\newtheorem{thm}{Theorem}[subsection]
\newtheorem{lemma}[thm]{Lemma}
\newtheorem{prop}[thm]{Proposition}
\newtheorem{corollary}[thm]{Corollary}
\newtheorem{remark}[thm]{Remark}
\newtheorem{rmk}[thm]{Remark}
\newtheorem{definition}[thm]{Definition}
\newtheorem{example}[thm]{Example}
\theoremstyle{remark}
\numberwithin{equation}{section} 
\numberwithin{figure}{section}
\numberwithin{table}{section}
\newcommand{\oCP}{{\overline{\CP}}\!\,}
\newenvironment{itemlist}
   { \begin{list} {$\bullet$}
         { \setlength{\topsep}{.5ex}  \setlength{\itemsep}{.5ex} \setlength{\leftmargin}{2.5ex} } }
   { \end{list} }
\newcommand{\bw}{{\bf{w}}}
\newcommand{\bE}{{\bf{E}}}
\newcommand{\Cc}{{\mathcal C}}
\newcommand{\om}{{\omega}}
\newcommand{\de}{{\delta}}
\newcommand{\la}{{\lambda}}
\newcommand{\si}{{\sigma}}
\newcommand{\p}{{\partial}}
\newcommand{\er}{{\Diamond}}
\newcommand{\Z}{\mathbb{Z}}
\newcommand{\R}{\mathbb{R}}
\newcommand{\Q}{\mathbb{Q}}
\newcommand{\C}{\mathbb{C}}
\newcommand{\CP}{\mathbb{CP}}
\newcommand{\eps}{\varepsilon}
\newcommand{\m}{\mathfrak{m}}
\newcommand{\Tt}{\mathcal{T}}
\newcommand{\acc}{\mathrm{acc}}
\newcommand{\sembeds}{\stackrel{s}{\hookrightarrow}}
 \newcommand{\vol}{\operatorname{vol}}
\newcommand{\bm}{\mathbf{m}}
\pgfplotsset{compat=1.18}
\title{A classification of infinite staircases for Hirzebruch surfaces}
\author{Nicki Magill\footnote{Department of Mathematics, University of California, Berkeley} \and Ana Rita Pires\footnote{School of Mathematics and Maxwell Institute for Mathematical Sciences, University of Edinburgh} \and Morgan Weiler\footnote{Department of Mathematics, University of California, Riverside}}
\date{}
\begin{document}

\maketitle

\begin{abstract}
    The ellipsoid embedding function of a symplectic manifold gives the smallest amount by which the symplectic form must be scaled in order for a standard ellipsoid of the given eccentricity to embed symplectically into the manifold. It was first computed for the standard four-ball (or equivalently, the complex projective plane) by McDuff and Schlenk, and found to contain the unexpected structure of an ``infinite staircase'', that is, an infinite sequence of nonsmooth points arranged in a piecewise linear stair-step pattern. Later work of Usher and Cristofaro-Gardiner--Holm--Mandini--Pires suggested that while four-dimensional symplectic toric manifolds with infinite staircases are plentiful, they are highly non-generic. This paper concludes the systematic study of one-point blowups of the complex projective plane, building on previous work of Bertozzi-Holm-Maw-McDuff-Mwakyoma-Pires-Weiler, Magill-McDuff, Magill-McDuff-Weiler, and Magill on these Hirzebruch surfaces. 
    
    We prove a conjecture of Cristofaro-Gardiner--Holm--Mandini--Pires for this family: that if the blowup is of rational weight and the embedding function has an infinite staircase then that weight must be $1/3$. We show also that the function for this manifold does not have a descending staircase.
    Furthermore, we give a sufficient and necessary condition for the existence of an infinite staircase in this family which boils down to solving a quadratic equation and  computing the function at one specific value.
    Many of our intermediate results also apply to the case of the polydisk (or equivalently, the symplectic product of two spheres).

\vspace{5pt}

\noindent\textbf{Keywords:} symplectic embeddings in four dimensions, ellipsoid embedding capacity function, infinite staircases, continued fractions, quantitative symplectic geometry, Hirzebruch surfaces, moduli space, exceptional spheres, Cremona reduction
\end{abstract}

\tableofcontents

\vspace{1cm}

\section{Introduction}
\subsection{Main results}
The four-dimensional \textbf{ellipsoid} is the region
\[
E(c,d):=\left\{(z_1,z_2)\in\C^2\,\middle|\,\frac{\pi|z_1|^2}{c}+\frac{\pi|z_2|^2}{d}<1\right\}
\]
with the standard symplectic form $\omega_0$ on $\C^2=\R^4$. For $z \geq 1$, the \textbf{ellipsoid embedding function} of a symplectic four-manifold $(X,\omega)$ is
\[
c_X(z):=\inf\left\{\lambda\,\middle|\, (E(1,z),\omega_0)\sembeds(X,\la\omega)\right\}
\]
 where $\sembeds$ denotes a symplectic embedding. For this symplectic embedding to exist, the volume of $E(1,z)$ must be smaller than or equal to the volume of $\la X:=(X,\la \om)$, and when the volumes are equal we call it a {\bf full filling}. This inequality gives a lower bound 
\begin{equation}
\label{eqn:volb} c_X(z) \geq\sqrt{\frac{z}{\text{volume}(X)}}=:\vol_X(z).
\end{equation}
As proved in \cite[Proposition~2.1]{AADT}
, when $X$ is a finite toric blow up of $\CP^2$ or $\CP^1\times\CP^1$, its ellipsoid embedding function is continuous, nondecreasing, piecewise linear when not equal to $\vol_X(z),$ and is equal to $\vol_X(z)$ for sufficiently large values of $z.$ We say $c_X$ has an {\bf infinite staircase} or {\bf staircase} if it is nonsmooth at infinitely many points. In this case, by the properties noted above, these nonsmooth points must accumulate and by \cite[Theorem~1.13]{AADT}, they do so at a unique $z$-value called the {\bf accumulation point}, which is a solution to a quadratic equation whose coefficients are determined by $X$. Further, at this $z$-value, there is a full filling, i.e., $c_X(z)=\vol_X(z)$. 

The first ellipsoid embedding function was computed by McDuff and Schlenk in \cite{McDuffSchlenk12} with $X$ the standard 4-ball $B(1)=E(1,1)$, or equivalently $\CP^2$ (see \cite{McDuffPolt} or \cite[Theorem 1.4]{AADT}). Its piecewise linear structure is determined by the Fibonacci numbers and the accumulation point is $\tau^4$ where $\tau$ is the golden ratio. Further work by many authors identified and ruled out infinite staircases for several other targets $X$ (\cite{FrenkelMuller15}, \cite{CasalsVianna}, \cite{CGspecial}, \cite{ellipsoidpolydisc}, \cite{SPUR}). Of note, in \cite{AADT}  Cristofaro-Gardiner, Holm, Mandini, and Pires studied the ellipsoid embedding functions of a class of manifolds which generalizes finite toric blowups of $\CP^2$ and $\CP^1\times\CP^1$. They identified in \cite[Theorem 1.19]{AADT} a family of twelve Fano manifolds with infinite staircases which included all previously known infinite staircases. Further, they conjectured in \cite[Conjecture 1.23]{AADT} that if $X$ is a rational member of this class which has an infinite staircase, then (up to scaling) it must be one of those twelve. Here, rational means that its moment polygon can be scaled to be a lattice polygon. In contrast, Usher found in \cite{usher} an infinite family of infinite staircases when $X$ is a polydisk (equivalently, a $\CP^1\times\CP^1$, see \cite{McDuffPolt} or \cite[Theorem 1.4]{AADT}):
\[
P(c,d):=\left\{(z_1,z_2)\in\C^2\,\middle|\,\pi|z_1|^2<c,\;\pi|z_2|^2<d\right\},
\]
again with symplectic form $\omega_0$. For each of these polydisks with an infinite staircase, we have $d/c$ irrational, which means that the respective moment polygon cannot be scaled to a lattice polygon.

The present paper is the fifth in a sequence beginning with \cite{ICERM, MM, MMW, M} whose goal is to study the ellipsoid embedding functions of the one-parameter family of one-point blowups of $\CP^2$, which will therefore include rational and irrational targets.

These one-point blowups of $\CP^2$ are the Hirzebruch surfaces 
$$H_b:=\CP^2(1)\#\oCP^2(b),$$ 
with $b\in[0,1)$. One of the twelve Fano staircases of \cite{AADT} is the infinite staircase of $c_{H_{1/3}}$; this is the only one-point blowup of  $\CP^2$ on that list. Our first main result finishes the proof of Conjecture 1.23 in \cite{AADT} for this family, which had been started in \cite{MMW}:

\begin{thm}\label{thm:13proved}
For rational $b$, the ellipsoid embedding function of the Hirzebruch surface $H_b$ has an infinite staircase if and only if $b=1/3$.
\end{thm}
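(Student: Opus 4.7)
The plan is to combine two ingredients. First, the reduction from prior work in this series \cite{ICERM,MM,MMW,M}, which shows that if $c_{H_b}$ has an infinite staircase then $b$ must lie in a specific countable set $B^{\#}$ of candidate values whose structure comes from sequences of obstructing exceptional classes arising via Cremona reduction. Second, the necessary-and-sufficient condition advertised in the abstract, which says that $b \in B^{\#}$ admits a staircase precisely when $c_{H_b}(z_{\acc}(b)) = \vol_{H_b}(z_{\acc}(b))$, where $z_{\acc}(b)$ is the solution of an explicit quadratic equation determined by $b$. The sufficiency of $b = 1/3$ is immediate: $H_{1/3}$ is among the twelve Fano manifolds with an infinite staircase identified in \cite[Theorem~1.19]{AADT}. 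So the task reduces to the arithmetic problem of determining which elements of $B^{\#}$ are rational and ruling out all but $1/3$.

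First I would enumerate the rational elements of $B^{\#}$. The set arises from sequences of exceptional classes with a particular combinatorial structure, and rationality of $b$ forces the quadratic defining $z_{\acc}(b)$ to have coefficients compatible with that structure; combined with the constraints imposed by the exceptional class sequences, this restricts the rational candidates to a manageable list, visibly containing $1/3$. Next, for each remaining rational candidate $b \neq 1/3$, I would apply the necessary-and-sufficient condition by computing $c_{H_b}$ at $z_{\acc}(b)$ and exhibiting an exceptional class whose obstruction function strictly exceeds $\vol_{H_b}(z_{\acc}(b))$ there; this rules out the full filling required for a staircase.

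The hard part will be the final obstruction step: producing, for each remaining rational candidate $b$, a specific exceptional class that obstructs the full filling at $z_{\acc}(b)$. This relies on the Cremona reduction machinery developed in the earlier papers of the series, together with the continued fraction analysis of the accumulation point that arises naturally from the staircase structure. I expect that most cases can be handled uniformly by exploiting the algebraic structure of $z_{\acc}(b)$ as a function of $b$, leaving only finitely many individual candidates near $b = 1/3$ to treat by hand. Combining these with the sufficiency direction completes the equivalence.
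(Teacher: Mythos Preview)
Your plan has a genuine gap. The strategy of exhibiting, for each rational candidate $b\neq 1/3$, an exceptional class whose obstruction exceeds the volume at the accumulation point cannot succeed, because there exist rational $b$-values for which $O(b)=0$ and yet $c_b$ has no infinite staircase. These are precisely the special rational $b$-values $b_i$ defined in \eqref{eqn:bidef}: they are not in $Block$ (no exceptional class obstructs the full filling at $\acc(b_i)$), and the paper must rule them out by an entirely different mechanism. You have also misstated the necessary-and-sufficient condition from the abstract: Theorem~\ref{thm:main} says $c_b$ has a staircase if and only if $O(b)=0$ \emph{and} $\acc(b)$ is irrational, not merely $O(b)=0$.

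The paper's actual argument splits according to the classification from \cite{MMW}: every $b$ with a staircase is either $1/3$, a special rational $b_i$, a principal staircase value, or a non-principal staircase value. Principal values are already known to be irrational by \cite[Remark~2.1.3~(i)]{MMW}. For the special rational $b_i$ (Theorem~\ref{thm:NoRatlAcc}), the paper does \emph{not} find an obstruction at the accumulation point; instead it shows via explicit Cremona reduction that $c_{b_i}(z)=\frac{1+z}{3-b_i}$ on an interval immediately to the right of $\acc(b_i)$, so no descending staircase can exist (and \cite{MMW} already excluded ascending ones). For the non-principal values (Theorem~\ref{cor:rpt}), the argument is again not about obstructions: the paper analyzes how mutation acts on the continued fractions of the centers of the staircase classes and shows that the limiting accumulation point cannot have an eventually periodic continued fraction, hence is not a quadratic irrational, which forces $b$ to be irrational by \eqref{eq:accb}. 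Neither of these two key steps fits the obstruction-finding template you proposed.
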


For irrational $b$-values, the papers \cite{ICERM, MM} identified families of $H_b$ with infinite staircases, including the first ``descending'' infinite staircases. The papers \cite{MMW, M} find further staircases by uncovering a fractal structure of $b$-values with infinite staircases near a $b$-value with an infinite staircase.

We will use the notation $c_b:=c_{H_b}$. By \cite[Theorem 1.13]{AADT}, if $c_b$ has an infinite staircase then its accumulation point is the unique real $z>1$ root of 
\begin{equation} \label{eq:accb}
z^2-\left(\frac{(3-b)^2}{1-b^2}-2\right)z+1=0.
\end{equation}
Extending this definition, for any $b \in [0,1)$, we define $\acc(b)$ to be the larger root of \eqref{eq:accb}, thus obtaining a function $\acc:[0,1)\to[3+2\sqrt{2},\infty).$

For the family $H_b$, we have
\[
\vol_b(z):=\vol_{H_b}(z)=\sqrt{\frac{z}{1-b^2}},
\]
and we define the \textbf{staircase obstruction} to be the quantity
\[
O(b):=c_b(\acc(b))-\vol_b(\acc(b)).
\]
As mentioned above, by \cite[Theorem~1.13]{AADT}, if $c_b$ has an infinite staircase, then $O(b)=0$. Figure \ref{fig:acc} illustrates the parameterized curve $b\mapsto(\acc(b),\vol_b(\acc(b))$.

Our second main result provides a converse to \cite[Theorem~1.13]{AADT} for the $H_b$ family:

\begin{figure}[ht]
\begin{center}
\begin{tikzpicture}
\begin{axis}[
	axis lines = middle,
	xtick = {5.5,6,6.5,7},
	ytick = {2.5,3,3.5},
	tick label style = {font=\small},
	xlabel = $z$,
	ylabel = $\lambda$,
	xlabel style = {below right},
	ylabel style = {above left},
	xmin=5.4,
	xmax=7.2,
	ymin=2.4,
	ymax=3.6,
	grid=major,
	width=3in,
	height=2.25in]
\addplot [red, thick,
	domain = 0:0.7,
	samples = 120
]({ (((3-x)*(3-x)/(2*(1-x^2))-1)+sqrt( ((3-x)*(3-x)/(2*(1-x^2))-1)* ((3-x)*(3-x)/(2*(1-x^2))-1) -1 ))},{sqrt(( (((3-x)*(3-x)/(2*(1-x^2))-1)+sqrt( ((3-x)*(3-x)/(2*(1-x^2))-1)* ((3-x)*(3-x)/(2*(1-x^2))-1) -1 )))/(1-x^2))});
\addplot [black, only marks, very thick, mark=*] coordinates{(6,2.5)};
\addplot [blue, only marks, very thick, mark=*] coordinates{(6.85,2.62)};
\addplot [green, only marks, very thick, mark=*] coordinates{(5.83,2.56)};
\end{axis}
\end{tikzpicture}
\end{center}
\caption{This is \cite[Figure~1.1.4]{ICERM}, which depicts the parameterized accumulation point curve $(z,\lambda)=(\acc(b), \vol_b(\acc(b)))$ for $0\le b< 1$.
The blue point indicates where $b=0$, at $(\tau^4, \tau^2)$. It is the accumulation point for the Fibonacci stairs of \cite{McDuffSchlenk12}. The green point, where $b=1/3$, is the accumulation point for $c_{1/3}$, with $z=3+2\sqrt{2}$. The plot makes it clear that this is the minimum of the function $b\mapsto\acc(b)$.  
The black point has $z=6, b=b_0=1/5$ (see \eqref{eqn:bidef}), and is the minimum of $\vol_b(\acc(b))$. 
} \label{fig:acc} 
\end{figure}
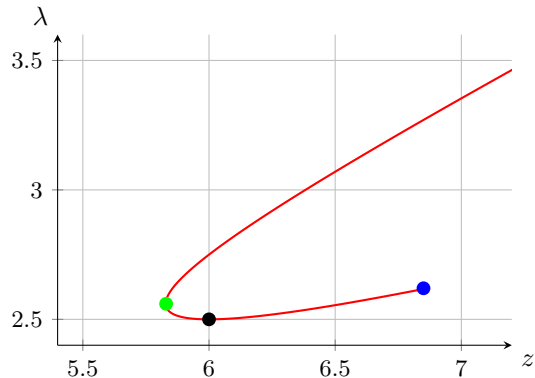

\begin{thm}\label{thm:main}
The ellipsoid embedding function $c_b$ of the Hirzebruch surface $H_b$ has an infinite staircase if and only if $\acc(b)$ is irrational and $O(b)=0$.
\end{thm}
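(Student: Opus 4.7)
The proof splits along the biconditional. For the ``only if'' direction, if $c_b$ has an infinite staircase, then \cite[Theorem~1.13]{AADT} directly yields the full filling $c_b(\acc(b)) = \vol_b(\acc(b))$, i.e., $O(b) = 0$. Showing that $\acc(b)$ must be irrational requires a separate argument: I would proceed by contrapositive, assuming $\acc(b) = p/q \in \Q$. The obstructions from exceptional classes that are sharp near a rational point $p/q$ are essentially those obtained by Cremona-reducing the class dual to the ellipsoid $E(q,p)$; since the weight expansion of $p/q$ is finite, only finitely many such classes can be sharp on any neighborhood of $p/q$. Hence $c_b$ has only finitely many nonsmooth points there, contradicting the definition of a staircase.

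The ``if'' direction is where the substantive content lies and where the staircase-family apparatus developed in \cite{ICERM, MM, MMW, M} is used. Assume $\acc(b)$ is irrational and $O(b) = 0$. First, since $\acc(b)$ is irrational, its relevant continued fraction expansion is infinite, which lets me recursively generate an infinite sequence of exceptional classes $\mathcal{E}_n$ via Cremona moves, using the recursive templates from the earlier papers in the series. Each $\mathcal{E}_n$ produces an obstruction $\mu_n$ whose graph crosses $\vol_b$ at two points $z_n^- < z_n^+$ with $z_n^\pm \to \acc(b)$, and is strictly above $\vol_b$ on the interval in between. Second, the hypothesis $O(b) = 0$ is used to promote these lower bounds on $c_b$ to equalities on each $[z_n^-, z_n^+]$: if $c_b$ were strictly above all of the $\mu_n$ on intervals accumulating at $\acc(b)$, continuity and piecewise linearity from \cite[Proposition~2.1]{AADT} would force $c_b(\acc(b)) > \vol_b(\acc(b))$, contradicting $O(b) = 0$. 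Patching the $\mu_n$'s with $\vol_b$ in the gaps produces the required infinite sequence of nonsmooth points.

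The main obstacle is the second step: upgrading the single-point equality $c_b(\acc(b)) = \vol_b(\acc(b))$ to the global staircase structure. One must show that no ``unknown'' exceptional class can block the staircase by dominating the $\mu_n$ on a positive-measure set near $\acc(b)$. The strategy is a compactness/limit argument: any competing class whose obstruction dominates on such a set must, by continuity and the piecewise-linear structure, also dominate at $\acc(b)$ itself, forcing $O(b) > 0$. Making this precise needs the quasi-perfect class formalism of \cite{MMW, M} to enumerate candidate obstructions, together with control of how their peak values degenerate as $z \to \acc(b)$. The elegance of Theorem~\ref{thm:main} --- reducing an infinite condition (a staircase of nonsmooth points) to a single numerical test ($O(b) = 0$ at the predicted accumulation point) --- is precisely what this final reduction accomplishes.
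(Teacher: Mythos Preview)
Your ``only if'' argument for irrationality of $\acc(b)$ has a genuine gap. The claim that a rational $\acc(b)=p/q$ forces only finitely many exceptional classes to be sharp near $p/q$, because the weight expansion of $p/q$ is finite, is false. An exceptional class $\bE=(d,m;\bm)$ can be obstructive at $z$ near $p/q$ without $\ell(\bm)$ being bounded by $\ell(p/q)$; indeed Lemma~\ref{lem:cghmp21} only says $\mu_{\bE,b}(z)\leq\vol_b(z)$ when $\ell(z)<\ell(\bm)$, and points $z$ arbitrarily close to $p/q$ have arbitrarily long weight expansions. The paper's own structure confirms this is not a triviality: the special rational $b$-values $b_i$ satisfy $O(b_i)=0$ with $\acc(b_i)\in\Q$, and ruling out staircases there is the content of Theorem~\ref{thm:NoRatlAcc}, whose proof (Section~\ref{s:nosratb}) constructs actual embeddings via Cremona reduction rather than bounding the number of obstructions. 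The paper's route is instead to invoke \cite[Theorem~1.1.1~(iv)]{MMW}, which shows the only rational candidates for accumulation points are the special rational $z$-values, and then apply Theorem~\ref{thm:NoRatlAcc} to eliminate those.

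Your ``if'' direction is more a plausible outline than a proof, and it understates how much is being imported. The paper's argument here is two lines because \cite[Theorem~1.1.1~(i,~ii)]{MMW} already establishes that $\{b:O(b)=0\}=Stair\cup\{\text{special rational }b\text{-values}\}$; irrationality of $\acc(b)$ then excludes the second set. What you describe --- building classes from the continued fraction of $\acc(b)$, showing they are live, and ruling out competing obstructions by a limiting argument --- is essentially the content of \cite{MMW} itself, and the step ``no unknown class dominates near $\acc(b)$'' is exactly where the quasi-perfect/mutation machinery is doing real work, not just a compactness formality.
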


Furthermore, many of our results also apply to polydisks, suggesting a method of proof for an analogous theorem in that case, see Remark~\ref{rmk:polydisk}.

Finally, our third main result computes $c_{1/3}$  from its accumulation point $\acc(1/3)=3+2\sqrt{2}$ up to 6; see also \cite[Lemma~2.2.12]{MM}.
\begin{thm}\label{thm:nodesc13} For $3+2\sqrt{2}\leq z\leq6$, 
    \[
    c_{1/3}(z)=\frac{3(z+1)}{8}.
    \]
    Consequently, there is no descending infinite staircase for $b=1/3$.
\end{thm}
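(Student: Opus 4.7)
The plan is to prove the formula $c_{1/3}(z) = 3(z+1)/8$ on $[3+2\sqrt{2},6]$ by matching a lower bound coming from a single exceptional class against an upper bound from McDuff's ball-packing criterion, and then to deduce the absence of a descending staircase from the resulting linearity. For the lower bound I would exhibit the exceptional class $(d,e;m_1,\dots,m_6) = (3,1;2,1,1,1,1,1)$, where $d$ is the degree in the line class, $e$ is the coefficient of the Hirzebruch exceptional divisor, and the $m_i$ multiply the balls coming from the weight expansion of $(1,z)$. The self-intersection and Chern conditions read $9 - 1 - 9 = -1$ and $9 - 1 - 7 = 1$, so this is genuinely an exceptional class. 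For $z \in [3+2\sqrt{2}, 6]$ the weight expansion $w(1,z)$ begins $(1,1,1,1,1,z-5)$, so the pairing $\sum m_i w_i(z)$ evaluates to $2+1+1+1+1+(z-5)=z+1$, and with $b = 1/3$ the denominator $d - be = 8/3$ produces the obstruction $c_{1/3}(z) \geq 3(z+1)/8$.

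For the matching upper bound I would invoke McDuff's criterion: an embedding $E(1,z) \sembeds \lambda H_{1/3}$ exists as soon as $\lambda \geq \vol_{1/3}(z)$ and $\lambda$ exceeds every exceptional-class obstruction. The volume inequality $3(z+1)/8 \geq \vol_{1/3}(z)$ is tight only at $z = 3+2\sqrt{2}$ and strict beyond, so the substantive task is to show that no other exceptional class obstructs more strongly on this interval. I would follow the Cremona-reduction framework of \cite{ICERM, MM, MMW} and in particular appeal to \cite[Lemma~2.2.12]{MM}, explicitly cited in the statement, which presumably furnishes exactly this domination on an overlapping range.

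Once the formula is established, $c_{1/3}$ is linear on $[3+2\sqrt{2},6]$ and in particular has no nonsmooth points in $(3+2\sqrt{2},6)$. A descending infinite staircase at $b=1/3$ would by definition require infinitely many nonsmooth points accumulating at $\acc(1/3) = 3+2\sqrt{2}$ from the right, which is directly incompatible with this linearity. The main obstacle will be the upper-bound step: controlling all competing exceptional classes uniformly across the interval, especially handling the combinatorial change in the weight expansion at the integer $z=6$ and verifying that the ascending staircase classes of \cite{AADT} meet the line $3(z+1)/8$ only at the accumulation point and fall strictly below it elsewhere.
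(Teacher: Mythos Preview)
Your lower bound is exactly right and matches the paper: the class $\bE_0(2)=(3,1;2,1^{\times5})$ is the one used in Lemma~\ref{lem:E02}, and your computation of $\mu_{\bE_0(2),1/3}(z)=3(z+1)/8$ on $[5,6]$ is correct. The deduction of ``no descending staircase'' from linearity on $(\sigma^2,6)$ is also fine.

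The upper bound, however, is where essentially all of the work lies, and your proposal does not contain a plan for it. Citing \cite[Lemma~2.2.12]{MM} with the qualifier ``presumably furnishes exactly this domination'' is not an argument; that lemma is listed only as ``see also'' and does not give the result on the full interval. More importantly, the Cremona-reduction framework you invoke is the method the paper uses in Section~\ref{s:nosratb} for the \emph{special rational} $b$-values, not for $b=1/3$. For Theorem~\ref{thm:nodesc13} the paper instead follows the ``ghost stairs'' strategy of \cite{McDuffSchlenk12,FrenkelMuller15}: it splits the interval into a short piece $\bigl(\tfrac{95+4\sqrt{2}}{17},6\bigr]$ handled by packing stability and a finite check of short exceptional classes (Lemma~\ref{lem:6reg}), and a long piece $[\sigma^2,\tfrac{95+4\sqrt{2}}{17}]$ handled by a delicate contradiction argument (Proposition~\ref{prop:regproof}). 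The latter requires introducing dense sequences of \emph{regular} points $c_{2k-1}, u_k(j), v_k(j)$, proving their regularity via error-vector estimates (Propositions~\ref{prop:ukj}, \ref{prop:vkj}) and, crucially, via intersection positivity with an auxiliary family of \emph{non-perfect} exceptional classes $\bE_k(i)$ (the ghost stairs, Definition~\ref{def:Eghostski}, Lemma~\ref{lem:ip}, Proposition~\ref{prop:c2k1}).

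The key missing idea is this last step: because $\bE_0(2)$ is not perfect, there is no automatic reason why perfect classes with nearby centers cannot dominate it (contrast \cite[Proposition~21]{ICERM}), and ruling this out requires the ghost classes and the intersection-positivity trick. Your sketch does not identify any mechanism for controlling the infinitely many potential obstructors accumulating at $\sigma^2$ from the right.
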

We prove Theorem \ref{thm:nodesc13} in Section \ref{s:ghosts}. Our methods parallel those of \cite{McDuffSchlenk12} and \cite{FrenkelMuller15} for the proofs of analogous results about $c_X$ for $X=B(1)$ and $P(1,1)$, including the use of ``ghost stairs.''

Together and with previous work, these results complete the classification of $b\in[0,1)$ into those with $O(b)>0$, and those with $O(b)=0$ and either no staircase, an ascending staircase, a descending staircase, or both. While the outline of the classification is given in Section \ref{s:review}, the actual classification depends in a complicated way on the continued fraction of the accumulation point corresponding to the $b$-value. For a visual and a brief explanation see Figure \ref{fig:block}.

\subsection{Proofs of Theorems \ref{thm:13proved} and \ref{thm:main}}

In this section we provide the definitions necessary to understand Theorems \ref{thm:NoRatlAcc} and \ref{cor:rpt}, and then use these together with results in \cite{MMW} to prove Theorems \ref{thm:13proved} and \ref{thm:main}.

Define
\begin{align*}
Stair&:=\{b\in[0,1)\,|\,c_b\text{ has an infinite staircase}\}, \mbox{ and }
\\Block&:=\{b\in[0,1)\,|\,O(b)>0\}.
\end{align*}
Note that \cite[Theorem~1.13]{AADT} implies that $Stair$ and $Block$ are disjoint.

In \cite{MMW}, the authors completely classified {\it Block} and gave an almost complete classification of {\it Stair}. The missing part of the classification was an infinite sequence of rational $b$-values known not to be in {\it Block} and conjectured not to be in {\it Stair}. We define these $b$-values now.

The \textbf{shift function} $S(z)=6-\frac1z$ is one of the symmetries studied extensively in \cite{MM}. Repeatedly applying this shift to $z=6$ generates the sequence of \textbf{special rational $z$-values}:
\begin{equation}\label{eqn:yk}
\frac{y_2}{y_1}=6, \quad \frac{y_3}{y_2}=\frac{35}{6}, \quad \frac{y_4}{y_3}=\frac{204}{35},\quad \frac{y_5}{y_4}=\frac{1189}{204}, \quad\ldots \quad y_k=6y_{k-1}-y_{k-2},
\end{equation}
where the $y_k$ were defined in \cite[Lemma~2.1.1~(iii)]{MM}. We also use the notation $p_i/q_i=y_{i+2}/y_{i+1}$. Notice that $y_{i+2}/y_{i+1}\to3+2\sqrt{2}=\acc(1/3)$. By \cite[Theorem 1.1.1~(iv)]{MMW}, the special rational $z$-values are the only rational numbers that might have staircases accumulating to them. 

Corresponding to these special rational $z$-values, set $b_i=\acc^{-1}_\eps(\frac{p_i}{q_i})$, where  $\eps=\pm 1$ corresponds to whether we take the upper ($+1$) or lower ($-1$) inverse of the function $b \mapsto \acc(b)$.\footnote{Note that this indexing is two less than that used in the definition of the $b_i$ in \cite[(2.3.5)]{MMW}.} The value of $\varepsilon$ is $(-1)^{i+1}$, so that we have
\begin{equation}\label{eqn:bidef}
b_0=\frac{1}{5}, \quad b_1=\frac{11}{31}, \quad, b_2=\frac{59}{179}, \quad b_3=\frac{349}{1045} \quad \dots \quad b_i=\acc^{-1}_{(-1)^{i+1}}\left(\frac{p_i}{q_i}\right).
\end{equation}
By \cite[Lemma 2.2.1]{MM}, all such $b$-values will also be rational, and are thus called \textbf{special rational $b$-values}. Moreover, the sequence of $b_i$s converges to $1/3$. The following result extends \cite[Theorem~6]{ICERM} establishing \cite[Conjecture~4.3.7]{MMW}, and is proved in Section \ref{s:nosratb}.

\begin{thm} \label{thm:NoRatlAcc}
    The ellipsoid embedding functions for the special rational $b$-values $b_i$, with $i\geq 0$, do not have infinite staircases. Consequently, there are no rational numbers $z=p/q$ that have staircases accumulating at $z$.
\end{thm}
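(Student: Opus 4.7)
The plan is to show, for each $i \ge 0$, that the staircase obstruction $O(b_i)$ is strictly positive; by \cite[Theorem~1.13]{AADT} this precludes an infinite staircase in $c_{b_i}$. Since $\acc(b_i) = p_i/q_i$ is rational by construction, the relevant obstruction is localized at this single $z$-value, so it suffices to exhibit an exceptional class $E_i$ on $H_{b_i}$ whose induced ellipsoidal obstruction satisfies $\mu_{E_i}(p_i/q_i) > \vol_{b_i}(p_i/q_i)$.

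First I would induct on $i$. The base case $i=0$, where $b_0 = 1/5$ and $p_0/q_0 = 6$, is the content of \cite[Theorem~6]{ICERM}: there is an explicit exceptional class on $H_{1/5}$ that strictly obstructs at $z=6$. For the inductive step, the shift function $S(z) = 6 - 1/z$ sends $p_i/q_i$ to $p_{i+1}/q_{i+1}$ by \eqref{eqn:yk}, and the Cremona machinery developed in \cite{MM} and \cite{MMW} lifts $S$ to an operation on exceptional classes. Given an obstructing class $E_i$ on $H_{b_i}$ at $p_i/q_i$, I would apply this lifted operation to produce a class $E_{i+1}$ on $H_{b_{i+1}}$, then compute directly, using \eqref{eqn:yk} together with the accumulation relation \eqref{eq:accb}, that the strict inequality $\mu_{E_{i+1}}(p_{i+1}/q_{i+1}) > \vol_{b_{i+1}}(p_{i+1}/q_{i+1})$ persists.

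The main obstacle will be verifying that the obstruction does not collapse to equality at any stage of the induction. Since $b_i \to 1/3$ and $p_i/q_i \to 3+2\sqrt{2} = \acc(1/3)$, the obstruction and the volume converge to their common value at the genuine staircase at $b = 1/3$, so I need quantitative control of the gap $\mu_{E_i}(p_i/q_i) - \vol_{b_i}(p_i/q_i)$. I expect this to follow by solving the shift recursion in closed form, expressing the gap as a rational function of the $y_k$ that is manifestly positive for every finite $i$, even as it tends to $0$.

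Finally, for the consequence, suppose $c_b$ has a staircase accumulating at a rational $z$. Then $z = p_i/q_i$ for some $i \ge 0$ by \cite[Theorem~1.1.1~(iv)]{MMW}, and $\acc(b) = z$ forces $b$ to be one of the at most two preimages of $p_i/q_i$ under $\acc$. By the classification of $Block$ in \cite{MMW}, every such preimage other than $b_i$ already lies in $Block$ and therefore carries no staircase; the first part of the present theorem now rules out $b_i$ itself. Hence no rational accumulation value survives.
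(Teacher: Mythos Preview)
Your approach has a fundamental flaw: the special rational $b$-values are precisely the points where $O(b_i)=0$, not $O(b_i)>0$. This is stated explicitly in the paper (see the remark following the theorem, where $\{b_0,b_1,\dots\}$ is the complement of $Stair\cup Block$ in $[0,1)$, and recall $Block=\{b:O(b)>0\}$). In particular, for $b_0=1/5$ one has $c_{1/5}(6)=\vol_{1/5}(6)$; \cite[Theorem~6]{ICERM} rules out the staircase not by exhibiting an exceptional class that strictly obstructs at the accumulation point, but by an ECH capacity argument showing $c_{1/5}$ is linear on an interval adjacent to $z=6$. So there is no obstructing class $E_0$ to seed your induction, and the inductive scheme you propose cannot get started. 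Indeed, these $b_i$ are exactly the cases where the simple obstruction criterion of \cite[Theorem~1.13]{AADT} gives no information, which is why they were left open in \cite{MMW}.

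The paper's actual argument is entirely different. For $i\ge1$, the ascending side was already excluded in \cite[Corollary~4.3.5]{MMW}, so the work is to rule out a \emph{descending} staircase at each $b_i$. The paper does this by proving that $c_{b_i}(z)=\lambda_{b_i}(z)=(1+z)/(3-b_i)$ on a short interval to the right of $z_i=\acc(b_i)$, via a constructive embedding: the vector $(\lambda_{b_i}(z);b_i\lambda_{b_i}(z),\bw(z))$ is shown to Cremona reduce to a non-negative reduced vector. The key technical step (Proposition~\ref{prop:reduceCase}) uses a single composite Cremona move $\Xi$ to relate the reduction problem for $b_k$ to that for $b_1$, up to a small correction term whose size is controlled by an explicit convergent series (Lemmas~\ref{lem:partialSum} and~\ref{lem:bound}); the $b_1$ case is then handled directly. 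Your intuition that the shift $S$ should transport the argument from one $b_i$ to the next is correct in spirit, but it is implemented on the embedding side via Cremona reduction, not on the obstruction side.
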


\begin{remark} \rm 
A consequence of \cite[Theorem~1.1.1~(i, ii)]{MMW} and Theorem \ref{thm:NoRatlAcc} is that 
$$Stair\cup Block\cup\{b_0,b_1,\dots\}=[0,1).$$
Furthermore, the special rational $b$-values are the only $b$-values not in $Block$ which have 
rational accumulation points, as a consequence of \cite[Lemma~4.1.8, Corollary~4.1.9, Proposition~4.3.2]{MMW}. We expect that Theorem \ref{thm:main} generalizes to all finite toric blowups of $\CP^2$ (using their versions of $\acc$ and $O$), which would imply that no infinite staircase can have a rational accumulation point. There are only two other known cases of rational convex toric domains which have rational accumulation points and a vanishing staircase obstruction, namely $E(3,4)$ and $\CP^2(3)\#\oCP^2(1)\#\oCP^2(1)\#\oCP^2(1)\#\oCP^2(1)\#\oCP^2(1)$. These were shown in \cite{CGspecial} and \cite{AADT} respectively to not have infinite staircases by using ECH capacities, a different method than we use. 

The evidence for \cite[Conjecture~1.23]{AADT} given in \cite[Section 6]{AADT} presumes that the accumulation point is irrational and different methods are needed to deal with the case for rational accumulation points. The methods used in this paper to show the special rational $b$-values do not have infinite staircases also work for the other known targets with this property. There is some hope these methods could generalize to show there can never be a rational accumulation point where the staircase obstruction vanishes. 
\hfill$\er$
\end{remark}

Combining Theorem~\ref{thm:NoRatlAcc} with \cite[Theorem~1.1.1]{MMW}, there are then two types of staircases corresponding to $b$-values in {\it Stair} other than $b=1/3$: they are called principal and non-principal staircases. We explain why the case of $b=1/3$ does not fit into this framework at the beginning of Section~\ref{s:ghosts}. The usual algebraic definitions of these are provided in Section \ref{ss:fractal}, but by \cite[Theorem~1.1.1~(ii)]{MMW} and Theorem \ref{thm:NoRatlAcc}, they are equivalent to the following characterization. An infinite staircase is a \textbf{principal staircase} if $b\neq1/3$ and the nonsmooth points appear in a monotone sequence, either increasing with $z$ (``ascending'') or decreasing with $z$ (``descending''), or equivalently, if the accumulation point has a neighborhood in which either all nonsmooth points are larger than it or all are smaller than it. By \cite[Theorem~1.1.1~(ii)]{MMW} and \cite[Remark~2.1.3~(i)]{MMW}, the principal staircases occur only for (quadratic) irrational $b$.
An infinite staircase is a \textbf{non-principal staircase} if $b\neq1/3$ and the nonsmooth points cannot be arranged into a monotone sequence, in which case we also say that $c_b$ ``has both an ascending and a descending staircase''. We prove the following result in Section \ref{s:blocked}:

\begin{thm} \label{cor:rpt}
    The $b$-values for which $c_b$ has a non-principal staircase are irrational.
\end{thm}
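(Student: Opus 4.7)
The plan is to combine the structural decomposition $[0,1) = Stair \cup Block \cup \{b_0, b_1, \dots\}$ recalled above with the algebraic definition of non-principal staircases given in Section \ref{ss:fractal}. First I would observe that if $c_b$ has a non-principal staircase then, by the characterization given above, $b \neq 1/3$ and $b \in Stair$; since $Stair$ is disjoint from $Block$ and, by Theorem \ref{thm:NoRatlAcc}, from the special values $\{b_i\}$, the task reduces to ruling out rational $b$-values in $Stair \setminus \{1/3\}$ whose nonsmooth points fail to be arranged monotonically.

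The key step would be to invoke the algebraic characterization of a non-principal staircase via the fractal-descent framework of \cite{MMW, M}: every non-principal $b$-value arises as a root of a monic quadratic with integer coefficients obtained by iterating M\"obius-type maps from a seed $b$-value, where the seed is either $b = 1/3$ or a principal staircase $b$-value. Principal staircase $b$-values are themselves quadratic irrational by \cite[Theorem~1.1.1~(ii)]{MMW} and \cite[Remark~2.1.3~(i)]{MMW}. I would then argue that the descent preserves membership in the real quadratic extension of $\Q$ determined by the seed; for the lone rational seed $b = 1/3$ this extension is $\Q(\sqrt{2})$, since $\acc(1/3) = 3 + 2\sqrt{2}$. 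Consequently every non-principal $b$-value lies in some $\Q(\sqrt{D})$ with $D$ not a perfect square, and so cannot be rational.

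The main obstacle I anticipate is bridging the topological characterization of non-principal (nonsmooth points accumulating to $\acc(b)$ from both sides) with this algebraic description: one must verify that every $b \in Stair \setminus \{1/3\}$ whose nonsmooth points do not form a monotone sequence is actually produced by the descent from one of the known seeds. Once this equivalence is secured, which should follow from \cite[Theorem~1.1.1]{MMW} combined with Theorem \ref{thm:NoRatlAcc}, the irrationality conclusion reduces to the elementary observation that the discriminants of the quadratics generated along the descent are never perfect squares, which can be verified inductively from the recurrence. Alternative routes worth keeping in reserve are (a) showing directly that every rational $b \notin \{1/3\} \cup \{b_i\}$ lies in $Block$ (tying Theorem \ref{cor:rpt} closer to the name of Section \ref{s:blocked}), or (b) showing that a descending staircase at rational $b$ forces $b = 1/3$, contradicting Theorem \ref{thm:nodesc13}; since a non-principal staircase necessarily has a descending part, either route yields the desired conclusion.
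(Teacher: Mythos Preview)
Your main approach contains a genuine error. You claim that a non-principal $b$-value ``arises as a root of a monic quadratic with integer coefficients obtained by iterating M\"obius-type maps from a seed $b$-value,'' and hence lies in some $\Q(\sqrt{D})$. This is false: a non-principal staircase corresponds to an infinite word $w$ in $x,y$ that is \emph{not} eventually constant, applied to a triple $\Tt_n$. At each step the mutation changes the triple, so the associated matrix $M_\la$ or $M_\rho$ changes as well; you are not iterating any fixed M\"obius map, and the limiting $b$-value (equivalently, the limiting $z$-value $\acc(b)$) is not a fixed point of any finite composition. Only for a principal staircase does the word end in $x^\infty$ or $y^\infty$, so that eventually a single matrix is iterated and the limit is its quadratic-irrational fixed point. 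In fact the paper shows precisely the opposite of your conclusion: $\acc(b)$ for a non-principal staircase is \emph{not} a quadratic irrational, so $b$ is certainly not in any $\Q(\sqrt{D})$.

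The paper's actual route is to work on the $z$-side via continued fractions. Theorem~\ref{thm:CFmain} describes exactly how a single $x$- or $y$-mutation transforms the continued fraction of the center of the middle class. From this, Proposition~\ref{lem:repeat} extracts the key combinatorial fact: once a word contains a factor $yx$ or $xy$, the continued fractions of the resulting centers cannot accumulate further copies of any fixed periodic block beyond two repetitions. Corollary~\ref{prop:rpt} then says that if the limit of the centers had an eventually periodic continued fraction, the word would have to be $x^\infty w_\star$ or $y^\infty w_\star$, i.e.\ principal. Hence for a non-principal staircase $\acc(b)$ is not a quadratic irrational; since $\acc(b)$ is a root of the quadratic \eqref{eq:accb} with coefficients in $\Q(b)$, this forces $b\notin\Q$. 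Your proposed inductive check that ``the discriminants along the descent are never perfect squares'' has no analogue here, because there is no single recurrence to induct along.
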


The idea behind the proof is the following: if $b$ were rational, then by \eqref{eq:accb} $\acc(b)$ would be either rational or quadratic irrational. It is shown in \cite[Theorem~1.1.1~(iv)]{MMW} that if $\acc(b)$ is rational then $c_b$ does not have a non-principal staircase as it cannot have an ascending staircase. We prove in Section \ref{s:blocked} that if $\acc(b)$ is quadratic irrational then it also cannot have a non-principal staircase. We do this by characterizing the continued fractions resulting from the ``mutation'' procedure developed by \cite{MMW} and related to ATF mutation by \cite{M}. This investigation results in a proof of \cite[Conjecture 2.2.4]{MMW} and computes the numerics of all ``blocking classes.''

We now prove Theorem \ref{thm:13proved}.

\begin{proof}[Proof of Theorem \ref{thm:13proved}]
By \cite[Theorem~1.1.1~(ii)]{MMW}, there are four possibilities for $b$-values with infinite staircases: $b=1/3,$ special rational $b$-values, and $b$-values with either non-principal or principal staircases. Theorem \ref{thm:NoRatlAcc} shows that the special rational $b$-values do not have infinite staircases; note, this implies that the definition of principal and non-principal given in the current section agree with the definitions in Section~\ref{ss:fractal} and \cite{MMW}. Theorem \ref{cor:rpt} and \cite[Remark~2.1.3~(i)]{MMW} show, respectively, that the non-principal and principal staircases do not have rational $b$-values. Hence, the only rational $b$-value with a staircase is $b=1/3.$
\end{proof}
Next, we prove Theorem \ref{thm:main}.
\begin{proof}[Proof of Theorem \ref{thm:main}]
    
    Assume $c_b$ has an infinite staircase. Then $O(b)=0$ by \cite[Theorem~1.13]{AADT}. By \cite[Theorem~1.1.1~(iv)]{MMW}, the only rational numbers that might be accumulation points of staircases are the special rational $z$-values, which are shown not to be staircase accumulation points in Theorem~\ref{thm:NoRatlAcc}. Hence, $\acc(b)$ is irrational.

    For the opposite implication, assume that $\acc(b)$ is irrational and $O(b)=0$. By \cite[Theorem~1.1.1~(i, ii)]{MMW}, all $b$-values with $O(b)=0$ are either in $Stair$ or are special rational $b$-values. 
    Because $\acc(b)$ is irrational, then $b$ is not a special rational $b$-value, and hence, it must be in {\it Stair}. 
\end{proof}

\subsection{Future directions}
This paper concludes the study of the structure of infinite staircases for one-point blowups of $\CP^2$, and it would be natural to try to extend this study to more general targets. 

For example, \cite{CGMM} extends the definition of $\acc$ and the staircase obstruction $O$ to a broad class of symplectic four-dimensional manifolds, but goes on to prove that a simple extension of Theorem \ref{thm:main} does not hold in that case: if $z$ is irrational then $\acc(E(1,z))=z$ and $O(E(1,z))=0$ but $c_{E(1,z)}$ does not have an infinite staircase. 

A natural class of targets to consider is that of finite blowups of $\CP^2$ or $\CP^1\times\CP^1$. 
As we discuss in the remark below, ECH capacities provide a heuristic for why these targets, amongst (open or closed) symplectic four-manifolds with a symplectic torus action, are central to the problem. See \cite[Remark 1.21]{AADT} for a discussion of closed targets with no symplectic torus action.

\begin{remark} \rm
 In many cases, the ECH capacities $c_k$ of \cite{qECH} characterize $c_X$: see \cite{concaveconvex}. In all cases checked, the outer corners of infinite staircases occur when both the target $X$ and the ellipsoid $E(1,z)$ have infinitely many strings $c_k=\dots=c_{k+n}$ of equal ECH capacities (specifically, when the first entry of such a sequence for $E(1,z)$ has the same index as the last entry of such a sequence for $X$). By \cite[Lemma~3.1]{CGM}, if $\partial X$ is smooth and contains such a sequence of equal ECH capacities then $\partial X$ is Besse, that is, the induced Reeb vector field is periodic. If $X$  has a symplectic torus action, is open with smooth boundary, and $c_X$ has an infinite staircase, then $X$ must be a rational ellipsoid $E(1,1)$, $E(1,2)$, $E(2,3)$ (see \cite[Figure~1.10]{AADT} and \cite{CGspecial}). Therefore, the only cases this heuristic leaves to consider are when $\p X$ is not smooth. When we consider the closed counterparts of those, we get exactly finite blowups of $\CP^2$ or $\CP^1 \times \CP^1.$
\hfill$\er$
\end{remark}

Within the class of finite blowups of $\CP^2$ or $\CP^1\times\CP^1$, the following case is within reach: the moduli space of standard toric blowups $\CP^2(1)\#\oCP(b_1)\#\oCP(b_2)$ with $1\geq b_1\geq b_2\geq0$ and $0<b_1+b_2\leq1$, depicted in Figure \ref{fig:2bl}. \footnote{Denoting by $\sim$ the relation of having the same ellipsoid embedding functions, by \cite[Theorem~1.4]{AADT}, we have $\CP^2(1)\sim B(1)$ and
$\CP^2(1)\#\oCP^2(b_1)\#\oCP^2(1-b_1)\sim P(b_1,1-b_1)\sim \CP^1(b_1)\times \CP^1(1-b_1)$.}

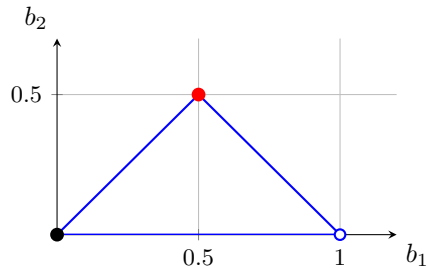
\begin{figure}[ht!]
\begin{center}
\begin{tikzpicture}
\begin{axis}[
	axis lines = middle,
	xtick = {0,1/2,1},
	ytick = {0,0.5},
	tick label style = {font=\small},
	xlabel = $b_1$,
	ylabel = $b_2$,
	xlabel style = {below right},
	ylabel style = {above left},
	xmin=0,
	xmax=1.2,
	ymin=0,
	ymax=0.7,
	grid=major,
	width=2.4in,
	height=1.65in]
 \addplot[blue, thick, domain = 1/2:1]{1-x};
\addplot[blue, thick, domain = 0:1]{0};
\addplot[blue, thick, domain = 0:1/2]{x};
\addplot [black, only marks, very thick, mark=*] coordinates{(0,0)};
\addplot [red, only marks, very thick, mark=*] coordinates{(1/2,1/2)};
\addplot [blue, fill = white, only marks, thick, mark=*] coordinates{(1,0)};
\end{axis}
\end{tikzpicture}
\end{center}
\caption{The black dot represents the ball and the red dot the ``cube'' $P(1/2, 1/2)$. Points along the $b_1$ axis are Hirzebruch surfaces $H_{b_1}$, while points along the line $b_1+b_2=1$ are polydisks $P(b_1,b_2)$. The balanced blowups are represented by the line $b_1=b_2$. }\label{fig:2bl} 
\end{figure}
 
Together with \cite{SPUR} and \cite{usher}, this paper suggests that the numerics of the ellipsoid embedding functions of polydisks and $H_b$ surfaces are very closely paired (see in particular Remark \ref{rmk:polydisk}) and that the structure of $Stair$ and $Block$ for these two one-parameter families of manifolds represented by the right and bottom edges of the triangle in Figure \ref{fig:2bl} will be nearly identical. See \cite[Conjecture~1.2.1]{SPUR} for a precise statement.

For the family of balanced blowups of $\CP^2$, represented by the left edge of the triangle in Figure \ref{fig:2bl}, the numerics seem to be influenced by those of both infinite staircases at the end vertices of that edge: the Fibonacci staircase of the ball and the Pell staircase of the polydisk $P(1/2,1/2)$, but we expect some of the same structures to appear. 

Preliminary investigations in \cite{M2} about what happens in the interior of the triangle indicate that $Block$ includes overlapping two-dimensional wedges which, restricted to the edges of the triangle, give the corresponding $Block$ sets. We expect (the appropriate versions of) Theorems \ref{thm:13proved} and \ref{thm:main} to extend to the whole triangle.

\subsection{Outline of the paper}

In Section \ref{s:review} we collect from previous papers and the current paper to give a full answer to the question of for which $b$-values the embedding function of $H_b$ has an infinite staircase, no proofs included. To do this, we review the tools we use to study infinite staircases. In particular, we explain ``exceptional classes,'' certain second homology classes in blowups of $\CP^2$ which provide the steps of an infinite staircase. In Section \ref{ss:fractal} we review the fractal ``mutation'' framework of \cite{MMW}, and in \ref{ss:symmetries} we review the linear fractional transformations of the $z$-variable from \cite{MM} which allow us to restrict our attention to $b\in((3-\sqrt{5})/2,1)$ when proving Theorem \ref{cor:rpt}.

We open Section \ref{s:blocked}, in which we prove Theorem \ref{cor:rpt}, with the auxiliary Theorem \ref{thm:CFmain}, which allows us to describe the effect of mutation using the continued fraction of the $z$-variable. One notable consequence is Corollary \ref{cor:MMWconj}, in which we prove \cite[Conjecture~2.2.4]{MMW}. Theorem \ref{thm:CFmain} is proved in Appendix \ref{app:pf}. In Section \ref{ss:afternumerics} we complete the proof of Theorem \ref{cor:rpt}.

 Theorem \ref{thm:NoRatlAcc} is proved in Section \ref{s:nosratb}. The main tool in this section is the ``Cremona move,'' a change-of-basis on second homology classes of a blowup of $\CP^2$.

Finally, in Section \ref{s:ghosts}, we prove that $c_{1/3}$ does not have a descending infinite staircase. The proof heavily relies on \cite[Section 6]{FrenkelMuller15}, which itself is inspired by \cite[Section 4]{McDuffSchlenk12}. In particular, we again use a version of the ``ghost stairs,'' which are a special set of exceptional classes that in the case of $B(1)$ have proven relevant to the stabilized embedding problem: see \cite{cghm}, for example.

Section \ref{s:review} is crucial to understanding the rest of the paper, but Sections \ref{s:blocked} to \ref{s:ghosts} are logically independent and can be read in any order, except for the use of Cremona moves (defined in Section 4) in Section 5.

\subsection{Acknowledgements}

We would like to thank the collaborators from the original paper \cite{ICERM}, Maria Bertozzi, Tara Holm, Emily Maw, Dusa McDuff, and Grace Mwakyoma, as well as AWM and ICERM for organizing and hosting the Women in Symplectic and Contact Geometry and Topology workshop in 2019, where this project began. We would like to further thank Tara, Dusa, and Grace for inspiring conversations about this paper specifically and Dan Cristofaro-Gardiner and Felix Schlenk for invaluable conversations about Section \ref{s:nosratb} and Section \ref{s:ghosts} respectively. We also thank Caden Farley, Jemma Schroder, Zichen Wang, and Elizaveta Zabelina for helpful discussions. We sincerely thank the referees for their outstandingly careful reading and the improvements they suggested. Finally, we thank the ICMS for hosting us under a Research in Groups in 2022. Morgan Weiler and Nicki Magill were supported by NSF grant DMS 2103245 and NSF Graduate Research Grant DGE-1650441, respectively, and Ana Rita Pires  was supported by an LMS Emmy Noether Fellowship. 

\section{For which $b$ does $H_b$ have an infinite staircase?}\label{s:review}

In this section, we detail the structure of $Block$ and $Stair$. In Section \ref{ss:reviewEC} we review exceptional classes in blowups of $\CP^2$, explain their relevance to the problem of embedding ellipsoids into finite type convex toric domains, and collect some useful definitions and facts.

Next, in Section \ref{ss:fractal} we detail the structure of $Block$ and $Stair$ for values of $b$ in $((3-\sqrt{5})/2,1)$, following \cite{MMW}. In Section \ref{ss:symmetries} we see how applying symmetries to the intervals in Section \ref{ss:fractal} determines $Block$ and $Stair$  for values of $b$ in $[0,(3-\sqrt{5})/2]$, following \cite{MM, MMW}.

Before we begin, we review the definition of {\bf weight decomposition} of a number $z\geq1$: the weight decomposition $\bw(z):=\left(w_1^{\times \ell_1},w_2^{\times \ell_2},w_3^{\times \ell_3},\hdots\right)$ is obtained by  inductively decomposing a rectangle with side lengths 1 and $z$ into squares as large as possible. 
Specifically, we set $w_0=z$ (note, $w_0$ is not in $\bw(z)$), $w_1=1$, and $w_{k}=w_{k-2}-\ell_{k-1} w_{k-1}$ where $\ell_{k}=\lfloor \frac{w_{k-1}}{w_{k}} \rfloor$. The continued fraction of $z$ is then $[\ell_1,\dots,\ell_n]$.

The \textbf{weights} of $z$ are the entries in its weight expansion. Rational numbers have finite weight expansions (by the fact that the Euclidean algorithm terminates), while irrational numbers have infinite weight expansions. We will also use the notation $\bw(z)=(w_1,\dots,w_N)$ repeating the entries according to their multiplicities, and our convention will be clear from context.

We note that by \cite[Lemma~1.2.6]{McDuffSchlenk12}, if $\mathbf{w}(p/q)=(w_1,\dots,w_N)$, then
    \begin{equation} \label{eq:rmkweights}
        \sum_{i=1}^Nw_i^2=\frac{p}{q} \quad \text{and} \quad 
        \sum_{i=1}^Nw_i=\frac{p}{q}+1-\frac{1}{q}.
    \end{equation}

\subsection{Review of exceptional classes}\label{ss:reviewEC}

Embeddings of rational ellipsoids into finite type convex toric domains are completely characterized by symplectically embedded spheres in blowups of $\CP^2$, a method due to McDuff (particularly the proof of \cite[Proposition~3.2]{m2}) and generalized by Cristofaro-Gardiner (in the proof of \cite[Theorem~1.2]{concaveconvex}). We review these ideas here, following the proof of \cite[Proposition~3.2]{m2}.

Expanding on \cite[Theorem~3.11]{Mc0} as in \cite[Theorem~2.1]{concaveconvex}, if $\mathbf{w}(z)=(w_1,\dots,w_N)$, then 
\begin{equation}\label{eqn:ballembeddings}
E(1,z)\sembeds \lambda H_b\iff \bigsqcup_{i=1}^N B(w_i) \bigsqcup B(\lambda b) \sembeds B(\lambda).
\end{equation}
A symplectic embedding of $N+1$ balls into a target ball can be translated to an $N+1$-fold blowup of $\CP^2$ in the following way: the boundary of the target ball is collapsed to create the $\CP^2$ and each of the embedded balls is symplectically blown up to create an exceptional divisor. 
Thus our problem now becomes one of identifying symplectic forms on blowups of $\CP^2$; more precisely, work of McDuff and Polterovich in \cite{McDuffPolt} implies that the embedding on the right-hand side of \eqref{eqn:ballembeddings} exists if and only if there is a symplectic form $\omega$ on the $(N+1)$-fold blowup $\CP^2\#(N+1)\overline{\CP}^2$ such that:
\begin{itemize}
    \item[(A)] its cohomology class satisfies
\[
PD([\omega])=\lambda L-\lambda b E_0-\sum_{i=1}^Nw_i E_i,
\]
where $L$ denotes the homology class of the line in $\CP^2$ and $E_i$ denotes the class of the $i$th exceptional divisor, and
\item[(B)] its first Chern class satisfies
\[
PD(c_1(\omega))=-K:=3L-\sum_{i=0}^NE_i.
\]
\end{itemize}

It is now useful to introduce the notion of \textbf{exceptional class}, i.e., $\bE\in H_2\left(\CP^2\#(N+1)\overline{\CP}^2\right)$ of self-intersection -1 (that is,  $\bE\cdot\bE=-1$), such that $-K(\bE)=1$, and which can be represented by a symplectically embedded $S^2$. Writing $\bE$ as
\begin{equation}\label{eqn:E}
\bE=(d;m,\bm)=dL-mE_0-\sum_{i=1}^Nm_iE_i,
\end{equation}
where $\bm=(m_1,\dots,m_N)$, the conditions  $-K(\bE)=1$ and $\bE\cdot\bE=-1$ become the following \textbf{Diophantine equations}:
    \begin{equation}\label{eqn:D}
    3d-m=\sum_im_i+1 \mbox{ and } d^2-m^2=\sum_im_i^2-1.
    \end{equation}

Here we used that the intersection pairing of the basis elements is:
\[
L\cdot L=1, \quad L\cdot E_i=E_i\cdot E_j=0, \quad E_i\cdot E_i=-1.
\]
The condition that $\bE$ is representable by a symplectically embedded sphere is,  by work of Li and Li \cite{li-li}, equivalent to the vector $(d;m,\bm)$ reducing to $(0;-1)$ under repeated ``Cremona moves'', which essentially express $\bE=(d;m,\bm)$ in a new basis. See the beginning of Section \ref{s:nosratb} for the definition of Cremona moves.

Finally, work of Li and Liu in \cite{li-liu} using Taubes' symplectic interpretation of Seiberg-Witten theory implies that a symplectic form $\omega$ on $\CP^2\#(N+1)\overline{\CP}^2$ satisfying (A) and (B) above exists in any cohomology class $\alpha$ for which
\begin{itemize}
    \item[(1)] $\alpha^2>0$, and
    \item[(2)] $\alpha(\bE)>0$ for all exceptional classes $\bE$.
\end{itemize}

So we have seen that the symplectic embedding problem of \eqref{eqn:ballembeddings} is equivalent to the existence of a symplectic form $\omega$ whose class $\alpha=[\omega]\in H^2\left(\CP^2\#(N+1)\overline{\CP}^2\right)$ satisfies (1) and (2), or equivalently (A) and (B).

Using conditions (1) and (A) we obtain the volume lower bound \eqref{eqn:volb}:
$$\left(\lambda L-\lambda b E_0-\sum_{i=1}^Nw_iE_i\right)^2>0 \iff \lambda^2>\frac{z}{1-b^2} \iff \lambda>\vol_b(z).$$

Using conditions (2) and (A), we obtain another lower bound:
\begin{equation}\label{eqn:muEb}
\left(\lambda L-\lambda b E_0-\sum_{i=1}^Nw_iE_i\right)\left(dL-mE_0-\sum_{i=1}^Nm_iE_i\right)>0 \iff \lambda>\frac{\bm\cdot\mathbf{w}(z)}{d-bm}=:\mu_{\bE,b}(z)
\end{equation}

These two bounds completely determine the ellipsoid embedding function for rational $z$ values:
\begin{equation}\label{eqn:cbsup}
c_b(z)=\sup_{\bE\text{ exceptional}}\left\{\mu_{\bE,b}(z),\vol_b(z)\right\}.
\end{equation}
The formula \eqref{eqn:cbsup} holds even when $z$ is irrational by appending infinitely many zeroes to the end of $\bm$ (the fact that the obstructions $\mu_{\bE,b}$ are continuous when larger than $\vol_b(z)$ follows from \cite[Proposition~2.30]{AADT}).

A tuple $\bE=(d;m,\bm)$ of integers is called \textbf{Diophantine} if its entries satisfy \eqref{eqn:D}, irrespective of whether or not $\bE$ can be represented by a symplectically embedded sphere. 
As seen in Hutchings in \cite[Theorem~2]{Hutch}, the supremum in \eqref{eqn:cbsup} can in fact be taken over all Diophantine classes, which greatly simplifies computations:
\[
c_b(z)=\sup_{\bE\text{ Diophantine}}\left\{\mu_{\bE,b}(z),\vol_b(z)\right\}.
\]

Next, we collect definitions and key facts about exceptional classes from \cite{AADT, ICERM, MM}.

An obstruction $\mu_{\bE,b}$ where $\bE$ is Diophantine class is called \textbf{live} at $z$ if 
\begin{equation}\label{eqn:live}c_b(z)=\mu_{\bE,b}(z)>\vol_b(z).\end{equation}
If an obstruction satisfies the inequality part of \eqref{eqn:live} at the accumulation point $\acc(b)$
\begin{equation}\label{eqn:Eblocks}
\mu_{\bE,b}(\acc(b))>\vol_b(\acc(b)),
\end{equation}
then by \cite[Theorem~1.13]{AADT} the function $c_b$ cannot have an infinite staircase. For that reason, we say that the Diophantine class $\bE$ \textbf{blocks} $b$ or is a \textbf{blocking class} for $b$.

If $\bE$ is a blocking class, then its \textbf{blocked $b$-interval} is the set of $b$-values $J_\bE\subset[0,1)$ for which \eqref{eqn:Eblocks} holds. Its \textbf{blocked $z$-interval} is the set $I_\bE=\acc(J_\bE)$. We will use the simpler phrase ``blocked interval'' when $b$ or $z$ is clear from context.

We say that $\bE=(d;m,\bm)$ is \textbf{quasi-perfect} if it is Diophantine and $\mathbf{m}=q\,\bw(p/q)$ for coprime integers $p>q$,\footnote{The vector $q\,\bw(p/q)$ is the smallest integer scaling of the weight expansion $\bw(p/q)$.} and we say that $\bE$ is \textbf{perfect} if it is also exceptional. Quasi-perfect classes turn out to be the most relevant classes in proving that an infinite staircase exists; their properties below will be used throughout the rest of the paper.

We note that when $\bE$ is quasi-perfect, by \eqref{eq:rmkweights} the Diophantine equations in \eqref{eqn:D} become
\begin{equation}\label{eqn:Ds}
    3d-m=p+q \quad \text{ and } \quad d^2-m^2=pq-1.
\end{equation}
For quasi-perfect $\bE$, we call $p/q$ its \textbf{center}. Computing $\mu_{\bE,b}$ at the center of $\bE$ is particularly simple by \eqref{eq:rmkweights}:
\[
    \mu_{\bE,b}\left(p/q\right)=\frac{q\bw(p/q)\cdot\bw(p/q)}{d-bm}=\frac{p}{d-bm}.
\]
In fact, we can compute $\mu_{\bE,b}$ on the entire $z$-interval $I$ where $\mu_{\bE,b}(z)>\vol_b(z)$: it is a linear (increasing) function of $z$ up to $p/q$ and then constant; see Lemma \ref{lem:cghmp23}, Figure \ref{fig:2}, and \cite[Lemma~16]{ICERM}.

Following \cite[(2.2.3), Corollary~2.2.2]{MM}, we can solve the system of equations in \eqref{eqn:Ds} for $d$ and $m$ in terms of $p$ and $q$ and get
\begin{equation}\label{eq:dmfrompqt}d=\tfrac18(3(p+q)+\eps t) \quad \text{and} \quad 
 m=\tfrac18((p+q)+3\eps t)
 \end{equation}
where $\eps=\pm1$ is whichever one makes the formulas above integers \cite[Lemma~2.2.1]{MM} (in our case,  if $m/d>1/3$ then $\eps=1$, and otherwise $\eps=-1$) and
we define
\begin{equation}\label{eq:deft}
t:=\sqrt{p^2+q^2-6pq+8}.
\end{equation}
 
 In this sense, $d$ and $m$ are linear functions of $p,q$, and $t$, and these determine the quasi-perfect class $\bE$. Therefore we will frequently use the notation
\begin{equation}\label{eqn:ECF}
\bE=(d,m,p,q,t)=(d,m,p,q)=\bE_{[a_0,\hdots,a_k]},
\end{equation}
where $[a_0,\hdots,a_k]=CF(p/q)$ denotes the continued fraction 
$$\frac pq=a_0+\frac{1}{a_1+\frac{1}{a_2+\ldots}}.$$

The variable $t$ is crucial to the fractal structure explained in Section \ref{ss:fractal} and the proof of Theorem \ref{thm:CFmain} in Appendix \ref{app:pf}. Another way to encode its definition \eqref{eq:deft} is by writing 
\[
\mathbf{x}=\begin{pmatrix}p\\q\\t\end{pmatrix}, \mbox{ where }\mathbf{x}^TA\mathbf{x}=8 \mbox{ for }A=\begin{pmatrix}-1&3&0\\3&-1&0\\0&0&1\end{pmatrix}.
\]

Given two quasi-perfect classes $\bE_0,\bE_1$ with $\bE_i=(d_i,m_i,p_i,q_i,t_i)$ and $\mathbf{x}_i=(p_i,q_i,t_i)$, we can take a linear combination of the form
\[
\bE_2=\nu\bE_1-\bE_0 \quad \mbox{and} \quad \mathbf{x}_2=\nu\mathbf{x}_1-\mathbf{x}_0.
\]
By \cite[Lemma~3.1.2]{MM}, if $\bE_0$ and $\bE_1$ are \textbf{$\nu$-compatible}, i.e., if $$\mathbf{x}_0^TA\mathbf{x}_1=4\nu,$$
then the resulting class $\bE_2$ is also quasi-perfect. Furthermore, $\bE_1$ and $\bE_2$ are $\nu$-compatible. Thus, by iterating this process we can produce a sequence of quasi-perfect classes from the initial {\bf seeds} $\bE_0$ and $\bE_1$.

\subsection{The structure of $Block$ and $Stair$ for $b\in((3-\sqrt{5})/2,1)$}\label{ss:fractal}

In this section, we will first review from \cite{ICERM} the exceptional classes which block the existence of an infinite staircase for the $b$-values on the red intervals in Figure \ref{fig:block}. Then we will explain the fractal structure of $Stair$ in the gaps between those intervals, following \cite{MMW}. 

The interior of each red interval in Figure \ref{fig:block} is the blocked $b$-interval $J_{\bE_{[n]}}$, with $n\geq 6$ and even,
where $\bE_{[n]}$ according to \eqref{eqn:ECF} is the perfect class\footnote{In \cite{ICERM, MM, MMW}, for $k\geq0$ the class $\bE_{[2k+6]}$ is denoted $\mathbf{B}^U_k$.} 
\[
\bE_{[n]}=\left(\frac n2,\frac n2-1,n,1,n-3\right).
\]
While this sequence of blocked $b$-intervals converges to the point $b=1$ (by \cite[Remark~2.3.6]{MM}, $(n-4)/(n-2) \in J_{\bE_{[n]}}$ for each $n$), the corresponding blocked $z$-intervals $I_{\bE_{[n]}}$ go off to infinity, with $n \in I_{\bE_{[n]}}$ because $n$ is the center of $\bE_{[n]}$. Explicit formulas for the endpoints of the intervals $J_{\bE_{[n]}}$ can be obtained by solving 
$$\mu_{\bE,b}(\acc(b))=\vol_b(\acc(b)))$$
for $b$ (see \eqref{eqn:Eblocks}) and are given in \cite[Theorem~1]{ICERM}.

\begin{landscape}

\begin{figure} 
\centering
\includegraphics[width=8.7in]{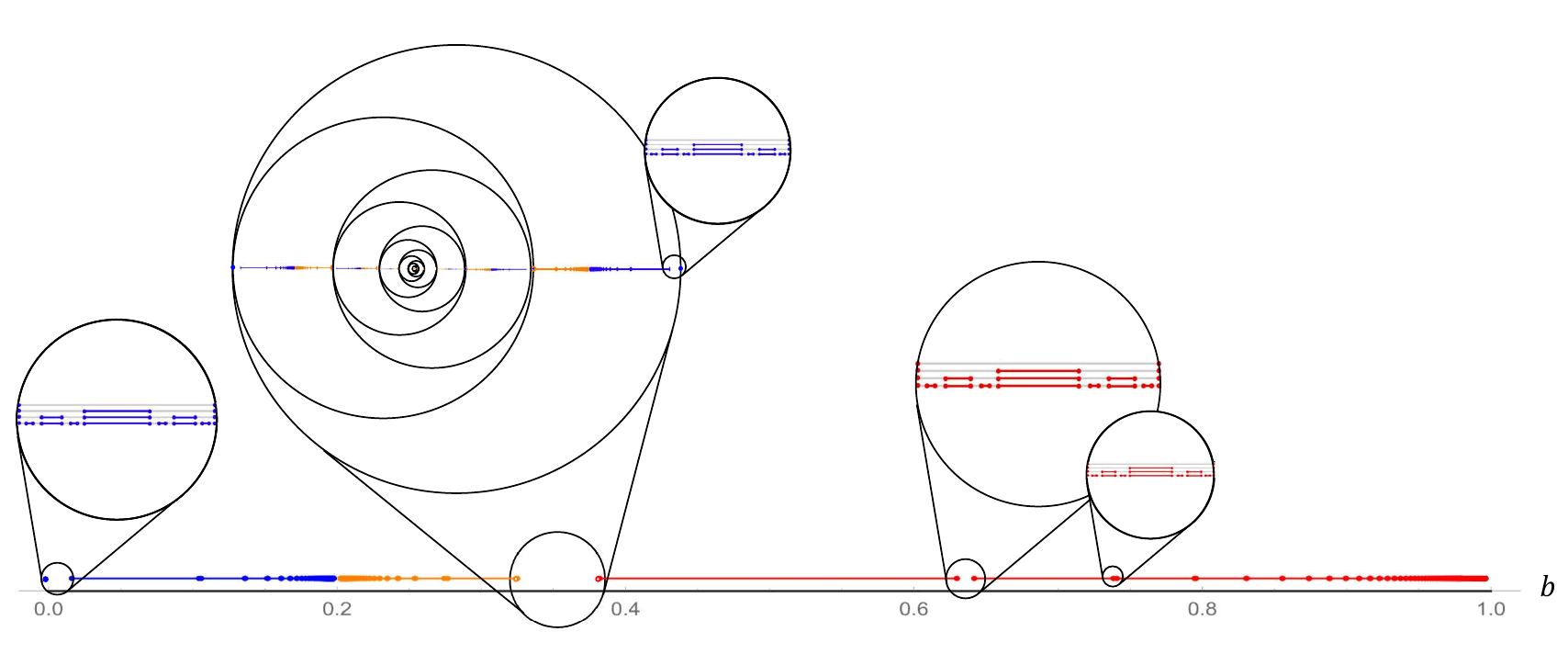} 
 \caption{This figure depicts the moduli space of one-point blowups of $\CP^2(1)$ parametrized by the size of the blowup $b\in[0,1)$. The set $Block$ of $b$-values with $O(b)>0$ consists of the union of the interiors of the colored intervals. The complement of $Block$ consists of $Stair$ and the special rational $b$-values.
 The special rational $b$-values are the common limit points of the blue and orange intervals (e.g. $b=0.2$) and they do not have infinite staircases. The set $Stair$ consists of $b$-values with principal staircases, $b$-values with non-principal staircases, and the exceptional value $b=1/3$.
 The principal staircases occur at the endpoints of the colored intervals, where one endpoint has only an ascending staircase and the other only a descending one. 
 In the figure, there are circles with Cantor sets; the non-principal staircases occur at the $b$-values in those Cantor sets that are not endpoints of colored intervals and have both an ascending and descending staircase.
 Finally, the exceptional value $b=1/3$, which has only an ascending staircase, appears as the limit of the nested circles in the figure.
 The structure of the region with red intervals is explained in Section \ref{ss:fractal} and the structure of the region with orange and blue intervals is explained in Section \ref{ss:symmetries}.}
 \label{fig:block}
\end{figure}

\end{landscape}

The endpoints of each $J_{\bE_{[n]}}$  are $b$-values for which $c_b$ has an infinite staircase. The staircases corresponding to the left endpoints of the $J_{\bE_{[n]}}$ intervals are ascending, whereas the ones corresponding to the right endpoints are descending. This family was discovered in \cite{ICERM}. To show that these are infinite staircases, for each value of $b$ an infinite sequence of perfect classes $\bE_k$ is constructed such that each obstruction $\mu_{\bE_k,b}$ is live near the center $p_k/q_k$ of the class, and furthermore the sequence of centers converges to the accumulation point:
$$\lim_{k \to \infty} \frac{p_k}{q_k}=\acc(b).$$
Each of these classes contributes a non-smooth point (and hence a step) to the graph of the function $c_b$, thus proving that it is an infinite staircase.

For $n>6$ even, the sequence of classes with seeds $\bE_{[n-2]}$ and $\bE_{[n-1,n-4]}$ (see \eqref{eqn:ECF}) proves the existence of an ascending staircase corresponding to the left endpoint of the blocked interval $J_{\bE_{[n]}}$. For the sake of brevity, we will say that the staircase has seeds $\bE_{[n-2]}$ and $\bE_{[n-1,n-4]}$.
For $n\geq6$ even, the descending staircase corresponding to the right endpoint of the blocking interval $J_{\bE_{[n]}}$ has seeds $\bE_{[n+2]}$ and $\bE_{[n+1,n-2]}$. The left endpoint of the leftmost red blocking interval $J_{\bE_{[6]}}$, which is 
$$b_{[6]}^-:=(3-\sqrt{5})/2,$$
also corresponds to an ascending infinite staircase and it will be discussed in Section \ref{ss:symmetries}.

Next, we look at the fractal structure of $Stair$ in the gaps between the red blocked intervals in Figure \ref{fig:block}. Each gap sits between a $b$-value with a descending staircase to its left (the right endpoint of $J_{\bE_{[n]}}$) and a $b$-value with an ascending staircase to its right (the left endpoint of $J_{\bE_{[n+2]}}$): we denote this gap by $G_n$. As explained above, the descending staircase has seeds $\bE_{[n+2]}$ and $\bE_{[n+1,n-2]}$, and the ascending staircase has seeds $\bE_{[n]}$ and $\bE_{[n+1,n-2]}$. These three classes
\begin{equation}\label{eq:threeclasses}
\bE_{[n]},\bE_{[n+1,n-2]},\bE_{[n+2]}
\end{equation}
will generate the whole structure of $Block$ and $Stair$ in $G_n$. To understand how, we review from \cite{MMW} the notions of adjacent classes, generating triple and mutation of a triple.

We say that two quasi-perfect classes $\bE_0,\bE_1$ with $\bE_i=(d_i,m_i,p_i,q_i,t_i)$
where $p_0/q_0<p_1/q_1$ and $\varepsilon_0=\varepsilon_1$ are \textbf{adjacent} if 
$$(p_0+q_0)(p_1+q_1)-t_0 t_1=8p_0 q_1.$$  
By \cite[Lemma~2.1.2~(iii)]{MMW}, two adjacent classes $\bE_0$ and $\bE_1$ are $\nu$-compatible if and only if $|p_1 q_0-p_0 q_1|=\nu$. 

Following {\cite[Definition~2.1.6]{MMW}}, we say that three quasi-perfect classes    \[
    \bE_\la=(d_\la, m_\la, p_\la, q_\la, t_\la),\quad \bE_\mu=(d_\mu, m_\mu, p_\mu, q_\mu, t_\mu),\quad \bE_\rho=(d_\rho, m_\rho, p_\rho, q_\rho, t_\rho)
    \]
    with $p_\la/q_\la<p_\mu/q_\mu<p_\rho/q_\rho$ and $t_\la, t_\mu, t_\rho\geq3$ form a \textbf{generating triple} $\Tt=(\bE_\la,\bE_\mu,\bE_\rho)$ if
    \begin{itemize}
        \item[{\rm (a)}] $\bE_\la$ and $\bE_\rho$ are adjacent,
        \item[{\rm (b)}] $\bE_\la$ and $\bE_\mu$ are adjacent and $t_\rho$-compatible, i.e. $t_\rho=|p_\mu q_\la-p_\la q_\mu|$,
        \item[{\rm (c)}] $\bE_\mu$ and $\bE_\rho$ are adjacent and $t_\la$-compatible, i.e. $t_\la=|p_\mu q_\rho-p_\rho q_\mu|$,
        \item[{\rm (d)}] $t_\la t_\rho-t_\mu=p_\rho q_\la-p_\la q_\rho$, and
        \item[{\rm (e)}] $\acc(m_\la/d_\la), \acc(m_\rho/d_\rho)>\acc(m_\mu/d_\mu)$.
    \end{itemize}

A mutation of a generating triple is an operation which takes a linear combination of two of the compatible classes in the original triple and uses it to create a new generating triple, as in {\cite[Proposition~2.1.9]{MMW}}:\footnote{An incipient notion of $x$-mutation appears already in \cite[Section 4.6]{usher} when he constructs the $\hat{A}$ classes from the $A$ classes.}
\begin{itemize}
\item the \textbf{$x$-mutation} of $\Tt=(\bE_\la,\bE_\mu,\bE_\rho)$ is the triple $x\Tt:=(\bE_\la,\bE_{x\mu},\bE_\mu)$, where $\bE_{x\mu}:=t_\la\bE_\mu-\bE_\rho$,
\item the \textbf{$y$-mutation} of $\Tt=(\bE_\la,\bE_\mu,\bE_\rho)$ is the triple $y\Tt:=(\bE_\mu,\bE_{y\mu},\bE_\rho)$, where  $\bE_{y\mu}:=t_\rho\bE_\mu-\bE_\la.$
\end{itemize}

\begin{figure}[htbp]\label{fig:2} 
\vspace{-1.1in}  
\hspace{1in}
\includegraphics[width=7in]{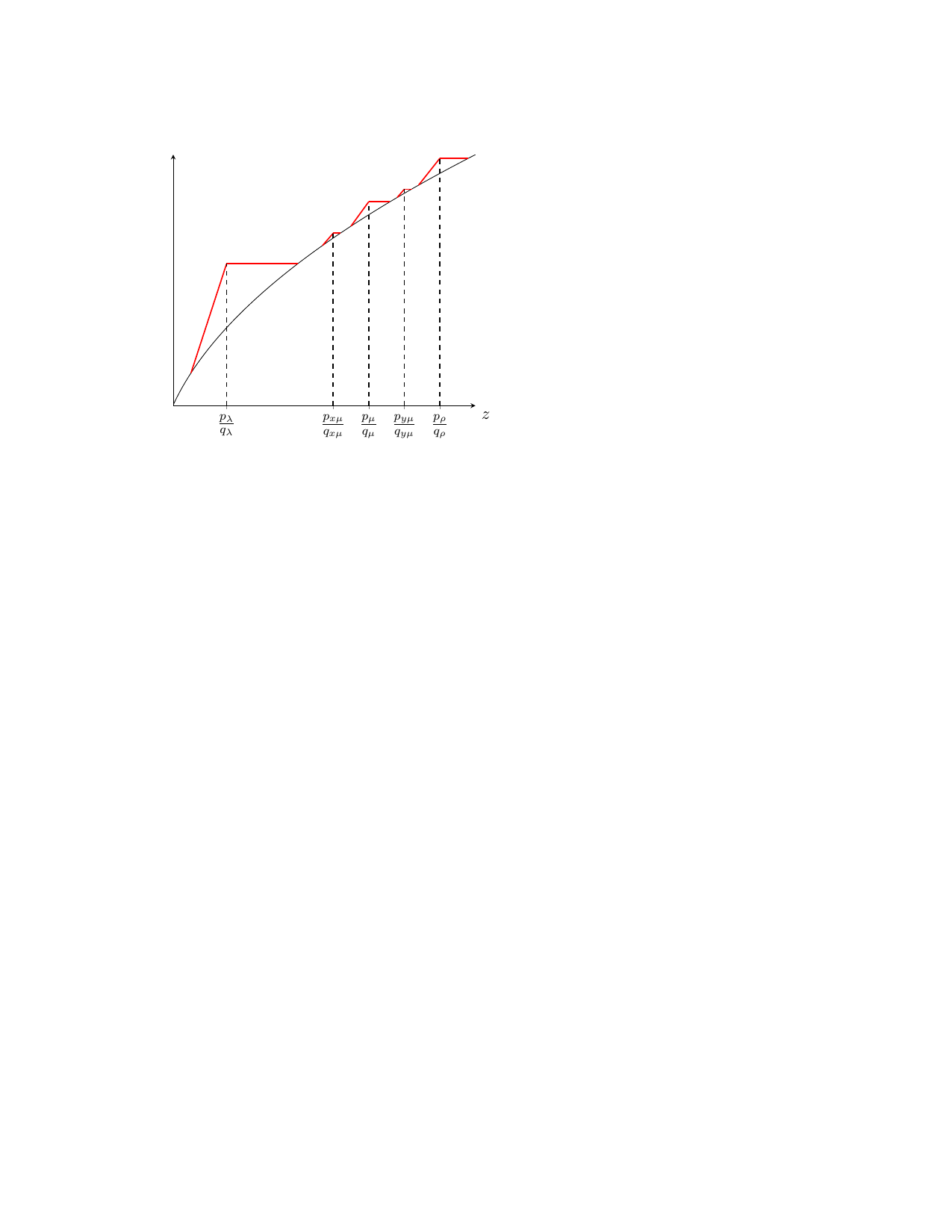} 
\vspace{-6in}
   \caption{\cite[Fig.~1.3]{MMW} This schematic figure shows the relative locations of the centers 
   of the classes obtained from a triple $\Tt=(\bE_\la,\bE_\mu,\bE_\rho)$ by $x$ and $y$ mutations along with the obstruction functions near their centers. The curve in black is the graph of the function $z\mapsto  V_b(z)$. If any of these obstructions are live for this value of $b$, then those will contribute a step towards the graph of $c_b$, possibly forming an infinite staircase (or two).}
\end{figure}

The fact that $x\Tt$ is a generating triple means in particular that the center of the new quasi-perfect class $\bE_{x\mu}$ is between the centers of the classes $\bE_\lambda$ and $\bE_\mu$, and similarly for $y\Tt$ and its classes: (see also Figure \ref{fig:2})
$$\frac{p_\lambda}{q_\lambda}<\frac{p_{x\mu}}{q_{x\mu}}<\frac{p_\mu}{q_\mu}<\frac{p_{y\mu}}{q_{y\mu}}<\frac{p_\rho}{q_\rho}.$$

A generating triple plus a sequence of $x$'s and $y$'s determines a sequence of generating triples via mutations. A sequence $(y,x,y,y,y\ldots)$, which we will usually encode in the form of a word $w=\ldots y y y x y$, when applied to the triple $\Tt=(\bE_\lambda,\bE_\mu,\bE_\rho)$ generates the triples 
\begin{equation}\label{eq:sequencewords}
w_1\Tt=y\Tt,\,\,\, w_2\Tt=xy\Tt,\,\,\, w_3\Tt=yxy\Tt, \,\,\,w_4\Tt=yyxy\Tt, \,\,\,w_5\Tt=yyyxy\Tt,\,\,\, \ldots
\end{equation}
In fact, each triple in this sequence can be uniquely represented by its middle class, since the middle class of $w_k\Tt$ is exactly $\bE_{w_k\mu}=:w_k\bE_{\mu}$, where $w_k$ is a finite word on $x$ and $y$ obtained by truncating the given sequence as in \eqref{eq:sequencewords}.

We are now ready to see how the three classes in \eqref{eq:threeclasses}  generate the structure of $Block$ and $Stair$ on the gap $G_n$ for each $n\geq6$ even.
As proved in \cite[Example~2.1.7]{MMW}, these classes form a generating triple (in \cite{MMW}, the notation used is $\Tt^{n/2}_*$):
  \[
    \Tt_n:=(\bE_{[n]},\bE_{[n+1,n-2]},\bE_{[n+2]}).
    \]
For each such $n$, we consider all the sequences of generating triples obtained by applying a sequence of $x$'s and $y$'s to the generating triple $\Tt_n$.
On the gap $G_n$, the set $Block$ consists exactly of the interiors of all intervals $J_{w\bE_{[n+1,n-2]}}$ blocked by the middle class $w\bE_{[n+1,n-2]}$ of a triple $w\Tt_n$, for some word $w$ on $x$ and $y$. Figure \ref{fig:block} shows the relative position of these blocking $b$-intervals; together they form a set that is the complement of the Cantor set (\cite[Theorem~1.1.1~(iii)]{MMW}). 

\begin{figure}[htbp]\label{fig:cantor}
\begin{center}
\vspace{0.5 cm}
\begin{overpic}[width=4in, unit=1mm]{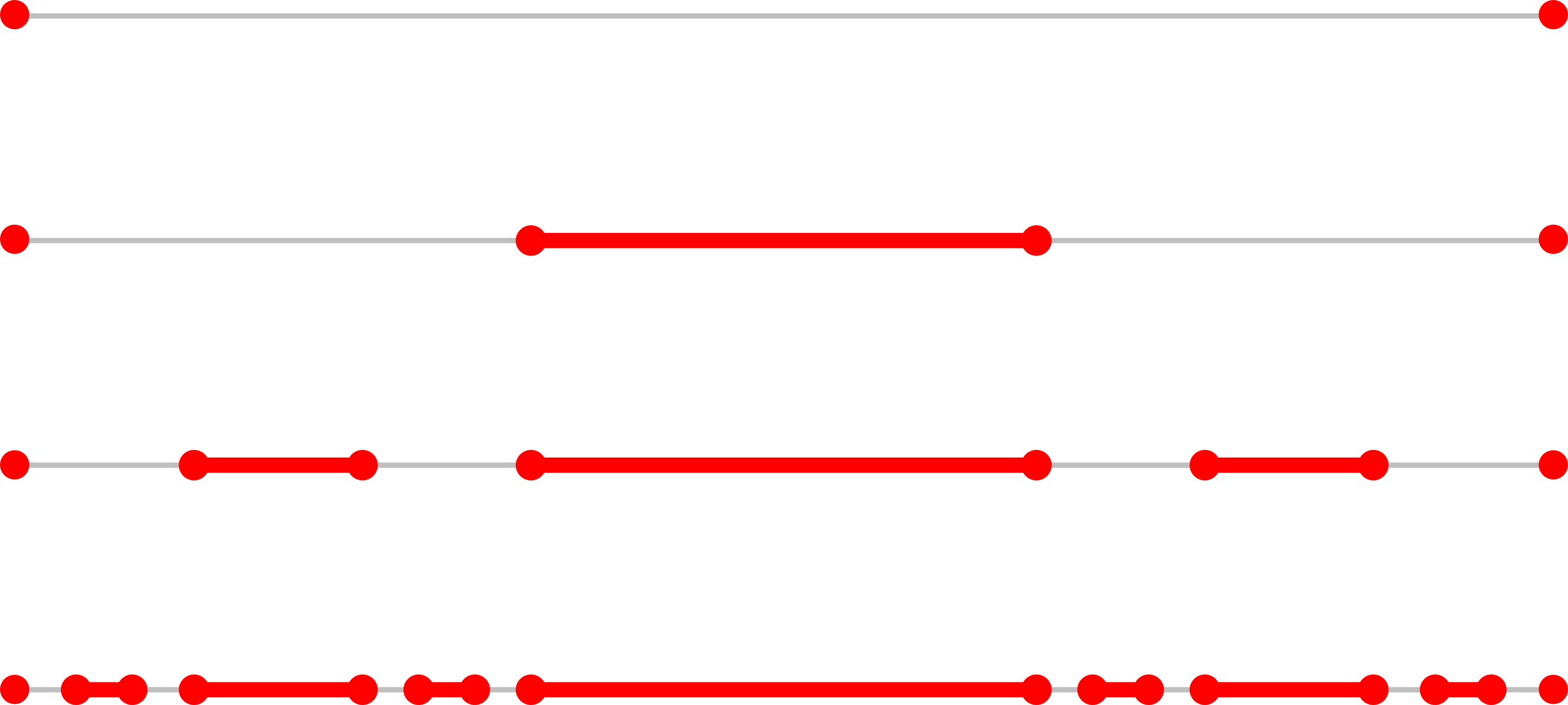}
\put(-1,48){$\bE_\la$}
\put(100,48){$\bE_\rho$}
\put(49,32){$\bE$}
\put(15.5,18){$x\bE$}
\put(81.5,18){$y\bE$}
\put(4,4){$xx\bE$}
\put(26,4){$yx\bE$}
\put(69.5,4){$xy\bE$}
\put(92,4){$yy\bE$}
\end{overpic}
\end{center}
   \caption{This figure shows the relative position of the blocking $b$-intervals $J_{w\bE}$ where $w$ is a word on $x$ and $y$, $w\bE$ is the middle class of the generating triple $w\Tt$, and $\Tt=(\bE_\la,\bE,\bE_\rho)$.}
\end{figure}

The set $Stair$ restricted to each gap $G_n$ is homeomorphic to the Cantor set (\cite[Theorem~1.1.1~(iii)]{MMW}). The (middle third) Cantor set consists of the numbers in $[0,1]$ which can be represented in base 3 using only zeros and twos. Replacing the zeros and twos by $x$'s and $y$'s respectively determines a word $w$ that can then be applied to the generating triple $\Tt_n$:
$$0.20222\ldots  \longrightarrow \ldots y y y x y =: w.$$
This gives an infinite sequence of classes of the form $w_k\bE_{[n+1,n-2]}$ (with centers $p_k/q_k$) and corresponding intervals $J_{w_k\bE_{[n+1,n-2]}}$ on $G_n$. These intervals actually converge to a single $b$-value on $G_n$ and, as shown in \cite[Section 3]{MMW}, infinitely many of the classes $w_k\bE_{[n+1,n-2]}$ are live for this $b$-value and contribute with a step to form an infinite staircase accumulating at $\acc(b)$, with 
\begin{equation} \label{eq:centerLimit} \lim_{k \to \infty} \frac{p_k}{q_k}=\acc(b).\end{equation}

There are two types of points in the Cantor set: those whose base three representation ends with either infinitely many zeros or infinitely many twos (these are endpoints of one of the removed intervals), and those whose do not.
For a $b$-value in $G_n$ that is the left endpoint of some interval $J_{w\bE_{[n+1,n-2]}}$ (for some word $w$), we form the infinite word $y^\infty x w$. The centers of the classes obtained via the final infinite string of $y$-mutations increase to $\acc(b)$, forming an ascending infinite staircase. Similarly, for a $b$-value that is the right endpoint of some interval $J_{w\bE_{[n+1,n-2]}}$, by considering the infinite word $x^\infty y w,$ we obtain a sequence that determines a descending staircase. These staircases are called \textbf{principal staircases}. 

The other $b$-values in $Stair|_{G_n}$ correspond to words $w$ which do not contain a string of infinitely many consecutive $x$'s or $y$'s. As a result, the word $w$ must contain infinitely many $x$'s and infinitely many $y$'s. Therefore, applying this word to the triple $\Tt_n$ generates a sequence of classes where infinitely many of them are obtained from the previous one by an $x$-mutation and infinitely many by a $y$-mutation. This determines two subsequences of centers converging to $\acc(b)$ (one increasing and one decreasing), forming two staircases (one ascending and one descending), both accumulating at $\acc(b)$. These are called \textbf{non-principal staircases}.

It was explained in \cite[Remark~2.1.3~(i)]{MMW} that principal staircases have quadratic irrational accumulation points and $b$-values, and proved in \cite[Theorem~1.1.1~(iv)]{MMW} that non-principal staircases have irrational accumulation points. Our Theorem \ref{cor:rpt} proves that non-principal staircases also have irrational $b$-values.

\begin{remark} \label{rmk:Tnodd} \rm
    While for our case of the Hirzebruch surface $H_b$, the relevant blocking classes have center at $p/q=n$ even, the blocking classes that show up in \cite{usher} for the polydisk case are perfect classes with center at $p/q=n$ odd. 
    
    For the polydisk we represent a quasi-perfect class in $H_2(S^2 \times S^2 \# n\oCP^2,\Z)$ by the tuple $(e,f;p,q,t)$, where $e,f$ are the coefficients of the $[S^2]$ classes and $\bw(p/q)$ determines the rest of the coefficients. By a method analogous to \eqref{eq:dmfrompqt}, $e,f$ are determined by $p,q,t$ (and there is no $\varepsilon$), see \cite[Lemma~2.2.5]{SPUR}.
 These classes are represented by 
 $$\bE_{[n]}=(\tfrac12(n-1),1,n,1,n-3).$$

For $n$ odd, in verifying the triples $\Tt_n:=(\bE_{[n]},\bE_{[n+1,n-2]},\bE_{[n+2]})$ are generating triples, 
 checking conditions (a)-(d) when $n$ is odd is identical to the case when $n$ is even, but condition (e) on the relative positions of the $\acc(m/d)$s would need to be amended to use the accumulation point function for the polydisk. \hfill$\er$
\end{remark}

\subsection{The structure of $Block$ and $Stair$ for $b\in[0,(3-\sqrt{5})/2]$}\label{ss:symmetries}

Following \cite[Section 2.2]{MM} and \cite[Section 2.3]{MMW}, in this section we will see how the structure of $Block$ and $Stair$ for $b\in[0,(3-\sqrt{5})/2]$ is obtained from that of $b\in((3-\sqrt{5})/2,1)$ under iterated symmetries. Each of the blue and orange sets in Figure \ref{fig:block} is a copy of the red set (and the Cantor sets in its gaps) which was detailed in Section \ref{ss:fractal}. We begin by introducing the relevant symmetries.

The two basic symmetries acting on the $z$-coordinate are the \textbf{shift} $S$ (orientation-preserving) and the \textbf{reflection} $R$ (orientation-reversing):
\begin{align*}
S(z):=6-\frac{1}{z}\qquad\mbox{and}\qquad R(z):=6+\frac{1}{z-6}.
\end{align*}
These symmetries on the $z$-coordinate induce corresponding symmetries on the $b$-coordinate:
\begin{align*}
\underline{S}(b)&:=\begin{cases}
\acc^{-1}_{+1}\circ S\circ\acc(b)\qquad \mbox{ for } b\in[0,1/3] \\
\acc^{-1}_{-1}\circ S\circ\acc(b)\qquad \mbox{ for } b\in[1/3,1)
\end{cases}\\
\underline{R}(b)&:=\acc^{-1}_{-1}\circ R\circ\acc(b)\qquad\,\,\,\,\mbox{ for } b\in[b_{[6]}^+,1)
\end{align*}
where $\acc^{-1}_{\varepsilon}$ with $\varepsilon=\pm1$ are the two branches that give the inverse of $\acc$. Note that if $b\geq1/3$, then $\underline{S}(b)\leq 1/3$ and vice-versa. The same happens for $\underline{R}$. The value 
$$b_{[6]}^+:=\tfrac{3}{44}(7+\sqrt{5})$$
is the right endpoint of the first red interval in Figure \ref{fig:block}. On the $b$-coordinate, the symmetry $\underline S$ is orientation-reversing while $\underline R$ is orientation-preserving; both are injective.

We further consider \textbf{symmetries} that are compositions of the form $S^i$ and $S^iR$ (for $z$-values), and $\underline{S}^i$ and $\underline{S}^i\underline{R}$ (for $b$-values), for $i\geq0$. Next, we will describe how these symmetries act on $(b_{[6]}^-,1)$ to tile the whole $[0,1)$ interval minus an infinite set of special rational values of $b$ and $1/3$.
We have 
$$\underline S\left((b_{[6]}^-,1)\right)=\left(\tfrac15,\underline{S}(b_{[6]}^-)\right),$$ where $\underline{S}(b_{[6]}^-)$ is the right endpoint of the biggest orange interval in Figure \ref{fig:block}, and
$$\underline R\left([b_{[6]}^+,1)\right)=[0,\tfrac15).$$
The image of the red set of intervals in Figure \ref{fig:block} under a symmetry of the form $\underline S^i$ is always an orange set, whereas under $\underline S^i\underline R$ it is always a blue set. Finally, apart from $b=1/3$, the gap $[\underline{S}(b_{[6]}^-),b_{[6]}^-]$ is tiled as depicted in Figure~\ref{fig:block} by 
$$\underline{S}^i\left([0,\underline{S}(b_{[6]}^-))\right), \qquad i\geq1.$$
The common limit point of the $i^\text{th}$ copy of the blue and orange intervals is exactly the
special rational $b$-value $\underline{S}^{i-1}(\tfrac15)$. 
On the other side, the $i^\text{th}$ copy of the orange set connects to the $i+2^\text{nd}$ copy of the blue set in the following way:
$$\underline S^i(b_{[6]}^-)=\underline S^{i+1} (0)=
\underline S^{i+1} \underline R(b_{[6]}^+).$$
The second blue set connects to the first red interval:
$$b_{[6]}^-=\underline S(0)=\underline S \underline R (b_{[6]}^+).$$
The limit point of the nested circles in Figure \ref{fig:block} is exactly $b=1/3$, which we ultimately prove in Theorem \ref{thm:13proved} is the only nonzero rational $b$-value for which $c_b$ has an infinite staircase.

These symmetries transport the structure of $Block$ and $Stair$ described in Section \ref{ss:fractal} for $b\in(b_{[6]}^-,1)$ to $[0,b_{[6]}^-]$, thus completing the picture for the whole parameter space $[0,1)$. This happens because the $z$-symmetries $S^i$ and $S^iR$ induce actions $(S^i)^\sharp$ and $(S^iR)^\sharp$ on quasi-perfect classes, and further on generating triples, thus taking blocking classes to blocking classes and staircase steps to staircase steps. In particular, if $c_b$ has an ascending (respectively descending) infinite staircase then each $c_{\underline S^i(b)}$ has an ascending (respectively descending) infinite staircase as well, whereas each $c_{\underline S^i\underline R(b)}$ has a descending (respectively ascending) infinite staircase. The image of a principal staircase under these symmetries is called a principal staircase, and similarly for non-principal staircases.

More specifically, $S$ and $R$ act linearly on $(p,q)$ for $z=p/q$ and the value of $t$ defined in \eqref{eq:deft} is fixed by the symmetries, which by \cite{MM} and \cite[Section 2.3]{MMW} implies that there is a well-defined action on quasi-perfect classes $\bE_{CF(p/q)}$. By \cite[Proposition~2.3.2]{MMW}, the action $(S^i)^\sharp$ (respectively, $(S^iR)^\sharp$) sends generating triples to generating triples (respectively, sends generating triples to triples which, when the first and last entry are swapped, are generating triples).
By \cite[Corollary~2.3.4]{MMW}, we know that $(S^i)^\sharp$ commutes with $x$ and $y$-mutations whereas $(S^iR)^\sharp$ switches them. By \cite[Theorem~1.1.1~(i)]{MMW}, the symmetries commute with taking the $b$-blocked and $z$-blocked intervals in the sense that the endpoints of the blocked $b$-interval $J_{(S^i)^\sharp\bE}$ are the images of the endpoints of $J_\bE$ under $\underline{S}^i$, and similarly for $(S^iR)^\sharp$ and also for the $z$-intervals $I_\bE$. By \cite[Lemma~4.1.2, 4.1.3]{MM}, if $\bE$ is perfect, then its image under any symmetry is also perfect.

\begin{remark} \rm
    Note that the special rational $z$-values $y_{i+2}/y_{i+1}$ defined in \eqref{eqn:yk} are the centers of the blocking classes $(S^i)^\sharp(\bE_{[6]})$. One of their preimages under $\acc$ is the special rational $b$-value $b_i$ while the other is blocked. \hfill$\er$
\end{remark}

\section{$b$-values for non-principal staircases are irrational}\label{s:blocked}

 The main goal of this Section is to prove Theorem \ref{cor:rpt}, which states that the $b$-values for which $c_b$ has a non-principal staircase are irrational. We do this by showing that the accumulation point for a non-principal staircase, while being irrational, is not a quadratic irrationality, that is, does not have a periodic continued fraction. By the accumulation point formula \eqref{eq:accb} this will then imply that $b$ is irrational. As explained in Section \ref{ss:fractal}, the accumulation point of one of these staircases is the limit \eqref{eq:centerLimit} of the sequence of centers of the classes $s_k\bE$, where $s_k$ are the successive truncations of an infinite word $w$ on $x$ and $y$ encoding the mutations on the triple $\Tt$ with middle class $\bE$.
For this reason, we are going to study how applying mutations to classes affects the continued fraction of their centers. In doing so, we also prove \cite[Conjecture~2.2.4]{MMW}.

\subsection{Obtaining the continued fractions of the centers of perfect blocking classes}\label{ss:numerics}

The aim of this section is to recursively generate the continued fractions of the centers of all perfect blocking classes. As stated in \cite[Corollary~4.2.3]{MMW}, all perfect blocking classes are obtained from the triples $\Tt_n$ via a sequence of mutations and symmetries. Theorem \ref{thm:CFmain} explains how performing one mutation changes the continued fraction of the center of the class. Figure \ref{fig:Fareystairs} shows the continued fractions of the centers of the first few classes obtained by performing mutations on the triple $\Tt_{6}=(\bE_{[6]},\bE_{[7,4]},\bE_{[8]})$, where we can already see some patterns emerging. One such pattern is that the length of the continued fraction of the center of the middle class of a triple is equal to the sum of the lengths of those of the left and right classes. This and some other patterns were observed and conjectured in \cite[Conjecture~2.2.4]{MMW} and are proved below as Corollary \ref{cor:MMWconj}. The example below hints at the pattern proved in Theorem \ref{thm:CFmain}.

\begin{figure}[h]
    \centering
\includegraphics[width=0.5\textwidth]{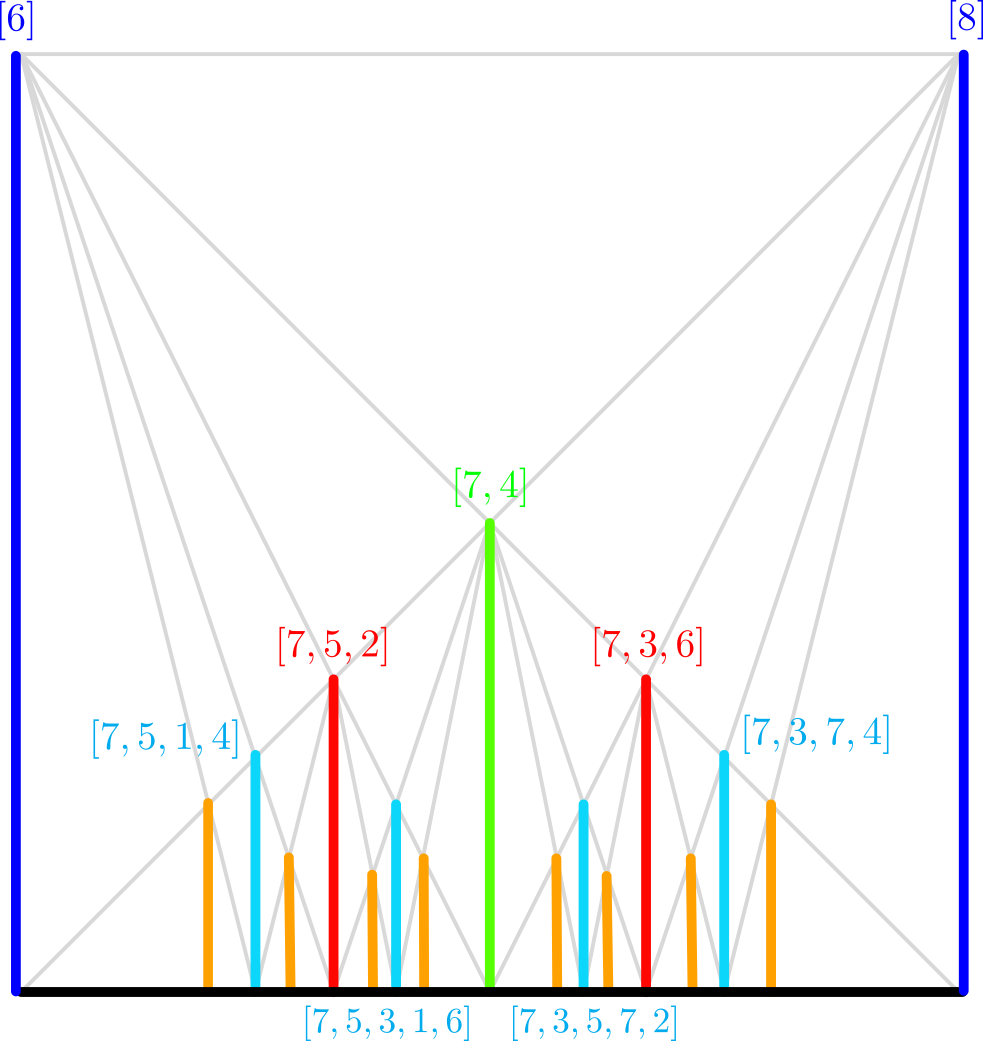}
    \caption{\cite[Figure~4.1]{MMW} This diagram represents blocking classes with centers between $6$ and $8$. It is structured like a Farey diagram to show how these classes arise via successive mutations from the triple $\Tt_{6}$. Vertical lines of the same color represent classes that arise as the middle class after the same number of mutations (except for the classes $[6]$ and $[8]$). 
    Given a triple $(\bE_\lambda,\bE_\mu,\bE_\rho)$, the height of $\bE_\mu$ is given by the intersection of the gray lines from the top of $\bE_\lambda$ to the bottom of $\bE_\rho$ and from the top of $\bE_\rho$ to the bottom of $\bE_\lambda$. In this way, each triangle with three gray sides which is not subdivided into smaller triangles represents a triple. A consequence of Corollary \ref{cor:MMWconj} will be that the height of the line corresponding to a class $\bE$ equals the reciprocal of the length of the continued fraction of its center (assuming the square has height 1).}
    \label{fig:Fareystairs}
\end{figure}

\begin{example} \rm 
When performing an $x$-mutation on the triple
\[
yxx\Tt_6=\left(\bE_{[\underbracket[0.140ex]{\scriptstyle{7,5,1,4}}]},\bE_{[7,5,1,3,5,1,6]},\bE_{[7,\overbracket[0.140ex]{\scriptstyle{5,2}}]}\right),
\]
the middle class of the new triple $xyxx\Tt_6$ has center 
    \[
\frac{t_{[7,5,1,4]}p_{[7,5,1,3,5,1,6]}-p_{[7,5,2]}}{t_{[7,5,1,4]}q_{[7,5,1,3,5,1,6]}-q_{[7,5,2]}}=\frac{89\cdot7066-79}{89\cdot985-11}=\frac{628,795}{87,654}=[\underbracket[0.140ex]{7,5,1,3},\underbracket[0.140ex]{5,1,5,7}_{\leftarrow},1,\overbracket[0.140ex]{5,2}].
    \]
\hfill$\er$
\end{example}

\begin{remark}\label{rmk:nn-1} \rm
In the statement and proof of Theorem \ref{thm:CFmain}, if we are considering a triple $(\bE_\la,\bE_\mu,\bE_\rho)$ where the class $\bE_\rho$ is centered at an integer $n+2$, we assume we are working with its two-digit continued fraction expansion $[n+1,1]$.
Furthermore, we make the simplification
\[
[a_0,\dots,a_{i-2},a_{i-1},0,a_{i+1},a_{i+2},\dots,a_m]=[a_0,\dots,a_{i-2},a_{i-1}+a_{i+1},a_{i+2},\dots,a_m].
\]
\hfill$\er$
\end{remark}

\begin{thm} \label{thm:CFmain}  
Let $n\geq6$ and $\left(\bE_\la,\bE_\mu,\bE_\rho\right)$ be a triple obtained from $\Tt_n$ via a sequence of $x$ and $y$ mutations. If the continued fraction expansions of the centers of the left and right classes are given by
\[
\frac{p_\la}{q_\la}=[l_0,\dots,l_{s}] \mbox{ and } \frac{p_\rho}{q_\rho}=[r_0,\dots,r_m],
\]
then those of the center of the middle classes of the triples $x\Tt$ and $y\Tt$ are given by the following formulas:
\begin{equation}\label{eqn:ymuCF}
\frac{p_{y\mu}}{q_{y\mu}}=[r_0\hdots,r_{m-1},r_m+(-1)^{m+1},r_m+(-1)^m,r_{m-1},\hdots,r_1,n-5,l_0,\hdots,l_{s}],
\end{equation}
and
\begin{equation}\label{eqn:xmuCF}
\frac{p_{x\mu}}{q_{x\mu}}=[l_0,\hdots,l_{s-1},l_{s}+(-1)^s,l_{s}+(-1)^{s+1},l_{s-1},\hdots,l_0,n-5,r_1,\hdots,r_m].
\end{equation}
Note that the only case when $s=0$ is that of $\bE_\lambda=[n]$, in which case we use the indexing $[n]=[l_s]$.
\end{thm}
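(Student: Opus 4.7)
The plan is to proceed by strong induction on the length of the word $w$ such that $\Tt = w\Tt_n$.

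For the base case $\Tt = \Tt_n$, we have $s = 0$, $l_0 = n$, $m = 1$, $r_0 = n+1$, $r_1 = 1$, together with $t_\la = n-3$, $t_\rho = n-1$, and $(p_\mu, q_\mu) = (n^2 - n - 1,\ n-2)$. A direct computation gives $(p_{y\mu}, q_{y\mu}) = (n^3 - 2n^2 - n + 1,\ n^2 - 3n + 1)$ and $(p_{x\mu}, q_{x\mu}) = (n^3 - 4n^2 + n + 1,\ n^2 - 5n + 5)$; running the Euclidean algorithm yields the continued fractions $[n+1,\, n-3,\, n]$ and $[n+1,\, n-1,\, n-4]$, respectively, matching \eqref{eqn:ymuCF} and \eqref{eqn:xmuCF} after the zero-simplification from Remark~\ref{rmk:nn-1}.

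For the inductive step, the main technical device is the matrix representation of continued fractions: for $p/q = [a_0, \ldots, a_N]$, the vector $(p, q)^T$ is the first column of
\[
M(a_0, \ldots, a_N) := \prod_{i=0}^N \begin{pmatrix} a_i & 1 \\ 1 & 0 \end{pmatrix}.
\]
Two properties are essential: concatenation of CFs corresponds to matrix multiplication, $M([\mathbf{a}, \mathbf{b}]) = M(\mathbf{a})\, M(\mathbf{b})$; and reversal corresponds to transposition, $M(a_N, \ldots, a_0) = M(a_0, \ldots, a_N)^T$, since each basic factor is symmetric. I would then factor the matrix predicted by the right-hand side of \eqref{eqn:ymuCF} as
\[
A \cdot B^T \cdot J \cdot M_\la,
\]
where $A := M(r_0, \ldots, r_{m-1}, r_m + (-1)^{m+1})$, $B := M(r_1, \ldots, r_{m-1}, r_m + (-1)^m)$, $J := \begin{pmatrix} n-5 & 1 \\ 1 & 0 \end{pmatrix}$ is the bridge, and $M_\la$ is the CF matrix of $\bE_\la$. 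Exploratory computation, using $\det M(r_0, \ldots, r_{m-1}) = (-1)^m$ (and hence the identity $p_\rho v - u q_\rho = (-1)^{m+1}$ for the previous convergent $u/v$ of $p_\rho/q_\rho$), reveals that $A \cdot B^T$ has first column $(p_\rho q_\rho - 1,\ q_\rho^2)^T$. Combining this with the triple's $t$-compatibility identity $t_\rho = p_\mu q_\la - p_\la q_\mu$ and the Diophantine relations \eqref{eqn:Ds}, I would verify that the first column of $A \cdot B^T \cdot J \cdot M_\la$ equals $t_\rho(p_\mu, q_\mu)^T - (p_\la, q_\la)^T = (p_{y\mu}, q_{y\mu})^T$, as required.

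The main obstacle is the careful bookkeeping of signs and parities: the pivot modifications $(-1)^{m+1}$ and $(-1)^m$ alternate with the parity of $m$, and one must verify that the resulting expression is a genuine continued fraction with positive integer entries (possibly after invoking the zero-simplification of Remark~\ref{rmk:nn-1}). Additionally, iterating mutations requires tracking how the CF of $\bE_\mu$ itself evolves, so that the compatibility and adjacency identities of the triple feed correctly into the next mutation step. The $x$-mutation case is then obtained by a symmetric argument, with the roles of $\bE_\la$ and $\bE_\rho$ interchanged and sign conventions adjusted accordingly.
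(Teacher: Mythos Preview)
Your approach is essentially the paper's: both encode the mutation as a $2\times 2$ matrix acting on $(p_\la,q_\la)^T$, factor that matrix through the continued-fraction product, and compute the palindromic block explicitly (your $A\cdot B^T$ is the paper's matrix $M_+$ up to convention; your $A\cdot B^T\cdot J$ is the paper's closed-form recursion matrix $M_\rho$). The paper organizes the argument in the reverse direction---it first writes down $M_\rho$ in closed form and proves $M_\rho(p_\la,q_\la)^T=(p_{y\mu},q_{y\mu})^T$ (Lemma~\ref{lem:recurMain}), then separately computes the continued fractions of the column ratios of $M_\rho$ (Lemma~\ref{lem:yMutMatrix})---but the content is the same.

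Two points to flag. First, the algebraic verification step uses more than the bare $t$-compatibility $t_\rho=p_\mu q_\la-p_\la q_\mu$ and the Diophantine relations \eqref{eqn:Ds}: the paper invokes the adjacency-type identities of \cite[Lemma~4.6]{M} (e.g.\ $t_\mu=p_\la(p_\rho-6q_\rho)+q_\la q_\rho$), and you will need the full $A\cdot B^T$, not just its first column, to carry this through. Second, the paper treats the boundary cases $\bE_\rho=\bE_{[n+2]}$ and $\bE_\la=\bE_{[n]}$ by separate direct computation rather than relying purely on the zero-simplification of Remark~\ref{rmk:nn-1}. Your inductive framing on word length is not actually needed for the main formula (which holds for any generating triple obtained from $\Tt_n$), though a mild induction is precisely what establishes $r_0=n+1$ (the paper's Claim~A), making your bridge entry $n-5$ equal to $r_0-6$.
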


The proof of this theorem is given in Appendix \ref{app:pf}, but the key tool is writing the action of the mutation on the centers of the classes in the triple in matrix form: in particular, $M_\lambda \begin{pmatrix}p_\rho\\ q_\rho\end{pmatrix}=\begin{pmatrix}p_{x\mu}\\ q_{x\mu}\end{pmatrix}$ and $M_\rho \begin{pmatrix}p_\la \\ q_\la\end{pmatrix}=\begin{pmatrix}p_{y\mu}\\ q_{y\mu}\end{pmatrix}$, where
\[
M_\la:= \begin{pmatrix} p_\la^2 & p_\la q_\la-6p_\la^2+1 \\ p_\la q_\la-1 & q_\la^2-6p_\la q_\la +6 \end{pmatrix} \quad \text{and} \quad M_\rho:=\begin{pmatrix} p_{\rho}^2-6p_\rho q_\rho+6 & p_\rho q_\rho-1 \\ p_\rho q_\rho-6 q_\rho^2+1 & q_\rho^2 \end{pmatrix}.
\]
Recall that an $x$-mutation leaves the left class $\bE_\la$ of the triple unchanged, hence performing repeated $x$-mutations will correspond to applying powers of the matrix $M_\la$ to the centers of $\bE_\mu$ and $\bE_\rho$. Similarly, powers of $M_\rho$ give the $y$-mutations.

We begin by proving a quick consequence of Theorem \ref{thm:CFmain}, which we will use to prove Conjecture 2.2.4 in \cite{MMW}. Corollary \ref{cor:a_0=n+1} also follows from the fact that $\bE_{[n]}$ blocks a staircase accumulating to $n+1$: see the last formula in the statement of \cite[Thm.~1]{ICERM}.

\begin{corollary}\label{cor:a_0=n+1}
    If $\bE$ is the middle class of a triple obtained from $\Tt_n:=\left(\bE_{[n]},\bE_{[n+1,n-2]},\bE_{[n+2]}\right)$ via a sequence of mutations, then the first entry of the continued fraction of its center is $n+1$.
\end{corollary}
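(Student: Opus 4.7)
The plan is to prove this by induction on the number of mutations applied to $\Tt_n$, strengthening the inductive hypothesis to also track the first continued fraction entries of the left and right classes of the triple, not only of the middle class. Specifically, I would establish the invariant: for any triple $(\bE_\la, \bE_\mu, \bE_\rho)$ obtained from $\Tt_n = (\bE_{[n]}, \bE_{[n+1,n-2]}, \bE_{[n+2]})$ by a sequence of $x$- and $y$-mutations, the first entries of the continued fractions of the centers of $\bE_\mu$ and $\bE_\rho$ are both $n+1$ (using the two-digit expansion $[n+1,1]$ for $\bE_{[n+2]}$ as in Remark~\ref{rmk:nn-1}), and the first entry of the continued fraction of the center of $\bE_\la$ is also $n+1$ except in the unique exceptional case $\bE_\la = \bE_{[n]}$, where it equals $n$.

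The base case is immediate: in $\Tt_n$ itself, the middle class has continued fraction $[n+1,n-2]$, the right class has continued fraction $[n+1,1]$ under the convention, and the left class is $\bE_{[n]}$. For the inductive step I would apply Theorem~\ref{thm:CFmain}. For an $x$-mutation, the formula \eqref{eqn:xmuCF} shows that the first continued fraction entry of the new middle class $\bE_{x\mu}$ is $l_0$ when $s>0$ and $l_0 + (-1)^0 = l_0+1$ when $s=0$; since $s=0$ occurs exactly when $\bE_\la = \bE_{[n]}$ (in which case $l_0 = n$ and $l_0+1 = n+1$), and otherwise $l_0 = n+1$ by the induction hypothesis, in either case the first entry is $n+1$. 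The new right class is the old middle class (first continued fraction entry $n+1$) and the new left class is unchanged, so the invariant persists. For a $y$-mutation, the two-digit convention guarantees $m \geq 1$ always, so the formula \eqref{eqn:ymuCF} yields that the first entry of the continued fraction of $\bE_{y\mu}$ is $r_0 = n+1$ by hypothesis; the new left class is the old middle class, whose continued fraction has length at least $2$ and hence is not $\bE_{[n]}$, and whose first entry is $n+1$, while the right class is unchanged.

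The argument reduces to careful bookkeeping once Theorem~\ref{thm:CFmain} is in hand, so no serious obstacle remains. The only delicate point is the exceptional case $\bE_\la = \bE_{[n]}$ where the invariant permits $l_0 = n$ rather than $n+1$; fortunately, the formula in Theorem~\ref{thm:CFmain} for $x$-mutations applied when $s=0$ contributes the extra $+1$ via the term $l_s + (-1)^s$, which exactly compensates and ensures the new middle class still begins with $n+1$. This also explains structurally why the integer $n+1$ appears uniformly as the first digit, rather than being an accident of the numerical values in $\Tt_n$.
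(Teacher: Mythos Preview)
Your proposal is correct and takes essentially the same approach as the paper: the paper's proof simply points to ``Claim~A'' inside the proof of Theorem~\ref{thm:CFmain} (in Appendix~\ref{app:pf}), which is exactly the induction you describe, tracking the first continued fraction entries of $\bE_\la$, $\bE_\mu$, $\bE_\rho$ through mutations with the single exceptional case $\bE_\la=\bE_{[n]}$. Your write-up is in fact more explicit about how the $s=0$ case of \eqref{eqn:xmuCF} supplies the extra $+1$ via $l_s+(-1)^s$, which the paper handles instead through a direct matrix computation in the appendix.
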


\begin{proof} 
This corollary was proved as Claim A in the proof of Theorem~\ref{thm:CFmain}, but we chose to highlight it separately. 
\end{proof}

Conjecture 2.2.4 in \cite{MMW} is about the length of the continued fractions of the centers of the classes obtained via mutations from $\Tt_6$ (but holds further for all $\Tt_n$ with $n\geq 6$), as suggested by Figure \ref{fig:Fareystairs}.\footnote{\cite[Conjecture~2.2.4]{MMW} is stated in terms of ``$\mathcal{CS}$-length,'' whose defining property is the recursion proved in Corollary \ref{cor:MMWconj}. Thus we do not need to use the term ``$\mathcal{CS}$-length'' here.} The \textbf{CF-length $\ell_{CF}$} of a class is the number of entries in the continued fraction of its center:
    \[
\ell_{CF}(\bE_{[a_0,\dots,a_m]})=m+1.
    \]
Note that when computing $\ell_{CF}$ we use the shorter possible continued fraction, i.e. we assume $a_m\neq1$.

As proven in \cite{ICERM}, the steps of the staircase accumulating to the left and right endpoints of $J_{\bE_{[n]}}$ are given by $y^k\bE_{[n-1,n-4]}$ and $x^k\bE_{[n+1,n-2]}$, respectively, and have continued fractions of the form
\[ y^k\bE_{[n-1,n-4]}=[\{n-1,n-5\}^{\lceil k/2 \rceil},end_k] \quad \text{and} \quad x^k\bE_{[n+1,n-2]}=[n+1,\{n-1,n-5\}^{\lfloor k/2 \rfloor},end_k]\] 
 where $end_k$ depends on the parity of $k$. We note that the periodic part of each of these continued fractions has length $2=2\ell_{CF}(\bE_{[n]}).$ Below, we show a similar result holds for $J_{\bE}$ where $\bE$ is obtained by mutation from $\bE_{[n]}.$

\begin{corollary}[{\cite[Conjecture~2.2.4]{MMW}}]\label{cor:MMWconj}
Given a triple $(\bE_\lambda,\bE_\mu,\bE_\rho)$ obtained via mutations from $\Tt_n$ with $n \geq 6,$ the following hold:
    \begin{itemize}
        \item[{\rm (i)}] We have
        \[\ell_{CF}\left(\bE_\mu\right)=\ell_{CF}\left(\bE_\la\right)+\ell_{CF}\left(\bE_\rho\right).\]

          \item[{\rm (ii)}] The steps of the staircase accumulating at the left endpoint of $J_{\bE_\mu}$ given by the centers of $y^kx\bE_\mu$ have continued fractions of the form
          $$[\text{start}_k,\{\text{period}\}^{\lceil{\frac k2}\rceil},\text{end}_k],$$
          where the ``period'' part has length equal to $2\ell_{CF}\left(\bE_\mu\right)$ and ``$\text{start}_k$'' and ``$\text{end}_k$'' depend only on the parity of $k$. A similar result holds for the steps of the staircase at the right endpoint of $J_{\bE_\mu}$, with centers $x^ky\bE_\mu$, where if $\bE_\rho=\bE_{[n+2]}$, then the exponent on $\{period\}$ is $\lfloor k/2 \rfloor.$
          
          \item[{\rm (iii)}]Moreover, the ``periods'' corresponding to the staircases at the left and right endpoints of $J_{\bE_\mu}$ have the same digits but in reverse (cyclic) order.
    \end{itemize}
\end{corollary}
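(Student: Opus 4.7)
My plan is to prove all three parts simultaneously by induction on the depth of mutation from the seed triple $\Tt_n$, with Theorem \ref{thm:CFmain} as the engine driving each inductive step.

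For part (i), the base case is immediate since the three classes in $\Tt_n$ have CF lengths $1, 2, 1$. For the inductive step, I would count the digits on the right-hand sides of \eqref{eqn:xmuCF} and \eqref{eqn:ymuCF}: writing $\ell_{CF}(\bE_\la) = s+1$ and $\ell_{CF}(\bE_\rho) = m+1$, the raw formulas contribute $2s + m + 3$ and $2m + s + 3$ entries respectively, which after invoking the inductive hypothesis $\ell_{CF}(\bE_\mu) = \ell_{CF}(\bE_\la) + \ell_{CF}(\bE_\rho)$ match precisely the target lengths $\ell_{CF}(\bE_\la) + \ell_{CF}(\bE_\mu)$ and $\ell_{CF}(\bE_\mu) + \ell_{CF}(\bE_\rho)$ for the new triples. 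The main subtlety lies in the conventions of Remark \ref{rmk:nn-1}: the middle entries $l_s \pm 1$ or $r_m \pm 1$ can equal zero, and the last entry of the formula can equal one. I expect these collapses to arise only when $\bE_\rho = \bE_{[n+2]}$, whose two-digit form $[n+1,1]$ makes $r_m = 1$, and to be compensated by the fact that the two-digit form also inflates the raw count by exactly the right amount.

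For part (ii), the structural observation is that $y$-mutation preserves the right class of a triple and $x$-mutation preserves the left class. Consequently, iterating $y$-mutations on $x\Tt = (\bE_\la, \bE_{x\mu}, \bE_\mu)$ keeps $\bE_\mu$ fixed as the right class forever, and repeatedly applying \eqref{eqn:ymuCF} prepends at each step a fresh copy of the block
\[
B_\mu := [\mu_0, \ldots, \mu_{t-1}, \mu_t + (-1)^{t+1}, \mu_t + (-1)^t, \mu_{t-1}, \ldots, \mu_1, n-5],
\]
where $[\mu_0, \ldots, \mu_t]$ is the CF of $\bE_\mu$, and by part (i) this block has length $2\ell_{CF}(\bE_\mu)$. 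The tail after the blocks alternates with the parity of $k$ between the CF of $\bE_\la$ and the CF of $\bE_{x\mu}$, because the formula feeds the current triple's left class back in. The parallel analysis using \eqref{eqn:xmuCF} on the triple $y\Tt$ gives the right-endpoint staircase. The special case $\bE_\rho = \bE_{[n+2]}$, in which the exponent drops to $\lfloor k/2 \rfloor$, is attributable to the trailing $1$ in $[n+1, 1]$ producing one extra collapse at the very end of each CF.

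For part (iii), comparing the periods produced in (ii) for the two endpoints, both are palindromic sequences built from $\bE_\mu$'s CF, ending with the separator $n-5$, and having the same middle pair $\mu_t \pm 1, \mu_t \mp 1$. They contain the same digits but are read in opposite directions up to a cyclic rotation determined by where one chooses to begin. The main obstacle throughout is the careful bookkeeping of the collapses in Remark \ref{rmk:nn-1}, particularly at the seams between successive palindromic blocks in (ii); these seams are ultimately safe because every block begins and ends with $n+1 \geq 7$ by Corollary \ref{cor:a_0=n+1}, so no zero entries appear there, but verifying this cleanly for every mutation history requires a methodical case analysis by parity and depth.
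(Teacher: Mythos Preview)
Your approach is essentially the same as the paper's: part (i) by induction counting digits in \eqref{eqn:ymuCF} and \eqref{eqn:xmuCF}, part (ii) via the two-step recursion arising from $\bE_\mu$ being the fixed outer class of every $y^kx\Tt$ (resp.\ $x^ky\Tt$), and part (iii) by direct comparison of the two periodic blocks. The paper handles the Remark~\ref{rmk:nn-1} collapses more tersely than you anticipate, since in short-form continued fractions the last digit is never $1$ except in the isolated case $\bE_\rho=\bE_{[n+2]}$, which is treated separately.
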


\begin{proof}
   
   For (i), the case for $\Tt_n=(\bE_{[n]},\bE_{[n+1,n-2]},\bE_{[n+2]})$ is clear. Hence, it suffices to show that if (i) holds for $\Tt,$ then it also holds for $x\Tt$ and $y\Tt.$  The statement holds for $y\Tt=(\bE_\mu,\bE_{y\mu},\bE_\rho)$ as 
   \[\ell_{CF}(\bE_{y\mu})=2\ell_{CF}(\bE_\rho)+\ell_{CF}(\bE_\la)=\ell_{CF}(\bE_\rho)+\ell_{CF}(\bE_\mu)\] where the first equality holds by \eqref{eqn:ymuCF} and the second equality holds by assumption that (i) holds for $\Tt.$ The proof for the $x$-mutation is analogous using \eqref{eqn:xmuCF}.
  
    For the proof of (ii), let $\bE_\mu=[a_0,\hdots,a_m].$
    We now show that the conclusion for the left endpoint of $J_{\bE_\mu}$ in (ii) follows from \eqref{eqn:ymuCF}. First, consider the case $k=1,$ where \eqref{eqn:ymuCF} implies that the continued fraction of $p_{yx\mu}/q_{yx\mu}$ is determined from the triple $(\bE_\la,\bE_{x\mu},\bE_\mu)$, so differs from that of $p_\la/q_\la$ by appending the $2\ell_{CF}\left(\bE_\mu\right)$ digits $a_0,\dots,a_{m-1},a_m+(-1)^{m+1},a_m+(-1)^m,a_{m-1},\dots,a_1,n-5$ to the beginning. The continued fraction of $p_{y^kx\mu}/q_{y^kx\mu}$ will be determined from the triple $(\bE_{y^{k-2}x\mu},\bE_{y^{k-1}x\mu},\bE_\mu)$. Thus, $p_{y^kx\mu}/q_{y^kx\mu}$ differs from that of $p_{y^{k-2}x\mu}/q_{y^{k-2}x\mu}$ 
    by appending the same $2\ell_{CF}\left(\bE_\mu\right)$ digits to the beginning coming from $\bE_\mu.$

    The conclusion for the right endpoint is similar following from \eqref{eqn:xmuCF} as here the continued fraction of $p_{x^ky\mu}/q_{x^ky\mu}$ differs from that of $p_{x^{k-2}y\mu}/q_{x^{k-2}y\mu}$ by inserting the $2\ell_{CF}\left(\bE_\mu\right)$ digits 
    $$a_1,\dots,a_{m-1},a_{m}+(-1)^m,a_{m}+(-1)^{m+1},a_{m-1},\dots,a_0,n-5$$
    after the initial $n+1$ (see Corollary \ref{cor:a_0=n+1}). Note, if $\bE_\rho = \bE_{[n+2]},$ the continued fraction of $p_{xy\mu}/q_{xy\mu}$ and $\bE_\rho$ do not differ by these digits, and as such we have the exponent $\lfloor k/2 \rfloor$ on $\{period\}$.

    To prove (iii), and using the notation from the proof of (ii) above, we need to show that (up to cyclic reordering),
    \begin{multline*}
        (a_0,\dots,a_{m-1},a_m+(-1)^{m+1},a_m+(-1)^m,a_{m-1},\dots,a_1,n-5)\\=(n-5,a_0,\dots,a_{m-1},a_m+(-1)^{m+1},a_m+(-1)^m,a_{m-1},\dots,a_1),
    \end{multline*}
    where these are the two periodic parts we saw appearing above, with the second one having its order reversed. This is immediate; simply move the $n-5$.
\end{proof}

Theorem \ref{thm:CFmain} applies to triples who have $\bE_\la$ with centers $z\geq 6$. To understand the continued fractions of the centers of the rest of the blocking classes, we recall from \cite[Lemma~2.1.5]{MM} the effect of the symmetries $S$ and $R$ on the continued fraction of $z$. When $z=[5+k,CF(\tfrac{1}{z-5-k})]$ for some $k\geq 0$, we have
\begin{equation} \label{eq:SCF}
    S(z)=\left[5,1,4+k,CF\left(\frac{1}{z-5-k}\right)\right]. \end{equation}
When $z=[6+k,CF(\frac{1}{z-6-k})]$ for some $k\geq 1$, we have
\begin{equation} \label{eq:RCF}
    R(z)=\left[6,k,CF\left(\frac{1}{z-6-k}\right)\right]. \end{equation}

Finally, we note that Theorem \ref{thm:CFmain} allows us to prove Corollary \ref{cor:+n-6}, which gives an explicit relationship between the relevant exceptional classes for the polydisk and the Hirzebruch surface $H_b$. 

\begin{corollary}\label{cor:+n-6}
    If $w$ is a finite word of $x$- and $y$- mutations, then the continued fractions of the centers of $w\Tt_n$ are given by adding $n-6$ entry by entry to the centers of $w\Tt_{[6]}.$
\end{corollary}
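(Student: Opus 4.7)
My plan is to prove Corollary \ref{cor:+n-6} by induction on the length of the word $w$, applying Theorem \ref{thm:CFmain} in the inductive step. Throughout the argument I use the one-digit continued fraction $[n+2]$ for $\bE_{[n+2]}$ when comparing CFs entry by entry between $w\Tt_n$ and $w\Tt_6$, but convert to the two-digit $[n+1,1]$ of Remark \ref{rmk:nn-1} whenever Theorem \ref{thm:CFmain} is invoked with $\bE_\rho = \bE_{[n+2]}$.

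The base case $w$ empty is immediate: the centers of the three classes of $\Tt_n$ have CFs $[n], [n+1, n-2], [n+2]$, which differ entry by entry by $n-6$ from the CFs $[6], [7, 4], [8]$ of $\Tt_6$.

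For the inductive step, suppose the claim holds for $w\Tt_n = (\bE_\la, \bE_\mu, \bE_\rho)$. The mutated triple $(xw)\Tt_n = (\bE_\la, \bE_{x\mu}, \bE_\mu)$ inherits its left and new right classes from $w\Tt_n$, so the claim for these follows from the inductive hypothesis. For the new middle $\bE_{x\mu}$, I apply formula \eqref{eqn:xmuCF} with $l_0,\dots,l_s$ the CF entries of $\bE_\la$ and $r_1,\dots,r_m$ those of $\bE_\rho$. Each entry of the output is either some $l_i$, some $r_j$, one of the modified entries $l_s + (-1)^s$ or $l_s + (-1)^{s+1}$, or the constant $n-5$; by the inductive hypothesis each of these differs by $n-6$ between $\Tt_n$ and $\Tt_6$, with $n-5$ corresponding to $1 = 6-5$ in the $\Tt_6$ formula. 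The $y$-mutation case is treated symmetrically via \eqref{eqn:ymuCF}.

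The main obstacle is handling the two-digit convention of Remark \ref{rmk:nn-1} when $\bE_\rho = \bE_{[n+2]}$, which happens precisely when $w$ is a power of $y$, since any $x$-mutation replaces the right class by the old middle and thus permanently removes $\bE_{[n+2]}$ from that position. In this case the theorem's formula uses $r = [n+1, 1]$ for $\Tt_n$ versus $[7, 1]$ for $\Tt_6$; the final entry $r_1 = 1$ coincides, while $r_0$ differs by $n-6$. The $y$-mutation formula then produces a $0$ entry (from $r_m + (-1)^m$ with $r_m = 1$), while the $x$-mutation formula produces a trailing $1$. In each subcase I will verify that the simplification merges the $n-5$ entry with an adjacent fixed entry into a single combined entry ($n-3$ or $n-4$ for $\Tt_n$; $3$ or $2$ for $\Tt_6$), which still differs by exactly $n-6$; all remaining entries differ by $n-6$ by the inductive hypothesis, since the trailing $1$ of the two-digit CF is common to both cases. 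Once this simplification is tracked carefully, the induction closes.
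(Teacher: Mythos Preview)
Your proposal is correct and follows essentially the same approach as the paper: induction on the length of $w$, with the base case immediate from the definition of $\Tt_n$ and the inductive step via Theorem~\ref{thm:CFmain}. The paper's proof is a two-line sketch (``the statement is immediate for the empty word; for all other words, the statement follows by Theorem~\ref{thm:CFmain}''), while you have expanded out the details, including the careful handling of the two-digit convention $[n+1,1]$ for $\bE_{[n+2]}$ from Remark~\ref{rmk:nn-1}; this extra care is not needed for the argument but does no harm.
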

\begin{proof}
    As $\Tt_{[n]}$ has centers $([n],[n+1,n-3],[n+2]),$ the statement is immediate for the empty word. For all other words,  the statement follows by Theorem~\ref{thm:CFmain}.
\end{proof}

\begin{remark}\label{rmk:polydisk} \rm 
As seen in Section~\ref{ss:fractal}, the blocking classes for the Hirzebruch surface $H_b$ are given by $w\Tt_n$ and its symmetries for $n$ even and $w$ a finite word of $x$- and $y$-mutations. By Remark \ref{rmk:Tnodd}, the blocking classes and generating triples relevant from the work of \cite{usher} are $\Tt_n$ and its shifts for $n$ odd. The paper \cite{SPUR} showed that $y\Tt_7$ has an ascending staircase associated to it, and the results of this paper we expect to be generalizable to $w\Tt_n$ for $n$ odd and $w$ a finite word of $x$- and $y$-mutations. 

As such, for $n$ odd we expect $w\Tt_n$ and their symmetries to be the blocking classes for the polydisk. Theroem~\ref{thm:CFmain} applies to $n$ of either parity. In particular, Corollary \ref{cor:+n-6} and Remark \ref{rmk:Tnodd} imply that if \[[a_0,\dots,a_m]\] is a perfect class for the $H_b$ with center $>7$ then \[[a_0+1,\dots,a_m+1]\] is a quasi-perfect class for the polydisk with center $>8$. Note that we have not yet proved that they are perfect, nor that they are all the perfect classes for the polydisk with center $>8$.

We now explain this observation in terms of where we expect staircases to occur for the polydisk. Let $\acc_P$ be the 1-1 accumulation point function for the polydisk defined in \cite[(1.14)]{AADT}, and $CF^+$ be the operation which sends a number with continued fraction $[a_0,\dots,a_n]$ to $[a_0+1,\dots,a_n+1]$. Then, when $b\geq(3-\sqrt{5})/2$, given a staircase in $c_b$, we expect there to be an infinite staircase for $P(1,\beta)$, where
\begin{equation}\label{eqn:CF-}
\beta=\acc_P^{-1}\circ CF^+\circ\acc.
\end{equation}
 (When $b<(3-\sqrt{5})/2$ the relationship is more complicated due to the fact that the operation $CF^+$ and the symmetries do not commute.) Note, that Corollary~\ref{cor:MMWconj}~(i) and all of the results in Section~\ref{ss:afternumerics} are stated for $n$ of either parity besides the proof of Theorem~\ref{cor:rpt}. 
 \hfill$\er$
\end{remark}

\subsection{Proof of Theorem \ref{cor:rpt}} \label{ss:afternumerics}

Here, we complete the proof of Theorem \ref{cor:rpt}, which states non-principal staircases have irrational $b$-values. To do this, we want to consider the set of all classes that are obtained from the ones in the generating triples $\Tt_n$ via any sequence of mutations and symmetries: let $\Cc:=\Cc_{odd} \sqcup \Cc_{even}$, where 
$$\Cc_{odd}:=\left\{ (S^iR^\delta)^\sharp(\bE) \ \middle\vert \begin{array}{l} \text{$i \geq 0$ and $\de=0$ or $1$,}\\ \text{$\bE=\bE_{[n]}$ or $\bE=w\bE_{[n+1,n-2]}$ with $n \geq 7$ odd, $w$ word on $x,y$}\end{array}\right\}.$$
$$\Cc_{even}:=\left\{ (S^iR^\delta)^\sharp(\bE) \ \middle\vert \begin{array}{l} \text{$i \geq 0$ and $\de=0$ or $1$,}\\ \text{$\bE=\bE_{[n]}$ or $\bE=w\bE_{[n+1,n-2]}$ with $n \geq 6$ even, $w$ word on $x,y$}\end{array}\right\}.$$

Even though the proof of Theorem \ref{cor:rpt} only involves classes in $\Cc_{even}$, many of the auxiliary results used in that proof hold also for classes in $\Cc_{odd}$. As discussed in Remarks \ref{rmk:Tnodd} and \ref{rmk:polydisk}, those classes are the ones governing the polydisk case, and for this reason, we include here results and proofs pertaining to them.

\begin{lemma} \label{lem:genclasses}
\begin{itemlist} Assume $\bE \in \Cc$ with $i=0$ and $\delta=0$ has center $[a_0,\hdots,a_m]$ where $a_m \neq 1$. Then
	\item[{\rm (i)}] $a_j \neq a_{j+1}$ for all $j$. 
	\item[{\rm (ii)}] The parity of $a_m$ differs from the parity of $a_j$ for all $ j < m$.
\end{itemlist} 
\end{lemma}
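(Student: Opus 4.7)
The plan is to prove (i) and (ii) simultaneously by induction on the number of mutations used to build $\bE$ from the initial triple $\Tt_n=(\bE_{[n]},\bE_{[n+1,n-2]},\bE_{[n+2]})$. Since every class $\bE\in\Cc$ with $i=\delta=0$ is either a seed $\bE_{[n']}$ (whose canonical continued fraction has a single entry, so (i) and (ii) are vacuous) or the middle class of a triple obtained from $\Tt_n$ by a finite word of $x$- and $y$-mutations, it suffices to verify (i), (ii) for every class appearing in each triple $w\Tt_n$.

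The base case is $\Tt_n$ itself: the seeds $\bE_{[n]}$ and $\bE_{[n+2]}$ are vacuous, and $\bE_{[n+1,n-2]}=[n+1,n-2]$ has distinct entries (since $n\geq 6$) of opposite parities (since their difference $3$ is odd). For the inductive step, assume all three classes of $\Tt=(\bE_\la,\bE_\mu,\bE_\rho)$ satisfy (i), (ii); since $x\Tt=(\bE_\la,\bE_{x\mu},\bE_\mu)$ has the same outer classes as $\Tt$, only $\bE_{x\mu}$ needs to be checked (and analogously $\bE_{y\mu}$ for $y\Tt$). I would apply the formula \eqref{eqn:xmuCF} together with Remark \ref{rmk:nn-1} (using the two-digit expansion $[n+1,1]$ for $\bE_{[n+2]}$ when it appears as $\bE_\rho$, and the $0$-simplification or trailing-$1$ absorption to reach the canonical continued fraction).

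Property (ii) then follows from a direct parity tally on \eqref{eqn:xmuCF}: the last canonical entry of $\bE_{x\mu}$ is $r_m$ (or, when $\bE_\rho=\bE_{[n+2]}$ and the trailing $1$ is absorbed, $n-4$), which by the inductive hypothesis has parity $\equiv n \pmod 2$; every other entry is an $l_j$ ($j<s$) or $r_j$ ($0<j<m$) of parity $\equiv n+1$ by the inductive hypothesis, one of $l_s\pm 1$ (opposite parity to $l_s\equiv n$), or $n-5\equiv n+1$. Property (i) for consecutive pairs lying inside $\bE_\la$, its reverse, or the $\bE_\rho$-tail is inherited from the inductive hypothesis, and the central pair $l_s+(-1)^s,l_s+(-1)^{s+1}$ differs by $2$.

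The main obstacle will be (i) at the four ``junctions'' created by \eqref{eqn:xmuCF}: (A) $l_{s-1}$ versus $l_s+(-1)^s$, (B) $l_s+(-1)^{s+1}$ versus $l_{s-1}$, (C) $l_0$ versus $n-5$, and (D) $n-5$ versus $r_1$. Junction (C) is immediate since $l_0\in\{n,n+1\}$ (by Corollary \ref{cor:a_0=n+1} and the $\bE_\la=\bE_{[n]}$ case), both of which differ from $n-5$ since $n\geq 6$. Junction (D) disappears via absorption when $\bE_\rho=\bE_{[n+2]}$, and otherwise reduces to showing $r_1\neq n-5$. Junctions (A) and (B) are delicate because $l_{s-1}$ and $l_s+(-1)^s$ share the same parity by (ii), so they differ iff $|l_{s-1}-l_s|\geq 3$, while (i) for $\bE_\la$ alone only gives $|l_{s-1}-l_s|\geq 1$. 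I therefore strengthen the inductive hypothesis to include, for every mutated middle class, the auxiliary facts $(\star)$ $|a_{m-1}-a_m|\geq 3$ and $(\star\star)$ $a_1\neq n-5$. Both hold for the base $\bE_{[n+1,n-2]}$, and the formulas \eqref{eqn:xmuCF}, \eqref{eqn:ymuCF} show that the last two digits of the new middle class come from $\bE_\rho$ (respectively $\bE_\la$), so $(\star)$ propagates, while its second digit is either inherited from $\bE_\la$ (hence $\neq n-5$ by the inductive hypothesis) or equals $n-1$ or $n-3$ in the seed-based edge cases, each distinct from $n-5$ for $n\geq 6$. With the strengthened hypothesis in hand, $|l_{s-1}-(l_s\pm 1)|\geq 2$ and so (i) holds at all junctions, closing the induction.
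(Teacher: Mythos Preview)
Your approach is essentially the same as the paper's. The paper also inducts on the length of the mutation word and isolates the same two auxiliary facts: its Claim~2 ($a_{m-1}\neq a_m\pm1$) is your $(\star)$ (equivalent once (ii) is known, since $|a_{m-1}-a_m|$ is then odd), and its Claim~1 ($a_1\in\{n-1,n-2,n-3\}$) is a sharpening of your $(\star\star)$. The only organisational difference is that the paper proves the two claims first and then derives (i) and (ii) sequentially, whereas you carry (i), (ii), $(\star)$, $(\star\star)$ through a single strengthened induction. One small point to tighten: when $\bE_\rho=\bE_{[n+2]}$ and the trailing $1$ is absorbed to give final digit $n-4$, junction (D) does not simply ``disappear'' --- you still need $l_0\neq n-4$ (or $n-1\neq n-4$ when $s=0$), which is immediate but should be said; likewise your claim that ``the last two digits of $\bE_{x\mu}$ come from $\bE_\rho$'' fails when $m=1$ (the penultimate digit is then $n-5$), though $(\star)$ is easily checked directly in that case.
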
 

\begin{proof}
We begin by proving two auxiliary results that hold for any arbitrary class $\bE\in\Cc$ with either $\bE=\bE_{[n]}$ or $\bE=w\bE_{[n+1,n-2]}$:

\vspace{0.2cm}

\noindent \textbf{Claim 1:} For the class $\bE$ with center $[a_0,\hdots,a_m]$, we have $a_1=n-1,n-2$ or $n-3$ (except in the case where $\bE=\bE_{[n]}$ and then $a_1=0$).
\begin{proof}(of Claim 1)
The claim holds if the center of $\bE$ is $[n]$ or $[n+1,n-2].$ Otherwise, we may assume the class $\bE$ is $x\bE_{\mu}$ (resp. $y\bE_{\mu}$) where $\bE_\mu$ is the middle entry of the triple $(\bE_\la,\bE_\mu,\bE_\rho)$. Then, by Theorem~\ref{thm:CFmain}, the continued fraction of $\bE_\la$ (resp. $\bE_\rho$) will determine $a_1.$ 
If $\bE_{\la}$ (resp. $\bE_\rho$) has CF-length less than or equal to two, then $\bE_{\la}$ (resp. $\bE_\rho$) must have center $[n],[n+1,n-2]$ (resp. $[n+2],[n+1,n-2]$). In these cases, following the process described in Theorem~\ref{thm:CFmain}, we find that $a_1$ must be either $n-1,n-3$, or $n-2$ only if $\bE=\bE_{[n+1,n-2]}$. See the proof of Corollary \ref{cor:a_0=n+1} for these computations. If $\bE_\la$ (resp. $\bE_\rho)$ has CF-length larger than two, than the second entry of $\bE$ will have the same second entry of $\bE_\la$ or $\bE_\rho$, so they must be $n-1,n-3$ as well.
\end{proof}

\noindent \textbf{Claim 2:} For the class $\bE$ with center $[a_0,\hdots,a_m]$, we have $a_{m-1} \neq a_m \pm 1$. 

\begin{proof}(of Claim 2)
The claim holds if the center of $\bE$ is $[n]$ or $[n+1,n-2]$. Otherwise, the class $\bE$ is $x\bE_{\mu}$ (resp. $y\bE_{\mu}$) where $\bE_\mu$ is the middle entry of the triple $(\bE_\la,\bE_\mu,\bE_\rho)$. The final digits of $\bE$ will be determined by $\bE_\rho$ (resp. $\bE_\la$). As above, if $\bE_\rho$ (resp. $\bE_\la$) has CF-length at least three (resp. two), the claim also follows for $\bE$. In the case where the CF-length is less than three (resp. two) we must check that $r_1 \neq n-5 \pm 1$ (resp. $\ell_0 \neq n-5 \pm 1$). These follow from Claim 1 and Corollary~\ref{cor:a_0=n+1}. 
\end{proof}

We now prove (i) by complete induction on the length of the word $w$. 
In the base case of length zero, the class $\bE$ has center $[n]$ or $[n+1,n-2]$ and so the claim holds. Next we assume that the claim holds for all classes where $w$ has length less than $k$ and aim to prove it for a class $\bE=w\bE_{[n+1,n-2]}$ with center $[a_0,\ldots,a_m]$ where $w$ has length $k$. 
By Theorem~\ref{thm:CFmain} and using the induction hypothesis, we can only have $a_i=a_{i+1}$ if there exists also a class obtained by performing fewer than $k$ mutations which has center $[b_0,\hdots,b_r]$ and for which $b_0=n-5$, $b_1=n-5$, or $b_{r-1}=b_r\pm 1$. 
But by Corollary~\ref{cor:a_0=n+1}, Claim 1, and Claim 2, no class can have these properties.

For (ii), we again use induction on the length of the word $w$. The base case where $w$ is the empty word is again clear. Next, by Theorem~\ref{thm:CFmain} and assuming $\bE_\rho$ does not have center $[n+2],$ we have that the last digit of any $x$ or $y$ mutation is the last digit of a previous class, so the parity of the last digit will stay the same. All other digits are either a previous non-last digit or the last digit plus or minus one, and hence their parity will always differ from that of the last digit. The only other term is $n-5,$ which has the same parity as $n+1,$ the first digit by Corollary~\ref{cor:a_0=n+1}. If $\bE_\rho$ has center $[n+2]$, then in Theorem~\ref{thm:CFmain} we write $[n+2]=[n+1,1].$ Say we have a triple $([b_0,\hdots,b_{r}],\bE,[n+1,1])$. By induction, we assume the statement holds for $[b_0,\hdots,b_r]$. Performing a $y$-mutation to the triple, the new class has center
\[ [n+1,2,0,n-5,b_0,\hdots,b_r]=[n+1,n-3,b_0,\hdots,b_r],\] and the statement holds. If we perform a $x$-mutation, the new class has center 
\[ [b_0,\hdots,b_0,n-5,1]=[b_0,\hdots,b_0,n-4],\] and the statement holds. 

\end{proof}

The first part of the following lemma is adapted from \cite[Proposition~4.1.3]{MMW} about perfect classes.

\begin{lemma} \label{lem:adjEq}
\begin{itemlist}
    \item[{\rm (i)}] Let $\bE$ and $\bE'$ be two perfect classes, with centers $[a_0,\hdots,a_m]$ and $[a'_0,\hdots,a'_{m'}]$ respectively. If $\bE$ and $\bE'$ are adjacent, then $a_j=a'_j$ for $0 \leq j \leq \min(m,m')-1.$
 \item[{\rm (ii)}] Assume $\bE$ and all mutations of $\bE$ are perfect. If $\bE$ has center $[a_0,\hdots,a_{m-1},a_m]$, then for every word $w$ the center of $w\bE$ is of the form $[a_0,\hdots,a_{m-1},a'_m,\hdots]$. 
 \item[{\rm (iii)}] If $\bE\in\Cc$ has center $[a_0,\hdots,a_m]$, then for every word $w$ the center of 
$w\bE$ is of the form $[a_0,\hdots,a_{m-1},a_m',\hdots]$.
 \end{itemlist}
\end{lemma}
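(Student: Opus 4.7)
The plan is to establish (i) by structural induction on the mutation depth of the generating triple, then to obtain (ii) by a transitivity argument along the mutation path, and finally to deduce (iii) directly from (ii) after verifying that its hypothesis holds for $\bE\in\Cc$.

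For (i), the key observation is that two adjacent perfect classes must appear together in some common generating triple obtained from $\Tt_n$ by a sequence of mutations (via \cite[Corollary 4.2.3]{MMW}), so it suffices to verify the claim on every adjacent pair in every such triple. The base case $\Tt_n = (\bE_{[n]}, \bE_{[n+1,n-2]}, \bE_{[n+2]})$, using $\bE_{[n+2]} = [n+1,1]$ as in Remark \ref{rmk:nn-1}, is immediate: pairs involving $\bE_{[n]}$ have $\min(m,m')-1 = -1$ so the statement is vacuous, while $(\bE_{[n+1,n-2]}, \bE_{[n+2]})$ both begin with $n+1$ by Corollary \ref{cor:a_0=n+1}. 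For the inductive step at a mutation $v\Tt'$ of a previously-treated triple $\Tt' = (\bE_\la',\bE_\mu',\bE_\rho')$, only two new adjacent pairs arise (the third is inherited from $\Tt'$). The pair of the new middle $\bE_{v\mu}$ with its ``parent'' class (namely $\bE_\la'$ under $x$-mutation or $\bE_\rho'$ under $y$-mutation) satisfies (i) immediately from the explicit continued-fraction formulas in Theorem \ref{thm:CFmain}, while the remaining pair, with the previous middle $\bE_\mu'$, requires combining Theorem \ref{thm:CFmain} with the inductive hypothesis applied to the three pairs of $\Tt'$ and the length identity from Corollary \ref{cor:MMWconj}(i).

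For (ii), the plan is to apply (i) transitively along the chain $\bE = w_0\bE, w_1\bE, \ldots, w_k\bE = w\bE$, where $w_i$ denotes the $i$-letter innermost truncation of $w$. Consecutive classes $w_{i-1}\bE$ and $w_i\bE$ are adjacent because they appear as the middle and right (or middle and left) of the triple $w_i\Tt$. Corollary \ref{cor:MMWconj}(i) implies that $\ell_{CF}(w_i\bE)$ is strictly increasing in $i \geq 1$ starting from $\ell_{CF}(\bE) = m+1$, so $\min(\ell_{CF}(w_{i-1}\bE), \ell_{CF}(w_i\bE)) - 1 \geq m$, and (i) applied to each consecutive pair yields agreement on at least the first $m$ digits of their centers. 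Transitivity of initial-segment agreement then gives the desired conclusion that $\bE$ and $w\bE$ agree on their first $m$ digits.

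For (iii), the hypothesis of (ii) is satisfied for every $\bE\in\Cc$: writing $\bE = (S^iR^\delta)^\sharp(\bE')$ for some $\bE'$ in the mutation tree of $\Tt_n$, the class $\bE$ is perfect by \cite[Lemmas 4.1.2--4.1.3]{MM}, and every mutation $w\bE$ is the same symmetry's image of a corresponding mutation of $\bE'$ (by \cite[Corollary 2.3.4]{MMW}, where $x$ and $y$ may be swapped under $R$), so is likewise perfect; part (ii) then applies directly. The hardest step in the whole argument will be the inductive step of (i) for the pair $(\bE_{v\mu}, \bE_\mu')$, which requires verifying agreement on as many as $\ell_{CF}(\bE_\mu')-1$ initial digits: both continued fractions arise from $v$-type mutations sharing a common ``parent'' class, so they should share a long common initial segment derived from that parent, but verifying this precisely will require careful bookkeeping of how initial digits propagate through repeated applications of Theorem \ref{thm:CFmain}.
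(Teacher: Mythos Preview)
Your approach to (ii) matches the paper's: apply (i) inductively along the mutation chain, using that consecutive $w_{i-1}\bE$ and $w_i\bE$ are adjacent. However, (i) and (iii) have genuine gaps.

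For (i), the paper simply cites \cite[Lemma~4.1.1~(iii)]{MMW} (which in turn uses \cite[Proposition~4.1.3]{MMW}); there is nothing to reprove. Your alternative route via Theorem~\ref{thm:CFmain} rests on the assertion that any two adjacent perfect classes appear together in some generating triple obtained by mutation from a $\Tt_n$. You attribute this to \cite[Corollary~4.2.3]{MMW}, but that result only says that every perfect blocking class arises from mutation and symmetry; it does not say that an algebraically adjacent pair must occur in a \emph{common} triple. Adjacency is an arithmetic condition on $(p,q,t)$, and it is not a priori clear that it forces two classes to be neighbors in the mutation tree. Your induction on the tree therefore only verifies (i) for pairs that happen to sit in a common triple, which is enough for (ii) but not for (i) as stated. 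If you want an independent proof, you would need to supply this structural fact about adjacent pairs, and the careful bookkeeping you flag for the $(\bE_{v\mu},\bE_\mu')$ pair would still lie ahead.

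For (iii), your argument assumes that every $\bE\in\Cc$ (and every mutation of it) is perfect, so that (ii) applies directly. This is only known for $\Cc_{even}$, via \cite[Corollary~7.3]{M} and \cite[Lemma~3.1.4]{MMW}; for $\Cc_{odd}$ the paper explicitly notes in Remark~\ref{rmk:polydisk} that perfectness has not been established. The paper handles $\Cc_{odd}$ instead by the digit-shift correspondence of Corollary~\ref{cor:+n-6}: the center of $w\bE_{[n+1,n-2]}\in\Cc_{odd}$ is obtained from the center of $w\bE_{[n,n-3]}\in\Cc_{even}$ by adding $1$ to every continued-fraction entry, so agreement of initial digits transfers from the even case. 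Your argument needs this detour to cover $\Cc_{odd}$.
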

\begin{proof}

(i) is a restatement of  \cite[Lemma~4.1.1~(iii)]{MMW}  which uses \cite[Proposition~4.1.3]{MMW}. 

Now (ii) is obtained by inductively applying (i), since performing an $x$ or $y$-mutation to a class yields a class adjacent to it.

For (iii), note that by \cite[Corollary~7.3]{M} and \cite[Lemma~3.1.4]{MMW}, every class in $\Cc_{even}$ is perfect, and so in particular the claims holds for these classes by (ii). For classes $\bE \in \Cc_{odd},$ by Remark \ref{rmk:polydisk}, the continued fraction of the center of $w\bE_{[n+1,n-2]} \in \Cc_{odd}$ is equal to the continued fraction of the center of $w\bE_{[n,n-3]}] \in \Cc_{even}$ with $1$ added to each entry of the continued fraction. Thus, (iii) also holds for classes in $\Cc_{odd}.$ 
\end{proof} 

As observed in Corollary \ref{cor:MMWconj}, when we perform (enough) repeated $x$-mutations to a class $\bE$, the continued fraction of the center of the resulting class will be of the form 
$$[start,\{period\}^s,end],$$
for some fixed set of digits $period$.
The next result uses Lemmas \ref{lem:genclasses} and \ref{lem:adjEq} to show that once we perform one $y$-mutation (followed by any number of $x$ or $y$-mutations), the resulting classes cannot pick up more than two repetitions of $period$.

\begin{prop} \label{lem:repeat}
    Let $A$ denote some fixed finite list of numbers. Assume $\bE\in \Cc$ has center 
    $$[A,\{a_{1},\hdots,a_\ell\}^{k},c_1,\hdots,c_m],$$
    with $k$  maximal in the sense that the center of $\bE$ cannot be written as $[A,\{a_{1},\hdots,a_\ell\}^{k+1},c_{\ell+1},\hdots,c_m]$. 

  Then, for all words $w$ and for all $s \geq 2$, the centers of the classes $wyx\bE$ and $wxy\bE$ cannot have centers of the form
	\[ [A,\{a_{1},\hdots,a_\ell\}^{k+s},end] \] where $end$ is any arbitrary finite list of numbers. 
\end{prop}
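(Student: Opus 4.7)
The strategy is to reduce the problem to a structural claim about the continued fractions of $yx\bE$ and $xy\bE$ themselves. By Lemma \ref{lem:adjEq}(iii) applied to $yx\bE$, for any word $w$ the center of $wyx\bE$ agrees with that of $yx\bE$ in all but its last digit (possibly with further digits appended). Hence, if $wyx\bE$ had center of the form $[A, \{a_1,\ldots,a_\ell\}^{k+s}, \text{end}]$ with $s \geq 2$, then the corresponding prefix of $yx\bE$'s center would have to match. So it suffices to show that the continued fraction of $yx\bE$ cannot have the first $|A|+(k+2)\ell$ digits equal to $[A, \{a_1,\ldots,a_\ell\}^{k+2}]$, and similarly for $xy\bE$.

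Using Theorem \ref{thm:CFmain} with $\bE$ as the right class of the triple $(x\bE, yx\bE, \bE)$ containing $yx\bE$, the continued fraction of $yx\bE$ begins
\[
[b_0,\ldots,b_{M-1},\, b_M\pm 1,\, b_M\mp 1,\, b_{M-1},\ldots,b_1,\, n-5,\, \ldots],
\]
where $\bE = [b_0,\ldots,b_M]$ spells out $A,\{a_1,\ldots,a_\ell\}^k, c_1,\ldots,c_m$. The palindromic structure around the modified pair at positions $M,M{+}1$ is the obstruction. I would then split into cases based on how $m$ compares to $\ell$. If $m > \ell$, then the digits at positions $|A|+k\ell,\ldots,|A|+(k+1)\ell-1$ of $yx\bE$ are simply $c_1,\ldots,c_\ell$, and matching the $(k+1)$-th period would force $(c_1,\ldots,c_\ell)=(a_1,\ldots,a_\ell)$, contradicting the maximality of $k$. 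If $m = \ell$, matching the first added period forces $c_i = a_i$ for $i<m$ and $c_m = a_m\mp 1$; the next $\ell$ digits of $yx\bE$ are $c_m\mp 1, c_{m-1},\ldots,c_1$, and matching these to $a_1,\ldots,a_\ell$ gives both $a_1=a_\ell$ and $a_1 = a_\ell\mp 2$, an immediate contradiction.

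The most involved case is $m < \ell$, where positions $p=\ell,\ldots,2\ell-1$ (counted after the $k$-th period) fall into the reversed $c$-portion and/or the reversed periodic portion of $yx\bE$. A position-by-position matching with $a_1,\ldots,a_\ell$ yields the palindromic identity $a_j = a_{2m-j+1}$ valid on the range $j \in [2m-\ell+1,\ell]$, which always contains $j=m$; substituting gives $a_m = a_{m+1}$, contradicting Lemma \ref{lem:genclasses}(i). The $xy\bE$ case is handled identically, using formula \eqref{eqn:xmuCF} in place of \eqref{eqn:ymuCF}; the slightly different palindromic structure (reversal includes $b_0$, opposite sign convention on the modified digits) does not affect any of the equalities on which the contradictions rest.

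The hardest step is the bookkeeping in the $m < \ell$ case, where the precise position at which the fatal palindromic identity appears depends on whether $\ell \geq 2m$ or $m < \ell < 2m$: in the first subcase the contradiction arises at absolute position $|A|+(k+1)\ell+m-1$ inside the first copy of the reversed periodic block, while in the second it appears at $|A|+(k+1)\ell+m$ on the transition between the reversed $c$-portion and the reversed periodic block. A minor technical check is that all contradiction positions lie within the first $L-1$ digits of $yx\bE$'s continued fraction (so that Lemma \ref{lem:adjEq}(iii) applies); using $\ell_{CF}(x\bE) = \ell_{CF}(\bE) + \ell_{CF}(\bE_\la) \geq \ell_{CF}(\bE)+1$ from Corollary \ref{cor:MMWconj}(i), one verifies $L-1 \geq |A|+(k+1)\ell+m$ whenever $k\geq 1$, which is the only nontrivial regime of the proposition.
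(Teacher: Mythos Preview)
Your approach is essentially identical to the paper's: reduce to analysing $yx\bE$ via Lemma~\ref{lem:adjEq}(iii), write out its continued fraction using Theorem~\ref{thm:CFmain}, and derive a contradiction by case analysis on $m$ versus $\ell$. Your contradiction in the $m<\ell$ case, namely $a_m=a_{m+1}$ via Lemma~\ref{lem:genclasses}(i), is equivalent to the paper's $c_m\pm1=c_m\mp1$ (since the first-period matching gives $a_m=c_m\pm1$ and $a_{m+1}=c_m\mp1$), and your sub-casing $\ell\geq 2m$ versus $m<\ell<2m$ is finer than necessary but harmless.

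There are two omissions. First, your $m<\ell$ argument yields $a_m=a_{m+1}$, which is vacuous when $m=0$; the paper treats $m=0$ separately via the parity statement Lemma~\ref{lem:genclasses}(ii) (the last digit $a_\ell$ cannot recur in a repeated block). Alternatively one observes directly that position $M$ of $yx\bE$ carries $a_\ell\pm1\neq a_\ell$. Second, the class set $\Cc$ includes images under the symmetries $(S^iR^\delta)^\sharp$, and both Theorem~\ref{thm:CFmain} and Lemma~\ref{lem:genclasses} are stated only for the unsymmetrised case $i=0,\delta=0$. The paper closes this by noting (via \eqref{eq:SCF}, \eqref{eq:RCF}) that a symmetry alters the continued fractions of $\bE$, $wyx\bE$, $wxy\bE$ by the same preamble, so one may absorb that preamble into $A$ and reduce to the unsymmetrised case. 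You should add both remarks; each is a one-line fix.
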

\begin{proof} 
For brevity, in this proof, we will often denote a class simply by the continued fraction of its center: $\bE_{CF[z]}=CF(z)$.

We begin assuming $\bE=[A,\{a_{1},\hdots,a_\ell\}^k,c_1,\hdots,c_m]$ is constructed by mutation from a class centered at $[n+1,n-2]$. We will first show that for all words $w$ and $s\geq 2$, \[wyx\bE \neq  [A,\{a_{1},\hdots,a_\ell\}^{k+s},end]. \]

First, note that the statement holds if $m=0$ because by Lemma~\ref{lem:genclasses}~(ii), the last digit of $\bE$ has a different parity than all other digits of $\bE$, so the last digit cannot appear in a part of the continued fraction repeated at least two times. Thus, we can now assume that $m \geq 1$.

Given $A=[A_0,A_1,\hdots,A_p]$, we define $\overline{A}=[A_p,A_{p-1},\hdots,A_1]$.
By Theorem~\ref{thm:CFmain}, we have 
\[ yx\bE=[A,\{a_1,\hdots,a_\ell\}^k,c_1,\hdots,c_{m-1},c_m \pm 1,c_m \mp 1, c_{m-1},\hdots,c_1,\{a_\ell,\hdots,a_1\}^k,\overline{A},n-5,\hdots] .\]
For the sake of contradiction, assume that
$ yx\bE=[A,\{a_{1},\hdots,a_\ell\}^{k+2},end]$, that is 
\[ [A,\{a_{1},\hdots,a_\ell\}^{k},c_1,\hdots,c_{m-1},c_m \pm 1,c_m \mp 1, c_{m-1},\hdots,c_1,\{a_\ell,\hdots,a_1\}^k,...] = 
[A,\{a_{j},\hdots,a_\ell\}^{k+2},end],\]
which is equivalent to
\begin{equation} \label{eq:equalCF}
    [c_1,\hdots,c_{m-1},c_m \pm 1, c_m \mp 1, c_{m-1},\hdots,c_1,\{a_\ell,\hdots,a_1\}^k,\hdots]=[\{a_1,\hdots,a_\ell\}^2,end]
\end{equation} 
as the first $|A|+k\ell$ digits are the same. 

Note, if $ \ell \leq m-1,$ then \eqref{eq:equalCF} implies that 
\[ \{a_1,\hdots,a_\ell\}=\{c_1,\hdots,c_\ell\},\]
which contradicts the assumption $k$ is maximal. By Lemma~\ref{lem:adjEq}~(iii), the first digits of $wyx\bE$ must agree will all but the last digit of $yx\bE$, so the contradiction persists. Thus either $m=\ell$ or $m<\ell$. 

\noindent\underline{Case $m=\ell$:} For \eqref{eq:equalCF} to hold, the entries of each column of the following table must be equal: 
\[ \begin{array}{|c|c|c|c|c|c|c|c|}\hline
a_1&a_2&\hdots&a_i&a_{i+1}&\hdots&a_{\ell-1}&a_\ell \\ \hline
c_1 & c_2&\hdots&c_i & c_{i+1} &\hdots & \hdots & c_\ell \pm 1 \\ \hline
c_\ell \mp 1& c_{\ell-1} & \hdots & \hdots & \hdots & \hdots & \hdots & c_1 \\ \hline 
\end{array}\]
Considering the first and last column of the table, this would require $c_1=c_\ell \pm 1=c_\ell \mp1$, which is not possible. This implies that 
\[ yx\bE \neq [A,\{a_{1},\hdots,a_\ell\}^{k+s},end]\] for $s \geq 2$. 
More generally, we now show $wyx\bE$  is also not equal to $ [A,\{a_{1},\hdots,a_\ell\}^{k+s},end]$ for $s \geq 2$. Note, by Lemma~\ref{lem:genclasses}~(i), $a_i \neq a_{i+1}$, so we cannot have a period of $1$, i.e. $\ell>1$. Therefore, in considering the entries of $yx\bE$ in the table, we have at least two columns in the table and one entry after $c_\ell \mp 1$ in the last row. Then, by Lemma~\ref{lem:adjEq}~(iii), the first digits of $wyx\bE$ must agree with all but the last digit of $yx\bE$, so as the problem of $c_\ell \pm 1=c_\ell \mp 1$ does not occur in the last digit, this will persist and we conclude that if $m=\ell$, then for $s\geq 2$, we have
 \[ wyx\bE \neq [A,\{a_{1},\hdots,a_\ell\}^{k+s},end].\]

\noindent\underline{Case $m<\ell$:} For the first $m+1$ digits on each side of \eqref{eq:equalCF} to be equal we must have: 
\[ [a_1,\hdots,a_{m-1},a_m,a_{m+1}]=[c_1,\hdots,c_{m-1},c_m \pm 1,c_m \mp 1].\]
Then, after the $c_m \mp 1$ on the right hand side, we will then reflect the digits $\{c_{m-1},\hdots,c_1\}$ for the next $m-1$ digits. Then, we will start reflecting the $\{a_1,\hdots,a_\ell\},$ so we will have that $a_{m+1}$ and $a_m$ will appear again $m-1+\ell-m=\ell-1$ and $\ell$ places after $c_m \mp 1$. Therefore, 
the entries of each column of the following table must be equal: 

\[ \begin{array}{|c|c|c|c|c|c|c|c|}\hline
a_1&a_2&\hdots&a_m&a_{m+1}&\hdots&a_{\ell-1}&a_\ell \\ \hline
c_1 & c_2&\hdots&c_m \pm 1 & c_m \mp 1 &\hdots & \hdots & \hdots \\ \hline
\hdots & \hdots & \hdots & a_{m+1} & a_m & \hdots & \hdots & \hdots \\ \hline 
\end{array}\]
Note, that if $2m-\ell<0$, it is possible that the last row of the table could have some empty entries. Regardless, for all three rows to be equal we would require $a_m=c_{m} \pm 1=a_{m+1}=c_m \mp 1$, i.e. $c_m \pm 1=c_m \mp 1$, which is not possible.
More generally for $wyx\bE$, and again by Lemma~\ref{lem:adjEq}~(iii),  the first digits of $wyx\bE$ must agree with all but the last digit of $yx\bE$, and as the issue of $a_{m+1}=c_m \pm 1=c_m \mp 1$ occurs before the last digit in $yx\bE$, this problem will persist, and we conclude that if $m<\ell$, and for $s \geq 2$,
 \[ wyx\bE \neq [A,\{a_{1},\hdots,a_\ell\}^{k+s},end].\]

The result for $wxy\bE$ will follow similarly. This concludes the proof where $\bE$ is a mutation of $[n+1,n-2].$

By  \eqref{eq:SCF} and \eqref{eq:RCF}, applying a symmetry $(S^i)^\sharp$ or $(S^i R^\delta)^\sharp$ to $\bE$, $wyx\bE$, and $wxy\bE$ changes the beginning of the continued fractions of their centers by adding the same preamble to each of them. Thus, the statement of the lemma holds also for these classes.

\end{proof}

Recall from \eqref{eq:sequencewords} that given an infinite word $w$ (or equivalently, a sequence), we can define its finite subwords $w_k$. The next result uses Proposition~\ref{lem:repeat} to show that if the limit of the sequence of centers of classes $w_k \bE$ has periodic continued fraction, then $w$ has infinitely many consecutive $x$'s or $y$'s.

\begin{corollary}
    \label{prop:rpt}
    Let $A$ denote some fixed finite list of numbers and $w$ be an infinite word on $x$ and $y$, with corresponding sequence of finite subwords $(w_k)_{k\in\mathbb{N}}$. For $\bE \in \Cc$, if the sequence of centers of $w_k\bE$ converges to $[A,\{a_{1},\hdots,a_n\}^\infty]$, then $w$ must be of the form $w=x^\infty w_\star$ or $w=y^\infty w_\star$ for some finite word $w_\star$.
\end{corollary}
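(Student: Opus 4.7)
The plan is to argue by contradiction: suppose $w$ is not of the form $x^\infty w_\star$ or $y^\infty w_\star$. Writing $w = \cdots a_3 a_2 a_1$ as a left-infinite word with $a_i \in \{x,y\}$ (so $a_1$ is the first mutation applied), this assumption is equivalent to saying the sequence $(a_i)_{i \geq 1}$ is not eventually constant. In particular, there will be a smallest index $j_1 \geq 1$ at which $a_{j_1+1} \neq a_{j_1}$, and the two-letter pattern $a_{j_1+1}a_{j_1}$ will therefore be either $xy$ or $yx$.

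I would then set $\bE' := w_{j_1-1}\bE$, with the convention $\bE' = \bE$ if $j_1 = 1$. Since $w_{j_1-1}$ is a finite word, $\bE'$ is a single class in $\Cc$, and the continued fraction of its center is a finite list. Let $N'$ denote the largest integer such that this continued fraction can be written in the form $[A, \{a_1,\ldots,a_n\}^{N'}, c_1,\ldots,c_m]$ in the maximal sense required by Proposition~\ref{lem:repeat}; as $\bE'$ is fixed, $N'$ is a definite finite nonnegative integer. For every $k > j_1$ the decomposition
$$
w_k\bE \;=\; (a_k \cdots a_{j_1+2})\,(a_{j_1+1} a_{j_1})\,\bE'
$$
exhibits $w_k\bE$ as $w''(yx)\bE'$ or $w''(xy)\bE'$, where $w'' := a_k \cdots a_{j_1+2}$ is some finite word.

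The key step will be to invoke Proposition~\ref{lem:repeat} on $\bE'$ with $\ell = n$, $k = N'$, and any $s \geq 2$: the center of $w_k\bE$ cannot have continued fraction of the form $[A, \{a_1,\ldots,a_n\}^{N'+s}, \mathrm{end}]$. Consequently the number of initial period-blocks of shape $\{a_1,\ldots,a_n\}$ (immediately following $A$) in the continued fraction of the center of $w_k\bE$ is bounded by $N' + 1$, uniformly in $k > j_1$. This will contradict the convergence hypothesis: by Lemma~\ref{lem:adjEq}~(iii) the continued-fraction digits of the centers of $w_k\bE$ stabilize position by position as $k \to \infty$, and the convergence of the rational centers to the irrational number $[A, \{a_1,\ldots,a_n\}^\infty]$ forces the stabilized expansion to coincide with that of the limit. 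Hence for any prescribed $M$ the continued fraction of the center of $w_k\bE$ will eventually begin with $[A, \{a_1,\ldots,a_n\}^M, \ldots]$; choosing $M > N' + 1$ yields the contradiction.

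The main obstacle I anticipate is the care needed to identify precisely the smallest change position $j_1$ and verify that it does place a $yx$ or $xy$ pair at the very bottom of the mutation word applied to $\bE'$, so that Proposition~\ref{lem:repeat} applies cleanly with the $A$ and period inherited from the limit; combined with the fact that the uniform bound $N' + 1$ is in direct conflict with the unbounded growth of initial period-blocks forced by convergence, the rest of the argument then falls into place without difficulty.
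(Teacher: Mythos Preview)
Your approach matches the paper's: locate a $yx$ or $xy$ alternation in $w$, apply Proposition~\ref{lem:repeat} to the class just before it to cap the number of initial period blocks, and contradict the unbounded growth forced by convergence. One technical point deserves attention, however. You take $j_1$ to be the \emph{first} change position and then assert that $N'\geq 0$ exists, i.e., that the continued fraction of $\bE'=w_{j_1-1}\bE$ can be written as $[A,\{a_1,\dots,a_n\}^{N'},c_1,\dots,c_m]$. This requires the continued fraction of $\bE'$ to begin with the full list $A$, which is not guaranteed when $j_1$ is small and the expansion of $\bE'$ is shorter than $|A|$; in that case Proposition~\ref{lem:repeat} does not apply with the given $A$. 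The remedy is straightforward: since $w$ is not eventually constant there are infinitely many change positions, and since the continued-fraction length of $w_k\bE$ tends to infinity, you may replace $j_1$ by any later change position $j$ for which $w_{j-1}\bE$ already has an expansion beginning with $[A,a_1,\dots,a_n,\dots]$. The paper handles exactly this issue by first fixing an index $k_{\min}$ beyond which every center has the form $[A,\{a_1,\dots,a_n\}^{j_k},\dots]$ with $j_k\geq 1$, and only then locating the alternation (implicitly) at or after $k_{\min}$.
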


 \begin{proof} 
 As in the proof of Proposition~\ref{lem:repeat}, we will often use the notation $\bE_{CF[z]}=CF(z)$.
 
  Denote $\bE_k:=w_k\bE.$ 
If the sequence of centers of the classes $\bE_k$ converges to $[A,\{a_{1},\hdots,a_n\}^\infty]$, then as $k$ increases, the continued fraction of the center of $\bE_k$ agrees with more digits of $[A,\{a_{1},\hdots,a_n\}^\infty]$ from the left. 

This implies that after some $k=k_{min}$, for all $k>k_{min}$, all classes are of the form
$$\bE_{k}=[A,\{a_{1},\hdots,a_n\}^{j_k},end_{k}],$$ 
where $j_k \geq 1$ and $end_{k}$ is some finite list of integers and $j_k$ is maximal in the sense that $\bE_{k} \neq [A,\{a_{1},\hdots,a_n\}^{j_k+1},end]$ for some other $end.$ The $j_k$ must be a non decreasing sequence converging to infinity.

We show by contradiction that $w=x^\infty w_{k_{min}}$ or $w=y^\infty w_{k_{min}}.$ If not, we may write $w=vxyw_{k_{min}}$ or $w=vyxw_{k_{min}}$ where $v$ is an infinite word on $x$ and $y$. Then, by 
Proposition~\ref{lem:repeat}, neither $vxyw_{k_{min}}\bE$ nor $vyxw_{k_{min}}\bE$ can equal $ [A,\{a_1,\hdots,a_n\}^{r},\hdots]$ for all $r \geq j_{k_{min}}+2$. Hence, the sequence $j_k$ would be bounded by $j_{k_{min}}+2,$ which is a contradiction.  
\end{proof}

We now translate Corollary \ref{prop:rpt} to the language of accumulation point of infinite staircases for certain $b$-values to prove Theorem \ref{cor:rpt}.

\begin{proof}[Proof of Theorem \ref{cor:rpt}]
Recall from Section~\ref{ss:fractal} that applying a word $w$ to a triple $\Tt_n$ (or a symmetry of it) with middle class $\bE$ generates an infinite staircase with accumulation point the limit of the center of the classes $w_k\bE$, where $(w_k)_{k\in\mathbb{N}}$ is the sequence of finite subwords of $w$.
When the word is not of the form $w=x^\infty w_\star$ or $w=y^\infty w_\star$ for some finite word $w_\star$ we obtain what we call a non-principal staircase.

    Hence, by Corollary~\ref{prop:rpt}, the accumulation point of a non-principal staircase does not have a periodic continued fraction, or equivalently, it is not a quadratic irrational. 
    
    By \eqref{eq:accb}, the accumulation points are the roots of a quadratic equation with coefficients in $\Q(b)$, and as the root is not a quadratic irrational, the $b$-value is not a rational number.  
\end{proof}

\section{Special rational $b$-values do not have infinite staircases}\label{s:nosratb}

In this section, we prove Theorem \ref{thm:NoRatlAcc}. A key tool in this section, that is also used in Section \ref{s:ghosts}, is the Cremona move. Here we review its definition and introduce some related notation.

A \textbf{Cremona move} is a linear automorphism of $\mathbb{R}^{m+1}$ of one of the following two types. The first is of the form

\[ c_{ijk}(d;n_1,\hdots,n_m)=(\tilde{d};\tilde{n}_1,\hdots,\tilde{n}_m), \quad \text{with }  \begin{cases} \tilde{d}= d+\delta_{ijk} \\ 
\tilde{n}_\ell =n_\ell+\delta_{ijk} & \text{if $\ell=i,j,k$}\\
\tilde{n}_\ell=n_\ell & \text{if $\ell \neq i,j,k$}
\end{cases} \]
where $\delta_{ijk}=d-n_i-n_j-n_k$ is called the \textbf{defect}.

The second type of Cremona move is a reordering operation $c_{ij}$ where $c_{ij}(d;n_1,\hdots,n_m)$ swaps the entries $n_i$ and $n_j$. The first element of the vector can never be reordered. 

 We say two vectors are {\bf Cremona equivalent} if one can be obtained from the other via a series of Cremona moves.
 A \textbf{reduced vector} is an ordered vector  (i.e.\ $n_1 \geq n_2 \geq n_3 \geq \hdots$) for which $\delta_{123}$, the defect of the Cremona move $c_{123}$, would be non-negative. 

\subsection{Proof of Theorem~\ref{thm:NoRatlAcc}}

Theorem~\ref{thm:NoRatlAcc} is a consequence of the following more specific result:

\begin{prop} \label{prop:Cremona}
The ellipsoid embedding functions for the special rational $b$-values $b_i$, with $i\geq 1$, do not have descending infinite staircases. 
\end{prop}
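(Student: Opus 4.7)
The proof plan is by contradiction, using Cremona reduction as the main tool. First I would assume that $c_{b_i}$ has a descending infinite staircase. By the definition of a descending staircase and the discussion in Section \ref{ss:reviewEC}, this produces an infinite sequence of perfect classes $\bE_k = (d_k; m_k, q_k\bw(p_k/q_k))$ whose centers $p_k/q_k$ form a strictly decreasing sequence converging to $\acc(b_i) = p_i/q_i$, each of which satisfies the live-obstruction inequality
\[
\mu_{\bE_k,b_i}(p_k/q_k) \;=\; \frac{p_k}{d_k - b_i m_k} \;>\; \vol_{b_i}(p_k/q_k) \;=\; \sqrt{\frac{p_k/q_k}{1-b_i^2}}.
\]

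Next I would exploit that every perfect class is Cremona equivalent to $(0;-1)$. Because $p_k/q_k$ approaches $p_i/q_i$ from above, the weight expansion $\bw(p_k/q_k)$ agrees with $\bw(p_i/q_i)$ in an increasing number of initial entries as $k\to\infty$, differing only in a tail determined by the continued-fraction expansion of $p_k/q_k$ beyond the common prefix. This lets me apply a uniform initial sequence of reordering moves and Cremona transformations $c_{ijk}$ depending only on $\bw(p_i/q_i)$. The resulting reduced form of $\bE_k$, together with the Diophantine equations \eqref{eqn:Ds} and the parametrization \eqref{eq:dmfrompqt} in terms of $t_k$, gives constraints on $d_k$ and $m_k$ in terms of $p_k$ and $q_k$. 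Combining these with the rationality relation \eqref{eq:accb} between $b_i$ and $p_i/q_i$, I would aim to show $d_k - b_i m_k \geq q_k\,\vol_{b_i}(p_k/q_k)$ for all sufficiently large $k$, contradicting liveness.

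The main obstacle I anticipate is keeping the Cremona reduction effective: verifying the nonnegativity of the successive defects $\delta_{ijk}$ required to apply $c_{ijk}$ in order, and tracking how the entries of the reduced class depend jointly on $p_k, q_k$, and $t_k$ rather than only on $p_k/q_k$. The symmetry framework of Section \ref{ss:symmetries} should reduce the work substantially: since $b_i$ is obtained from the base case by iterating $\underline{S}$, and the induced actions $(S^i)^\sharp$ and $(S^iR)^\sharp$ preserve perfectness and commute or swap with $x$- and $y$-mutations, a descending staircase at $b_i$ should transport to a corresponding structure at $b_1$ (possibly flipped by $\underline{R}$). This way the argument would reduce to the single explicit case $i=1$ with $p_1/q_1 = 35/6$, where the Cremona reduction becomes a finite, hand-checkable computation parallel in spirit to the treatment of $z=6$ already carried out in \cite[Theorem~6]{ICERM} for $b_0 = 1/5$.
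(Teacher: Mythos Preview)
Your proposal has a genuine conceptual gap in the target inequality. You aim to show that for every hypothetical staircase class $\bE_k$ one has $\mu_{\bE_k,b_i}(p_k/q_k)\le\vol_{b_i}(p_k/q_k)$ (or even something smaller), but this cannot hold. The function $c_{b_i}$ is strictly \emph{above} the volume curve immediately to the right of $z_i=\acc(b_i)$: indeed, by \cite[Example~32]{ICERM} there is a class whose obstruction equals the linear function $\lambda_{b_i}(z)=(1+z)/(3-b_i)$ there, and $\lambda_{b_i}(z)>\vol_{b_i}(z)$ for $z>z_i$. So exceptional classes with $\mu>\vol$ do exist near $z_i$; what has to be ruled out is that infinitely many of them are \emph{live}, i.e.\ that $c_{b_i}$ has infinitely many corners there. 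Your proposed contradiction never engages with this distinction. Relatedly, Cremona reducing a perfect class $\bE_k$ to $(0;-1)$ gives no information beyond the Diophantine equations \eqref{eqn:Ds} you already assumed---every exceptional class reduces that way by definition---so ``tracking the reduction'' cannot produce new constraints on $d_k,m_k$.

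The paper's argument works on the embedding side rather than the obstruction side. It shows directly that $c_{b_i}(z)=\lambda_{b_i}(z)$ on a right neighborhood of $z_i$, which being linear has no staircase. The upper bound $c_{b_i}(z)\le\lambda_{b_i}(z)$ is obtained by constructing the embedding $E(1,z)\hookrightarrow\lambda_{b_i}(z)H_{b_i}$: via \cite[Theorem~2.1]{concaveconvex} this is a ball packing, which exists iff the vector $(\lambda_{b_i}(z);b_i\lambda_{b_i}(z),\bw(z))$ Cremona-reduces to a reduced vector with nonnegative entries. So the Cremona moves are applied to the \emph{symplectic form class}, not to the obstructing exceptional classes. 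Your instinct to use the symmetries to reduce to a single base case is correct and is exactly what the paper does (Proposition~\ref{prop:reduceCase} relates $v_{b_k}(\tilde z_k)$ to $v_{b_1}(\tilde z_1)$ via iterated $\Gamma\circ\Xi$), but again on the embedding vector; the residual case $i=1$ is then handled by an explicit finite reduction (Proposition~\ref{prop:genEquiv}).
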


\begin{proof}(of Theorem ~\ref{thm:NoRatlAcc})
In \cite[Theorem~6]{ICERM}, it was shown that the ellipsoid embedding function for $b_0=\frac15$ does not have an infinite staircase.
In \cite[Corollary~4.3.5]{MMW}, it was shown that the ellipsoid embedding functions $c_{b_i}$ for $i\geq 1$ do not have ascending staircases and Proposition~\ref{prop:Cremona} guarantees that they also do not have descending staircases.

Finally, by \cite[Theorem~1.1.1~(iv)]{MMW}, the only rational $z$-values which could be accumulation points of infinite staircases are the special rationals $z_i=\text{acc}(b_i)$.
\end{proof}

We will now prove Proposition~\ref{prop:Cremona}; the proof uses some auxiliary results which can be found in Section~\ref{subsec:tech}.

\begin{proof}(of Proposition~\ref{prop:Cremona})
For $i\geq1$, let $b_i$ be a special rational $b$-value. We will show that $c_{b_i}$ does not have a descending infinite staircase, which would necessarily accumulate at $z_i=\text{acc}(b_i)$, by showing that for $z$-values immediately to the right of $z_i$, the ellipsoid embedding function $c_{b_i}(z)$ is equal to   the linear function 
\[
\lambda_{b_i}(z)=\frac{1+z}{3-{b_i}}.
\]
 
As proved in \cite[Example~32]{ICERM}, to the right of $z_i$ we have
$$\lambda_{b_i}(z)\leq c_{b_i}(z),$$
so we must prove that in a small neighbourhood to the right of $z_i$ we also have
$$\lambda_{b_i}(z)\geq c_{b_i}(z),$$
which is \cite[Conjecture~4.3.7]{MMW}. 
We do this by showing that for such a neighbourhood there exists an embedding 
$$E(1,z) \hookrightarrow \lambda_{b_i}(z) H_{b_i}.$$

Following \cite[Theorem~2.1]{concaveconvex}, the existence of such an embedding is equivalent to the existence of a ball packing
$$\sqcup_{j}B(w_j)\sqcup B(b_i \lambda_{b_i}(z))\hookrightarrow B(\lambda_{b_i}(z)),$$
where the $w_i$ are the entries in the weight sequence of $z$ and the disjoint union is taken with multiplicities.

As explained in \cite[Section 2.2, Method 2]{ellipsoidpolydisc} and proved in \cite[Section 2.3]{BP} and \cite[Section 6.3]{KK}, the existence of such a ball packing is implied by (and is in fact equivalent to) the vector $(\lambda_{b_i}(z);b_i \lambda_{b_i}(z),\bw(z))$ becoming, under a series of Cremona moves with negative defect, a reduced vector with all non-negative entries. But this fact is proved in Proposition~\ref{prop:FinalReduced} so we are done.
\end{proof}

\begin{remark} \rm
    The statement of Proposition \ref{prop:Cremona} holds also in the case $b_0=\frac15$, this was shown in \cite[Theorem~6]{ICERM} relying on an argument involving ECH capacities. It can actually also be proved using a Cremona reduction argument but the proof above would have to be modified for that special case because in a neighbourhood to the right of $z_0$, the function $c_{b_0}(z)$ does not equal $\lambda_{b_0}(z)$. \hfill$\er$
\end{remark}

\subsection{Auxiliary results}\label{subsec:tech}

Throughout this section, we will assume (unless otherwise noted) that $i\geq 1$, that $b_i$ is a special rational $b$-value, that $z_i=\text{acc}(b_i)=:\frac{p_i}{q_i}$ is the corresponding special rational $z$-value, and that $\eps_i=(-1)^{i}$.

We are left with proving Proposition~\ref{prop:FinalReduced}, which states that for $z$ immediately to the right of $z_i$, the vector $(\lambda_{b_i}(z);{b_i} \lambda_{b_i}(z),\bw(z))$  is Cremona equivalent to a reduced vector with all non-negative entries. Since we only considering cases when $5<z<6$ and ${b_i}\lambda_{b_i}(z)<1$, we reorder the vector $(\lambda_{b_i}(z);{b_i} \lambda_{b_i}(z),\bw(z))$ and denote it by
\begin{equation}\label{eq:defBl} v_{b_i}(z):=\left(\lambda_{b_i}(z);1^{\times 5},{b_i}\lambda_{b_i}(z),\bw(z-5)\right).
\end{equation}

First we note some facts about the entries of the vector $v_{b_i}(z)$:

\begin{lemma} \label{lem:deltaWeight}
For $z=z_i+\delta$ we have:
\begin{align*}
    \lambda_{b_i}(z)&=\frac{(p_i+q_i+\de q_i)(3p_i+3q_i+\eps_i)}{8 q_i(p_i+q_i)}=\frac{z+1}{8}\left(3-\frac{\eps_i}{p_i+q_i}\right) \\
    {b_i}\lambda_{b_i}(z)&=\frac{(p_i+q_i+\de q_i)(p_i+q_i+3\eps_i)}{8 q_i(p_i+q_i)}=\frac{z+1}{8}\left(1-3\frac{\eps_i}{p_i+q_i}\right).
\end{align*}
Furthermore, assuming that $\delta>0$ is small enough we have for $z_1=\frac{35}{6}$:
\begin{equation} \label{eq:weightde} \bw(z-5)=\left(z-5,6-z^{\times 5},6z-35^{\times \ell_3},6+35\ell_3-z(6\ell_3+1)^{\times \ell_4},w_5^{\times\ell_5},\hdots\right) \end{equation}
\noindent where $\ell_3=\lfloor \frac{1-6\de}{36\de} \rfloor=\lfloor \frac{6-z}{6z-35} \rfloor$, and for $i\geq 2$:
\begin{equation*}
\bw(z-5)=\left(z-5,6-z^{\times 4},\hdots\right).
\end{equation*}

\end{lemma}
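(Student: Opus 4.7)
The two assertions of the lemma are logically independent and I would handle them separately.

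\emph{Formulas for $\lambda_{b_i}(z)$ and $b_i\lambda_{b_i}(z)$.} Starting from $\lambda_b(z) = (1+z)/(3-b)$, the first step is to express $3-b_i$ explicitly in terms of $p_i,q_i$ using the accumulation point equation \eqref{eq:accb}. Rewriting that equation as $(3-b)^2 z = (1-b^2)(z+1)^2$ and setting $s = 3-b$, I obtain the quadratic
\[
s^2(z^2+3z+1) - 6s(z+1)^2 + 8(z+1)^2 = 0,
\]
whose discriminant simplifies to $4(z+1)^2(z^2-6z+1)$. The decisive numerical input is that every special rational $z_i = p_i/q_i$ satisfies $t_i = 3$: this holds at $z_0 = 6$, and $t$ is preserved under the shift $S$ (Section~\ref{ss:symmetries}). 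Hence $p_i^2 + q_i^2 - 6p_iq_i = t_i^2 - 8 = 1$, which gives $\sqrt{z_i^2-6z_i+1} = 1/q_i$ and $p_i^2 + 3p_iq_i + q_i^2 = 9p_iq_i + 1$. Substituting into the quadratic formula and clearing denominators produces $3 - b_i = 8(p_i+q_i)/\bigl(3(p_i+q_i)+\varepsilon_i\bigr)$, with $\varepsilon_i \in \{\pm 1\}$ picking out the branch. The stated formula for $\lambda_{b_i}(z)$ then follows by substituting $1+z = (p_i+q_i+\de q_i)/q_i$. For $b_i\lambda_{b_i}(z)$ I would invert to obtain $b_i = \bigl((p_i+q_i) + 3\varepsilon_i\bigr)/\bigl(3(p_i+q_i)+\varepsilon_i\bigr)$; multiplying by $\lambda_{b_i}(z)$, the denominator $3(p_i+q_i)+\varepsilon_i$ cancels, yielding the compact expression in the statement.

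\emph{Weight expansion.} Since $z_i \in (5,6)$, the quantity $z-5$ lies in $(0,1)$, so the first square in the decomposition of the $1\times(z-5)$ rectangle has side $z-5$, leaving a rectangle with sides $z-5$ and $6-z$, the former being larger. The next block of squares has side $6-z$ and multiplicity $\lfloor (z-5)/(6-z) \rfloor$, and the two cases in the lemma correspond exactly to whether this floor is $5$ or $4$. At $z_1 = 35/6$ the ratio is exactly $5$, and perturbing by small $\de > 0$ pushes it just above $5$, so the floor is $5$. For $i \geq 2$, by \eqref{eqn:yk} the special rationals $z_i$ strictly decrease to $3 + 2\sqrt{2}$, so $(z_i-5)/(6-z_i)$ strictly decreases to its limit $2+2\sqrt{2} \approx 4.83$ from above while staying below $29/6 < 5$; the floor is therefore $4$, matching the lemma. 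The rest of the expansion in the $i=1$ case follows by iterating the algorithm: the leftover after stripping five squares of side $6-z$ is $(z-5) - 5(6-z) = 6z-35 = 6\de$, giving $\ell_3 = \lfloor (6-z)/(6z-35) \rfloor$ squares of side $6z-35$; one further step produces $(6-z) - \ell_3(6z-35) = 6 + 35\ell_3 - z(6\ell_3+1)$, and so on.

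The only genuinely delicate point is matching the sign $\varepsilon_i$ to the branch of $\acc^{-1}$ defining $b_i$; this reduces to the algebraic identity $\bigl(3(p_i+q_i)\bigr)^2 - 1 = 8(9p_iq_i + 1)$, which is immediate from $p_i^2+q_i^2-6p_iq_i = 1$. Everything else is a routine execution of the weight-decomposition algorithm once $\ell_2$ has been pinned down.
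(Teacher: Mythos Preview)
Your proposal is correct. The paper's own proof is much terser: for the first two identities it simply quotes the closed formula $b_i=(p_i+q_i-3\eps_i)/(3(p_i+q_i)-\eps_i)$ from \cite[Lemma~2.1.1~(iv)]{MM} and says the rest is a straightforward computation; for the weight expansion it quotes the continued fraction $z_i=[5,\{1,4\}^{i-1},1,5]$ from \cite[Lemma~2.1.5~(i)]{MM}, which immediately gives $\ell_2=5$ when $i=1$ and $\ell_2=1+4-1=4$ (after absorbing the $\de$-tail) when $i\ge 2$. Your route re-derives both of these inputs from scratch: you solve the accumulation-point quadratic \eqref{eq:accb} directly, exploiting the invariance $t_i=3$ (equivalently $p_i^2+q_i^2-6p_iq_i=1$) to simplify the discriminant, and you pin down $\ell_2$ by bounding the ratio $(z_i-5)/(6-z_i)$ using only the monotone decrease $z_i\searrow 3+2\sqrt{2}$. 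Your approach buys self-containment and makes transparent why the formula for $3-b_i$ takes that particular shape; the paper's buys brevity. One small caveat: your closing sentence about the sign $\varepsilon_i$ is not quite right---the identity $(3(p_i+q_i))^2-1=8(9p_iq_i+1)$ is what collapses the quadratic-formula expression to $8(p_i+q_i)/(3(p_i+q_i)\pm1)$, but it does not by itself tell you which sign corresponds to which branch of $\acc^{-1}$; that still needs a quick check (e.g.\ at $i=1$), which the paper sidesteps by citing the formula directly.
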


\begin{proof} 
The first two identities are straightforward computations using the following fact proved in \cite[Lemma~2.1.1~(iv)]{MM}: 
$$b_i=\frac{p_i+q_i-3\eps_i}{3(p_i+q_i)-\eps_i}$$ where $\eps_i=(-1)^i$.\footnote{Note that in \cite{MM}, the authors use the notation $\acc^{-1}_U=\acc^{-1}_{+1}$ and $\acc^{-1}_L=\acc^{-1}_{-1}$.}

By \cite[Lemma~2.1.5~(i)]{MM}, for $i \geq 1$ we have the continued fraction expansion
$z_{i}=[5,\{1,4\}^{i-1},1,5].$ Let $\delta$ be small enough such that the continued fraction of $z=z_i+\de$ has the form $[5,\{1,4\}^{i-1},1,5,a,\ldots]$.\footnote{The inequality $z_i<[5,\{1,4\}^{i-1},1,5,a,\ldots]$ holds because the continued fraction of $z_i$ has odd length.} To compute the weight expansions, we set $w_{0}=1$ and $w_1=z-5$, and then inductively compute $\ell_{k}=\lfloor \frac{w_{k-1}}{w_{k}} \rfloor$ and $w_{k}=w_{k-2}-\ell_{k-1} w_{k-1}$. 
\end{proof}

Next, we define two auxiliary functions and state some of their properties. By abuse of notation, we treat as equal vectors that are  identical except for being padded on the right with a different number of zeros:

\begin{lemma} \label{lem:crGen} We define the composition of Cremona moves 
$$\Xi=c_{367}c_{345}c_{127}c_{456}c_{123}$$ 
and a  dropping\footnote{This function will only ever be used when $x_1=x_2=x_4=x_5=x_6=0$, so dropping these entries is in fact a reordering move: under our abuse of notation it amounts to moving zeros all the way to the end of the vector.} and reordering function 
$$\Gamma\left((x_0;x_1,\ldots,x_{11},\vec{y})\right)=(x_0;x_3,x_8,x_{9},x_{10},x_{11},x_7,\vec{y}).$$ 
Each of these functions is linear, and they have the following properties:
\vspace{0.3cm}
\begin{itemize}[leftmargin=*]
    \item $\!\begin{aligned}[t]
                    \Xi(w;x^{\times 5},y,z)=(8w-3 (5x+y+z); & 3w-6x-y-z^{\times 2}, 3w-5 x-y-2z,
\\ & 3w-6x-y-z^{\times 3}, 3w-5x-2y-z),
                \end{aligned}$
\vspace{0.3cm}
    \item $\Xi(w+e;x^{\times 5},y+3e,z)=\Xi(w;x^{\times 5},y,z)+(-e;0^{\times 6},-3e),$
\vspace{0.3cm}
    \item $(\Gamma \circ\Xi)((1;0^{\times 5},3,\vec{0}))=-(1;0^{\times 5},3,\vec{0}).$  
\end{itemize}
\end{lemma}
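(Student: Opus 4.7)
The plan is to verify all three bullets by direct computation, leveraging the fact that both $\Xi$ and $\Gamma$ are genuinely linear maps. Each Cremona move $c_{ijk}$ sends $v=(d;n_1,\ldots,n_m)$ to a vector whose modified entries are $v_\ell+\delta_{ijk}$ for $\ell\in\{0,i,j,k\}$; since $\delta_{ijk}=d-n_i-n_j-n_k$ is a linear functional of $v$, each $c_{ijk}$ is linear, and so is the composition $\Xi$. The map $\Gamma$ is linear as a permutation-projection. This disposes of the linearity assertions automatically.

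For the first bullet, I would track the five Cremona moves in $\Xi=c_{367}c_{345}c_{127}c_{456}c_{123}$ applied in order to $v_0=(w;x,x,x,x,x,y,z)$. The key point is that after each move the defect must be recomputed from the current state. Briefly, $c_{123}$ has defect $w-3x$ and yields $(2w-3x;w-2x,w-2x,w-2x,x,x,y,z)$; $c_{456}$ has defect $2w-5x-y$ and equalizes the values at positions $4,5$; $c_{127}$ has defect $2w-4x-y-z$ and acts on entries $1,2,7$; $c_{345}$ has defect $w-2x-z$ and acts on entries $3,4,5$; finally $c_{367}$ has defect $w-x-y-z$ and acts on entries $3,6,7$. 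Reading off the resulting seven coordinates and the leading entry produces the formula in the first bullet.

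For the second bullet, linearity of $\Xi$ reduces the assertion to showing $\Xi(e;0^{\times 5},3e,0)=(-e;0^{\times 6},-3e)$, which is immediate from the first bullet with $w=e$, $x=0$, $y=3e$, $z=0$: the leading entry becomes $8e-9e=-e$, each of the ``middle'' slots collapses to $3e-3e=0$, and the last slot becomes $3e-6e=-3e$. For the third bullet, evaluating the first bullet at $(w,x,y,z)=(1,0,3,0)$ gives $\Xi((1;0^{\times 5},3,\vec{0}))=(-1;0,0,0,0,0,0,-3,\vec{0})$; applying $\Gamma$ extracts the string $(x_3,x_8,x_9,x_{10},x_{11},x_7)=(0,0,0,0,0,-3)$, producing $(-1;0^{\times 5},-3,\vec{0})=-(1;0^{\times 5},3,\vec{0})$.

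The main obstacle is purely bookkeeping. There is no conceptual difficulty, but care is needed because each defect depends on the intermediate state of the vector rather than on the original input, and the subscripts of each $c_{ijk}$ dictate which three entries (together with the $0$th) are altered. Once the first bullet is established, the other two fall out essentially by inspection from linearity.
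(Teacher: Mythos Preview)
Your proposal is correct and takes the same approach as the paper, which simply states that these are straightforward computations. Your tracking of the five defects $w-3x$, $2w-5x-y$, $2w-4x-y-z$, $w-2x-z$, $w-x-y-z$ is accurate, and the reductions of the second and third bullets to the first via linearity are exactly right.
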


\begin{proof}
These are straightforward computations. 
\end{proof}

\begin{remark}\rm
The composition $\Xi$ is the same that was introduced in \cite[Section 4.1]{MM} to show that two exceptional classes related by the symmetry $S$ are Cremona equivalent. \hfill$\er$
\end{remark}

The first result will allow us to reduce to the case of $b_1=11/31$ and $z_1=35/6.$

\begin{prop} 
 For $\delta>0$ small enough we define $\tilde{z}_1=z_1+\delta$ and $\tilde{z}_k=S^k(\tilde{z}_1)$. Then we have: 
\begin{equation}\label{eq:from lemma}
\tilde z_1\cdots \tilde z_{k-1} \left(\Gamma\circ\Xi\right)^{k-1}\left(v_{b_k}(\tilde z_k)\right)=v_{b_1}(\tilde z_1)-e_k (1;0^{\times 5},3,\vec{0}),
\end{equation}
where
\begin{equation}\label{eq:defEk}e_k=\eps_1q_1\delta\sum_{i=1}^{k-1}\frac{1}{(7p_i-q_i)(p_i+q_i)}.
\end{equation}
\label{prop:reduceCase}
\end{prop}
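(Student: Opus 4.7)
The plan is to proceed by induction on $k$. The base case $k=1$ is immediate: the empty product is $1$, the empty sum in $e_1$ is zero, and $(\Gamma\circ\Xi)^0$ is the identity, so both sides reduce to $v_{b_1}(\tilde z_1)$. The inductive step will rest on a one-step identity
\[
\tilde z_k\,(\Gamma\circ\Xi)\bigl(v_{b_{k+1}}(\tilde z_{k+1})\bigr) = v_{b_k}(\tilde z_k) + c_k\,(1;0^{\times 5},3,\vec 0),
\]
for an explicit scalar $c_k$. Given this, applying $\tilde z_1\cdots\tilde z_{k-1}(\Gamma\circ\Xi)^{k-1}$ to both sides, using linearity and the inductive hypothesis on the first summand, together with the eigenvalue property $(\Gamma\circ\Xi)^{k-1}(1;0^{\times 5},3,\vec 0)=(-1)^{k-1}(1;0^{\times 5},3,\vec 0)$ from the third bullet of Lemma~\ref{lem:crGen}, reduces the inductive step to the scalar identity $\tilde z_1\cdots\tilde z_{k-1}\,c_k\,(-1)^{k-1}=-(e_{k+1}-e_k)$.

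To establish the one-step identity, I would apply the first bullet of Lemma~\ref{lem:crGen} with $w=\lambda_{b_{k+1}}(\tilde z_{k+1})$, $x=1$, $y=b_{k+1}w$, $z=\tilde z_{k+1}-5$ (leaving the tail of $\bw(\tilde z_{k+1}-5)$ untouched), then apply $\Gamma$ and multiply by $\tilde z_k$. Using $\tilde z_{k+1}=S(\tilde z_k)=6-1/\tilde z_k$, so $\tilde z_k(6-\tilde z_{k+1})=1$, together with the identity $(3-b_{k+1})\lambda_{b_{k+1}}(\tilde z_{k+1})=1+\tilde z_{k+1}$ from Lemma~\ref{lem:deltaWeight}, a direct check shows positions $1$--$5$ of the output equal $1$, matching $v_{b_k}(\tilde z_k)$ (positions $2$--$5$ rely on $\ell_2=\lfloor \tilde z_k-1\rfloor\geq 4$ in $\bw(\tilde z_{k+1}-5)$). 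A further computation encoding the action of $S$ on weight expansions shows that multiplying the tail $(u_6,u_7,\ldots)$ of $\bw(\tilde z_{k+1}-5)$ by $\tilde z_k$ reproduces $\bw(\tilde z_k-5)$, again matching $v_{b_k}(\tilde z_k)$. Only positions $0$ and $6$ deviate, and one verifies that the deviation is exactly $c_k(1;0^{\times 5},3,\vec 0)$ with
\[c_k := 3\tilde z_k - \tilde z_k\,\lambda_{b_{k+1}}(\tilde z_{k+1}) - \lambda_{b_k}(\tilde z_k);\]
the two resulting equations at positions $0$ and $6$ are automatically consistent precisely because $\tilde z_{k+1}=S(\tilde z_k)$.

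Closing the induction requires an explicit formula for $c_k$. Expanding $\lambda_{b_i}$ via Lemma~\ref{lem:deltaWeight} and using $\eps_{k+1}=-\eps_k$, $q_{k+1}=p_k$, and $p_{k+1}=6p_k-q_k$ (so $p_{k+1}+q_{k+1}=7p_k-q_k$), one obtains
\[
c_k = \frac{-\eps_k\,q_k\,\delta_k}{(p_k+q_k)(7p_k-q_k)},\qquad \delta_k:=\tilde z_k-z_k.
\]
The recursion $\delta_{k+1}=\delta_k/(\tilde z_k z_k)$ (from $\tilde z_{k+1}=S(\tilde z_k)$ and $z_{k+1}=S(z_k)$), combined with the telescoping identity $\prod_{i=1}^{k-1}z_i = p_{k-1}/q_1$ and $p_{k-1}=q_k$, gives $\delta_k = q_1\delta/(q_k\,\tilde z_1\cdots\tilde z_{k-1})$. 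Substituting yields the required scalar identity $\tilde z_1\cdots\tilde z_{k-1}\,c_k\,(-1)^{k-1} = -\eps_1 q_1\delta/((p_k+q_k)(7p_k-q_k)) = -(e_{k+1}-e_k)$. The main obstacle is the tail-matching step: although $\tilde z_k u_6=\tilde z_k-5$ and $\tilde z_k u_7=6-\tilde z_k$ follow immediately from $\tilde z_k(6-\tilde z_{k+1})=1$, showing that the entire tail $\tilde z_k\cdot(u_6,u_7,\ldots)$ reproduces $\bw(\tilde z_k-5)$ requires a careful Euclidean-algorithm argument (parallel to the continued-fraction formulas \eqref{eq:SCF}--\eqref{eq:RCF}), and is where the hypothesis ``$\delta>0$ small enough'' is used to ensure sufficient length of the expansions at each stage.
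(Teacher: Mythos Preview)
Your proposal is correct and follows essentially the same route as the paper: induction on $k$, the one-step identity $\tilde z_k(\Gamma\circ\Xi)(v_{b_{k+1}}(\tilde z_{k+1}))=v_{b_k}(\tilde z_k)+c_k(1;0^{\times5},3,\vec 0)$ with $c_k=-\eps_k q_k\delta_k/((7p_k-q_k)(p_k+q_k))$, the eigenvector property of $(1;0^{\times5},3,\vec 0)$ under $\Gamma\circ\Xi$, and the scalar identity $\tilde z_1\cdots\tilde z_{k-1}\,\delta_k\,q_k=q_1\delta$ to close. One small point: the tail-matching step you flag as the main obstacle is handled in the paper by a two-line geometric observation rather than a Euclidean-algorithm chase---since $\tilde z_{k+1}-5=1-1/\tilde z_k$, scaling the weight decomposition of a $1\times(1-1/\tilde z_k)$ rectangle by $\tilde z_k$ gives the decomposition of a $\tilde z_k\times(\tilde z_k-1)$ rectangle, namely $(\tilde z_k-1,1^{\times4},\bw(\tilde z_k-5))$, so $\tilde z_k\,\bw_{3\cdots}(\tilde z_{k+1}-5)=\bw(\tilde z_k-5)$ directly.
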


\begin{proof}
Let $\delta=\delta_1$ be small enough that Lemma \ref{lem:deltaWeight} holds.
We will prove this result by induction on $k$. 
The base case of $k=1$ is immediate, as both sides of identity \eqref{eq:from lemma} equal $v_{b_1}(\tilde z_1)$.
For the induction step we assume that \eqref{eq:from lemma} holds for $k$ and aim to show that: 
\begin{align}\label{eq:goalinduction}
\tilde z_1\cdots \tilde z_{k} &\left(\Gamma\circ\Xi\right)^{k}\left(v_{b_{k+1}}(\tilde z_{k+1})\right)=\nonumber\\
&=\tilde z_1\cdots \tilde z_{k-1} \left(\Gamma\circ\Xi\right)^{k-1}\left(v_{b_k}(z_k)\right)+\frac{\eps_1q_1\delta_1}{(7p_k-q_k)(p_k+q_k)} (1;0^{\times 5},3,\vec{0}).
\end{align}

We begin by proving the following auxiliary fact, where $\delta_k=\tilde{z}_k-z_k$:
\begin{equation}\label{eq:sublemma}
\tilde{z}_k\,\left(\Gamma\circ\Xi\right)(v_{b_{k+1}}(\tilde{z}_{k+1}))=v_{b_k}(\tilde{z}_k)-\frac{\de_k \eps_k q_k}{(7p_k-q_k)(p_k+q_k)}(1;0^{\times 5},3,\vec{0}).
\end{equation} 
First we use \eqref{eq:defBl} and Lemma~\ref{lem:deltaWeight} to expand $v_{b_{k+1}}(\tilde z_{k+1})$:

$$v_{b_{k+1}}(\tilde z_{k+1})=\left(\lambda_{b_{k+1}}(\tilde z_{k+1});1^{\times 5},{b_{k+1}}\lambda_{b_{k+1}}(\tilde z_{k+1}),\tilde z_{k+1}-5,(6-\tilde z_{k+1})^{\times 4},\bw_{3\cdots}(\tilde z_{k+1}-5)\right),$$
where $\bw_{3\cdots} (x)$ represents the weight sequence of $x$ from position six onward. Applying Lemma~\ref{lem:crGen}, and simplifying using Lemma \ref{lem:deltaWeight} we obtain:

\begin{align} 
\tilde z_{k}\left(\Gamma\circ\Xi\right)\left( v_{b_{k+1}}(\tilde z_{k+1})\right)=&\tilde z_{k}\bigg( \frac18 \left(3(7-\tilde z_{k+1})+\frac{\eps_{k+1} (1+\tilde z_{k+1})}{p_{k+1}+q_{k+1}}\right); \nonumber\\ 
&\phantom{space}(6-\tilde{z}_{k+1})^{\times 5},\frac18 \left((7-\tilde z_{k+1})+\frac{3 \eps_{k+1} (1+\tilde z_{k+1})}{p_{k+1}+q_{k+1}}\right),\bw_{3\cdots}(\tilde z_{k+1}-5)  \bigg)\nonumber \\ 
=&\bigg( \frac18 \left(3(\tilde{z}_k+1)-\frac{7\eps_k\de_k}{7p_k-q_k}-\frac{\eps_k}{q_k}\right); \nonumber\\
&\phantom{space}1^{\times 5},\frac18 \left((\tilde{z}_k+1)-\frac{21\eps_k\de_k}{7p_k-q_k}-\frac{3\eps_k}{q_k}\right),\tilde{z}_k \,\bw_{3\cdots}\left(1-\frac{1}{\tilde{z}_k}\right)\bigg). \label{eq:threelines}
\end{align}

\noindent In the last equality, the simplifications use the facts that
\begin{equation}\label{eq:useful twice}\eps_{k+1}=-\eps_k\mbox{ \,\,\,\,\,and \,\,\,\,\, } \tilde{z}_{k+1}=6-\frac{1}{\tilde z_k}\quad\mbox{ and }\quad\de_{k+1}=\frac{q_k^2\de_k}{p_k^2+p_k q_k \de_k}.
\end{equation}
From \eqref{eq:threelines} we obtain \eqref{eq:sublemma} as desired by using the formulas of Lemma~\ref{lem:deltaWeight} and noting that 
$$\tilde{z}_k\, w_{3\cdots}\left(1-\frac{1}{\tilde{z}_k} \right)=\bw(\tilde{z}_k-5)$$ 
because if we decompose a $(1-\frac{1}{\tilde{z}_k})\times 1$ rectangle into squares we get the weight sequence
$$\tilde{z}_k \bw\left(1-\frac{1}{\tilde{z}_k}\right)=\left( \tilde z_k-1,1^{\times 4},\bw(\tilde{z}_k-5)\right).$$

Now we turn back to proving \eqref{eq:goalinduction} towards the induction step. Because $\Gamma\circ\Xi$ is linear, applying $\tilde z_1\cdots \tilde z_{k-1}(\Gamma\circ\Xi)^{k-1}$ to both sides of \eqref{eq:sublemma} yields
\begin{align*}
\tilde z_1\cdots \tilde z_k&\,\left(\Gamma\circ\Xi\right)^k(v_{b_{k+1}}(\tilde z_{k+1}))=\\
&=\tilde z_1\cdots \tilde z_{k-1} \left(\Gamma\circ\Xi\right)^{k-1}v_{b_k}(\tilde z_k)-\frac{(-1)^{k-1}\tilde z_1\cdots \tilde z_{k-1}\de_k \eps_k q_k}{(7p_k-q_k)(p_k+q_k)}(1,0^{\times 5},3,\vec{0});
\end{align*}
here we also use the third property in Lemma \ref{lem:crGen}.
Comparing this to \eqref{eq:goalinduction}, we are left to prove that 
\[
\eps_1q_1\delta_1=(-1)^{k-1}\tilde z_1\cdots \tilde z_{k-1}\de_k \eps_k q_k.
\]
This is straightforward to prove by induction, using \eqref{eq:useful twice}.
\end{proof}

The next two lemmas concern the sum in the term $e_k$ defined in \eqref{eq:defEk}. The first one is used in the proof of the second. Recall from \eqref{eqn:yk} the definition of the sequence $y_k$, where  $y_k=q_{k-1}=p_{k-2}$.

\begin{lemma} \label{lem:partialSum} 
For $k\geq1$ the following equality holds:
\begin{equation}\label{eq:thatsum} \sum_{i=1}^k \frac{1}{(7p_i-q_i)(p_i+q_i)}=  \sum_{i=1}^k \frac{1}{(7y_{i+2}-y_{i+1})(y_{i+2}+y_{i+1})}= \frac{y_k}{41(35(y_{k+1}+y_k)-6(y_k+y_{k-1}))}.
\end{equation}
\end{lemma}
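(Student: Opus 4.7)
The plan is to prove the identity in two steps: first reduce the problem to a cleaner form using the recurrence $y_{j+1}=6y_j-y_{j-1}$, then establish the result by telescoping.

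The first equality follows immediately from the identification $p_i=y_{i+2}$, $q_i=y_{i+1}$ recorded just before equation (1.7), since for coprime $p,q$ the tuple $(p_i,q_i)$ is determined by $p_i/q_i$. For the second equality, I would first apply the recursion $y_{j+1}=6y_j-y_{j-1}$ (equivalently $y_{j-1}=6y_j-y_{j+1}$) to simplify every three-term combination in sight. Direct substitution yields
\begin{align*}
y_{i+2}+y_{i+1} &= 7y_{i+1}-y_i, \\
7y_{i+2}-y_{i+1} &= 41y_{i+1}-7y_i, \\
35(y_{k+1}+y_k)-6(y_k+y_{k-1}) &= 41y_{k+1}-7y_k.
\end{align*}
Setting $d_i:=41y_{i+1}-7y_i$, and observing via the same recursion that $7y_{i+1}-y_i=41y_i-7y_{i-1}=d_{i-1}$, the identity to prove collapses to
\[
\sum_{i=1}^{k}\frac{1}{d_i\, d_{i-1}} \;=\; \frac{y_k}{41\, d_k}.
\]

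For this I would prove the partial-fractions/telescoping identity
\[
\frac{1}{d_i\, d_{i-1}} \;=\; \frac{y_i}{41\, d_i} - \frac{y_{i-1}}{41\, d_{i-1}}, \qquad i\geq 1.
\]
Clearing the common denominator $41\,d_i d_{i-1}$, this reduces (after the cross terms $\pm 7 y_i y_{i-1}$ cancel) to $41=41(y_i^2-y_{i-1}y_{i+1})$, i.e.\ to the statement $y_i^2-y_{i-1}y_{i+1}=1$ for all $i\geq 1$. This is a standard conserved quantity for the Chebyshev-type recursion: substituting $y_{i+1}=6y_i-y_{i-1}$ gives $y_i^2-y_{i-1}y_{i+1}=y_i^2-6y_iy_{i-1}+y_{i-1}^2$, and a direct check shows this expression is invariant under $i\mapsto i+1$; its value at $i=1$ is $1^2-6\cdot 1\cdot 0+0^2=1$, using $y_0=0$ (obtained from $y_2=6y_1-y_0$ with $y_1=1$, $y_2=6$).

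Summing the telescoping identity from $i=1$ to $k$ leaves exactly $\tfrac{y_k}{41 d_k}-\tfrac{y_0}{41 d_0}$, and the second term vanishes because $y_0=0$. This gives the claimed right-hand side. The only potentially nonobvious step is guessing the right telescoping form; once the quantities $d_i$ are introduced and the identity $7y_{i+1}-y_i=d_{i-1}$ is spotted, all the remaining work is mechanical manipulation of the linear recurrence, so I do not expect any genuine obstacle.
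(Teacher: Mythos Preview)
Your argument is correct and is essentially the same as the paper's: both rewrite $7y_{i+2}-y_{i+1}=y_{i+3}+y_{i+2}$ and $35(y_{k+1}+y_k)-6(y_k+y_{k-1})=y_{k+3}+y_{k+2}$ (your $d_i$), then telescope via the identity
\[
\frac{1}{d_i d_{i-1}}=\frac{y_i}{41 d_i}-\frac{y_{i-1}}{41 d_{i-1}},
\]
which in both cases reduces to $y_i^2-y_{i-1}y_{i+1}=1$. The only cosmetic differences are that the paper phrases the telescoping as an induction and cites this last identity from \cite[Lemma~2.1.3~(ii)]{MM}, whereas you introduce the convenient shorthand $d_i$ and supply a self-contained proof of the identity; your write-up is in fact slightly cleaner.
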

\begin{proof}
The first equality just comes from changing the $p_i,q_i$ notation to the $y_i$ notation. 
As
\begin{equation}\label{eq:top of proof} y_{k+3}=6y_{k+2}-y_{k+1}=6(6y_{k+1}-y_{k})-y_{k+1}=35y_{k+1}-6y_{k},
\end{equation}
we can rewrite the second equality of \eqref{eq:thatsum} as
\begin{equation}\label{eq:line below as}
\sum_{i=1}^k \frac{1}{(y_{i+3}+y_{i+2})(y_{i+2}+y_{i+1})}= \frac{y_k}{41(y_{k+3}+y_{k+2})}.
\end{equation}

We now prove \eqref{eq:line below as} by induction. The case for $k=1$ can be easily checked. To prove the inductive step, it suffices to check that 

$$\frac{1}{(y_{k+4}+y_{k+3})(y_{k+3}+y_{k+2})}=\frac{y_{k+1}}{41(y_{k+4}+y_{k+3})}-\frac{y_k}{41(y_{k+3}+y_{k+2})}.$$
Clearing denominators this is equivalent to showing that
$$41=y_{k+1}y_{k+3}-y_ky_{k+4}+y_{k+1}y_{k+2}-y_ky_{k+3},$$
which holds true by \eqref{eq:top of proof}, the recursion $y_k=6y_{k-1}-y_{k-2}$, and the identity $y_k^2-y_{k+1}y_{k-1}=1$, proved in \cite[Lemma~2.1.3~(ii)]{MM}.
\end{proof}

\begin{lemma}\label{lem:bound} 
We have
\[ \lim_{k \to \infty} \sum_{i=1}^k \frac{1}{(7p_i-q_i)(p_i+q_i)}=\frac{29}{82}-\frac{1}{2\sqrt{2}}.\]
Furthermore, for all $k\geq 1$, 
\[ 0<\sum_{i=1}^k \frac{1}{(7p_i-q_i)(p_i+q_i)}<\frac{29}{82}-\frac{1}{2\sqrt{2}}.\]
\end{lemma}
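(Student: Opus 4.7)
The plan is to start from the closed-form expression provided by Lemma~\ref{lem:partialSum},
\[
s_k:=\sum_{i=1}^k \frac{1}{(7p_i-q_i)(p_i+q_i)}=\frac{y_k}{41\bigl(35(y_{k+1}+y_k)-6(y_k+y_{k-1})\bigr)},
\]
and take its limit by dividing both numerator and denominator by $y_k$, so that
\[
s_k=\frac{1}{41\bigl(35(y_{k+1}/y_k+1)-6(1+y_{k-1}/y_k)\bigr)}.
\]
The recursion $y_k=6y_{k-1}-y_{k-2}$ has characteristic polynomial $x^2-6x+1$ with roots $\phi=3+2\sqrt{2}$ and $1/\phi=3-2\sqrt{2}$ (consistent with $\acc(1/3)=3+2\sqrt{2}$ from \eqref{eqn:yk}); since the dominant root is $\phi$, the standard argument for linear recurrences gives $y_{k+1}/y_k\to\phi$ and $y_{k-1}/y_k\to 1/\phi$ as $k\to\infty$.

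Plugging in, I would compute
\[
35(\phi+1)-6(1+1/\phi)=35(4+2\sqrt{2})-6(4-2\sqrt{2})=116+82\sqrt{2}=2(58+41\sqrt{2}),
\]
so the limit of $s_k$ equals $1/\bigl(82(58+41\sqrt{2})\bigr)$. Rationalizing using $(58-41\sqrt{2})(58+41\sqrt{2})=58^2-2\cdot 41^2=2$ yields
\[
\lim_{k\to\infty} s_k=\frac{58-41\sqrt{2}}{164}=\frac{29}{82}-\frac{\sqrt{2}}{4}=\frac{29}{82}-\frac{1}{2\sqrt{2}},
\]
which is the claimed limit.

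For the two-sided bound, the lower bound $s_k>0$ is immediate since every summand $1/((7p_i-q_i)(p_i+q_i))$ is positive (here one uses that $7p_i>q_i$, which follows from $p_i/q_i\to 3+2\sqrt{2}<7$ being approached monotonically enough, or more simply from the explicit values $p_i,q_i>0$ with $p_i\geq 6q_i-q_{i-1}>q_i/7$ for $i\geq 1$). The upper bound $s_k<\frac{29}{82}-\frac{1}{2\sqrt{2}}$ is then a formal consequence of the first part: the partial sums $(s_k)$ form a strictly increasing sequence (since $s_{k+1}-s_k$ is one of the positive summands), and they converge to the limit $L:=\frac{29}{82}-\frac{1}{2\sqrt{2}}$, so $s_k<s_{k+1}\leq L$.

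I do not anticipate a serious obstacle here. The only mildly delicate step is the convergence $y_{k\pm 1}/y_k\to \phi^{\pm 1}$, which I would justify in one line by writing $y_k=A\phi^k+B\phi^{-k}$ (from the linear recursion and initial values $y_1=1$, $y_2=6$, giving $A,B\neq 0$) and observing that $\phi^{-k}\to 0$.
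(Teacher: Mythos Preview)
Your proposal is correct and follows essentially the same approach as the paper: both start from the closed form in Lemma~\ref{lem:partialSum}, divide through by $y_k$, and pass to the limit using $y_{k+1}/y_k\to 3+2\sqrt{2}$ and $y_{k-1}/y_k\to 3-2\sqrt{2}$ (the paper cites \cite[Lemma~2.1.5]{MM} for these limits rather than deriving them from the characteristic polynomial), then note that positivity of the summands gives the strict bounds. Your write-up actually supplies more of the arithmetic detail (the rationalization step) than the paper does; the only cosmetic slip is the aside ``$p_i/q_i\to 3+2\sqrt{2}<7$,'' which is irrelevant to $7p_i>q_i$---but your alternative justification via $p_i=6q_i-y_i>5q_i$ is fine.
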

\begin{proof}
By Lemma~\ref{lem:partialSum}, the partial sums are equal to 
\[  \frac{y_k}{41(35(y_{k+1}+y_k)-6(y_k+y_{k-1}))}= \frac{1}{41(35(\frac{y_{k+1}}{y_k}+1)-6(1+\frac{y_{k-1}}{y_k}))}\]
The conclusion follows from the limits from \cite[Lemma~2.1.5]{MM} 
$$\lim_{k \to \infty} \frac{y_{k+1}}{y_k}=3+2\sqrt{2} \qquad \mbox{ and } \qquad \lim_{k \to \infty} \frac{y_k}{y_{k+1}}= 3-2\sqrt{2}.$$
The second statement is clear because the summands are all positive. 
\end{proof} 

By Proposition~\ref{prop:reduceCase} we know that any vector $v_{b_k}(\tilde{z_k})$ is Cremona equivalent to (a rescaling of) $v_{b_1}(\tilde{z_1})+e(1;0^{\times 5},3,\vec{0})$, now we show that this latter vector is Cremona equivalent to a reduced vector:

\begin{prop} \label{prop:genEquiv}
For $\delta>0$ small enough and $e=6\delta\xi$ with $0 \leq \xi \leq \frac{29}{82}-\frac{1}{2\sqrt{2}}$, the vector 
$$v_{b_1}(z_1+\de)+e(1;0^{\times 5},3,\vec0)$$
is Cremona equivalent to a reduced vector with all non-negative entries.
\end{prop}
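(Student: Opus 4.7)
The plan is to perform an explicit Cremona reduction on the vector $V := v_{b_1}(z_1+\delta) + e(1;0^{\times 5},3,\vec{0})$ and verify that the resulting reduced vector has all non-negative entries. Using Lemma~\ref{lem:deltaWeight} with $z_1 = 35/6$ and $b_1 = 11/31$, I would first write
\[V = \bigl(\lambda+e;\,1^{\times 5},\,b_1\lambda+3e,\,z-5,\,(6-z)^{\times 5},\,(6z-35)^{\times \ell_3},\ldots\bigr),\]
where $\lambda = \lambda_{b_1}(z)$, and check that for $\delta>0$ small and $\xi$ in the given range, $V$ is already ordered in non-increasing fashion.

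The key initial step is to apply the composition $\Xi$ from Lemma~\ref{lem:crGen}. Substituting $w = \lambda$, $x=1$, $y = b_1\lambda$, and the formula variable ``$z$'' with our $z-5$ into the first identity of Lemma~\ref{lem:crGen}, together with the coefficients $8-3b_1 = 215/31$, $3-b_1 = 82/31$, $3-2b_1 = 71/31$, a direct computation yields
\[\Xi(V) = \bigl(\tfrac{5}{12}-\tfrac{31\delta}{82}-e;\,0,0,\,6-z,\,0,0,0,\,\tfrac{1}{12}-\tfrac{11\delta}{82}-3e,\,(6-z)^{\times 5},\,(6z-35)^{\times \ell_3},\ldots\bigr).\]
The five zeros produced at positions $1,2,4,5,6$ make the subsequent reordering $\Gamma$ a legitimate step, and after moving the remaining $(6-z)$ past $\tfrac{1}{12}-\tfrac{11\delta}{82}-3e$ one obtains
\[W := \Gamma(\Xi(V)) = \bigl(\tfrac{5}{12}-\tfrac{31\delta}{82}-e;\,(6-z)^{\times 6},\,\tfrac{1}{12}-\tfrac{11\delta}{82}-3e,\,(6z-35)^{\times \ell_3},\ldots\bigr).\]

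Next I would continue the Cremona reduction on $W$, repeatedly applying $c_{ijk}$-moves first against the large block $(6-z)^{\times 6}$ and then against the subsequent blocks from the weight expansion of $z-5$. Each such sequence of moves mimics the classical Cremona reduction of a weight vector $(d;\bw(z'))$ along one level of the continued fraction of $z-5$; since $\bw(z-5)$ is finite when $z$ is rational, the reduction terminates in finitely many steps, and the general case follows by continuity in $z$. At each stage I would check that the defect is non-positive so that the moves are genuinely reducing the vector.

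The main obstacle is the final non-negativity verification. The smallest entries accumulate corrections of the form $a\delta + be$ whose coefficients depend on how far the reduction has proceeded, and the bound $\xi \leq \tfrac{29}{82} - \tfrac{1}{2\sqrt{2}}$ from Lemma~\ref{lem:bound} is precisely what is required to keep these entries non-negative uniformly. I expect the cleanest way to carry this out is an induction on the length of the continued fraction of $z-5$: show that the intermediate vector $W$ has the same structural shape as $V$ but at a shallower level of the continued fraction (after appropriate rescaling), thereby reducing the problem to a base case plus a stable induction step whose accumulated error is controlled by the telescoping sum analyzed in Lemma~\ref{lem:partialSum}.
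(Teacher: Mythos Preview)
Your computation of $\Xi(V)$ and the reordered $W$ is correct, and applying $\Xi$ is indeed the paper's first move. The gap is in everything that follows.

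The induction you propose does not exist. After $\Gamma\circ\Xi$, the vector $W=(\tfrac{5}{12}-\tfrac{31\delta}{82}-e;\,(1/6-\delta)^{\times 6},\,\tfrac{1}{12}-\tfrac{11\delta}{82}-3e,\,(6\delta)^{\times \ell_3},\dots)$ does \emph{not} have the shape of $v_{b_1}$ at a shallower continued-fraction level: the leading block has six equal entries rather than five, and the special ``$b$-entry'' no longer sits in the position that would allow a second application of $\Xi$ to reproduce the same structure. More importantly, you have misread the role of Lemma~\ref{lem:partialSum}: the telescoping sum there runs over the index $k$ of the special rational $b$-values $b_k$ (this is the content of Proposition~\ref{prop:reduceCase}), not over levels of the continued fraction of a single $z$ near $z_1$. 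Proposition~\ref{prop:genEquiv} is purely about $b_1$; inside its proof only the crude numerical bound $\xi<6/41$ (implied by $\xi\le\tfrac{29}{82}-\tfrac{1}{2\sqrt2}$) is ever used, to keep certain defects negative and certain entries positive.

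What the paper actually does after $\Xi$ is apply three further explicit Cremona moves $c_{8,9,10}c_{3,11,12}c_{8,9,10}$, which kill the six $(1/6-\delta)$'s and the three new $(1/12+\tfrac{133\delta}{82}-e)$'s and bring the degree down to $\tfrac{1}{12}+O(\delta)$. At that point the only ``large'' entry left is $\tfrac{1}{12}-\tfrac{11\delta}{82}-3e$, and the rest are of size $O(\delta)$: six copies of $(\tfrac{174}{41}+12\xi)\delta$, then the $(6\delta)^{\times\ell_3}$ block, then the tail of the weight expansion. From here one performs $\lfloor\ell_3/2\rfloor$ moves of the form $c_{1,2i,2i+1}$, each with the \emph{same} defect $(12\xi-\tfrac{72}{41})\delta<0$, and the proof ends with a short case analysis on the parity of $\ell_3$ and on whether $(\tfrac{174}{41}+12\xi)\delta$ exceeds $\tfrac16-(6\ell_3+1)\delta$. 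That case split is where the actual non-negativity verification lives; your proposal does not anticipate it and the vague appeal to ``classical Cremona reduction of a weight vector'' does not substitute for it, because the presence of the extra $\tfrac{1}{12}-\tfrac{11\delta}{82}-3e$ entry and the perturbation $e$ means this is not a pure weight-vector reduction.
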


\begin{proof}
We expand $v_{b_1}(z_1+\de)$ using \eqref{eq:defBl} and Lemma~\ref{lem:deltaWeight} and apply the following sequence of Cremona moves to $v_{b_1}(z_1+\de)+e(1;0^{\times 5},3,\vec0)$: 
\[c_{8,9,10}c_{3,11,12}c_{8,9,10}\,\Xi.\] 
This yields the vector
\[ \left(\tfrac{1}{12}+\tfrac{829}{82}\de-5e;0^{\times 2},\tfrac{174}{41}\de-2e,0^{\times 3},\tfrac{1}{12}-\tfrac{11}{82}\de-3e,\tfrac{174}{41}\de-2e^{\times 5},6\de^{\times \ell_3},\tfrac16-(6\ell_3+1)\de^{\times \ell_4},w_5^{\times\ell_5},\hdots\right).\] 
Reordering and setting $e=-6\delta \xi$ we get 
\begin{equation}\label{eq:weight3} 
\left(\tfrac{1}{12}+(\tfrac{829}{82}+30\xi)\de;\tfrac{1}{12}-(\tfrac{11}{82}-18\xi)\de,6\de^{\times \ell_3},(\tfrac{174}{41}+12\xi)\de^{\times 6},\tfrac16-(6\ell_3+1)\de^{\times \ell_4},w_5^{\times\ell_5},\hdots\right),
\end{equation}
where $\ell_3=\lfloor \frac{1-6\de}{36\de} \rfloor$.

We want to put all but the first entry of \eqref{eq:weight3} in decreasing order before we apply a series of Cremona moves which will lead us to a reduced vector.

For small enough $\de$ and using the bounds on $\xi$, we have 
$$\tfrac{1}{12}-(\tfrac{11}{82}-18\xi)\de>6\de>(\tfrac{174}{41}+12\xi)\de>0$$
and also
$$\tfrac16-(6\ell_3+1)\de \geq w_5 \geq \ldots \geq 0$$ 
since these entries appear in this order on the weight vector of $z_1+\de$ as in\eqref{eq:weightde}. We have to consider two cases: 
$$(\tfrac{174}{41}+12\xi)\de\geq\tfrac16-(6\ell_3+1)\de \mbox{\,\,\,\,\, or \,\,\,\,\,} (\tfrac{174}{41}+12\xi)\de<\tfrac16-(6\ell_3+1)\de.$$

 \vspace{0.2cm}

{\bf Case 1:} When $(\tfrac{174}{41}+12\xi)\de\geq\tfrac16-(6\ell_3+1)\de$, the vector as written in \eqref{eq:weight3} already has all entries but the first one in decreasing order. Note that this case includes the case when $\ell_4=0$ as in this case $w_4=\tfrac16-(6\ell_3+1)\de=0$. We do $\lfloor \frac{\ell_3}{2}\rfloor$ Cremona moves,  each with defect $(12\xi-\tfrac{72}{41})\de <0$. The Cremona moves we perform are 
\[ c_{1,2\lfloor \ell_3/2 \rfloor-1,2\lfloor \ell_3/2 \rfloor}\hdots c_{1,i,i+1} \hdots c_{123}\]

If $\ell_3$ is odd, we obtain
$$\left(\tfrac{1}{12}+(\tfrac{901-72\ell_3}{82}+6(4+\ell_3)\xi)\de;
\tfrac{1}{12}+(\tfrac{61-72\ell_3}{82}+6(2+\ell_3)\xi)\de,
6\de,
(\tfrac{174}{41}+12\xi)\de^{\times (\ell_3+5)},\hdots\right).$$
For small enough $\de$ this vector has all but the first entry in decreasing order since
\begin{equation} \label{eq:order1} \tfrac{1}{12}+(\tfrac{61-72\ell_3}{82}+6(2+\ell_3)\xi)\de>6\de, \end{equation}
 which can be checked using the lower bound on $\xi$ and the bound $\ell_3=\lfloor \frac{1-6\de}{36\de} \rfloor\leq \frac{1-6\de}{36\de} $. The next Cremona move would have defect equal to zero. Further, as all entries of the vector are non-negative, we are done.

If $\ell_3$ is even, we obtain
$$\left(\tfrac{1}{12}+(\tfrac{829-72\ell_3}{82}+6(5+\ell_3)\xi)\de;
\tfrac{1}{12}+(\tfrac{-11-72\ell_3}{82}+6(3+\ell_3)\xi)\de,
(\tfrac{174}{41}+12\xi)\de^{\times (\ell_3+6)},
\hdots\right).$$
As above, for small enough $\de$ we have that indeed 
\begin{equation} \label{eq:order2} \tfrac{1}{12}+(\tfrac{-11-72\ell_3}{82}+6(3+\ell_3)\xi)\de > (\tfrac{174}{41}+12\xi)\de, \end{equation} 
so the vector has all but the first entry in decreasing order. The next standard Cremona move would have defect $(\tfrac{72}{41}-12\xi)\de$, which is positive, so the vector is reduced. Further, as the entries are non-negative, we are done.

{\bf Case 2:} When $(\tfrac{174}{41}+12\xi)\de<\tfrac16-(6\ell_3+1)\de$, we begin by showing the following:

\vspace{0.2cm}

\noindent\textbf{Claim:} 
If  $(\tfrac{174}{41}+12\xi)\de<\tfrac16-(6\ell_3+1)\de$ then  $\ell_4\leq1$.  

\begin{proof}
We aim to prove the contrapositive and therefore assume that $\ell_4\geq2$. Rewriting $\ell_4$ (see the beginning of Section \ref{ss:reviewEC}) as
$$\ell_4=\left\lfloor\frac{w_3}{w_4}\right\rfloor=\left\lfloor\frac{1}{\frac{1-6\de}{36\de}-\ell_3}\right\rfloor$$
we get that the inequality $\ell_4\geq2$ is equivalent to
$$\ell_3\geq \frac{1-6\de}{36\de}-\frac12.$$
Using this bound and then as $\xi \leq \tfrac{29}{82}-\tfrac{1}{2\sqrt{2}}$, we obtain, as desired,
$$\tfrac16-(6\ell_3+1)\de\leq3\de<(\tfrac{174}{41}+12\xi)\de.$$\end{proof}

Case 1 above includes the case of $\ell_4=0$, so we are left to consider $\ell_4=1$.

\vspace{0.2cm}

\noindent\textbf{Claim:} 
When $\ell_4=1$, we have $(\tfrac{174}{41}+12\xi)\de>w_5$.

\begin{proof}
As in \eqref{eq:weightde}, we have $w_3=6\de.$ In how we preform the weight expansion, this implies that $1/2(w_3)=3\de>w_5$. We can then easily check that
\[(\tfrac{174}{41}+12\xi)\de>3\de \]
\end{proof}

Also, $6\de>\tfrac16-(6\ell_3+1)\de$ as these are consecutive terms in the weight sequence of $z_1+\de.$ Ordering all but the first entry of vector \eqref{eq:weight3} so that they are in decreasing order we obtain:
\begin{equation*}\label{eq:weight4} 
(\tfrac{1}{12}+(\tfrac{829}{82}+30\xi)\de;
\tfrac{1}{12}-(\tfrac{11}{82}-18\xi)\de,
6\de^{\times \ell_3},
\tfrac16-(6\ell_3+1)\de,
(\tfrac{174}{41}+12\xi)\de^{\times 6},
w_5^{\times\ell_5},\hdots).
\end{equation*}

We do $\lfloor \frac{\ell_3}{2}\rfloor$ Cremona moves,  each with defect $(12\xi-\tfrac{72}{41})\de$, and  collect like terms after each move.

If $\ell_3$ is odd, we obtain
\begin{multline} \label{eq:oddCase2}
(\tfrac{1}{12}+(\tfrac{901-72\ell_3}{82}+6(4+\ell_3)\xi)\de;
\tfrac{1}{12}+(\tfrac{61-72\ell_3}{82}+6(2+\ell_3)\xi)\de,
6\de,\\
\tfrac16-(6\ell_3+1)\de,
(\tfrac{174}{41}+12\xi)\de^{\times (\ell_3+5)},w_5^{\times\ell_5}\hdots).
\end{multline}

If $\ell_3$ is even, we obtain
\begin{multline} \label{eq:evenCase2} 
(\tfrac{1}{12}+(\tfrac{829-72\ell_3}{82}+6(5+\ell_3)\xi)\de;
\tfrac{1}{12}+(\tfrac{-11-72\ell_3}{82}+6(3+\ell_3)\xi)\de,\\
\tfrac16-(6\ell_3+1)\de,
(\tfrac{174}{41}+12\xi)\de^{\times (\ell_3+6)},w_5^{\times\ell_5}\hdots).
\end{multline}

If $\ell_3$ is odd, then by \eqref{eq:order1}, the vector \eqref{eq:oddCase2} is correctly ordered and further, all entries are non-negative. The defect of the next Cremona move is $-\frac16+(\frac{215}{41}+6\ell_3+12\xi)\de$. If it is non-negative, then the vector is reduced. If not, we perform the Cremona move with this defect, and claim after this no reordering is necessary as we add the assumed negative defect to the third entry and obtain the fourth entry: 
\[ \tfrac{1}{6}-(6\ell_3+1)\de+(-\tfrac16+(\tfrac{215}{41}+6\ell_3+12\xi)\de)=(\tfrac{174}{41}+12\xi)\de\]
for small enough $\de.$ Then, the following Cremona move will have defect 
$$\frac16-\left(\frac{215}{41}+6\ell_3+12\xi\right)\de>0.$$ 

If $\ell_3$ is even, then by \eqref{eq:order2}, the vector is written in decreasing order. The defect of the next standard Cremona move is $-\frac16+(7+6\ell_3)\de$, which we can check is non-negative since $\ell_3=\lfloor \frac{1-6\de}{36\de}\rfloor>\frac{1-6\de}{36\de}-1$ and furthermore, all the entries of \eqref{eq:evenCase2} are non-negative.

In either case, the result is a reduced vector with non-negative entries. 

\end{proof}

We now prove the missing final result for the proof of Proposition \ref{prop:Cremona} and thus conclude the proof of Theorem \ref{thm:NoRatlAcc}.

\begin{prop} \label{prop:FinalReduced} 
For each $k\geq1$ there exists a $\delta_k>0$ such that for all $0<\delta<\delta_k$, the vector 
\begin{equation}\label{eq:unorder}
(\lambda_{b_k}(z_k+\delta);{b_k} \lambda_{b_k}(z_k+\delta),w(z_k+\delta))\end{equation}
is Cremona equivalent to a reduced vector with all non-negative entries.
\end{prop}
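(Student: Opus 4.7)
The plan is to chain together Proposition~\ref{prop:reduceCase}, Lemma~\ref{lem:bound}, and Proposition~\ref{prop:genEquiv}. First, since the reordering moves $c_{ij}$ are legal Cremona moves (on all but the first entry), the vector in \eqref{eq:unorder} is Cremona equivalent to $v_{b_k}(z_k+\delta)$ as defined in \eqref{eq:defBl}: one simply moves the five entries equal to $1$, the entry $b_k\lambda_{b_k}(z_k+\delta)$, and the weight entries into the prescribed order.

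Next, I would apply Proposition~\ref{prop:reduceCase} with $\tilde z_k = z_k+\delta$. Since $\Gamma\circ\Xi$ is a composition of Cremona moves (combined with zero-padding, which is invisible under our vector convention), and since Cremona equivalence is preserved by multiplication by the positive scalar $\tilde z_1 \cdots \tilde z_{k-1}$, this shows that $v_{b_k}(z_k+\delta)$ is Cremona equivalent to
\[
v_{b_1}(\tilde z_1) - e_k (1;0^{\times 5},3,\vec{0}),
\]
with $e_k$ given by \eqref{eq:defEk}. Because $b_1 = 11/31$ and $z_1 = 35/6$ give $q_1 = 6$ and $\eps_1 = (-1)^1 = -1$, this rewrites as $-e_k = 6\delta\xi$, where
\[
\xi := \sum_{i=1}^{k-1} \frac{1}{(7p_i - q_i)(p_i+q_i)}.
\]
By Lemma~\ref{lem:bound}, $0 \le \xi < \tfrac{29}{82} - \tfrac{1}{2\sqrt{2}}$ for every $k \ge 1$, with $\xi = 0$ in the base case $k=1$.

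Finally, I would apply Proposition~\ref{prop:genEquiv} with this choice $e = -e_k = 6\delta\xi$: the vector $v_{b_1}(\tilde z_1) + e(1;0^{\times 5},3,\vec 0) = v_{b_1}(\tilde z_1) - e_k(1;0^{\times 5},3,\vec{0})$ is Cremona equivalent to a reduced vector with non-negative entries. Chaining these equivalences proves the statement.

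The only real bookkeeping is choosing $\delta_k > 0$ small enough that every hypothesis along the chain is simultaneously satisfied: Lemma~\ref{lem:deltaWeight} (and hence Proposition~\ref{prop:reduceCase}) requires the weight expansion of $z_k+\delta$ to take the stated form, while Proposition~\ref{prop:genEquiv} needs $\tilde z_1 - z_1$ to be small. Since each shift $\tilde z_j = S(\tilde z_{j+1})$ is a diffeomorphism on a neighbourhood of the corresponding special rational value, $\delta \to 0$ forces $\tilde z_1 - z_1 \to 0$, so a sufficiently small $\delta_k$ meets all the smallness demands. Beyond this straightforward uniformization I do not expect any further obstacle; the proof is essentially a direct concatenation of the three preparatory results.
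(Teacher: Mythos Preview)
Your proof is correct and follows essentially the same route as the paper: reorder to $v_{b_k}$, pull back to level $k=1$ via Proposition~\ref{prop:reduceCase}, bound $e_k$ with Lemma~\ref{lem:bound}, and then invoke Proposition~\ref{prop:genEquiv}. One point to tighten: the scalar $\tilde z_1\cdots\tilde z_{k-1}$ means it is that positive multiple of $v_{b_k}(\tilde z_k)$---not $v_{b_k}(\tilde z_k)$ itself---that is Cremona equivalent to the right-hand side of \eqref{eq:from lemma}; the paper closes by observing that a positive rescaling of a reduced vector with non-negative entries is again reduced with non-negative entries (and note that the $\delta$ in $e_k$ and in Proposition~\ref{prop:genEquiv} is $\tilde z_1-z_1$, not your level-$k$ perturbation).
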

  
\begin{proof}
Fix a value $\delta_1$ which satisfies Propositions \ref{prop:reduceCase} and \ref{prop:genEquiv} and define $\delta_k=S^k(z_1+\delta_1)-z_k$.

Note that for any $\delta<\delta_1$, if $e_k$ is defined as in \eqref{eq:defEk} then by Lemmas~\ref{lem:partialSum} and \ref{lem:bound}, the vector \eqref{eq:from lemma} satisfies the conditions of Proposition \ref{prop:genEquiv}.

Now, for any $\tilde{\delta}<\delta_k$, and because $S$ is monotone increasing, the equation
$$z_k+\tilde\delta=S^k(z_1+\delta)$$
defines a $\delta$ which satisfies $\delta<\delta_1$. Define also $e_k$ as in \eqref{eq:defEk}, $\tilde z_1=z_1+\delta$ and $\tilde z_i=S^i(\tilde z_1)$, which in particular gives $\tilde z_k=z_k+\tilde\delta$. Then by Proposition \ref{prop:reduceCase} we have \eqref{eq:from lemma}, the right hand side of which is Cremona equivalent to a reduced vector with all non-negative entries by Proposition \ref{prop:genEquiv}. But if a positive scaling of the vector $v_{b_k}(\tilde z_k)$ is Cremona equivalent to a reduced vector with all non-negative entries, then so is $v_{b_k}(\tilde z_k)$ itself, which is what we wanted to prove because by \eqref{eq:defBl} it is a reordering of \eqref{eq:unorder}.

\end{proof}

\section{There is no descending staircase for $b=1/3$}\label{s:ghosts}

We begin this section by noting a change of notation: with the exception of the Cremona reduction in the proof of Lemma \ref{lem:Ekiexceptional}, we will use the notation $(d,m;\bm)$ for exceptional classes rather than $(d;m,\bm)$. The notation  $(d,m;\bm)$ emphasizes the different roles which $m$ and $\bm$ play in this section. See, for example, Definition \ref{def:Eghostski}. In a more concrete example, the class $(3,1;2,1^{\times5})$ is not perfect, while its reordering $(3,2;1^{\times6})$ is perfect. Placing the semicolon after the $m$ coordinate emphasizes that perfectness is a property of the difference between $m$ and $\bm$. We will also occasionally refer to classes relevant to the embedding functions for $B(1)$ and $P(1,1)$, in which case we will use the semicolon placements from \cite{McDuffSchlenk12} and \cite{FrenkelMuller15}, respectively.

We now collect some thoughts about the embedding function $c_{1/3}$. It was proved in \cite[Theorem~1.19]{AADT} that it has an ascending staircase accumulating to $3+2\sqrt{2}=:\sigma^2$, where $\sigma=1+\sqrt{2}$ is the silver ratio. In this section, we will prove Theorem \ref{thm:nodesc13}, concluding that $c_{1/3}$ does not have a descending staircase by showing that it is equal to the obstruction coming from the exceptional class
\[
\bE_0(2):=(3,1;2,1^\times{5}).
\]
It is important to note that the ascending staircase of $c_{1/3}$ is in essence different from that of a principal (ascending) staircase. One manifestation of this is that principal staircases have only one staircase because there is an obstruction from a perfect class which equals $c_b(z)$ for $z$ on one side of their accumulation point, whereas in the case of $c_{1/3}$, the class $\bE_0(2)$
is not perfect. As a result, we do not immediately know that $\bE_0(2)$ provides the largest possible obstruction over the interval when it is visible in $c_{1/3}$ (compare to \cite[Proposition~21]{ICERM}) and so it would be possible to have a descending staircase with infinitely many steps given by perfect classes' obstructions that are larger than that of $\bE_0(2)$. This is what happens in \cite[Figure~5.3.1]{ICERM} for a different $b$-value. 

The special rational values play an important role here: the $b_i$s converge to $1/3$ and the $\acc(b_i)=z_i$s converge to $\sigma^2$. Indeed, the function $c_{1/3}$ can be seen as the limit of the functions $c_{b_i}$ for the special rational $b$-values in the sense of \cite[Proposition~4.3.2]{MMW}: the steps of $c_{1/3}$ successively appear as steps in the functions $c_{b_i}$ as $i$ increases. Furthermore, there exists a subfamily of the ``ghost stairs'' defined in Section \ref{ss:regc} (these are non-perfect exceptional classes) whose obstructions agree with that of $\bE_0(2)$ between $\si^2$ and $z_i$, as explained in  Remark \ref{rmk:ekifacts}.

We start by proving the lower bound in Theorem \ref{thm:nodesc13}. The upper bound will be proved in Proposition \ref{prop:regproof}, which will further rely on the results of Section \ref{ss:reguv}-\ref{ss:regc}.

\begin{lemma}\label{lem:E02} For $\sigma^2\leq z\leq6$,
    \[
    c_{1/3}(z)\geq\frac{3(z+1)}{8}.
    \]
\end{lemma}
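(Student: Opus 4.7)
The strategy is to lower-bound $c_{1/3}$ by the obstruction from the single exceptional class $\bE_0(2)=(3,1;2,1^{\times 5})$ introduced just before the lemma. By the formula \eqref{eqn:cbsup}, $c_{1/3}(z)\geq\mu_{\bE_0(2),1/3}(z)$ for every $z$, so the lemma reduces to showing $\mu_{\bE_0(2),1/3}(z)=\frac{3(z+1)}{8}$ for all $z\in[\si^2,6]$.

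First I would verify that $\bE_0(2)$ is Diophantine by checking the two equations in \eqref{eqn:D} (both reduce to $8=8$), which is all that is required for $\bE_0(2)$ to contribute to the supremum in \eqref{eqn:cbsup}. Next, I would compute the obstruction via \eqref{eqn:muEb}. The key observation concerns the shape of $\bw(z)$ on the relevant range: since $5\leq z\leq 6$, the first step of the Euclidean algorithm produces quotient $\lfloor z\rfloor=5$, so the weight expansion begins $\bw(z)=(1^{\times 5},z-5,\dots)$. Pairing this with $\bm=(2,1^{\times 5})$, which has exactly six entries, collapses the inner product to $\bm\cdot\bw(z)=2+4+(z-5)=z+1$. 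Combined with $d-\tfrac13 m=3-\tfrac13=\tfrac83$, this yields $\mu_{\bE_0(2),1/3}(z)=\frac{3(z+1)}{8}$ on all of $[\si^2,6]$, proving the bound.

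There is no serious obstacle here: the entire argument is the one-line obstruction calculation above. The only content is recognizing $\bE_0(2)$ as the class whose $\bm$-coordinates are tuned to the stable initial block of the weight expansion on $[5,6]$, so that its obstruction is linear in $z$ with the desired slope $3/8$ and intercept $3/8$. As a sanity check one may also verify that this obstruction dominates $\vol_{1/3}(z)=\sqrt{9z/8}$ on $[\si^2,6]$, with equality exactly at $z=\si^2$, consistent with the full filling at the accumulation point; but this is not logically needed for the lemma, since both $\mu$ and $\vol$ are lower bounds for $c_{1/3}$.
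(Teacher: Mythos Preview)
Your argument is correct and is exactly the paper's proof: compute the obstruction $\mu_{\bE_0(2),1/3}$ using that $\bw(z)=(1^{\times5},z-5,\dots)$ on $[\si^2,6]$, pair with $\bm=(2,1^{\times5})$ to get $z+1$, and divide by $d-m/3=8/3$. The only cosmetic difference is that you verify the Diophantine equations explicitly, while the paper takes this for granted.
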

\begin{proof}
    This inequality comes from the obstruction from the non-perfect exceptional class $\bE_0(2)$. When $z\in[\sigma^2,6]$, we may write $z=5+x$ and $\bw(z)=(1^{\times5},x,\dots)$. Thus
\[
c_{1/3}(z)=\frac{(2,1^{\times5})\cdot(1^{\times5},x)}{3-1/3}=\frac{6+x}{8/3}
\]
from which the conclusion follows.
\end{proof}

In this section we will frequently use several facts which we state here for convenience. The next two lemmas were upgraded (in \cite[Lemma~14]{ICERM}) in the case of Hirzebruch surfaces $H_b$ (equivalently, toric domains $X_b$) to Diophantine classes from their original proofs for exceptional classes in \cite{AADT}. First recall that the \textbf{length} $\ell(\bm)$ of a vector $\bm$ is its cardinality, and the \textbf{length} of $z\in\Q$ is the length $\ell(z):=\ell(\bw(z))$ of its weight expansion.

\begin{lemma}[{\cite[Lemma~2.28~(1)]{AADT}}]\label{lem:cghmp21}
    Let $\bE=(d,m;\bm)$ be a Diophantine class. If $\ell(z)<\ell(\bm)$ then $\mu_{\bE,b}(z)\leq\vol_b(z)$.
\end{lemma}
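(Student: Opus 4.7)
The approach is to combine the Cauchy--Schwarz inequality with the Diophantine identity $\sum_i m_i^2 = d^2 - m^2 + 1$ coming from \eqref{eqn:D}. Padding $\bw(z)$ by zeros past position $\ell(z)$ (which does not alter the pairing), Cauchy--Schwarz applied to the first $\ell(z)$ coordinates yields
\[
(\bm \cdot \bw(z))^2 \leq M \cdot z, \qquad M := \sum_{i=1}^{\ell(z)} m_i^2,
\]
where I use $\sum_i w_i^2 = z$ from \eqref{eq:rmkweights}.

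The length hypothesis $\ell(z) < \ell(\bm)$ enters as follows: since the $m_i$ are integers by \eqref{eqn:E} and, under the natural convention that $\ell(\bm)$ is the position of the last nonzero entry of $\bm$, the tail $M' := \sum_{i > \ell(z)} m_i^2$ contains at least one nonzero integer square, so $M' \geq 1$. Combined with $M + M' = d^2 - m^2 + 1$, this gives $M \leq d^2 - m^2$, and therefore $|\bm \cdot \bw(z)| \leq \sqrt{(d^2 - m^2)\,z}$.

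It then suffices to prove the purely algebraic estimate $(d^2 - m^2)(1 - b^2) \leq (d - bm)^2$, which follows at once from the identity
\[
(d - bm)^2 - (d^2 - m^2)(1 - b^2) = (m - bd)^2 \geq 0.
\]
Dividing by $(d - bm)^2$ (or handling $d - bm \leq 0$ trivially, since then $\mu_{\bE,b}(z)$ is non-positive) yields $|\mu_{\bE,b}(z)| \leq \vol_b(z)$, and in particular $\mu_{\bE,b}(z) \leq \vol_b(z)$.

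The main (and essentially only) obstacle is the integrality step: without the strict length inequality, Cauchy--Schwarz would give only $(\bm \cdot \bw(z))^2 \leq (d^2 - m^2 + 1)\,z$, for which the analogous inequality $(d^2 - m^2 + 1)(1 - b^2) \leq (d - bm)^2$ fails in general (e.g.\ for $b$ close to $0$). The hypothesis $\ell(z) < \ell(\bm)$ is precisely what converts the ``$+1$'' in the Diophantine identity into the numerical slack $M' \geq 1$ needed to bring the obstruction below the volume bound.
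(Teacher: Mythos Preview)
Your argument is correct and is in fact the standard proof of this result. The paper itself does not supply a proof of this lemma; it is quoted from \cite[Lemma~2.28~(1)]{AADT} (with the extension to Diophantine rather than exceptional classes attributed to \cite[Lemma~14]{ICERM}). The Cauchy--Schwarz step, the use of integrality of the tail $m_i$ to gain the crucial ``$-1$'' against the Diophantine identity $\sum m_i^2 = d^2 - m^2 + 1$, and the algebraic identity $(d-bm)^2 - (d^2-m^2)(1-b^2) = (m-bd)^2$ constitute exactly the argument given in those references.

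One minor remark: your treatment of the case $d - bm \le 0$ is slightly incomplete as stated, since $\bm\cdot\bw(z)$ could in principle be negative as well, making $\mu_{\bE,b}(z)$ positive. But your bound $|\bm\cdot\bw(z)|\le\sqrt{(d^2-m^2)z}$ together with the identity above actually gives $|\mu_{\bE,b}(z)|\le\vol_b(z)$ for \emph{any} nonzero $d-bm$, so the conclusion stands. (The degenerate case $d-bm=0$ does not arise for classes that genuinely obstruct, and in any event $\mu_{\bE,b}$ is then not defined.) Also note that the hypothesis $\ell(z)<\ell(\bm)$ with $\ell(\bm)$ finite forces $z$ to be rational, so invoking \eqref{eq:rmkweights} for $\sum w_i^2=z$ is legitimate.
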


\begin{lemma}[{\cite[Proposition~2.30]{AADT}}]\label{lem:cghmp23} Let $\bE=(d,m;\bm)$ be a Diophantine class. On each maximal interval $I$ on which $\mu_{\bE,b}>\vol_b$:
\begin{itemize}
    \item[{\rm (i)}] The obstruction $\mu_{\bE,b}$ is continuous and piecewise-linear.
    \item[{\rm (ii)}] The obstruction $\mu_{\bE,b}$ has a unique nondifferentiable ``break point'' on $I$ and this is at \sout{a} the unique value $a\in I$ with $\ell(a)=\ell(\bm)$.
\end{itemize}
\end{lemma}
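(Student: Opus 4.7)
The plan is to work with $f(z) := \bm \cdot \bw(z)$, since $d-bm$ is constant in $z$ and hence $\mu_{\bE,b}(z) = f(z)/(d-bm)$ inherits the continuity, piecewise-linearity, and break-point structure of $f$. Throughout, I adopt the convention of padding $\bm$ with zeros on the right when $\ell(z)>N := \ell(\bm)$, so that only the first $N$ entries of $\bw(z)$ enter the sum $f(z) = \sum_{i=1}^{\min(N,\ell(z))} m_i w_i(z)$.

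For part (i), I would first verify that each weight $w_i(z)$ is continuous and piecewise linear on any bounded interval. This is a direct consequence of the recursive definition $w_k = w_{k-2} - \ell_{k-1} w_{k-1}$ with $\ell_{k-1}=\lfloor w_{k-2}/w_{k-1}\rfloor$: on any interval where the integer parts $\ell_1,\ldots,\ell_{k-1}$ are constant, each $w_i$ (for $i\leq k$) is an affine function of $z$, and at transition values where some $\ell_j$ changes the corresponding weight vanishes, so $w_i$ extends continuously across the transition. Since $f$ is a finite linear combination of the $w_i$'s, it is continuous and piecewise linear on any interval, giving (i).

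For part (ii), the break points of $f$ correspond to CF-transitions of $z$ at depth at most $N$. The key observation is that, for $z$ with $\ell(z) < N$, the expansion $\bw(z)$ terminates before the full vector $\bm$ is used, whereas for $z$ with $\ell(z) > N$ the remaining weights are dotted against zeros. The transition $\ell(z) = N$ is special: it is the unique place where the ``active length'' of $\bm$ saturates, and it occurs at an isolated rational, so $a$ is unique in any bounded interval generically. I would compute the one-sided slopes of $f$ at such a point $a$ directly from the recursion and show they differ, producing an honest kink at $a$. For the uniqueness of the kink on $I$, the argument is that on any cell of $z$-values where the first $N$ weights are each affine in $z$, $f$ is affine; the slope change at a boundary between two such cells is nontrivial only when that boundary corresponds to $\ell(z)=N$, because at any other CF-transition the continuity of $\bw(z)$ (combined with the vanishing weight) produces matching slope contributions among the $w_i$ in the linear combination $\sum m_i w_i$.

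The principal obstacle is the last point: ruling out spurious break points of $f$ inside $I$ that correspond to CF-restructurings of $z$ not at depth exactly $N$. I expect this to rely on two ingredients: first, a careful bookkeeping of which entries of $\bw(z)$ change form across a given transition, showing that at depths strictly less than $N$ the coefficients in $\bm$ weighting the newly-active weight and the disappearing weight conspire to preserve the slope; and second, the fact that on the maximal interval $I$ where $\mu_{\bE,b} > \vol_b$, a competing linear piece of $f$ with a different slope would force $f$ (and hence $\mu_{\bE,b}$) to cross the strictly concave function $\vol_b$ strictly inside $I$, contradicting maximality of $I$. Combining these should isolate the unique break point at the value $a \in I$ with $\ell(a) = N$, completing (ii).
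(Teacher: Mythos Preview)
The paper does not prove this lemma; it is quoted from \cite[Proposition~2.30]{AADT}. So there is no ``paper's own proof'' to compare against. That said, your sketch has a real gap in part (ii).

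Your first proposed ingredient---that at CF-transitions of depth strictly less than $N=\ell(\bm)$ the coefficients $m_i$ ``conspire to preserve the slope''---is false in general. For a generic integer vector $\bm$, the function $f(z)=\bm\cdot\bw(z)$ genuinely has a corner at every rational $z_0$ with $\ell(z_0)\leq N$, because at such a point the slopes of all of $w_{\ell(z_0)},\dots,w_N$ change and there is no algebraic relation among the $m_i$ forcing cancellation. Your second ingredient (concavity of $\vol_b$) also does not work: a piecewise-linear function can have many corners and still lie strictly above a concave function on an interval (think of any staircase).

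The correct mechanism is the companion Lemma~\ref{lem:cghmp21}: if $\ell(z)<N$ then $\mu_{\bE,b}(z)\leq\vol_b(z)$, so no rational of length $<N$ lies in $I$. Consequently every rational in $I$ has length $\geq N$, and since corners of $f$ at depth $>N$ are invisible (those weights are dotted against zero), the only possible corners of $f$ on $I$ occur at rationals of length exactly $N$. Finally, between any two rationals of length $N$ there is a rational of strictly shorter length (the mediant/Stern--Brocot property), and such a point is excluded from $I$; hence $I$ contains at most one rational $a$ with $\ell(a)=N$. Existence and nondifferentiability at $a$ then follow from the direct slope computation you outline. Your proposal would be repaired by replacing the two speculative ingredients with this use of Lemma~\ref{lem:cghmp21}.
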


The following lemma puts further restrictions on those $\bE$ with $\mu_{\bE,b}>\vol_b$.

\begin{lemma}[{\cite[Lemma~15~(i)]{ICERM}}]\label{lem:bd-m}
    Let $\bE=(d,m;\bm)$ be a Diophantine class. If $\mu_{\bE,b}$ is ever greater than $\vol_b$, then
    \[
    |bd-m|<\sqrt{1-b^2}.
    \]
    Furthermore,
    \[
    \mu_{\bE,b}(z)<\vol_b(z)\sqrt{1+\frac{1}{d^2-m^2}}.
    \]
\end{lemma}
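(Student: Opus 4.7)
The plan is to apply the Cauchy--Schwarz inequality to the dot product $\bm\cdot\bw(z)$ appearing in the numerator of
\[
\mu_{\bE,b}(z) = \frac{\bm\cdot\bw(z)}{d - bm}.
\]
The Diophantine equations \eqref{eqn:D} give $\|\bm\|^2 = \sum m_i^2 = d^2 - m^2 + 1$, and the identity \eqref{eq:rmkweights} (extended to irrational $z$ by truncation of the continued fraction and passage to the limit) gives $\|\bw(z)\|^2 = z$. Cauchy--Schwarz then produces the key inequality
\[
\mu_{\bE,b}(z) \leq \frac{\sqrt{(d^2 - m^2 + 1)\,z}}{d - bm}.
\]

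For the first claim, I would combine this bound with the hypothesis $\mu_{\bE,b}(z) > \vol_b(z) = \sqrt{z/(1-b^2)}$. Squaring and cancelling $z$ reduces the inequality to $(1-b^2)(d^2 - m^2 + 1) > (d-bm)^2$. Expanding both sides, the $d^2$, $m^2$, $b^2d^2$, and $b^2m^2$ terms regroup as $(1-b^2) - (bd-m)^2$, giving $(bd-m)^2 < 1-b^2$ as desired.

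For the second inequality, I would write
\[
\vol_b(z)\sqrt{1 + \frac{1}{d^2 - m^2}} = \sqrt{\frac{z(d^2 - m^2 + 1)}{(1-b^2)(d^2 - m^2)}}
\]
and compare this to the Cauchy--Schwarz upper bound on $\mu_{\bE,b}(z)$. The comparison reduces to $(1-b^2)(d^2-m^2) \leq (d-bm)^2$, equivalently $(bd-m)^2 \geq 0$, which is trivial; strictness comes either from strict Cauchy--Schwarz (when $\bm$ is not a scalar multiple of the truncation of $\bw(z)$ matching its support) or from $bd \neq m$. The essential step is simply applying Cauchy--Schwarz; the main obstacle, such as it is, lies in pinning down strictness of the final bound in edge cases, but otherwise the argument is routine algebraic manipulation that isolates the quantity $(bd-m)^2$ in the difference $(1-b^2)(d^2 - m^2 + 1) - (d - bm)^2$.
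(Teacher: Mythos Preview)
The paper does not supply its own proof of this lemma; it is quoted directly from \cite[Lemma~15~(i)]{ICERM}. Your Cauchy--Schwarz argument is precisely the standard proof used there: bound $\bm\cdot\bw(z)$ by $\sqrt{(d^2-m^2+1)z}$ using the quadratic Diophantine equation and $\|\bw(z)\|^2=z$, then rearrange. The algebraic identity you isolate, $(1-b^2)(d^2-m^2+1)-(d-bm)^2 = (1-b^2)-(bd-m)^2$, is exactly what drives both conclusions.

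Your caution about strictness in the second inequality is well placed. The non-strict version follows immediately from $(bd-m)^2\geq 0$, and strictness can genuinely fail at isolated points: for instance $\bE=(2,0;1^{\times 5})$ at $b=0$, $z=5$ gives equality on both sides of Cauchy--Schwarz and has $bd=m$. In practice this never matters for the paper's applications (Lemma~\ref{lem:3d-m} and Proposition~\ref{prop:regproof}), where the bound is only invoked to produce an upper bound on $d$ from a \emph{strict} lower bound $\mu_{\bE,b}(z)>\alpha\vol_b(z)$ with $\alpha>1$; the non-strict form suffices there. So your proof is correct for the result as actually used, and your flag on the edge case is a legitimate observation about the statement as literally written.
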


\subsection{The $z$-interval $\left(\frac{95+4\sqrt{2}}{17},6\right]$}

We first compute $c_{1/3}$ on the interval $\left(\frac{95+4\sqrt{2}}{17},6\right]\approx(5.921,6]$, which can be done by analyzing short exceptional classes. (The length of $\bm$ is bounded by $d$ using the quadratic Diophantine equation $d^2-m^2+1=\bm\cdot\bm$, so $\bm$ being short is equivalent to $d$ having a finite upper bound.)

The following lemma lists all possibly obstructive short exceptional classes, using only the first principles of the Diophantine equations. The condition $|d-3m|\leq2$ is motivated by the requirement that $\bE$ be live for $b=1/3$; see Lemma \ref{lem:bd-m}. Their obstruction functions will be analyzed in the proof of Lemma \ref{lem:6reg}, where packing stability can be used to prove that they are the only possible obstructions when $z$ is close enough to (and smaller than) six.

\begin{lemma}\label{lem:E7} The exceptional classes $\bE$ with $\ell(\bm)\leq7$ and $|d-3m|\leq2$ are
\begin{multline}
\hspace{1.45cm}(0,0;-1),\quad (1,0;1^{\times2}),\quad (2,0;1^{\times5}),\quad (1,1;1),\quad (2,1;1^{\times4}),\quad (3,1;2,1^{\times5}),\quad \\\hspace{1.4cm}(4,1;2^{\times3},1^{\times4}),\quad (4,2;2^{\times2},1^{\times5}),\quad (5,1;2^{\times6},1),\quad (5,2;2^{\times5},1^{\times2}),\quad (6,2;3,2^{\times6}).\hspace{2.5cm}\nonumber
\end{multline}
\end{lemma}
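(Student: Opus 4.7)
The proof is a finite enumeration based on the Diophantine equations \eqref{eqn:D}. The plan is to reparametrize by $k := d - 3m$, which by hypothesis satisfies $|k| \leq 2$. Substituting $d = 3m + k$ into \eqref{eqn:D} expresses the moments of $\bm$ in terms of $m$ and $k$:
\[
S := \sum_i m_i = 8m + 3k - 1, \qquad Q := \sum_i m_i^2 = 8m^2 + 6mk + k^2 + 1.
\]
Thus once $(m,k)$ is fixed, the length-$\leq 7$ integer vectors $\bm$ that could appear are constrained by two linear equations in the multiplicity profile.

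First I would bound $m$. Applying the Cauchy--Schwarz inequality to $\bm$ gives $S^2 \leq \ell(\bm) \cdot Q \leq 7Q$. For each $k \in \{-2,-1,0,1,2\}$ this becomes an explicit quadratic inequality in $m$; since the asymptotic ratio $S^2/Q \to 8$ as $m \to \infty$, direct computation shows $m \leq 2$ in every case. Together with $d = 3m+k \geq 0$ this leaves a short finite list of admissible pairs $(d,m)$.

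Next, for each such pair I would enumerate all integer vectors $\bm$ of length at most $7$ with $\sum_i m_i = S$ and $\sum_i m_i^2 = Q$. Writing $a_j := \#\{i : m_i = j\}$, the constraints $\sum_j a_j \leq 7$, $\sum_j j\, a_j = S$, $\sum_j j^2 a_j = Q$ form a small linear system in non-negative integers. Solutions are easily found by splitting on the maximum entry of $\bm$, and in every admissible case exactly one vector arises (up to reordering), namely the one in the statement. One must remember to allow the degenerate case $\bm = (-1)$ at $(d,m) = (0,0)$, corresponding to an exceptional divisor class.

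Finally, to finish the proof I would verify that each of the eleven candidates is exceptional. By the Li--Li criterion recalled in Section \ref{ss:reviewEC}, it is enough to exhibit for each candidate a sequence of Cremona moves reducing it to $(0;-1)$. Each reduction is short (one or two standard moves $c_{ijk}$ after reordering) and routine to write down. The main obstacle is not any single Cremona reduction but rather ensuring the enumeration is tight: applying Cauchy--Schwarz carefully at the borderline $\ell(\bm) = 7$ so that no $(m,k)$ is overlooked, and correctly treating the unique degenerate case with a negative entry.
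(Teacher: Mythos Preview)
Your proposal is correct and follows essentially the same approach as the paper's own proof: both substitute $d=3m+c$ with $|c|\le2$, apply the Cauchy--Schwarz/Jensen bound $\bigl(\sum m_i\bigr)^2\le\ell(\bm)\sum m_i^2$ to the Diophantine equations to bound $m$, enumerate the finitely many solutions, and then verify each Cremona reduces to $(0;-1)$. Your write-up is somewhat more explicit about the enumeration step (via the multiplicity profile $a_j$), but the strategy is identical.
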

\begin{proof}

By the Diophantine equations \eqref{eqn:D}, we have\footnote{The square of a sum of $n$ terms is at most $n$ times the sum of their squares by Jensen's inequality.}
\[
((3d-m)-1)^2=\left(\sum_{i=1}^{\ell(\bm)}m_i\right)^2\leq\ell(\bm)\sum_{i=1}^{\ell(\bm)}m_i^2=\ell(\bm)(d^2-m^2+1),
\]
which when $\ell(\bm)\leq7$ implies
\begin{align}
(3d-m)^2-2(3d-m)+1&\leq7(d^2-m^2+1)\nonumber
\\\iff d^2-3d+4m^2+m-3dm&\leq3.\label{eqn:72}
\end{align}
Set $d=3m+c$. Using the bounds $|c|\leq2$ and $|\bm|\leq d^2-m^2+1$ from the quadratic Diophantine equation, we may solve \eqref{eqn:72} and obtain the eleven exceptional classes listed. All Cremona reduce to $(0,0;-1)$.
\end{proof}

The following lemma will be vital to the proof by contradiction in Proposition \ref{prop:regproof}, where we will use it to argue that if $c_{1/3}$ is not smooth on $[3+2\sqrt{2},6]$ then (because it must be piecewise linear) its last nonsmooth point before six must be concave up (an ``inner corner,'' i.e., on a small enough neighborhood $c_{1/3}$ is convex). Another way in which Lemma \ref{lem:6reg} will aid us is that the proof of Proposition \ref{prop:regproof} requires the fact that $\mu_{\bE,1/3}(6)\leq21/8$ whenever $\ell(\bm)=6$. We prove this claim in the course of the proof of Lemma \ref{lem:6reg}, as explained in 
Remark \ref{rmk:sig6reg}).

\begin{lemma}\label{lem:6reg} We have $c_{1/3}(z)=\frac{3(z+1)}{8}$ for $z\in\left(\frac{95+4\sqrt{2}}{17},6\right]$. 
\end{lemma}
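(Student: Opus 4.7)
The inequality $c_{1/3}(z)\geq\frac{3(z+1)}{8}$ on this interval is already Lemma \ref{lem:E02}, so the only task is the reverse inequality. My plan is to show that for every exceptional class $\bE=(d,m;\bm)$ and every $z\in\left(\tfrac{95+4\sqrt{2}}{17},6\right]$, the obstruction $\mu_{\bE,1/3}(z)$ does not exceed $\tfrac{3(z+1)}{8}$.

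The first step is to reduce to a finite list of candidate classes. Suppose some $\bE$ violates the bound at a point $z$ in the interval. Because $\tfrac{3(z+1)}{8}$ strictly exceeds $\vol_{1/3}(z)$ on $(\sigma^2,6]$ (the two functions meet at $z=\sigma^2$, where $z^2-6z+1=0$), we have $\mu_{\bE,1/3}>\vol_{1/3}$, so Lemma \ref{lem:bd-m} applies. It first yields $|d-3m|\leq 2$, because $|d/3-m|<\sqrt{8/9}<1$ and $d-3m$ is an integer. It also gives $\mu_{\bE,1/3}<\vol_{1/3}\sqrt{1+1/(d^2-m^2)}$, which combined with the hypothetical inequality $\mu_{\bE,1/3}\geq\tfrac{3(z+1)}{8}$ forces, after squaring, $d^2-m^2<\tfrac{8z}{z^2-6z+1}$. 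The right-hand side is decreasing in $z$ on $(\sigma^2,6]$ (its derivative is proportional to $1-z^2$), so its maximum on our interval is attained at the left endpoint, where rationalizing the denominator gives the clean value $\tfrac{261+256\sqrt{2}}{7}<90$. Any counterexample must therefore satisfy $|d-3m|\leq 2$ and $d^2-m^2\leq 89$, leaving only finitely many $(d,m)$.

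For each admissible $(d,m)$, the Diophantine equations \eqref{eqn:D} pin down $\sum m_i$ and $\sum m_i^2$, so the possible tuples $\bm$ (up to a reordering which does not affect $\mu_{\bE,1/3}$) form a finite list. The length-$\leq 7$ entries are precisely those of Lemma \ref{lem:E7}; for each of these, computing $\mu_{\bE,1/3}$ using $\bw(z)=(1^{\times 5},z-5,\ldots)$ for $z\in(5,6]$ shows directly that only $\bE_0(2)$ attains $\tfrac{3(z+1)}{8}$, and the others remain strictly below. For the length-$\geq 8$ candidates (necessarily with $d$ bounded, essentially $d\leq 10$), I would write $z=6-\eps$ and use $\bw(6-\eps)=(1^{\times 5},1-\eps,\eps^{\times k},\ldots)$ to express $\mu_{\bE,1/3}(z)$ as a piecewise affine function of $\eps$, then verify the required inequality on the interval by checking endpoints. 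As a byproduct, evaluating $\mu_{\bE,1/3}(6)$ for each length-$6$ class on the finite list establishes the auxiliary claim flagged in Remark \ref{rmk:sig6reg}, namely that $\mu_{\bE,1/3}(6)\leq 21/8$ whenever $\ell(\bm)=6$.

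The main obstacle I expect is the bookkeeping in the length-$\geq 8$ case: enumerating Diophantine vectors $\bm$ with the prescribed sum and sum of squares, checking exceptionality by Cremona reduction when it is not already in the literature, and computing the obstruction in each case. The specific endpoint $\tfrac{95+4\sqrt{2}}{17}$ is forced precisely by the quadratic bound: at this $z$-value, the threshold $\tfrac{8z}{z^2-6z+1}$ crosses $90$, and moving the left endpoint any further left would make the finite list explode and would require the ghost-stair analysis developed in the subsequent subsections \ref{ss:reguv}--\ref{ss:regc}.
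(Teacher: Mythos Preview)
Your plan is viable but takes a genuinely different route from the paper. Rather than bounding $d^2-m^2$ and enumerating all admissible $(d,m,\bm)$, the paper uses packing stability to eliminate the length-$\geq 8$ case entirely: for $z=5+x$ in the interval one has $x>9/10$, so the tail $(w_7,\dots,w_M)$ of $\bw(z)$ equals $w_7\bw(z')$ with $z'=x/(1-x)\geq 9$, and by \cite[Corollary~1.2.4]{McDuffSchlenk12} there is a full filling $E(1,z')\sembeds B(\lambda)$. Replacing the tail by the single ball $B(w_7\lambda)$ reduces the question to a seven-ball packing, so only the eleven length-$\leq 7$ classes from Lemma~\ref{lem:E7} must be checked, with $w_7\lambda=\sqrt{x-x^2}$ substituted for the seventh weight. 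Your direct enumeration would involve roughly seventeen $(d,m)$ pairs (up through $(10,4)$ with $d^2-m^2=84$), each with its own list of Diophantine $\bm$ and piecewise-linear obstruction; it should succeed, but the packing-stability trick is what makes the paper's argument short. Incidentally, by the Hutchings result quoted after \eqref{eqn:cbsup} you need not check exceptionality: Diophantine suffices.

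Your explanation of the endpoint is incorrect. The value $(95+4\sqrt{2})/17$ is \emph{not} where $\tfrac{8z}{z^2-6z+1}$ crosses $90$; your own computation gives $(261+256\sqrt{2})/7\approx 89.0$ there, and the largest $d^2-m^2$ among admissible classes is only $84$, so there is slack on that side. In the paper the endpoint is forced by the single class $(5,1;2^{\times 6},1)$: in the seven-ball reformulation its obstruction inequality becomes $4\sqrt{x-x^2}<2-x$, equivalently $17x^2-20x+4>0$, which holds exactly for $x>(10+4\sqrt{2})/17$. In your direct approach this same class is harmless already for $x\geq 2/3$ (since then $\bm\cdot\bw(z)=11+x$), so if you carried the enumeration through you might even prove the lemma on a larger interval; but the binding constraint would come from one of the longer classes you have not analyzed, not from the volume-type bound.
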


We do not claim that $\left(\frac{95+4\sqrt{2}}{17},6\right]\approx(5.921,6]$ is the largest interval on which the following proof method would work. The lower bound is convenient because it allows us to directly invoke the ``easy'' formulation of packing stability from \cite[Corollary~1.2.4]{McDuffSchlenk12}, and it could potentially be improved. The upper bound is likely the maximal possible, since from visual inspection of the lower bound on $c_{1/3}$ from several thousand obstructions it appears that $c_{1/3}(z)=21/8$ when $z$ is slightly larger than six; likely we could prove this, for example, following the methods of \cite[Section 5.2]{McDuffSchlenk12}, or, as pointed out by the referee, using the Cremona reduction methods of Section \ref{s:nosratb}.

\begin{proof}
    Since $c_{1/3}$ is continuous, it suffices to prove the claim for $z\in\Q$. First note that because $3+2\sqrt{2}<\frac{95+4\sqrt{2}}{17}<6$, we have by Lemma \ref{lem:E02} that $c_{1/3}(z)\geq\frac{3(z+1)}{8}$, thus it remains to show that $c_{1/3}(z)\leq\frac{3(z+1)}{8}$.

    Let $z=5+x\in\mathbb{Q}$. We have
    \[
    \bw(z)=(1^{\times5},x,w_7,\dots,w_M),
    \]
    where the first nine of $w_7,\dots,w_M$ must be equal because
    \[
    w_7=1-x\leq\frac{x}{9} \iff x\geq\frac{9}{10}.
    \]
    
    Define $z'=\frac{x}{1-x}$, so $z'\geq 9$. Furthermore, the $w_7$-scaling of the weight expansion of $z'$ is given by $w_7\bw(z')=(w_7,\dots,w_M)$,  and corresponds to the upper right rectangle determining $\bw(z)$, with the five $1\times1$ and the single $x\times x$ squares removed (the largest six squares).

    Thus
    \[
    z=5\cdot1^2+x^2+\vol(E(w_7,w_7z')), \mbox{ where } \vol(E(w_7,w_7z'))=\sum_{i=7}^Mw_i^2.
    \]
For the second equality, note that $\vol(E(w_7,w_7z'))=w_7^2\vol(E(1,z'))$, which is the sum of the areas of the squares determining $w_7\bw(z')$.

By the definition of $c_b$,
\[
c_{1/3}(z)\leq\frac{3(z+1)}{8}\iff E(1,z)\sembeds\frac{3(z+1)}{8}H_{1/3},
\]
which, by similar logic to that of \eqref{eqn:ballembeddings}, is equivalent to
\begin{equation}\label{eqn:Bwi}
\bigsqcup_{i=1}^MB(w_i)\sembeds\frac{3(z+1)}{8}H_{1/3} \iff \underbrace{B(1)\sqcup\cdots\sqcup B(1)}_{5}\sqcup B(x)\sqcup \bigsqcup_{i=7}^MB(w_i)\sembeds \frac{3(5+x+1)}{8}H_{1/3}.
\end{equation}

We can simplify \eqref{eqn:Bwi} using the fact that  $(w_7,\dots,w_m)=w_7\bw(z')$. Since $z'\geq9$, by \cite[Corollary~1.2.4]{McDuffSchlenk12}, there is a full filling $E(1,z')\sembeds B(\lambda)$, where $\lambda$ is determined by
    \[
    \vol(B(\lambda))=\vol(E(1,z')) \iff z=5+x^2+w_7^2\lambda^2.
    \]

    Thus in the right-hand embedding in \eqref{eqn:Bwi} we may replace $\bigsqcup_{i=7}^MB(w_i)$ by $B(w_7\la)$:
    \[
    \underbrace{B(1)\sqcup\cdots\sqcup B(1)}_{5}\sqcup B(x)\sqcup B(w_7\lambda)\sembeds \frac{3(5+x+1)}{8}H_{1/3}.
    \]
    Such an embedding of balls is equivalent, by the argument in the proof of \cite[Proposition~3.2]{m2}, and summarized in Section \ref{ss:reviewEC}, to showing that for every $\bE$ with $\ell(\bE)\leq7$,
    \begin{equation}\label{eqn:mubig}
    \frac{\bm\cdot(1^{\times5},x,w_7\lambda)}{d-m/3}<\frac{3(5+x+1)}{8}.
    \end{equation}
    
    Note that unless $\bE$ also satisfies the constraint $|d-3m|\leq2$, we will have the stronger upper bound by the volume by Lemma \ref{lem:bd-m}. Such $\bE$ are described in Lemma \ref{lem:E7}. The classes $(0,0;-1)$ and $(3,1;2,1^{\times5})$ may be ignored, the former because its coefficients are nonpositive and the latter because we already know its obstruction function precisely equals the line $\frac{3(z+1)}{8}$. We show that the remaining classes satisfy \eqref{eqn:mubig} case-by-case.
    \begin{itemize}
    	\item When $\bE=(1,0;1^{\times2}), (2,0;1^{\times5}), (1,1;1)$, or $(2,1;1^{\times4})$, the vector $\bm$ is short enough that
	\[
	\mu_{\bE,1/3}(z)=\frac{\bm\cdot(1^{\times5},x,w_7\lambda)}{d-m/3}
	\]
	is constant. We directly compute these four constants and check that they are less than $\frac{5+9/10+1}{8/3}$ which in turn is less than $\frac{5+x+1}{8/3}$.

	\item When $\bE=(4,1;2^{\times3}, 1^{\times4}),  (4,2;2^{\times2},1^{\times5}), (5,1;2^{\times6},1), (5,2;2^{\times5},1^{\times2})$, or $(6,2;3,2^{\times6})$, we use the fact that
	\[
	z=5+x=5+x^2+w_7^2\lambda^2 \iff w_7^2\lambda^2=x-x^2
	\]
	to rewrite $\bm\cdot(1^{\times5},x,w_7\lambda)$ solely in terms of $x$. We then check in each case that \eqref{eqn:mubig} holds as long as $x\geq9/10$, except when $\bE=(5,1;2^{\times6},1)$, in which case \eqref{eqn:mubig} holds when $x>2(5+2\sqrt{2})/17$.
    \end{itemize}
\end{proof}

 \subsection{The $z$-interval 
 $\left[\sigma^2,\frac{95+4\sqrt{2}}{17}\right]$}

We will follow the proofs in \cite[Section 4]{McDuffSchlenk12} and \cite[Section 6]{FrenkelMuller15}. In particular the numerology is very similar to that of \cite[Section 6]{FrenkelMuller15} partially due to the fact that the Pell staircase also accumulates to $\si^2.$ We make use of several of their numerical results, until the definition of the ghost stairs in Definition \ref{def:Eghostski}, which are alterations of their ghost stairs.

The strategy of the proof is as follows. We first introduce several sets of points (chosen so that any numbers between consecutive points have longer continued fractions, see the end of the proof of Proposition \ref{prop:regproof}). We prove these points are regular, Definition \ref{def:reg}, in Propositions \ref{prop:ukj}, \ref{prop:vkj}, and \ref{prop:c2k1} in subsequent sections, and use that regularity in Proposition \ref{prop:regproof} to prove Theorem \ref{thm:nodesc13}. 

Recall that
\[
\sigma^2=3+2\sqrt{2}=[5,\{1,4\}^\infty].
\]

As in \cite[Definition~6.3]{FrenkelMuller15}, for all $k,j\geq1$ we define
\begin{align*}
    c_{2k-1}&:=[5,\{1,4\}^{k-1},1]=[5,\{1,4\}^{k-2},1,5],
    \\c_{2k}&:=[5,\{1,4\}^k],
    \\u_k(j)&:=[5,\{1,4\}^{k-1},1,5,j],
    \\v_k(j)&:=[5,\{1,4\}^{k-1},1,j].
\end{align*}

\begin{lemma}\label{lem:vlim} We have
\begin{itemize}
    \item[{\rm (i)}] $c_2<c_4<\cdots<c_{2k}<\cdots<\sigma^2<\cdots<c_{2k+1}<\cdots<c_3<c_1$.
    \item[{\rm (ii)}] $c_{2k+1}<\cdots<u_k(2)<u_k(1)=v_k(6)<v_k(7)<\cdots<c_{2k-1}$.
    \item[{\rm (iii)}] $\lim_{j\to\infty}v_k(j)=c_{2k-1}$.
    \item[{\rm (iv)}] $\lim_{j\to\infty}u_k(j)=c_{2k+1}$.
\end{itemize}
\end{lemma}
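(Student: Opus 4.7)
My plan is to deduce all four parts directly from standard facts about continued fraction convergents, viewing everything as instances of the formula
\[
[a_0,\ldots,a_n,t]=\frac{t\,P_n+P_{n-1}}{t\,Q_n+Q_{n-1}},
\]
where $P_n/Q_n$ are the usual convergent numerators and denominators of the shared prefix $[5,\{1,4\}^{k-1},\ldots]$. Differentiating in $t$ and using the identity $P_nQ_{n-1}-P_{n-1}Q_n=(-1)^{n-1}$ shows that the value is monotone in $t$, with the sign of the monotonicity determined by the parity of $n$.

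For part (i), the standard convergent alternation for any simple continued fraction gives that the finite truncations of $\sigma^2=[5,\{1,4\}^\infty]$ of odd (resp.\ even) entry-count lie above (resp.\ below) the limit and form a decreasing (resp.\ increasing) sequence; since $c_{2k-1}$ and $c_{2k}$ are precisely these truncations (after counting that $c_{2k-1}=[5,\{1,4\}^{k-1},1]$ has $2k$ entries and $c_{2k}=[5,\{1,4\}^k]$ has $2k{+}1$ entries), the chain in (i) follows.

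For parts (ii)--(iv), the key computations are local. Counting entries shows that in $v_k(j)$ the variable $j$ sits in position $2k$ (with the fixed prefix having $n=2k-1$), so the monotonicity sign is $(-1)^{2k-2}=+1$ and $v_k(j)$ is strictly increasing in $j$; for $u_k(j)$, $j$ sits in position $2k+1$ (prefix index $n=2k$), so the sign is $(-1)^{2k-1}=-1$ and $u_k(j)$ is strictly decreasing. Taking $j\to\infty$ collapses the last entry to $0$, giving the limits $\lim v_k(j)=[5,\{1,4\}^{k-1},1]=c_{2k-1}$ (this is (iii)) and $\lim u_k(j)=[5,\{1,4\}^{k-1},1,5]$; applying the identity $[\ldots,a,1]=[\ldots,a+1]$ in reverse to $c_{2k+1}=[5,\{1,4\}^k,1]=[5,\{1,4\}^{k-1},1,4,1]=[5,\{1,4\}^{k-1},1,5]$ yields (iv). Finally, $u_k(1)=[5,\{1,4\}^{k-1},1,5,1]=[5,\{1,4\}^{k-1},1,6]=v_k(6)$ by the same identity, completing (ii).

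There is no real obstacle here; the only thing to be careful about is the bookkeeping of which position $j$ occupies in each continued fraction (and hence which parity governs the monotonicity). Everything else is the textbook behavior of simple continued fractions.
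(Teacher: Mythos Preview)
Your proof is correct and follows essentially the same approach as the paper's, namely deducing everything from elementary properties of simple continued fractions. The paper is terser: it simply cites \cite[Corollary~6.5]{FrenkelMuller15} for (i) and (ii) and gives a one-line limit argument for (iii) (noting (iv) is similar), whereas you unpack the argument explicitly via the convergent formula $[a_0,\dots,a_n,t]=(tP_n+P_{n-1})/(tQ_n+Q_{n-1})$ and the parity of $n$.
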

\begin{proof} Parts (i) and (ii) are exactly the same as in \cite[Corollary~6.5]{FrenkelMuller15}. We prove (iii) here, and note that the proof of (iv) is similar to that of (iii).

The fact that $\lim_{j\to\infty}v_k(j)=c_{2k-1}$ follows immediately from the continued fractions: the continued fractions of $c_{2k-1}$ and $v_k(j)$ differ only by changing the final denominator of $1$ in the first expression of $c_{2k-1}$ to $1+\frac1j$. Then as $j\to\infty$ we have $\frac1j\to0$.
\end{proof}

This auxiliary result is a rewriting of \cite[Lemma~6.9]{FrenkelMuller15}:

\begin{lemma}\label{lem:bounds}
We have:
    \begin{itemize}
        \item[{\rm (i)}] $\frac{u_k(j+1)+1}{8/3}>\vol_{1/3}(u_k(j))$ for all $k,j\geq1$.
        \item[{\rm (ii)}] $\frac{v_k(j)+1}{8/3}>\vol_{1/3}(v_k(j+1))$ for all $k\geq1, j\geq6$.
    \end{itemize}
\end{lemma}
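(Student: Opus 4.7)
The plan is to reduce both parts to explicit polynomial inequalities in $j$ and $k$, then verify these directly. First I will observe that $\vol_{1/3}(z) = 3\sqrt{z}/(2\sqrt{2})$, so squaring both (positive) sides of the inequalities makes (i) equivalent to $(u_k(j+1)+1)^2 > 8\,u_k(j)$ for $k, j \geq 1$, and (ii) equivalent to $(v_k(j)+1)^2 > 8\,v_k(j+1)$ for $k \geq 1$, $j \geq 6$. These resemble the inequalities established by Frenkel--M\"uller in \cite[Lemma~6.9]{FrenkelMuller15} for the Pell staircase on the polydisk, since the continued fractions $c_i, u_k(j), v_k(j)$ are defined identically here.

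Next I will express $u_k(j)$ and $v_k(j)$ as explicit rationals via the continuant recursion. Since $\sigma^2 = [5, \overline{1,4}]$, its convergents satisfy $c_{2k-1} = y_{k+1}/y_k$ and $c_{2k+1} = y_{k+2}/y_{k+1}$, with $y_k$ the sequence of \eqref{eqn:yk}; and $c_{2k-2} = [5, \{1,4\}^{k-1}]$ equals $(y_{k+1} - y_k)/(y_k - y_{k-1})$, obtainable from the continuant identity $p_{n+1} = p_n + p_{n-1}$ applied at a CF-digit equal to $1$. The identity $[a_0, \ldots, a_{n-1}, j] = (j\, p_{n-1} + p_{n-2})/(j\, q_{n-1} + q_{n-2})$ then gives
\[
u_k(j) = \frac{j\, y_{k+2} + y_{k+1}}{j\, y_{k+1} + y_k}, \qquad v_k(j) = \frac{(j+1)\, y_{k+1} - y_k}{(j+1)\, y_k - y_{k-1}}.
\]

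Substituting these into the squared inequalities and clearing the (positive) denominators will produce, in each case, a cubic-in-$j$ polynomial $N_i(j; y_{k+1}, y_k)$ of degree $3$ in $(y_k, y_{k+1})$ whose positivity must be established. Then, by repeatedly using the Pell-type Bezout relation $y_{k+1}^2 - y_k\, y_{k+2} = 1$, equivalently $y_{k+1}^2 - 6\, y_k\, y_{k+1} + y_k^2 = 1$, I will reduce monomials in $y_k, y_{k+1}$, and each $N_i$ should collapse to a linear form in $y_k$ and $y_{k+1}$: for (i), $N_1 = (j^3 + 8j^2 - 8)\, y_{k+1} + j(j+8)\, y_k$, positive for all $j \geq 1$ and $k \geq 1$; for (ii), $N_2 = (j-6)(j+2)\, y_{k+1} + (j^3 - 8j^2 + 4j + 56)\, y_k$, where the first coefficient is nonnegative for $j \geq 6$, and the second equals $8$ at $j = 6$ and has positive derivative for $j \geq 6$, so $N_2 > 0$ throughout.

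The hard part will be the algebraic bookkeeping in the polynomial reduction: the substantial cancellations that collapse a cubic-in-$y_k, y_{k+1}$ expression down to a clean linear form are not a priori obvious, and reflect the equation $\sigma^4 - 6\sigma^2 + 1 = 0$ satisfied by the accumulation point $\sigma^2 = 3 + 2\sqrt{2}$.
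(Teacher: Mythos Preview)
Your proof is correct. The continued-fraction formulas for $u_k(j)$ and $v_k(j)$ are right, the Pell-type identity $y_{k+1}^2-6y_ky_{k+1}+y_k^2=1$ does collapse the cubic to the linear forms you claim (I spot-checked both $N_1$ and $N_2$ at small values and the reductions go through exactly), and the sign analysis for $j\geq1$ and $j\geq6$ respectively is fine.

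The paper's own argument is much shorter but less self-contained: it simply observes that $\vol_{1/3}(z)=\sqrt{z/(8/9)}$, so both inequalities are literally equivalent, after dividing by $3/2$, to the corresponding statements $\frac{u_k(j+1)+1}{4}>\sqrt{u_k(j)/2}$ and $\frac{v_k(j)+1}{4}>\sqrt{v_k(j+1)/2}$ already proved in \cite[Lemma~6.9]{FrenkelMuller15} for the cube $P(1,1)$ (where the volume function is $\sqrt{z/2}$). You actually note this resemblance in your first paragraph but then re-derive everything from scratch. Your route has the advantage of being completely self-contained and of making explicit why the inequalities are so tight at $j=1$ and $j=6$ (the linear forms $N_1,N_2$ are small there); the paper's route is a one-line citation. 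Either is fine, but if you are happy to cite \cite{FrenkelMuller15}, you could replace the entire polynomial computation with that single equivalence.
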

\begin{proof}
    \begin{itemize}
    \item[{\rm (i)}] This is equivalent to \cite[Lemma~6.9~(i)]{FrenkelMuller15}:
    \[
    \frac{u_k(j+1)+1}{8/3}>\sqrt{\frac{u_k(j)}{8/9}} \iff \frac{u_k(j+1)+1}{4}>\sqrt{\frac{u_k(j)}{2}}.
    \]

    \item[{\rm (ii)}] This is equivalent to  \cite[Lemma~6.9~(ii)]{FrenkelMuller15}:
    \[
    \frac{v_k(j)+1}{8/3}>\sqrt{\frac{v_k(j+1)}{8/9}} \iff \frac{v_k(j)+1}{4}>\sqrt{\frac{v_k(j+1)}{2}}.
    \]
    \end{itemize}
\end{proof}

\begin{definition}[{\cite[Definition~6.10]{FrenkelMuller15}}]\label{def:reg}
    A point $z\in[\sigma^2,6]$ is \textbf{regular} if for all exceptional classes $\bE=(d,m;\bm)$ with $\ell(\bm)=\ell(z)$, we have
    \[
    \mu_{\bE,1/3}(z)\leq\frac{3(z+1)}{8}.
    \]
\end{definition}

\begin{rmk}\label{rmk:sig6reg} \rm
Recall from \eqref{eqn:cbsup} that $c_{1/3}(z)= \sup_\bE \{\mu_{\bE,1/3}(z),\vol_{1/3}(z)\}$. Since $c_{1/3}$ has an infinite staircase accumulating at $\sigma^2$, we have by \cite[Theorem~1.13]{AADT} that $c_{1/3}(\sigma^2)=\sqrt{\frac{\sigma^2}{8/9}}=\frac{3(\sigma^2+1)}{8}$. In particular, this implies that $z=\sigma^2$ is regular. Similarly, the fact that $c_{1/3}(6)=\frac{3(6+1)}{8}$ by  Lemma \ref{lem:6reg} implies that $z=6$ is regular. 
\end{rmk}

We will see in Sections \ref{ss:reguv}-\ref{ss:regc} that the points $c_{2k-1}, u_k(j)$, and $v_k(j)$ are regular. With their regularity in place, we can prove the following upper bound, which along with the lower bound from Lemma \ref{lem:E02} completes the proof of Theorem \ref{thm:13proved}.

\begin{prop}\label{prop:regproof}
    On the interval $[\sigma^2,6]$ we have:
    \[
    c_{1/3}(z)\leq\frac{3(z+1)}{8}.
    \]

\end{prop}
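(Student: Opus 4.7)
The plan is to argue by contradiction. Suppose there exists $z_0\in[\sigma^2,6]$ with $c_{1/3}(z_0)>L(z_0)$, where $L(z):=\tfrac{3(z+1)}{8}$. Continuity of $c_{1/3}$ and $L$, together with Remark \ref{rmk:sig6reg} and Lemma \ref{lem:6reg}, forces $z_0\in(\sigma^2,6)$; and since $(z+1)^2>8z$ precisely for $z>\sigma^2$, we have $L>\vol_{1/3}$ on $(\sigma^2,6]$, so the supremum in \eqref{eqn:cbsup} is attained at $z_0$ by some exceptional class $\bE=(d,m;\bm)$ with $\mu_{\bE,1/3}(z_0)>L(z_0)>\vol_{1/3}(z_0)$. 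I would then pick such a counterexample pair $(\bE,z_0)$ with $\ell(\bm)=:N$ minimal.

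By Lemma \ref{lem:cghmp23}, on the maximal interval $I$ where $\mu_{\bE,1/3}>\vol_{1/3}$, the obstruction is piecewise linear with a unique break $a$ and $\ell(a)=N$. Using that $\mu_{\bE,1/3}$ is linearly increasing up to $a$ and constant afterwards, a slope comparison with $L$ (together with the boundary identity $\mu_{\bE,1/3}=\vol_{1/3}$ on $\partial I$) forces both $\mu_{\bE,1/3}(a)>L(a)$ and $a\in(\sigma^2,6)$. Setting $R:=\{\sigma^2,6\}\cup\{c_{2k-1}\}_{k\geq1}\cup\{u_k(j)\}_{k,j\geq1}\cup\{v_k(j)\}_{k\geq1,j\geq6}$, if $a\in R$ then the regularity of $a$ coming from Propositions \ref{prop:ukj}, \ref{prop:vkj}, \ref{prop:c2k1} (via Definition \ref{def:reg}) immediately contradicts $\mu_{\bE,1/3}(a)>L(a)$.

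The remaining case is $a\in(p_1,p_2)$ for two consecutive points $p_1<p_2$ of $R$. Here continued-fraction analysis built on Lemma \ref{lem:vlim} yields $\ell(a)>\max(\ell(p_1),\ell(p_2))$. I would then verify $c_{1/3}(p_i)\leq L(p_i)$ for $i=1,2$: every $\bE'$ with $\ell(\bm')<N$ has $\mu_{\bE',1/3}\leq L$ on $[\sigma^2,6]$ by the minimality of $N$, and every $\bE'$ with $\ell(\bm')\geq N>\ell(p_i)$ has $\mu_{\bE',1/3}(p_i)\leq\vol_{1/3}(p_i)\leq L(p_i)$ by Lemma \ref{lem:cghmp21}; taking the supremum over exceptional $\bE'$ at $p_i$ yields the claim. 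Thus $f:=c_{1/3}-L$ is nonpositive at $p_1,p_2$ and strictly positive at $a\in(p_1,p_2)$, so $f$ attains a positive local maximum at some nonsmooth point $z^{**}\in(p_1,p_2)$ of $c_{1/3}$, where a dominant exceptional class $\bE''$ must have its unique break.

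The main obstacle—and the technical heart of the proof, paralleling \cite[Section~6]{FrenkelMuller15}—is excluding this configuration. The class $\bE''$ satisfies $\ell(\bm'')=\ell(z^{**})>\max(\ell(p_1),\ell(p_2))$, and for the quasi-perfect representative with center $z^{**}=p/q$ one computes $\mu_{\bE'',1/3}(z^{**})=\tfrac{3p}{p+q}$; the required inequality $\tfrac{3p}{p+q}\leq\tfrac{3(p+q)}{8q}$ is equivalent to $(p/q)^2-6(p/q)+1\geq 0$, which holds precisely because $z^{**}>\sigma^2$. A residual refinement using Cauchy--Schwarz together with the integrality constraints from the Diophantine equations \eqref{eqn:D} handles any non-quasi-perfect competitor with $\ell(\bm'')=\ell(z^{**})$ and break at $z^{**}$, contradicting $\mu_{\bE'',1/3}(z^{**})>L(z^{**})$ and completing the proof.
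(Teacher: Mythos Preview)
Your proposal has a genuine gap at the last step, and it is not a minor one.

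Everything up through verifying $c_{1/3}(p_i)\leq L(p_i)$ at the two neighbouring regular points $p_1,p_2$ is fine, modulo details you could certainly fill in. The problem is what you do with the resulting local maximum $z^{**}\in(p_1,p_2)$. You correctly note that for a \emph{quasi-perfect} class centred at $z^{**}=p/q$ one has $\mu_{\bE'',1/3}(z^{**})=3p/(p+q)\leq L(z^{**})$ whenever $p/q>\sigma^2$. But the class $\bE''$ live at $z^{**}$ need not be quasi-perfect; it need only have its break point at $z^{**}$, i.e.\ $\ell(\bm'')=\ell(z^{**})$. Your ``residual refinement using Cauchy--Schwarz together with the integrality constraints'' is exactly the statement that $z^{**}$ is regular in the sense of Definition~\ref{def:reg}, and that is precisely what Propositions~\ref{prop:ukj}, \ref{prop:vkj}, \ref{prop:c2k1} establish---but only at the specific points $u_k(j)$, $v_k(j)$, $c_{2k-1}$. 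Their proofs rely on arithmetic features of those particular continued fractions (e.g.\ the exact values of $p^2-6pq+q^2$ from \cite[Corollary~6.8]{FrenkelMuller15}, and for the $c_{2k-1}$ the ghost-stair construction and intersection positivity). There is no indicated mechanism to extend this to an arbitrary break point $z^{**}$, so your final paragraph is asserting the very thing to be proved.

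The paper's argument avoids this trap entirely. Instead of trying to prove regularity at an unknown point, it takes the \emph{largest} outer corner $s_0$, shows the class $\bE$ live just to its right must break at $s_0$, and then uses Lemma~\ref{lem:bounds} (which you never invoke) to force the neighbouring regular point $u_+$ to lie inside the obstruction interval $I$. Lemma~\ref{lem:cghmp23} then gives $\ell(u_+)>\ell(s_0)$, while the continued-fraction nesting gives $\ell(s_0)>\ell(u_+)$, a contradiction. The role of Lemma~\ref{lem:bounds} is exactly to transfer information from the regular grid points to the unknown corner $s_0$ without ever needing regularity at $s_0$ itself; your argument has no substitute for this step.
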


\begin{proof} We follow the proofs of \cite[Proposition~4.1.6]{McDuffSchlenk12}, \cite[Proposition~6.11]{FrenkelMuller15} quite closely, making several necessary changes, particularly to the references.

The points
    \begin{align*}
        c_{2k-1}&, k\geq1,
        \\u_k(j)&, k\geq1, j\geq2,
        \\v_k(j)&, k\geq1, j\geq6,
    \end{align*}
    are regular by Propositions \ref{prop:ukj}, \ref{prop:vkj}, \ref{prop:c2k1}, and Corollary \ref{cor:j6reg}.

By Lemma \ref{lem:E02} and \cite[Proposition~2.1~(5)]{AADT}, the function $c_{1/3}$ is piecewise linear on $(\sigma^2,6)$. Let $S\subset(\sigma^2,6)$ denote the set of $z$-values of nonsmooth points of $c_{1/3}$, with $S=S_+\cup S_-$, where $S_+$ denotes the ``outer corners'' (where $c_{1/3}$ is concave) and $S_-$ denotes the ``inner corners'' (where $c_{1/3}$ is convex).

Assume by contradiction that there is some $z_0\in(\sigma^2,6)$ for which $c_{1/3}(z_0)>\frac{3(z+1)}{8}$. Then $S$ is nonempty. Moreover, by the fact that $c_{1/3}(\sigma^2)=\frac{3(\sigma^2+1)}{8}$, the set $S_+$ is nonempty. Finally, by Lemma \ref{lem:6reg}, the largest $z$-value in $S$ is in $S_-$ and it is at most $\frac{95+4\sqrt{2}}{17}\approx5.921$.

Denote by $s_0\in\left(\sigma^2,\frac{95+4\sqrt{2}}{17}\right]$ the largest $z$-value in $S_+$, which exists because there are only finitely many nonsmooth points outside any given neighborhood of the accumulation point $\sigma^2$. Given any $\alpha>1$, if an exceptional class satisfies $\mu_{\bE,1/3}(z)>\alpha\vol_{1/3}(z)$, then the volume bound in Lemma \ref{lem:bd-m} can be reinterpreted as an upper bound for the $d$ value of that class (or equivalently an upper bound for $m$), using the fact that $|d-3m|\leq2$ by the first part of Lemma \ref{lem:bd-m}. Given $z\in[s_0,s_0+\epsilon)$, set $\alpha(z)=3(z+1)/8\vol_{1/3}(z)$, which is uniformly bounded away above one on $[s_0,s_0+\epsilon)$. Because the upper bound on $d$ is continuous, if $\epsilon$ is small enough then there is a uniform bound on $d$ for all $z\in[s_0,s_0+\epsilon)$, and thus there are only finitely many possible $\bE$ which can have $\mu_{\bE,1/3}(z)>\alpha(z)\vol_{1/3}(z)$. Therefore $c_{1/3}(z)$ is a true maximum (rather than a supremum), and there is an exceptional class $\bE=(d,m;\bm)$ for which $c_{1/3}(z)=\mu_{\bE,1/3}(z)$ on $[s_0,s_0+\epsilon)$ for sufficiently small $\epsilon > 0$.

Choose $\epsilon$ small enough that $[s_0,s_0+\epsilon)\cap S_-=\emptyset$. Let $I\supset[s_0,s_0+\epsilon)$ be the maximal open interval on which $\mu_{\bE,1/3}(z)>\vol_{1/3}(z)$; by Lemma \ref{lem:cghmp23} there is a unique point $a\in I$ that is a nonsmooth point of $\mu_{\bE,1/3}$, and $\ell(a)=\ell(\bm)$. Furthermore, by Lemma \ref{lem:cghmp21}, for all other $z\in I$ we have $\ell(z)>\ell(\bm)$. Meanwhile, by Lemma \ref{lem:cghmp23}, the obstruction $\mu_{\bE,1/3}$ is piecewise linear on $I$, so if $s_0$ is not a nonsmooth point of $\mu_{\bE,1/3}$ then for $z<s_0$ and close to $s_0$, the value of $\mu_{\bE,1/3}$ at $z$ lies on the continuation of the line $\mu_{\bE,1/3}|_{[s_0,s_0+\epsilon)}=c_{1/3}|_{[s_0,s_0+\epsilon)}$. This is a contradiction to the fact that $s_0$ is a concave nonsmooth point of $c_{1/3}$ (that is, $s_0\in S_+$), which forces the value of $c_{1/3}(z)$ (and hence that of $\mu_{\bE,1/3}$) to be below this line. Since $a$ is the unique nonsmooth point of $\mu_{\bE,1/3}$ in $I\supset[s_0,s_0+\epsilon)$, we must have $a=s_0$.

By Lemma \ref{lem:cghmp23}, on $I$ we have
\[
\mu_{\bE,1/3}(z)=\begin{cases}\alpha+\beta z&\text{ if }z\leq s_0
\\\alpha'+\beta'z&\text{ if }z\geq s_0\end{cases}.
\]
Since $c_{1/3}$ is nondecreasing and $\mu_{\bE,1/3}(z)=c_{1/3}(z)$ on part of the interval where $\mu_{\bE,1/3}(z)=\alpha'+\beta'z$, we must have $\beta'\geq0$.

Let $k$ be the index for which $s_0\in(c_{2k+1},c_{2k-1})$ ($s_0$ cannot be either $c_{2k\pm1}$ as they are regular, and $\ell(\bm)=\ell(s_0)$). Further let $s_0\in(u_-,u_+)$, where the $u_\pm$ are consecutive $u_k(j)$ or $v_k(j)$ terms in the sequence in Lemma \ref{lem:vlim} (ii); again $s_0\neq u_\pm$ by regularity.

\begin{figure}[ht!]
    \centering
    \includegraphics[width=0.6\textwidth]{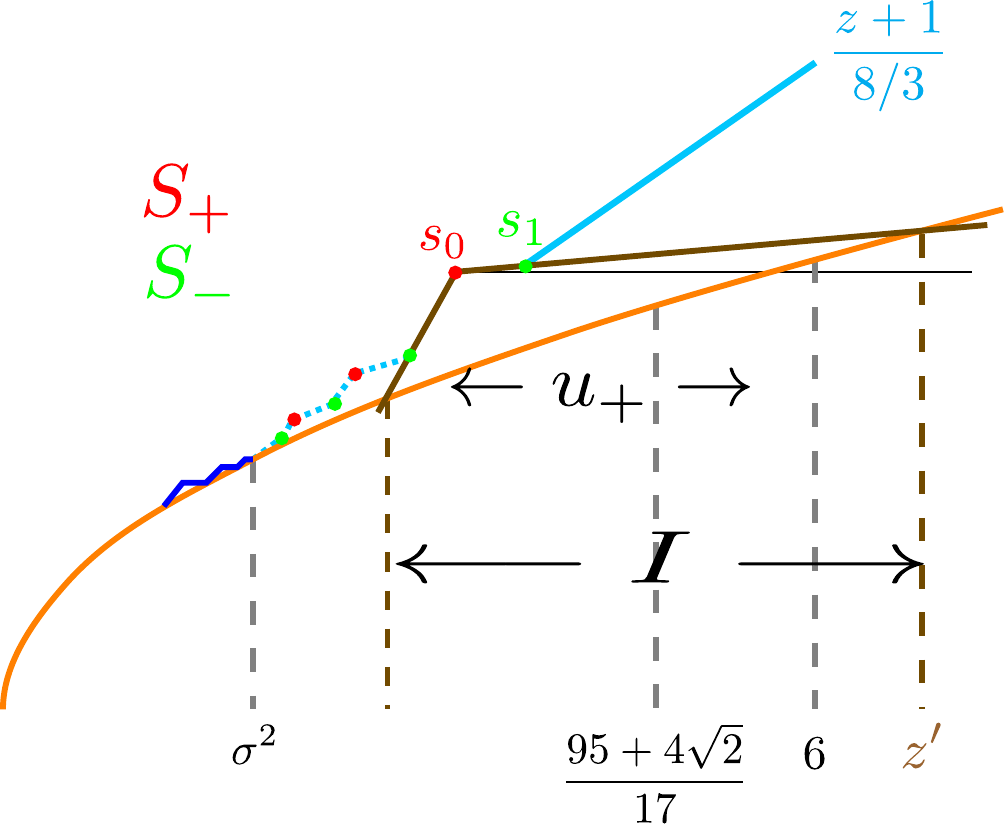}
    \caption{Here we depict the (contradictory) situation described in the proof of Proposition \ref{prop:regproof}. The orange curve is the volume obstruction, the dark blue staircase is the graph of the function $c_{1/3}$ below $\sigma^2$, and the dotted bright blue curve is the hypothetical graph of $c_{1/3}$ between $\sigma^2$ and the interval on which the obstruction from $\bE$ is live. The graph of $\mu_{\bE,1/3}$ on $I$ is in brown. The point $u_+$ must be between the $z$-values of the two vertical pink lines.}
    \label{fig:s0u+}
\end{figure}

By Lemma \ref{lem:bounds},
\begin{equation}\label{eqn:s0u+}
\mu_{\bE,1/3}(s_0)>\frac{3(s_0+1)}{8}>\frac{3(u_-+1)}{8}>\vol_{1/3}(u_+).
\end{equation}
We refer the reader to Figure \ref{fig:s0u+}.

Let $s_1$ be the largest $z$-value in $S$; recall that it is in $S_-$. Since there are no nonsmooth points of $c_{1/3}$ between $s_0$ and $s_1$, on $[s_0,s_1]$ the function $c_{1/3}$ must be the extension of the line $c_{1/3}|_{[s_0,s_0+\epsilon)}$, thus
\begin{equation}\label{eqn:s1}
\vol_{1/3}(s_1)<\frac{3(s_1+1)}{8}=c_{1/3}(s_1)=\alpha'+\beta's_1.
\end{equation}
We know that $\mu_{\bE,1/3}(z)=\alpha'+\beta'z$ for all $z \in [s_0,z']$, where $z'$ is the right endpoint of the interval $I$, defined by:
\[
\alpha'+\beta'z'=\vol_{1/3}(z').
\]
The inequality \eqref{eqn:s1} shows that $s_1<z'$. Therefore, because there are no nonsmooth points of $\mu_{\bE,1/3}$ between $s_0$ and $z'$, we have $\mu_{\bE,1/3}(s_1)=\alpha'+\beta's_1>\vol_{1/3}(s_1)$, implying that $[s_0,s_1]\subset I$.

Further, by \eqref{eqn:s0u+} and the fact that $\beta'\geq0$, we must have $z'\geq u_+$, therefore
\[
\mu_{\bE,1/3}(u_+)>\vol_{1/3}(u_+),
\]
meaning $u_+\in I$.

Thus $\ell(u_+)>\ell(s_0)$ by Lemma \ref{lem:cghmp23}. However, this contradicts the fact that because $s_0\in(u_-,u_+)$, we must have $\ell(s_0)>\ell(u_\pm)$. This last fact can be explained as follows: if 
$$[a_0,\dots,a_{n-1},a_n]<x<[a_0,\dots,a_{n-1},a_n'],$$
then we may write (by abuse of notation) $x=[a_0,x']$, where $x'$ may be a real number. Then, because $a_0<x<a_0+1$,
\[
\frac{1}{a_1+\frac{1}{\ddots+\frac{1}{a_n}}}<x-a_0<\frac{1}{a_1+\frac{1}{\ddots+\frac{1}{a_n'}}} \Rightarrow a_1+\frac{1}{\ddots+\frac{1}{a_n}}>x'>a_1+\frac{1}{\ddots+\frac{1}{a_n'}}.
\]
Repeating this argument shows that because $u_\pm$ agree up to their last digit, the continued fraction of $s_0$ must also agree for those first digits. In the case $\{u_-,u_+\}=\{u_k(j+1),u_k(j)\}$, we thus have
\[
s_0=[5,\{1,4\}^{k-1},1,5,s'],
\]
and because the $s'$ is in an odd place,
\begin{equation}\label{eqn:js'}
u_k(j+1)<s_0<u_k(j) \Rightarrow j+1>s'>j \Rightarrow s'=[j,s'']
\end{equation}
for some real number $s''$. Thus $\ell(s_0)=\ell(u_+)+\ell(s'')$ and in particular $\ell(s_0)>\ell(u_+)$. The argument in the case $\{u_-,u_+\}=\{v_k(j),v_k(j+1)\}$ is similar, except that here we have
\[
s_0=[5,\{1,4\}^{k-1},1,s']
\]
with the $s'$ in an even place, which produces the same inequalities as in \eqref{eqn:js'}.
\end{proof}

\subsection{Regularity of the $u_k(j)$ and $v_k(j)$}\label{ss:reguv}

We now prove that the $u_k(j)$ and $v_k(j)$ are regular, which are hypotheses of Proposition \ref{prop:regproof}. We first prove key estimates in Lemma \ref{lem:3d-m}.

\begin{rmk} \rm
Note that by Lemma \ref{lem:bd-m}, if $\bE=(d,m;\bm)$ is obstructive then
\[
|d-3m|<\sqrt{8} \iff -2\leq d-3m\leq2.
\]
Therefore when proving the hypotheses of Proposition \ref{prop:regproof} in Propositions \ref{prop:ukj},  \ref{prop:vkj}, and \ref{prop:c2k1}, we may restrict to $\bE$ with $-2\leq d-3m\leq2$.
\end{rmk}

Recall that the \textbf{error vector} $\varepsilon$ of a Diophantine class $\bE=(d,m;\bm)$ at a point $z$ in an interval on which $\mu_{\bE,1/3}$ is obstructive is defined by
\[
\bm=\alpha(z)\bw(z)+\varepsilon,
\]
where by \cite[Lemma~18~(i)]{ICERM},
\begin{itemize}
    \item the obstruction from the $\alpha(z)\bw(z)$ part of $\bm$ is the volume constraint at $z$:
    \[
    \alpha(z)=\frac{3d-m}{\sqrt{8z}},
    \]
    \item $\ell(\varepsilon)=\ell(z)$,
    \item and $||\varepsilon||^2=\varepsilon\cdot\varepsilon<1$.
\end{itemize}

\begin{lemma}\label{lem:3d-m} Assume $\bE=(d,m;\bm)$ satisfies
\[
\mu_{\bE,1/3}(z)>\frac{(z+1)}{8}
\]
for some $z\in[\sigma^2,6]$. Then if $d=3m+c$, the following inequalities hold:
\begin{itemize}
    \item[{\rm (i)}]
    \[
    3d-m<\sqrt{\frac{64z}{z^2-6z+1}+c^2}.
    \]
    \item[{\rm (ii)}]
    \[
    -\varepsilon\cdot\varepsilon>\frac{(3d-m)^2}{8\sqrt{2z}}(z+1-2\sqrt{2z})-1+\frac{c^2}{8}.
    \]
\end{itemize}
\end{lemma}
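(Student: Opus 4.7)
The plan is to derive both bounds from the two Diophantine equations combined with the algebraic identity that results from writing $d = 3m + c$: a direct expansion shows
\[
d^2 - m^2 = \frac{(3d-m)^2}{8} - \frac{c^2}{8}, \qquad \text{so} \qquad \bm \cdot \bm = \frac{(3d-m)^2}{8} - \frac{c^2}{8} + 1.
\]
Based on the form of the conclusions, I read the hypothesis as $\mu_{\bE,1/3}(z) > 3(z+1)/8$ rather than $(z+1)/8$ (the latter is below even $\vol_{1/3}(z) = 3\sqrt{z/8}$ and yields no nontrivial bound, so it appears to be a typo in the statement).

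For part (i), I would rewrite the hypothesis as $\bm \cdot \bw(z) > (z+1)(3d-m)/8$ using $\mu_{\bE,1/3}(z) = 3\bm\cdot\bw(z)/(3d-m)$. Applying Cauchy-Schwarz together with $\bw(z) \cdot \bw(z) = z$ (from \eqref{eq:rmkweights}, padding with zeros as needed for irrational $z$ by continuity) produces
\[
\frac{(z+1)^2(3d-m)^2}{64} < \left(\frac{(3d-m)^2}{8} - \frac{c^2}{8} + 1\right) z.
\]
Rearranging using $(z+1)^2 - 8z = z^2 - 6z + 1$ gives $(3d-m)^2(z^2-6z+1) < 64z(1 - c^2/8)$. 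Since $z > \sigma^2$ forces $z^2 - 6z + 1 > 0$, I divide and observe that the resulting $-8zc^2/(z^2-6z+1) \leq 0 \leq c^2$ to conclude (i) after taking square roots.

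For part (ii), I would use the error-vector decomposition $\bm = \alpha(z)\bw(z) + \varepsilon$ with $\alpha(z) = (3d-m)/\sqrt{8z}$ from \cite[Lemma~18~(i)]{ICERM}. Computing $\bm \cdot \bm$ through the decomposition, equating to $d^2 - m^2 + 1$, and applying the $c$-identity yields
\[
-\varepsilon \cdot \varepsilon = -1 + \frac{c^2}{8} + 2\alpha(z)\, \varepsilon \cdot \bw(z).
\]
Separately, substituting the decomposition into the definition of $\mu_{\bE,1/3}$ shows $\mu_{\bE,1/3}(z) = \vol_{1/3}(z) + 3\varepsilon\cdot\bw(z)/(3d-m)$, so the hypothesis translates to $\varepsilon \cdot \bw(z) > (3d-m)(z+1-2\sqrt{2z})/8$. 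Crucially, the right-hand side is non-negative on $[\sigma^2, \infty)$ because $z + 1 - 2\sqrt{2z} = (\sqrt{z} - \sqrt{2})^2 - 1 \geq 0$ there, so multiplying by the positive quantity $2\alpha(z)$ preserves the inequality. Substituting into the identity for $-\varepsilon\cdot\varepsilon$ and simplifying $2\alpha(z)(3d-m)/8 = (3d-m)^2/(8\sqrt{2z})$ produces exactly (ii). The only subtlety worth flagging is tracking sign preservation throughout, which follows once $z \geq \sigma^2$ and $3d - m > 0$ (guaranteed for an obstructive class by Lemma~\ref{lem:bd-m}) are noted.
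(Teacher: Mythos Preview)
Your proposal is correct and follows essentially the same route as the paper. For (i) you apply Cauchy--Schwarz directly to $\bm\cdot\bw(z)$, whereas the paper cites the packaged bound of Lemma~\ref{lem:bd-m} (which yields $d^2-m^2<8z/(z^2-6z+1)$ and then the same substitution $d=3m+c$); for (ii) your argument via the error-vector decomposition is identical to the paper's, and your remark about the sign of $z+1-2\sqrt{2z}$ is harmless but unnecessary since multiplying an inequality by the positive $2\alpha(z)$ preserves it regardless.
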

\begin{proof}
By Lemma \ref{lem:bd-m},
\[
\mu_{\bE,1/3}(z)\leq\sqrt{\frac{z}{8/9}}\sqrt{1+\frac{1}{d^2-m^2}},
\]
thus
\begin{align*}
    &\frac{3(z+1)}{8}<\sqrt{\frac{z}{8/9}}\sqrt{1+\frac{1}{d^2-m^2}}
    \\\iff& d^2-m^2<\frac{8z}{z^2-6z+1}.
\end{align*}

We now assume $d=3m+c$. Then
\begin{align*}
    &8m^2+6cm+c^2<\frac{8z}{z^2-6z+1}
    \\\iff& (3d-m)^2-c^2<\frac{64z}{z^2-6z+1}.
\end{align*}

To prove conclusion (ii), from the quadratic Diophantine equation in \eqref{eqn:D}, we have:
\begin{align}
    d^2-m^2+1&=\alpha(z)^2\bw(z)\cdot\bw(z)+2\alpha(z)\bw(z)\cdot\varepsilon+\varepsilon\cdot\varepsilon\nonumber
    \\&=\frac{(3d-m)^2}{8}+\frac{2(3d-m)}{\sqrt{8z}}\bw(z)\cdot\varepsilon+\varepsilon\cdot\varepsilon\nonumber.
\end{align}
Writing $d=3m+c$ and simplifying we get:
\begin{equation}\label{eqn:qD}
\bw(z)\cdot\varepsilon=\frac{\sqrt{2z}(1-c^2/8-\varepsilon\cdot\varepsilon)}{3d-m}.
\end{equation}
Meanwhile, from our assumption that $\mu_{\bE,1/3}(z)>3(z+1)/8$, we have:
$$\frac{3(z+1)}{8}<\frac{(\alpha(z)\bw(z)+\varepsilon)\cdot\bw(z)}{d-m/3}=\frac{1}{d-m/3}\left(\frac{(3d-m)\sqrt{z}}{\sqrt{8}}+\bw(z)\cdot\varepsilon\right)$$
\begin{equation}\label{eqn:webound} \iff(3d-m)\left(\frac{z+1}{8}-\sqrt{\frac{z}{8}}\right)<\bw(z)\cdot\varepsilon.\end{equation}
Finally, combining (\ref{eqn:qD}) and (\ref{eqn:webound}), we obtain as desired:
$$\frac{\sqrt{2z}(1-c^2/8-\varepsilon\cdot\varepsilon)}{3d-m}>(3d-m)\left(\frac{z+1}{8}-\sqrt{\frac{z}{8}}\right)$$

$$\iff-\varepsilon\cdot\varepsilon>\frac{(3d-m)^2}{8\sqrt{2z}}(z+1-2\sqrt{2z})-1+\frac{c^2}{8}.
$$
\end{proof}

\begin{prop}\label{prop:ukj}
    The points $u_k(j)$ for $ k\geq1, j\geq2$ are regular.
\end{prop}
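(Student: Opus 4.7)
The plan is to argue by contradiction, modelled on the regularity arguments in \cite[Section~4]{McDuffSchlenk12} and \cite[Proposition~6.12]{FrenkelMuller15}. Suppose $\bE=(d,m;\bm)$ is an exceptional class with $\ell(\bm)=\ell(u_k(j))$ satisfying $\mu_{\bE,1/3}(u_k(j))>3(u_k(j)+1)/8$. Writing $d=3m+c$, Lemma \ref{lem:bd-m} forces $|c|\leq 2$, and since $u_k(j)\in(\sigma^2,c_{2k-1})\subset(\sigma^2,6)$, Lemma \ref{lem:3d-m}(i) produces an explicit upper bound on $3d-m$ that is uniform in $k$ and $j$. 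Feeding this back into Lemma \ref{lem:3d-m}(ii) then furnishes a strictly positive lower bound for $-\varepsilon\cdot\varepsilon$, where $\varepsilon$ is the error vector of $\bE$ at $u_k(j)$.

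Next, I would write down the weight expansion of $u_k(j)=[5,\{1,4\}^{k-1},1,5,j]$ explicitly. Each occurrence of the continued-fraction block $\{1,4\}$ corresponds to a predictable step of the Euclidean algorithm and so yields a block of $\bw(u_k(j))$ with a prescribed length and a prescribed common weight, governed by the Pell-like recursion of the convergents of $u_k(j)$. In particular, the terminal digit $j$ produces a block of $j$ equal weights at the end of $\bw(u_k(j))$, and the penultimate and earlier blocks have controlled lengths that can be read off from the convergent denominators.

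Using the decomposition $\bm=\alpha(u_k(j))\bw(u_k(j))+\varepsilon$ with $\ell(\varepsilon)=\ell(u_k(j))$, the integrality of the entries of $\bm$ forces the components of $\varepsilon$ on each weight block to be equal modulo an integer correction controlled by $\alpha(u_k(j))$ and by the common weight on that block. Summing these block-wise constraints, one extracts a lower bound for $\varepsilon\cdot\varepsilon$ in terms of $3d-m$, $c$, and the lengths of the blocks. Combined with the upper estimate $\varepsilon\cdot\varepsilon<1$ and the lower estimate from Lemma \ref{lem:3d-m}(ii), this rules out all but finitely many candidate pairs $(3d-m,c)$, which are then treated by hand.

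The main obstacle will be the bookkeeping for the error vector on the blocks: one must simultaneously use (a) that the $m_i$ on a single block of equal weights differ from each other only by integers, (b) that Lemma \ref{lem:3d-m}(ii) gives a precise coupling between $3d-m$ and $\varepsilon\cdot\varepsilon$, and (c) the asymptotic behaviour as $j\to\infty$, where by Lemma \ref{lem:vlim}(iv) one has $u_k(j)\to c_{2k+1}$, so the tail block grows without destroying the estimate. A brief case split on small values of $j$ (where the block structure degenerates) handles the base cases, and the generic estimate handles the rest, completing the contradiction.
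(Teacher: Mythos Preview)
Your overall plan (contradiction via the error vector $\varepsilon$, using Lemma~\ref{lem:3d-m}) is the same as the paper's, but there is a genuine gap in the middle step: the bound on $3d-m$ from Lemma~\ref{lem:3d-m}(i) is \emph{not} uniform in $k$ and $j$. Indeed
\[
3d-m<\sqrt{\frac{64z}{z^2-6z+1}+c^2},
\]
and since $u_k(j)\to c_{2k+1}\to\sigma^2$ as $k\to\infty$, the denominator $z^2-6z+1$ tends to $0$ and the right-hand side blows up. So you cannot reduce to finitely many pairs $(3d-m,c)$ uniformly over all $k,j$.

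The paper repairs exactly this point by focusing only on the terminal block of $j$ equal weights, where $m_i\ge 1$ and the common weight is $1/q_k(j)$. This gives
\[
\varepsilon\cdot\varepsilon\ge j\left(1-\frac{3d-m}{q_k(j)\sqrt{8u_k(j)}}\right)^2,
\]
and the crucial input is the Frenkel--M\"uller identity $q_k(j)^2\bigl(u_k(j)^2-6u_k(j)+1\bigr)=j^2+6j+1$ (\cite[Corollary~6.8(i)]{FrenkelMuller15}), which precisely cancels the blow-up: after substituting Lemma~\ref{lem:3d-m}(i), the ratio $(3d-m)/(q_k(j)\sqrt{8u_k(j)})$ is controlled by $\sqrt{8/(j^2+6j+1)}$ plus a small $c$-dependent term, independent of $k$. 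This yields $\varepsilon\cdot\varepsilon>1$ directly for $j\ge 4$. For $j=2,3$ the paper does not check finitely many $(3d-m,c)$ by hand; instead it combines the terminal-block lower bound with Lemma~\ref{lem:3d-m}(ii) to obtain an explicit quadratic $g(m)$ (with leading coefficient involving the same Frenkel--M\"uller quantity) and shows its discriminant is negative, forcing $g(m)>0$, a contradiction. Your proposed block-by-block integrality analysis of all of $\bw(u_k(j))$ is therefore unnecessary; only the last block matters, but you need the arithmetic identity linking $q_k(j)$ to $j$ to make the estimate close.
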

\begin{proof} We will proceed by contradiction. 
    Assume $\bE=(d,m;\bm)$ is an exceptional class with
    \[
    \mu_{\bE,1/3}(u_k(j))>\frac{3(u_k(j)+1)}{8}
    \]
    and $\ell(\bm)=\ell(u_k(j))=5(k+1)+1+j$. Set $u_k(j)=p_k(j)/q_k(j)$. 
    
    The entries of the vector $\varepsilon$ are the differences between the corresponding entries in $\bm$ and $\alpha(u_k(j))\bw(u_k(j))$. The last $j$ entries in $\bm$ are at least one, while the last $j$ entries in $\alpha(u_k(j))\bw(u_k(j))$ all equal $\alpha(u_k(j))/q_k(j)=(3d-m)/(q_k(j)\sqrt{8u_k(j)})$, because the last element in the continued fraction of $u_k(j)$ is $j$.  Thus
    \begin{align}
        \varepsilon\cdot\varepsilon&\geq j\left(1-\frac{3d-m}{q_k(j)\sqrt{8u_k(j)}}\right)^2\label{eq:boundee}
        \\&> j\left(1-\sqrt{\frac{8}{q_k(j)^2(u_k(j)^2-6u_k(j)+1)}+\frac{c^2}{8q_k(j)^2u_k(j)}}\right)^2 \text{ by Lemma \ref{lem:3d-m} (i) and \textbf{Claim}}\nonumber
        \\&= j\left(1-\sqrt{\frac{8}{j^2+6j+1}+\frac{c^2}{8p_k(j)q_k(j)}}\right)^2=:j\left(1-\sqrt{f(j)}\right)^2 \text{ by \cite[Corollary~6.8~(i)]{FrenkelMuller15}}.\nonumber
    \end{align}

\noindent\textbf{Claim:} We claim that $f(j)$ is decreasing and less than one when $j\geq2$.  This allows us to invoke Lemma~\ref{lem:3d-m}~(i) above and also shows that 
$$F(j):=j\left(1-\sqrt{f(j)}\right)^2$$ 
is increasing for $j\geq2$. First note that if $f$ is decreasing, to show that it is less than one it suffices to show $f(2)<1$. To do so, note that $|c|\leq2$ and $p_k(j), q_k(j)\geq1$, thus the term $c^2/(8p_k(j)q_k(j))\leq1/2$, and furthermore $8/(2^2+6\cdot2+1)=8/17<1/2$.

To show that $f(j)$ is decreasing when $j\geq2$, it suffices to show that $p_k(j)$ and $q_k(j)$ are both nondecreasing in $j$. For this, we use the fact that if $p_i/q_i=[a_0,\dots,a_i]$ denotes the $i^\text{th}$ convergent of a continued fraction $[a_0,\dots,a_i,\dots]$ then
\[
x_i=a_ix_{i-1}+x_{i-2} \mbox{ for } x=p,q.
\]
Consider the final two convergents of $u_k(j)=[5,\{1,4\}^{k-1},1,5,j]$, which are $p_k/q_k=[5,\{1,4\}^{k-1},1,5]$ and $p'_k/q'_k=[5,\{1,4\}^{k-1},1]$. Then $x_k(j)=jx_k+x'_k$ for $x=p,q$, which are evidently increasing in $j$. We have shown the claim.

Moreover, the function $F(j)$ is at least one when $j\geq4$. This is because $p_k,p'_k\geq5$ and $q_k,q'_k\geq1$,\footnote{One might wonder whether or not we could decrease the bound on $j$ by using better estimates for $p_k(j)$ and $q_k(j)$. However, this is not possible, because even if we assume they are infinitely large so that the $c^2/8p_k(j)q_k(j)$ term does not contribute, we can only show $F(3)>3(1-\sqrt{8/(3^2+6*3+1)})^2\approx0.65$.} thus $p_k(4)q_k(4)\geq125$. The fact that $|c|\leq2$ is then enough to show $F(4)>1$.

Therefore, for $j\geq4$, we have shown that $\varepsilon\cdot\varepsilon>F(j)>1$. This contradicts the fact that $\varepsilon\cdot\varepsilon<1$ by \cite[Lemma~18~(i)]{ICERM}.

To handle the cases $j=2, 3$, we use the bound on $\varepsilon\cdot\varepsilon$ from \eqref{eq:boundee} and the bound on $-\varepsilon\cdot\varepsilon$ from Lemma \ref{lem:3d-m} (ii) to get:
\begin{align*}
0&=\varepsilon\cdot\varepsilon-\varepsilon\cdot\varepsilon
\\&>j\left(1-\frac{3d-m}{q_k(j)\sqrt{8u_k(j)}}\right)^2+\frac{(3d-m)^2}{8\sqrt{2u_k(j)}}(u_k(j)+1-2\sqrt{2u_k(j)})-1+\frac{c^2}{8} 
\\&=j\left(1-\frac{8m+3c}{\sqrt{8p_k(j)q_k(j)}}\right)^2+\frac{(8m+3c)^2\sqrt{q_k(j)}}{8\sqrt{2p_k(j)}}\left(\frac{p_k(j)}{q_k(j)}+1-\frac{2\sqrt{2p_k(j)}}{\sqrt{q_k(j)}}\right)-1+\frac{c^2}{8}=:g(m).
\end{align*}
Rearranging, we obtain $g(m)=4Am^2+Bm+C$ where:
\begin{align*}
A=&\frac{\sqrt{2p_k(j)}}{\sqrt{q_k(j)}}+\frac{\sqrt{2q_k(j)}}{\sqrt{p_k(j)}}+\frac{2j}{p_k(j)q_k(j)}-4,
\\B=&-\frac{4\sqrt{2}j}{\sqrt{p_k(j)q_k(j)}}+3c A,
\\C=& j-1+\frac{9c^2}{16}A-\frac{3cj}{\sqrt{2p_k(j)q_k(j)}}+\frac{c^2}{8}.
\end{align*}

To show $g(m)>0$ and obtain a contradiction, we check that $A$ is positive and the discriminant of $g(m)$ is negative, so that $g(m)$ is concave up and does not have zeroes. Checking the first amounts to simplifying the following:
\begin{align}
&A=\frac{\sqrt{2p_k(j)}}{\sqrt{q_k(j)}}+\frac{\sqrt{2q_k(j)}}{\sqrt{p_k(j)}}+\frac{2j}{p_k(j)q_k(j)}-4>0\nonumber
\\
\iff&\sqrt{2p_k(j)q_k(j)}\left(p_k(j)+q_k(j)\right)>4p_k(j)q_k(j)-2j\nonumber
\\
\iff&p_k(j)q_k(j)\left(p_k(j)^2-6p_k(j)q_k(j)+q_k(j)^2+8j\right)>2j^2.\label{eqn:m2>0}
\end{align}
We now use \cite[Corollary~6.8~(iii)]{FrenkelMuller15}, which states
\[
p_k(j)^2-6p_k(j)q_k(j)+q_k(j)^2=\begin{cases}17&\text{ if }j=2
\\28&\text{ if }j=3
    \end{cases}.
\]
Moreover, we have $p_k(j)q_k(j)\geq45$ (using the fact discussed above that $p_i=a_ip_{i-1}+p_{i-2}$, and the same for $q_i$) when $j\geq2$. This finishes the proof that $A$ is positive.

Showing that the discriminant $B^2-16A C$ is negative amounts to showing that
\begin{equation}\label{eqn:D<0c}16j^2<(8(j-1)+c^2)\left(\sqrt{2p_k(j)q_k(j)}(p_k(j)+q_k(j))+2j-4p_k(j)q_k(j)\right).
\end{equation}
Note that the inequality \eqref{eqn:D<0c} is only strengthened if $c\neq0$, so we assume $c=0$ and divide by eight, meaning that we must show
\begin{align}
&2j^2<(j-1)\left(\sqrt{2p_k(j)q_k(j)}(p_k(j)+q_k(j))+2j-4p_k(j)q_k(j)\right)\nonumber
\\\iff&2j+4(j-1)p_k(j)q_k(j)<(j-1)\sqrt{2p_k(j)q_k(j)}(p_k(j)+q_k(j))\nonumber
\\\iff&2j^2+8(j-1)p_k(j)q_k(j)<(j-1)^2p_k(j)q_k(j)\left(p_k(j)^2-6p_k(j)q_k(j)+q_k(j)^2-8\right).\label{eqn:D<0}
\end{align}

Again using \cite[Corollary~6.8~(iii)]{FrenkelMuller15}, we must show
\[
8+8p_k(2)q_k(2)<p_k(2)q_k(2)(17-8) \mbox{ and } 18+16p_k(3)q_k(3)<4p_k(3)q_k(3)(28-8),
\]
both of which hold by the fact that $p_k(j)q_k(j)\geq45$.
\end{proof}

The next result and its proof are very similar to the one above.

\begin{prop}\label{prop:vkj}
    The points $v_k(j)$ for $k\geq1, j\geq7$ are regular.
\end{prop}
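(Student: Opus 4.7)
The plan is to mirror the proof of Proposition \ref{prop:ukj}, with the numerology adjusted appropriately. Write $v_k(j) = \tilde p_k(j)/\tilde q_k(j)$ in lowest terms. The first step is to establish the analogue of \cite[Corollary~6.8]{FrenkelMuller15} for $v_k(j)$: using the convergent recursion $\tilde p_k(j) = j\,\tilde p'_k + \tilde p''_k$ (and likewise for $\tilde q$), where $\tilde p'_k/\tilde q'_k = c_{2k-1} = [5,\{1,4\}^{k-1},1]$ and $\tilde p''_k/\tilde q''_k = [5,\{1,4\}^{k-1}]$, together with the fact that $\sigma^2$ is a root of $z^2 - 6z + 1 = 0$, I expect a direct computation to yield
\[
\tilde p_k(j)^2 - 6\tilde p_k(j)\tilde q_k(j) + \tilde q_k(j)^2 = j^2 - 4j - 4,
\]
as well as monotonicity of $\tilde p_k(j)$ and $\tilde q_k(j)$ in $j$.

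Next, assume for contradiction that $\bE = (d,m;\bm)$ is an exceptional class with $\mu_{\bE,1/3}(v_k(j)) > 3(v_k(j)+1)/8$ and $\ell(\bm) = \ell(v_k(j)) = 5k + 1 + j$. Since $j$ is the final continued fraction entry of $v_k(j)$, the last $j$ entries of $\bw(v_k(j))$ all equal $1/\tilde q_k(j)$, while the corresponding entries of $\bm$ are at least one. Combining these bounds with Lemma \ref{lem:3d-m}(i) and the identity above gives
\[
\varepsilon \cdot \varepsilon \;\geq\; j\!\left(1 - \sqrt{\tfrac{8}{j^2 - 4j - 4} + \tfrac{c^2}{8\tilde p_k(j)\tilde q_k(j)}}\right)^{\!2} \;=:\; \tilde F(j),
\]
where $c = d - 3m \in \{-2,-1,0,1,2\}$. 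For $j \geq 7$ the quantity under the square root is less than $1$ (using $8/17 < 1/2$ and $\tilde p_1(7)\tilde q_1(7) = 47 \cdot 8 = 376$) and strictly decreasing in $j$, so $\tilde F(j)$ is increasing. A direct numerical check at $j = 8$ shows $\tilde F(8) > 1$, contradicting $\varepsilon \cdot \varepsilon < 1$ from \cite[Lemma~18~(i)]{ICERM}; this handles every $j \geq 8$.

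The main obstacle is the single remaining case $j = 7$, where $\tilde F(7) \approx 0.7 < 1$. Here I would follow the template used for $j = 2, 3$ in Proposition \ref{prop:ukj}: combining the lower bound on $\varepsilon \cdot \varepsilon$ with Lemma \ref{lem:3d-m}(ii) produces a quadratic $g(m) = 4\tilde A m^2 + \tilde B m + \tilde C$ (with coefficients built from $\tilde p_k(7)$, $\tilde q_k(7)$, $c$, $j$ exactly as in the previous proof) that must be nonpositive. A contradiction follows once $\tilde A > 0$ and the discriminant $\tilde B^2 - 16\tilde A \tilde C < 0$. Substituting the identity $\tilde p_k(7)^2 - 6\tilde p_k(7)\tilde q_k(7) + \tilde q_k(7)^2 = 17$, the positivity of $\tilde A$ reduces to $73\,\tilde p_k(7)\tilde q_k(7) > 98$, and the discriminant inequality reduces (in the worst case $c = 0$) to $98 < 276\,\tilde p_k(7)\tilde q_k(7)$; both are immediate from the uniform lower bound $\tilde p_k(7)\tilde q_k(7) \geq 376$, which holds for every $k \geq 1$ since $\tilde p_k(j)\tilde q_k(j)$ is increasing in $k$. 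The hardest part of the plan is carrying out the $j = 7$ discriminant bookkeeping cleanly, but no new ideas beyond those in Proposition \ref{prop:ukj} should be required.
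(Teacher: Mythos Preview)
Your proposal is correct and follows essentially the same approach as the paper's own proof: the same error-vector lower bound combined with Lemma \ref{lem:3d-m}(i) and the identity $\tilde p_k(j)^2 - 6\tilde p_k(j)\tilde q_k(j) + \tilde q_k(j)^2 = j^2 - 4j - 4$ (which the paper cites as \cite[Corollary~6.8~(ii)]{FrenkelMuller15} rather than re-deriving) handles $j\geq 8$, and the residual case $j=7$ is treated via the identical quadratic-in-$m$ argument from Proposition \ref{prop:ukj}, with the paper using the lower bound $\tilde p_k(j)\tilde q_k(j)\geq 245$ from $j=6$ where you use $376$ from $j=7$.
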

\begin{proof}
    We will proceed by contradiction. Assume $\bE=(d,m;\bm)$ is an exceptional class with
    \[
    \mu_{\bE,1/3}(v_k(j))>\frac{3(v_k(j)+1)}{8}
    \]
    and $\ell(\bm)=\ell(v_k(j))=5k+1+j$. Set $v_k(j)=p_k(j)/q_k(j)$.

    As in the proof of Proposition \ref{prop:ukj}, we use the fact that the last $j$ entries in $\bm$ are at least one, while the last $j$ entries in $\alpha(v_k(j))\bw(v_k(j))$ all equal $\alpha(v_k(j))/q_k(j)=(3d-m)/q_k(j)\sqrt{8v_k(j)}$. Thus
    \begin{align*}
        \varepsilon\cdot\varepsilon&\geq j\left(1-\frac{3d-m}{q_k(j)\sqrt{8v_k(j)}}\right)^2
        \\&> j\left(1-\sqrt{\frac{8}{q_k(j)^2(v_k(j)^2-6v_k(j)+1)}+\frac{c^2}{8q_k(j)^2v_k(j)}}\right)^2\text{ by Lemma \ref{lem:3d-m} (i) and \textbf{Claim}}
        \\&= j\left(1-\sqrt{\frac{8}{j^2-4j-4}+\frac{c^2}{8p_k(j)q_k(j)}}\right)^2=:F(j)\text{ by \cite[Corollary~6.8~(ii)]{FrenkelMuller15}}.
    \end{align*}
   \noindent\textbf{Claim:} The fact that $F(j)$ is increasing and at least one when $j\geq8$ can be shown similarly to the analogous fact in the proof of Proposition \ref{prop:ukj}, contradicting the fact that $\varepsilon\cdot\varepsilon<1$ by \cite[Lemma~18~(i)]{ICERM}.

    To handle the case $j=7$, we follow the method laid out in the proof of Proposition \ref{prop:ukj} for $j=2, 3$. We obtain the same polynomial $g(m)$, and the proofs that the $m^2$ coefficients $4A$ is positive and discriminant is negative are exactly the same, until instead of \cite[Corollary~6.8~(iii)]{FrenkelMuller15} we use \cite[Corollary~6.8~(iv)]{FrenkelMuller15}:
\[
p_k(j)^2-6p_k(j)q_k(j)+q_k(j)^2=\begin{cases}8&\text{ if }j=6
\\17&\text{ if }j=7
    \end{cases}.
\]
Thus to show the analogue of (\ref{eqn:m2>0}) we need to show
\[
p_k(6)q_k(6)(8+48)>72 \mbox{ and } p_k(7)q_k(7)(17+56)>98,
\]
both of which are true because
\begin{equation}\label{eqn:pkj245}
p_k(j)q_k(j)\geq p_k(6)q_k(6)\geq(6\cdot5+5)(6\cdot1+1)=245.
\end{equation}
Note that here we used as in the proof of Proposition \ref{prop:ukj} the fact that $x_k(j)=jx_k+x'_k$ for $x=p, q$, where $p_k/q_k=[5,\{1,4\}^{k-1},1]$ and $p'_k/q'_k=[5,\{1,4\}^{k-1}]$.

To show the analogue of (\ref{eqn:D<0}), which implies that the discriminant of $g(m)$ is negative, we show
\[
98+48p_k(7)q_k(7)<49p_k(7)q_k(7)(17-8),
\]
which holds by (\ref{eqn:pkj245}).

\end{proof}

\subsection{The ghost stairs and regularity of the $c_{2k-1}$}\label{ss:regc}

We define a family of non-perfect classes $\bE_k(i)$ for $k\geq1, i\geq1$ in analogy to the classes $E(b_k(i))$ of \cite[Section 4.2]{McDuffSchlenk12} and \cite[Section 6]{FrenkelMuller15}. We will then show in Lemmas \ref{lem:Ekidiophantine} and \ref{lem:Ekiexceptional} that these classes are Diophantine and exceptional, respectively. Then in Lemma \ref{lem:ip} we will use intersection positivity with these $\bE_k(i)$ to show that the obstruction from any other exceptional class $\bE$ at $b_k(i)$ must be at most $3(b_k(i)+1)/8$, which will allow us to conclude in Proposition \ref{prop:c2k1} that the $c_{2k-1}$ are regular (the last remaining unproved hypothesis of Proposition \ref{prop:regproof}.

\begin{definition}\label{def:Eghostski} For $k\geq1$ and $i\equiv2\, (\text{mod }4)$, let
\[
\frac{p_k(i)}{q_k(i)}:=b_k(i):=\left[5,\{1,4\}^{k-1},1,2i+2\right]=v_k(2i+2).
\]
We define $\bE_k(i):=(d_k(i),m_k(i);\bm_k(i))$ as follows. Set $i=2j$, with $j$ odd, $j\geq3$. The vector $\bm_k(i)$ is $q_k(i)\bw(b_k(i))$ with the final $(1^{\times4j+2})$ replaced by
\[
\left(j+1, \left(\frac{j+1}{2}\right)^{\times3}, \frac{j-1}{2},1^{\times j}\right),
\]
and
    \[
    m_k(i):=\frac{p_k(i)+q_k(i)}{8}, \quad d_k(i):=3m_k(i).
    \]
When $j=1$ the definition is the same, but because $j-1=0$ we write $\bm_k(i)$ in decreasing order, so that we append $(2,1^{\times4})$ instead of $(2,1^{\times3},0,1)$.
\end{definition}
Note that only when $i\equiv2\, (\text{mod }4)$ is $(p_k(i)+q_k(i))$ divisible by eight, implying $d_k(i)$ and $m_k(i)$ are integers. This can be shown as follows: first, by a direct computation for $k=1,2$ and all $i$. Second, we show that $p_k(i)=6p_{k-1}(i)-p_{k-2}(i)$ by using the recursive formula for numerators and denominators of convergents of continued fractions, \eqref{eqn:piai}, to write $p_{k}(i), p_{k-1}(i)$, and $p_{k-2}(i)$ in terms of the numerators of $[5,\{1,4\}^{k-4},1,4]$ and $[5,\{1,4\}^{k-4},1,4,1]$, which are convergents of all three. The same argument works for the $q_k(i)$, which shows divisibility by eight for $k\geq3$.

\begin{rmk}\label{rmk:ekifacts} \rm

The following remarks will not be used in the proof of Proposition \ref{prop:c2k1}, but are a collection of properties of the $\bE_k(i)$ to put them in context with other embedding targets and obstructions.

    \begin{itemize}
    \item[{\rm (i)}] While we will be using $\bE_k(i)$ to compute $c_{1/3}(b_k(i))$ (see Lemma \ref{lem:ip}), the ``center''  (the obstruction from $\bE_k(i)$ in this case is below the $\vol_b$ curve) of $\bE_k(i)$ with $i=2j$ is not $b_k(i)$ but
    \[
    [5,\{1,4\}^{k-1},1,j+5]
    \]
    when $j\geq3$ and
    \[
    [5,\{1,4\}^{k-1},1,5]=y_{k+2}/y_{k+1}
    \]
    when $j=1$, where the $y_k$ are defined in \eqref{eqn:yk}. (However, if we do not reorder $\bm_k(2)$ to put the zero at the end, the obstruction $\mu$ may still be plotted, and its center is also $[5,\{1,4\}^{k-1},1,j+5]$.)
    
    We do not prove this here. A similar property holds in \cite{McDuffSchlenk12} and \cite{FrenkelMuller15}.
    
    \item[{\rm (ii)}] The class
    \[
    \bE_0(2):=(3,1;2,1^{\times5})
    \]
    is a natural extension to $k=0$ of the $i=2$ family in the sense that those classes all break at $y_{k+2}/y_{k+1}$ (see \cite[Lemma~2.1.1~(iii)]{MM}). However, we elect to keep the indexing comparable to \cite{McDuffSchlenk12} and \cite{FrenkelMuller15}, where the classes
    \[
    \bE_{MS}:=(3;2,1^{\times6}) \quad \text{ and } \quad \bE_{FM}:=(2,2;2,1^{\times5}),
    \]
    which play roles analogous to that of $\bE_0(2)$ (in that they form the non-perfect obstruction equal to the embedding function immediately after the accumulation point), do not appear in the families $E(b_k(i))$.

    Moreover, there is simply no number $[5,\{1,4\}^{-1},1,6]$!

    \item[{\rm (iii)}]
    The classes $\bE_k(2)$ are live, and thus by \cite{concaveconvex} on some interval they must equal the obstruction from some ratio of ECH capacities of $E(1,z)$ and $H_{1/3}$. Specifically, we claim without proof that $m_k(2)=y_{k+1}$, therefore
\begin{align*}
d_k(2)(d_k(2)+3)-m_k(2)(m_k(2)+1)&=3y_{k+1}(3y_{k+1}+3)-y_{k+1}(y_{k+1}+1)
\\&=9y_{k+1}^2+9y_{k+1}-y_{k+1}^2-y_{k+1}
\\&=2(8T_{y_{k+1}}),
\end{align*}
where $T_n=n(n+1)/2$ is the $n^\text{th}$ triangular number. A modification of the arguments in \cite[Lemma~92, Theorem~94]{ICERM} shows that this is a necessary condition for $\mu_{\bE_k(2),1/3}$ near $y_{k+2}/y_{k+1}$ to equal the obstruction from the ratios of the $8T_{y_{k+1}}^\text{st}$ ECH capacities of $E(1,z)$ and $H_{1/3}$ for $z\approx y_{k+2}/y_{k+1}$.

Not only are the classes $\bE_k(i)$ for $i>2$ not live, they are not even obstructive. It is for this reason that their analogues are called ``ghosts'' in \cite{McDuffSchlenk12} and \cite{cghm}. Thus there is no reason to expect their obstruction equals an obstruction from an ECH capacity. The obstruction from the $K^\text{th}$ ECH capacity, where 
\[
2K=d_k(i)(d_k(i)+3)-m_k(i)(m_k(i)+1),
\]
is the most likely candidate, but this obstruction does not equal $\mu_{\bE_k(i),1/3}$ near its center $[5,\{1,4\}^{k-1},1,j+5]$. Computer testing indicates it is unlikely that there is any ratio of ECH capacities whose obstruction equals that from $\bE_k(i)$ near its center.
    \end{itemize}
\end{rmk}

\begin{lemma}\label{lem:Ekidiophantine}
    The classes $\bE_k(i)$ are Diophantine, that is, they satisfy the Diophantine equations in \eqref{eqn:D}. 
\end{lemma}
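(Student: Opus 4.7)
The plan is to verify the two Diophantine equations in \eqref{eqn:D} directly from the definition of $\bE_k(i)$. Setting $i = 2j$ throughout, and using that $d_k(i) = 3 m_k(i)$ and $8 m_k(i) = p_k(i) + q_k(i)$ by construction, the linear equation $3d_k(i) - m_k(i) = \sum m_\ell + 1$ reduces to showing that the entries of $\bm_k(i)$ sum to $p_k(i) + q_k(i) - 1$. By \eqref{eq:rmkweights}, this identity already holds for $q_k(i)\bw(b_k(i))$, so it suffices to check that the replacement vector $(j+1, ((j+1)/2)^{\times 3}, (j-1)/2, 1^{\times j})$ has the same sum as the block $1^{\times(4j+2)}$ it replaces; a short computation gives $4j+2$ on both sides.

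The quadratic equation is handled by the same bookkeeping. By \eqref{eq:rmkweights}, the squared entries of $q_k(i)\bw(b_k(i))$ sum to $p_k(i) q_k(i)$, and a direct calculation shows that the replacement vector has squared sum $2(j+1)^2$, giving a net change of $2(j+1)^2 - (4j+2) = 2 j^2$ from the original block. Substituting $d_k(i)^2 - m_k(i)^2 = (p_k(i) + q_k(i))^2/8$ into $d^2 - m^2 = \sum m_\ell^2 - 1$ and simplifying then reduces the claim to the identity
\[
p_k(i)^2 - 6\, p_k(i)\, q_k(i) + q_k(i)^2 = 16 j^2 - 8. \qquad (\star)
\]

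The main obstacle is establishing $(\star)$, which I plan to prove by induction on $k$. For the base case $k = 1$, direct evaluation gives $b_1(i) = v_1(4j+2) = (24j+17)/(4j+3)$, and expanding $(24j+17)^2 - 6(24j+17)(4j+3) + (4j+3)^2$ produces exactly $16 j^2 - 8$. For the inductive step, \eqref{eq:SCF} yields $b_k(i) = S(b_{k-1}(i))$ with $S(z) = 6 - 1/z$, which translates at the level of numerators and denominators into $p_k(i) = 6\, p_{k-1}(i) - q_{k-1}(i)$ and $q_k(i) = p_{k-1}(i)$. Substituting and expanding shows that the quadratic form $p^2 - 6 p q + q^2$ is preserved under this substitution, so $(\star)$ propagates from $k-1$ to $k$. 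This invariance is not accidental: $S$ is a Möbius transformation fixing the roots of $x^2 - 6 x + 1$, and the form $p^2 - 6 p q + q^2$ is the associated binary quadratic form. Finally, the $j = 1$ edge case in Definition~\ref{def:Eghostski}, where a zero entry is dropped upon reordering, affects neither the sum nor the sum of squares of $\bm_k(i)$, so the argument applies uniformly.
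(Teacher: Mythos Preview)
Your proof is correct and more self-contained than the paper's. The paper instead compares $\bm_k(i)$ to the Frenkel--M\"uller classes $E(b_k(i))=(e_k(i),f_k(i);\mathbf{n}_k(i))$ for the cube $P(1,1)$: it checks that $\bm_k(i)$ and $\mathbf{n}_k(i)$ have the same sum and the same sum of squares (each differs from $q_k(i)\bw(b_k(i))$ only in the final block), and then observes that $3d_k(i)-m_k(i)=2(e_k(i)+f_k(i))$ and $d_k(i)^2-m_k(i)^2=2e_k(i)f_k(i)$, so the Diophantine equations for $\bE_k(i)$ follow directly from the polydisk Diophantine equations for $E(b_k(i))$ established in \cite[Lemma~6.15]{FrenkelMuller15}. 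Your approach bypasses this comparison by reducing everything to the identity $(\star)$ and proving it from scratch via the $S$-invariance of the form $p^2-6pq+q^2$; this is in fact a special case of \cite[Corollary~6.8~(ii)]{FrenkelMuller15} (for $v_k(4j+2)$ one gets $(4j+2)^2-4(4j+2)-4=16j^2-8$), so you have essentially re-derived the needed Frenkel--M\"uller input rather than quoting it. The paper's route emphasizes the structural parallel with the polydisk case, while yours is independent of \cite{FrenkelMuller15} at this step.
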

\begin{proof}
We note that our classes $\bE_k(i)$ are very similar to classes $E(b_k(i))$ defined in \cite[Definition~6.14]{FrenkelMuller15}. This proof will use their \cite[Lemma~6.15]{FrenkelMuller15} to simplify our goal of checking the Diophantine equations \eqref{eqn:D}. Note that the notation $E(b_k(i))$ is inconsistent with our notation used for exceptional classes, and we only use it to refer to the classes from \cite{FrenkelMuller15}. However, our $b_k(i)$ are the same as theirs.

Their classes are of the form $E(b_k(i)):=(e_k(i),f_k(i);\mathbf{n}_k(i))$ where 
$$e_k(i)=f_k(i)=\frac{p_k(i)+q_k(i)}{4}$$
and $\mathbf{n}_k(i)=(n_1,\dots,n_L)$ equals $q_k(i)\bw(b_k(i))$ with the last block $\left(1^{\times(4j+2)}\right)$ replaced by the block $\left(j+1,j,1^{\times(2j+1)}\right)$. The conclusion of \cite[Lemma~6.15]{FrenkelMuller15} is that
\begin{equation}\label{eqn:pD}
\mathbf{n}_k(i)\cdot\mathbf{n}_k(i)=2e_k(i)f_k(i)+1, \quad \sum_ln_l=2(e_k(i)+f_k(i))-1.
\end{equation}
We obtain the quadratic Diophantine equation by first showing that  the sum of squares of the entries of $\bm_k(i)$ and $\mathbf{n}_k(i)$ are equal. It suffices to check the entries which replaced the final $(1^{\times 4j+2})$ from $\bw(b_k(2j))$ in either case. This amounts to checking that the following equality holds:
$$ (j+1)^2+3\left(\frac{j+1}{2}\right)^2+\left(\frac{j-1}{2}\right)^2+j=(j+1)^2+j^2+2j+1.$$ We then note that
 \[
    d_k(i)^2-m_k(i)^2=\frac{(p_k(i)+q_k(i))^2}{8}=2e_k(i)f_k(i).
    \]
Our quadratic Diophantine equation will then follow from the quadratic equation in \eqref{eqn:pD}. 

Similarly, we obtain the linear Diophantine equation by first showing that the sum of the entries in $\bm_k(i)$ and $\mathbf{n}_k(i)$ are equal. We show that the sum of the same last entries in $\bm_k(i)$ equals the sum of the same last entries in $\mathbf{n}_k(i)$, by checking that the following equality holds:
\[
j+1+\frac{3(j+1)+(j-1)}{2}+j=j+1+j+2j+1. 
\]
The linear equation then follows by noting that 
    \[
    3d_k(i)-m_k(i)=p_k(i)+q_k(i)=2(e_k(i)+f_k(i)).
    \]
\end{proof}

Recall the definition of the \textbf{Pell numbers}
\[
P_n=2P_{n-1}+P_{n-2}, \quad P_0=0, \quad P_1=1
\]
and the \textbf{half companion Pell numbers}
\[
H_n=2H_{n-1}+H_{n-2}, \quad H_0=1, \quad H_1=1,
\]
which satisfy
\begin{equation}\label{eqn:PhcP}
H_n+P_n=P_{n+1}, \quad H_n=P_n+P_{n-1}, \text{ and } 2P_n=H_n+H_{n-1}.
\end{equation}

\begin{lemma}\label{lem:Ekiexceptional}
    The classes $\bE_k(i)$ are exceptional.
\end{lemma}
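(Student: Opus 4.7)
The strategy is to show that the vector $(d_k(i); m_k(i), \bm_k(i))$ Cremona reduces to $(0;-1)$, which combined with Lemma~\ref{lem:Ekidiophantine} and the Li--Li criterion recalled in Section~\ref{ss:reviewEC} will establish that $\bE_k(i)$ is exceptional. I would model the argument on \cite[Section~6]{FrenkelMuller15}, leveraging the close relationship between $\bE_k(i)$ and Frenkel--M\"uller's class $E(b_k(i))$ that was exploited in the proof of Lemma~\ref{lem:Ekidiophantine}: the two classes share the same $p_k(i), q_k(i)$, and their modified tails have equal sum and sum of squares, though they differ in internal structure.

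I would proceed by double induction, first on $k$ and then, within the base case $k=1$, on $i$. The innermost base case $\bE_1(2) = (18; 6, 7^{\times 5}, 6, 2, 1^{\times 4})$ can be verified by an explicit sequence of reorderings and $c_{123}$--type Cremona moves (each with negative defect) that brings this vector to $(2; 1^{\times 5})$, which in turn reduces to $(0;-1)$ as in the reduction of $(3;2,1^{\times 5})$ performed in \cite[Section~6.2]{FrenkelMuller15}. For the inductive step on $i$ with $k=1$ fixed, I would identify a uniform sequence of Cremona moves that carries $\bE_1(i)$ to $\bE_1(i-4)$ (after padding with zeros), exploiting the fact that the continued fractions $[5,1,2i+2]$ and $[5,1,2i-6]$ differ only in the length of the trailing block of equal small entries, so the needed moves act only in a neighborhood of the modified tail.

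For the outer induction on $k$, I would use the shift relation $b_k(i) = S(b_{k-1}(i))$, a direct consequence of \eqref{eq:SCF} with $k=0$ applied to the odd--length continued fraction of $b_{k-1}(i)$. At the level of vectors, $S$ is implemented by a prescribed sequence of Cremona moves (developed in \cite[Section~4.1]{MM} and recalled in Section~\ref{ss:symmetries}) that modifies only the head of the vector while leaving the tail invariant. Since the tail of $\bm_k(i)$ is obtained from that of $\bm_{k-1}(i)$ by the same modification pattern---namely replacing $(1^{\times 4j+2})$ by $(j+1, ((j+1)/2)^{\times 3}, (j-1)/2, 1^{\times j})$---I would verify that $\bE_k(i) = S^\sharp(\bE_{k-1}(i))$ as vectors, and then invoke \cite[Lemmas~4.1.2--4.1.3]{MM} (whose proofs, being purely about Cremona equivalence, apply beyond the perfect setting) to conclude that Cremona equivalence to $(0;-1)$ is preserved.

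The main obstacle will be verifying, at the base case $k=1$, that the modified tail $(j+1, ((j+1)/2)^{\times 3}, (j-1)/2, 1^{\times j})$ reduces correctly under the chosen Cremona moves. The Frenkel--M\"uller template uses the tail $(j+1, j, 1^{\times 2j+1})$, and while the two tails agree in sum and sum of squares (as exploited in Lemma~\ref{lem:Ekidiophantine}), the individual entries differ, so the moves must be re--engineered. The parity hypothesis that $j$ is odd (making $(j\pm 1)/2$ integral) will enter exactly where a Cremona move touches the middle entries of the tail, and the degenerate case $j=1$, in which the tail collapses to $(2, 1^{\times 4})$ after dropping the zero, will require separate verification, as will the transition cases $j=3$ where the $(j-1)/2$ entry first becomes nonzero.
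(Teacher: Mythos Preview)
Your inductive step on $k$ is the right idea and matches the paper: both use the Cremona sequence $\Xi$ from Section~\ref{subsec:tech} (which implements the shift $S$ on classes) to reduce $\bE_k(2j)$ to $\bE_{k-1}(2j)$. The paper does this by a direct computation using Pell--number identities rather than by invoking \cite[Lemmas~4.1.2--4.1.3]{MM}; this is cleaner, since those lemmas are stated for (quasi-)perfect classes and the $\bE_k(i)$ are not quasi-perfect, so your appeal to ``the proofs extend'' would need to be spelled out.

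Your base case, however, is overcomplicated and contains a mistaken justification. The paper does \emph{not} induct on $i$. Instead it applies the \emph{same} $\Xi$ to $\bE_1(2j)$, which already strips the head and yields
\[
\left(\tfrac{3(j+1)}{2};\, j+1,\, \left(\tfrac{j+1}{2}\right)^{\times 4},\, \tfrac{j-1}{2},\, 1^{\times j+1}\right);
\]
two more Cremona moves then give $\left(\tfrac{j+1}{2};\, \tfrac{j-1}{2},\, 1^{\times j+1}\right)$, which reduces to $(0;-1)$ in $(j+1)/2$ standard steps. This handles all odd $j$ parametrically in one shot.

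Your proposed localization---that the moves taking $\bE_1(i)$ to $\bE_1(i-4)$ ``act only in a neighborhood of the modified tail''---is false. Since $q_1(2j)=4j+3$, the first five entries of $(d_1(2j);\bm_1(2j))$ are $(4j+3)^{\times 5}$, the sixth entry is $m_1(2j)=(7j+5)/2$, and the seventh is $p_1(2j)-5q_1(2j)=4j+2$; every one of these changes with $j$. So any Cremona reduction from $\bE_1(i)$ to $\bE_1(i-4)$ must touch the head as well. Such a reduction might still exist, but your stated reason for it does not, and in any case the paper's uniform use of $\Xi$ makes the inner induction unnecessary.
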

In the proof of this lemma we will use the notation $(d;m,\bm)$ from Section \ref{s:nosratb}.
\begin{proof}
We have already shown in Lemma \ref{lem:Ekidiophantine} that the $\bE_k(i)$ are Diophantine. Thus it remains to show that they Cremona reduce to $(0;-1)$.

We prove the lemma simultaneously for all $i$ by induction over $k$. During this proof we will not rearrange our classes into descending order between moves unless specified, and will always start with the classes in the form
\[
(d_k(2j);q_k(2j)^{\times5},m_k(2j),p_k(2j)-5q_k(2j),\dots),
\]
which is descending order when $k=1, j=1$ and when $j\geq2$, but not when $k=1, j\geq2$.

For the base case $k=1$, set $i=2j$ (recall that $j$ is odd) and note that by \cite[Lemma~6.4~(iii)]{FrenkelMuller15} and the fact that $b_k(i)=v_k(2i+2)$ we have, written in lowest terms:
\[
b_1(2j)=\frac{(2j+1)P_4+P_3}{(2j+1)P_2+P_1}=\frac{24j+17}{4j+3}.
\]
Thus we can compute
$$d_1(2j)=\frac{3(7j+5)}{2} \quad \text{ and } \quad m_1(2j)=\frac{7j+5}{2}.$$

We have the Cremona equivalences below, where the first equivalence consists of performing the composition of Cremona moves $\Xi$ (see Lemma~\ref{lem:crGen}) followed by reordering:
\begin{align*}
\bE_1(2j)&=\left(\frac{3(7j+5)}{2};(4j+3)^{\times5},\frac{7j+5}{2},4j+2,j+1,\left(\frac{j+1}{2}\right)^{\times3},\frac{j-1}{2},1^{\times j}\right)
\\&\overset{\Xi}{\sim}\left(\frac{3(j+1)}{2};j+1,\left(\frac{j+1}{2}\right)^{\times4},\frac{j-1}{2},1^{\times j+1}\right)
\\&\sim\left(j+1;\left(\frac{j+1}{2}\right)^{\times3},\frac{j-1}{2},1^{\times j+1}\right)
\\&\sim\left(\frac{j+1}{2};\frac{j-1}{2},1^{\times j+1}\right).
\end{align*}
This reduces to $(0;-1)$ in $(j+1)/2$ steps because if $s\geq1, t\geq2$, then $(s+1;s,1^{\times t})\sim(s;s-1,1^{\times t-2})$.

Next we perform the inductive step. Again using \cite[Lemma~6.4~(iii)]{FrenkelMuller15}, we have
\begin{align*}
b_k(2j)&=\frac{(2j+1)P_{2k+2}+P_{2k+1}}{(2j+1)P_{2k}+P_{2k-1}}=\frac{2jP_{2k+2}+H_{2k+2}}{2jP_{2k}+H_{2k}},
\\d_k(2j)&=\frac{3((2j+1)(P_{2k+2}+P_{2k})+P_{2k+1}+P_{2k-1})}{8}=\frac{3(jH_{2k+1}+P_{2k+1})}{2},
\\m_k(2j)&=\frac{(2j+1)(P_{2k+2}+P_{2k})+P_{2k+1}+P_{2k-1}}{8}=\frac{jH_{2k+1}+P_{2k+1}}{2},
\end{align*}
and thus
\[
\bE_k(2j)=\left(\frac{3(jH_{2k+1}+P_{2k+1})}{2};(2jP_{2k}+H_{2k})^{\times5},\frac{jH_{2k+1}+P_{2k+1}}{2},4jP_{2k-1}+2H_{2k-1},\dots\right).
\]

We now verify that $\Xi(\bE_k(2j))=\bE_{k-1}(2j)$ by using the computation of $\Xi$ in Lemma~\ref{lem:crGen}. Adapting the notation of Lemma~\ref{lem:crGen} 
with
\[
w=d_k(2j),\quad x=q_k(2j)=2jP_{2k}+H_{2k},\quad y=m_k(2j),\quad z=p_k(2j)-5q_k(2j)=4jP_{2k-1}+2H_{2k-1},
\]
we simplify using (\ref{eqn:PhcP}):
\begin{align*}
    8w-3(5x+y+z)&=d_{k-1}(2j)
    \\3w-6x-y-z&=0
    \\3w-5x-y-2z&=2jP_{2k-2}+H_{2k-2}
    \\3w-5x-2y-z&=\frac{jH_{2k-1}+P_{2k-1}}{2}.
\end{align*}
To complete the proof, we note that $q_{k-1}(2j)\bw_{k-1}(2j)$ equals $q_k(2j)\bw_k(2j)$ with its first six entries $(2jP_{2k}+H_{2k}^{\times5},4jP_{2k-1}+2H_{2k-1})$ replaced with $(2jP_{2k-2}+H_{2k-2})$. This follows from the fact that the continued fractions of $b_k(2j)$ and $b_{k-1}(2j)$ agree starting from the seventh position in the former and second position in the latter, and the first five terms $q_{k-1}\bw(b_{k-1}(2j))$ equal $2jP_{2k-2}+H_{2k-2}$.
\end{proof}

Finally, we use the fact that the classes $\bE_k(i)$ are exceptional (Lemmas \ref{lem:Ekidiophantine} and \ref{lem:Ekiexceptional}) to prove that the $c_{2k-1}$ are regular in Proposition \ref{prop:c2k1}. First we use the fact that exceptional classes intersect positively to compute $c_{1/3}(b_k(i))$ in the following lemma.

\begin{lemma}\label{lem:ip} For $j$ odd,
    \[
    c_{1/3}(b_k(2j))=\frac{3(b_k(2j)+1)}{8}.
    \]
\end{lemma}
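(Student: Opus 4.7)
The lower bound $c_{1/3}(b_k(2j)) \geq \frac{3(b_k(2j)+1)}{8}$ follows directly from Lemma~\ref{lem:E02}, since by Lemma~\ref{lem:vlim} we have $b_k(2j) = v_k(4j+2) \in (\sigma^2, 6)$. For the reverse inequality, using the formula $c_{1/3}(z) = \sup_\bE\{\mu_{\bE,1/3}(z),\vol_{1/3}(z)\}$, the plan is to prove $\mu_{\bE,1/3}(b_k(2j)) \leq \frac{3(b_k(2j)+1)}{8}$ for every exceptional class $\bE$; the volume bound $\vol_{1/3}(b_k(2j)) \leq \frac{3(b_k(2j)+1)}{8}$ is automatic from $b_k(2j) \geq \sigma^2$, which is exactly the locus where the volume curve meets the line $\frac{3(z+1)}{8}$.

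For an exceptional class $\bE = (d,m;\bm)$ with $\bE \neq \bE_k(2j)$, the idea is to invoke the intersection positivity of distinct exceptional classes, $\bE \cdot \bE_k(2j) \geq 0$, to obtain
\[
\bm \cdot \bm_k(2j) \;\leq\; d\, d_k(2j) - m\, m_k(2j) \;=\; (3d-m)\, m_k(2j) \;=\; \frac{(3d-m)(p_k(2j)+q_k(2j))}{8},
\]
using $d_k(2j) = 3m_k(2j)$ and the definition of $m_k(2j)$; we may assume $\bm$ is sorted in descending order since this is the ordering (of $\bm$ paired against the descending $\bm_k(2j)$) in which the pairing on the left is tightest, by the rearrangement inequality. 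The key remaining step is the inequality $\bm \cdot \bm_k(2j) \;\geq\; q_k(2j)\, \bm \cdot \bw(b_k(2j))$. This holds by Abel summation: the vectors $\bm_k(2j)$ and $q_k(2j)\bw(b_k(2j))$ agree outside the final block of $4j+2$ coordinates, on which the latter equals $1^{\times 4j+2}$, while the (padded, descending) corresponding part of $\bm_k(2j)$ has the same total sum $4j+2$ but dominant partial sums, and $\bm$ is descending on this block. Chaining the two inequalities yields $\mu_{\bE,1/3}(b_k(2j)) = \frac{3\,\bm \cdot \bw(b_k(2j))}{3d-m} \leq \frac{3(b_k(2j)+1)}{8}$, as required.

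For the remaining case $\bE = \bE_k(2j)$, where intersection positivity fails because the self-intersection is $-1$, the plan is a direct computation. Since the modification defining $\bm_k(2j)$ occurs only in the final block, on which $\bw(b_k(2j))$ is constant equal to $1/q_k(2j)$, and preserves the sum of that block, the identity \eqref{eq:rmkweights} gives
\[
\bm_k(2j) \cdot \bw(b_k(2j)) \;=\; q_k(2j)\, \bw(b_k(2j))\cdot\bw(b_k(2j)) \;=\; p_k(2j).
\]
Combined with $3d_k(2j)-m_k(2j) = 8m_k(2j) = p_k(2j)+q_k(2j)$, this yields
\[
\mu_{\bE_k(2j),1/3}(b_k(2j)) \;=\; \frac{3\, p_k(2j)}{p_k(2j)+q_k(2j)} \;=\; \frac{3\, b_k(2j)}{b_k(2j)+1},
\]
which is at most $\frac{3(b_k(2j)+1)}{8}$ exactly when $b_k(2j)^2 - 6 b_k(2j) + 1 \geq 0$, i.e.\ $b_k(2j) \geq \sigma^2$. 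The main obstacle is the majorization step: while conceptually an Abel summation, it requires a careful verification of the partial sum inequalities between the descending block $(j+1,((j+1)/2)^{\times 3},(j-1)/2,1^{\times j},0^{\times 3j-3})$ and the uniform block $1^{\times 4j+2}$, with special care for the edge case $j=1$ where the replacement block is $(2,1^{\times 4},0)$ and for the length mismatch that requires padding with zeros.
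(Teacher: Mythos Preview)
Your proof is correct and follows the same strategy as the paper: use intersection positivity $\bE\cdot\bE_k(2j)\geq 0$ together with the observation that the modified tail of $\bm_k(2j)$ majorizes the constant tail $1^{\times(4j+2)}$ of $q_k(2j)\bw(b_k(2j))$ (you phrase this via Abel summation, while the paper verifies it by explicit termwise groupings, splitting the surplus $jn_{5k+2}$ against $j$ of the deficit terms and the remaining surplus against the rest). Your treatment is in fact slightly more careful than the paper's, since you separately address the case $\bE=\bE_k(2j)$ where intersection positivity does not apply.
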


\begin{proof}
    By Lemma \ref{lem:E02}, we have 
    $$c_{1/3}(b_k(2j))\geq\frac{3(b_k(2j)+1)}{8}.$$
     In order to prove the other inequality, recall that as explained in Section \ref{ss:reviewEC}, the value $c_{1/3}(z)$ is the supremum over exceptional classes $\bE=(d,m;\mathbf{n})$, of their respective obstructions at $z$:
       \[
    \mu_{\bE,1/3}(z)=\frac{\mathbf{n}\cdot\mathbf{w}(z)}{d-m/3}.
    \]
     Thus, it suffices to show that for $j$ odd and $\bE=(d,m;\mathbf{n})$, with $ \mathbf{n}=(n_1,\dots,n_L)$, we have 
\begin{equation}\label{eqn:earliergoal} \mu_{\bE,1/3}(b_k(2j))=\frac{\mathbf{n}\cdot\mathbf{w}(b_k(2j))}{d-m/3}\leq\frac{3(b_k(2j)+1)}{8},
\end{equation}
or equivalently
    \begin{equation}\label{eqn:ipgoal}
 \mathbf{n}\cdot\mathbf{w}(b_k(2j))\leq\frac{(b_k(2j)+1)(3d-m)}{8}.
    \end{equation}
We first prove \eqref{eqn:ipgoal} for $j=1$ and then tackle all other odd values of $j$. Recall the entries of $\bE_k(2)$ from Definition \ref{def:Eghostski}. For $j=1$, by intersection positivity, we have:
$$    0\leq\bE\cdot\bE_k(2)=d(3m_k(2))-mm_k(2)-q_k(2)\mathbf{n}\cdot\bw(b_k(2))-n_{5k+2}+n_{5k+7}.$$
Because $n_{5k+2}\geq n_{5k+7}$, this implies
\[
q_k(2)\mathbf{n}\cdot\bw(b_k(2))\leq m_k(2)(3d-m).
\]
Rearranging and using the fact that $m_k(2)/q_k(2)=(b_k(2)+1)/8$ by Definition~\ref{def:Eghostski} proves \eqref{eqn:ipgoal} for $j=1$.

Now assume $j\geq3$. Again with $\bE_k(2j)$ as in Definition \ref{def:Eghostski} and by intersection positivity, we have:
 \begin{align} 0&\leq\bE\cdot\bE_k(2j)\nonumber  \\
    &=d(3m_k(2j))-mm_k(2j)-q_k(2j)\mathbf{n}\cdot\bw(b_k(2j))-jn_{5k+2}-\left(\frac{j+1}{2}-1\right)(n_{5k+3}+n_{5k+4}+n_{5k+5})\nonumber\\
   &\quad -\left(\frac{j-1}{2}-1\right)n_{5k+6}
+n_{5k+j+7}+\cdots+n_{5k+4j+3}\label{eqn:zeroly}
         \end{align}
Firstly, because $n_{5k+2}\geq n_{5k+j+6+i}$ for $i=1,\ldots,j$, we have
\begin{equation}\label{eqn:firstly}-jn_{5k+2}+n_{5k+j+7}+\cdots+n_{5k+2j+6}\leq 0.
\end{equation}
Secondly, because $n_{5k+3},\dots,n_{5k+6}\geq n_{5k+2j+7},\dots,n_{5k+4j+3}$, we have
\begin{equation}\label{eqn:secondly}
-\left(\frac{j+1}{2}-1\right)(n_{5k+3}+n_{5k+4}+n_{5k+5})-\left(\frac{j-1}{2}-1\right)n_{5k+6}+n_{5k+2j+7}+\cdots+n_{5k+4j+3}\leq0.
\end{equation}
Using \eqref{eqn:firstly} and \eqref{eqn:secondly} in \eqref{eqn:zeroly} we obtain:
\begin{equation}  \label{eqn:qmw} \\q_k(2j)\mathbf{n}\cdot\bw(b_k(2j))\leq m_k(2j)(3d-m)
 \end{equation}
which shows \eqref{eqn:ipgoal} as in the case $j=1$.
\end{proof}

\begin{corollary}\label{cor:j6reg}
    The points $v_k(6)$ for $k\geq1$ are regular.
\end{corollary}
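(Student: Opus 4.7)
The plan is to observe that the points $v_k(6)$ coincide with the special values $b_k(2)$ for which we have already computed $c_{1/3}$ exactly in Lemma \ref{lem:ip}. Indeed, from Definition \ref{def:Eghostski} with $i = 2$ (so $j = 1$), we have
\[
b_k(2) = [5,\{1,4\}^{k-1},1,2\cdot 2 + 2] = [5,\{1,4\}^{k-1},1,6] = v_k(6).
\]
Lemma \ref{lem:ip} then gives $c_{1/3}(v_k(6)) = \tfrac{3(v_k(6)+1)}{8}$.

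Now recall from \eqref{eqn:cbsup} that $c_{1/3}(z) = \sup_{\bE} \{\mu_{\bE,1/3}(z), \vol_{1/3}(z)\}$, where the supremum runs over all exceptional classes. In particular, for every exceptional class $\bE = (d,m;\bm)$,
\[
\mu_{\bE,1/3}(v_k(6)) \;\le\; c_{1/3}(v_k(6)) \;=\; \frac{3(v_k(6)+1)}{8}.
\]
This inequality holds \emph{a fortiori} for those exceptional classes satisfying the length condition $\ell(\bm) = \ell(v_k(6))$, which is exactly what is required by Definition \ref{def:reg} for $v_k(6)$ to be regular. There is no real obstacle here: the entire content has already been extracted into Lemma \ref{lem:ip} (via the intersection positivity argument using the ghost classes $\bE_k(2)$), and the corollary is obtained by unwinding notation.
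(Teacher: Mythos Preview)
Your proof is correct and takes essentially the same approach as the paper: identify $v_k(6)=b_k(2)$ via Definition~\ref{def:Eghostski}, invoke the $j=1$ case of Lemma~\ref{lem:ip}, and observe that $c_{1/3}(v_k(6))=\tfrac{3(v_k(6)+1)}{8}$ forces the regularity inequality for every exceptional class.
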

\begin{proof}
    Recall from Definition~\ref{def:Eghostski} that $v_k(6)=b_k(2)$. By the $j=1$ case of Lemma~\ref{lem:ip}, these points are regular.
\end{proof}

\begin{prop}\label{prop:c2k1}
    The points $c_{2k-1}$ are regular.
\end{prop}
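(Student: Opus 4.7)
My plan is to prove the stronger statement that
\[
c_{1/3}(c_{2k-1})=\frac{3(c_{2k-1}+1)}{8},
\]
from which regularity is immediate: for any exceptional class $\bE=(d,m;\bm)$ with $\ell(\bm)=\ell(c_{2k-1})$ (indeed with any length), one has $\mu_{\bE,1/3}(c_{2k-1})\le c_{1/3}(c_{2k-1})=\tfrac{3(c_{2k-1}+1)}{8}$, which is precisely Definition~\ref{def:reg}. This is the natural companion to Corollary~\ref{cor:j6reg}, which was the $j=1$ case of Lemma~\ref{lem:ip}: the special points $b_k(2j)$ where Lemma~\ref{lem:ip} pins down $c_{1/3}$ exactly are the ones that accumulate to $c_{2k-1}$.

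The argument is a one-line limit. By Definition~\ref{def:Eghostski} we have $b_k(2j)=v_k(4j+2)$, and by Lemma~\ref{lem:vlim}(iii), $v_k(j')\to c_{2k-1}$ as $j'\to\infty$; hence the subsequence $\{b_k(2j)\}_{j\text{ odd}}$ converges to $c_{2k-1}$ from below. By Lemma~\ref{lem:ip}, for each such $j$,
\[
c_{1/3}(b_k(2j))=\frac{3(b_k(2j)+1)}{8}.
\]
Since $c_{1/3}$ is continuous (by \cite[Proposition~2.1]{AADT}, as noted in the introduction), passing to the limit $j\to\infty$ through odd values gives $c_{1/3}(c_{2k-1})=\tfrac{3(c_{2k-1}+1)}{8}$, which completes the proof.

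I do not expect a substantive obstacle. All the heavy lifting has already been done: the construction of the ghost stairs $\bE_k(i)$ (Definition~\ref{def:Eghostski}), the verification that they are exceptional (Lemmas~\ref{lem:Ekidiophantine} and~\ref{lem:Ekiexceptional}), and the intersection-positivity computation that turns them into a pointwise upper bound on $c_{1/3}$ at $b_k(2j)$ (Lemma~\ref{lem:ip}). The only ingredients that remain to invoke are the indexing identity $b_k(2j)=v_k(4j+2)$, the convergence $v_k(j')\to c_{2k-1}$ from Lemma~\ref{lem:vlim}(iii), and continuity of $c_{1/3}$. None of these requires Proposition~\ref{prop:c2k1} or Proposition~\ref{prop:regproof}, so there is no circularity. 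It is worth noting that the same trick cannot be used to prove regularity of $u_k(j)$ or $v_k(j)$ in Propositions~\ref{prop:ukj} and~\ref{prop:vkj}, because these points are not accumulation points of the $b_k(2j)$; this is why those earlier propositions require the more delicate error-vector arguments via Lemma~\ref{lem:3d-m}.
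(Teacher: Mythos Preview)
Your proof is correct and is essentially identical to the paper's own proof: both pass to the limit along the sequence $b_k(2j)=v_k(4j+2)\to c_{2k-1}$, use Lemma~\ref{lem:ip} to pin down $c_{1/3}(b_k(2j))$, and then invoke continuity of $c_{1/3}$ to conclude $c_{1/3}(c_{2k-1})=\tfrac{3(c_{2k-1}+1)}{8}$, from which regularity is immediate. The only differences are cosmetic (indexing and the additional commentary you provide about why this limit argument does not apply to the $u_k(j)$ and $v_k(j)$).
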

\begin{proof}
    Lemma \ref{lem:ip} shows that for $i\equiv2\mod4$,
    \[
    c_{1/3}(v_k(2i+2))=\frac{3(v_k(2i+2)+1)}{8}.
    \]
    By continuity of $c$ and Lemma \ref{lem:vlim} (iii),
    \[
    c_{1/3}(c_{2k-1})=\lim_{i\to\infty}c_{1/3}(v_k(2i+2))=\frac{3(c_{2k-1}+1)}{8},
    \]
    thus there can be no $\bE$ with $\mu_{\bE,1/3}(c_{2k-1})>\frac{3(c_{2k-1}+1)}{8}$, implying that the points $c_{2k-1}$ are regular.
\end{proof}

\appendix

\section{Proof of Theorem \ref{thm:CFmain}}\label{app:pf}

We begin by defining the recursion matrices
\[
M_\la:= \begin{pmatrix} p_\la^2 & p_\la q_\la-6p_\la^2+1 \\ p_\la q_\la-1 & q_\la^2-6p_\la q_\la +6 \end{pmatrix} \quad \text{and} \quad M_\rho:=\begin{pmatrix} p_{\rho}^2-6p_\rho q_\rho+6 & p_\rho q_\rho-1 \\ p_\rho q_\rho-6 q_\rho^2+1 & q_\rho^2 \end{pmatrix}.
\]
and proving the following lemma:

\begin{lemma} 
\label{lem:recurMain} Assuming $\left(\bE_\la,\bE_\mu,\bE_\rho\right)$ is a triple, then the following identities hold: 
\begin{itemize}
\item[{\rm (i)}] $(M_\rho+I)(p_\la,q_\la)=t_\rho (p_\mu,q_\mu)$ 

\item[{\rm (ii)}] $M_\rho(p_\la,q_\la)=(p_{y\mu},q_{y\mu})$

\item[{\rm (iii)}] $(M_\la+I)(p_\rho,q_\rho)=t_\la(p_\mu,q_\mu)$ 

\item[{\rm (iv)}] $M_\la(p_\rho,q_\rho)=(p_{x\mu},q_{x\mu})$
\end{itemize}
\end{lemma}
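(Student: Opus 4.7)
The plan is to observe that (i) and (ii) are equivalent, as are (iii) and (iv), and then to prove (ii) and (iv) by direct computation, reducing to the defining conditions of a generating triple. For the equivalence: by definition of $y$-mutation, $\bE_{y\mu} = t_\rho \bE_\mu - \bE_\la$, so $v_{y\mu} = t_\rho v_\mu - v_\la$, whence $M_\rho v_\la = v_{y\mu}$ if and only if $(M_\rho + I) v_\la = t_\rho v_\mu$; similarly, $v_{x\mu} = t_\la v_\mu - v_\rho$ gives the equivalence of (iii) and (iv).

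To prove (ii), I would first derive the ``middle class formula''
\[
v_\mu = \frac{t_\la\, v_\la + t_\rho\, v_\rho}{t_\la t_\rho - t_\mu}
\]
by solving the $2 \times 2$ linear system for $(p_\mu, q_\mu)$ given by the triple conditions (b) and (c), namely $t_\rho = p_\mu q_\la - p_\la q_\mu$ and $t_\la = p_\rho q_\mu - p_\mu q_\rho$, whose determinant equals $p_\rho q_\la - p_\la q_\rho = t_\la t_\rho - t_\mu$ by condition (d). Substituting into $v_{y\mu} = t_\rho v_\mu - v_\la$ and clearing denominators, (ii) becomes
\[
(t_\la t_\rho - t_\mu)\, M_\rho\, v_\la = t_\mu\, v_\la + t_\rho^2\, v_\rho.
\]
Next, I would eliminate the $t$-variables using condition (d) for $t_\la t_\rho - t_\mu$, the adjacency of $\bE_\la$ and $\bE_\rho$ from condition (a) in the form $t_\la t_\rho = (p_\la + q_\la)(p_\rho + q_\rho) - 8 p_\la q_\rho$ (which together with (d) yields $t_\mu = p_\la p_\rho - 6 p_\la q_\rho + q_\la q_\rho$), and the defining identity $t_\rho^2 = p_\rho^2 + q_\rho^2 - 6 p_\rho q_\rho + 8$. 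The claim thus reduces to a pair of polynomial identities in $p_\la, q_\la, p_\rho, q_\rho$, one for each vector component; the first follows from squaring the adjacency relation for $\bE_\la, \bE_\rho$, while the second requires additionally the Markov-like identity $t_\la^2 + t_\mu^2 + t_\rho^2 - t_\la t_\mu t_\rho = 8$, itself derivable by combining the three compatibility conditions (b)--(d).

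Claim (iv) is entirely parallel. By the symmetry of the generating triple conditions under $\la \leftrightarrow \rho$, the same argument applied with $v_{x\mu} = t_\la v_\mu - v_\rho$ yields (iv), with the roles of $t_\la$ and $t_\rho$ (and correspondingly of $\bE_\la$ and $\bE_\rho$) interchanged throughout.

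The main obstacle is bookkeeping: expanding both vector components and matching terms via the adjacency relation is tedious, and verifying the Markov-like identity requires a separate computation combining the three compatibility conditions. Conceptually, however, the entire content of Lemma \ref{lem:recurMain} is encoded in the adjacency of $\bE_\la$ and $\bE_\rho$ from condition (a) together with the defining relations of quasi-perfect classes.
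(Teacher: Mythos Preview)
Your approach is correct and does lead to a proof, but it takes a different route from the paper's. The paper verifies (i) and (iii) by a direct expansion and then quotes several ready-made identities from \cite[Lemma~4.6]{M}: in particular $t_\mu = p_\la(p_\rho - 6q_\rho) + q_\la q_\rho$, together with $p_\mu t_\rho - p_\rho t_\mu = 7p_\la - q_\la$ and $q_\mu t_\rho - q_\rho t_\mu = p_\la + q_\la$ (and their $\la\leftrightarrow\rho$ analogues). Each row of (i) and (iii) then collapses in one or two substitutions, so the argument is short but outsources all of the algebraic content to \cite{M}.

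Your version is self-contained: after the middle class formula, the entire identity reduces to the single polynomial relation $R=0$ obtained by squaring the adjacency equation and substituting $t_\la^2 = p_\la^2 - 6p_\la q_\la + q_\la^2 + 8$ and $t_\rho^2 = p_\rho^2 - 6p_\rho q_\rho + q_\rho^2 + 8$. One correction: the second component does \emph{not} need the Markov-like identity. In fact
\[
(p_\rho q_\la - p_\la q_\rho)\, M_\rho v_\la \;-\; t_\mu v_\la \;-\; t_\rho^2 v_\rho \;=\; \tfrac{R}{8}\, v_\rho
\]
is a genuine polynomial identity in $p_\la, q_\la, p_\rho, q_\rho$ (with $t_\mu, t_\rho^2$ replaced by their expressions above), so both components vanish simultaneously once $\bE_\la$ and $\bE_\rho$ are adjacent. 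This is tidier than what you sketched and makes explicit that condition (a) alone carries the whole lemma, at the cost of a somewhat longer polynomial verification than the paper's citation-based proof.
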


\begin{proof}
We expand the left hand side of (i). Thus, we must check
\[
t_\rho \begin{pmatrix} p_\mu \\ q_{\mu}  \end{pmatrix}=  \begin{pmatrix} p_\la (p_\rho^2-6p_\rho q_\rho+7)+(p_\rho q_\rho-1)q_\la \\ p_\la(p_\rho q_\rho-6q_\rho^2+1)+q_\la(q_\rho^2+1) \end{pmatrix}.
\]
The first row is
\[
p_\mu t_\rho=7p_\la-q_\la+p_\la(p_\rho^2-6p_\rho q_\rho)+q_\la(p_\rho q_\rho),
\]
which by \cite[Lemma~4.6~(i)]{M} 
is equivalent to 
\[
t_\mu p_\rho =p_\la(p_\rho^2-6p_\rho q_\rho)+q_\la(p_\rho q_\rho)=p_\rho(p_\la (p_\rho-6q_\rho)+q_\la q_\rho),
\]
which holds by \cite[Lemma~4.6~(iv)]{M}. Similarly, by \cite[Lemma~4.6~(i, iv)]{M}, we have
\[
p_\la+q_\la+q_\rho(p_\la(p_\rho-6q_\rho)+q_\la q_\rho)=q_\mu t_\rho-q_\rho t_\mu+q_\rho t_\mu=q_\mu t_\rho,
\]
proving equality for the second row in (i). 

Conclusion (ii) follows from (i) and the definition of $y$-mutation:
\[
M_\rho(p_\la,q_\la)=(t_\rho p_\mu-p_\la,t_\rho q_\mu-q_\la)=(p_{y\mu},q_{y\mu}).
\]

We now verify (iii), which expands to
\[
t_\la\begin{pmatrix}p_\mu\\q_\mu\end{pmatrix}=\begin{pmatrix}p_\rho(p_\la^2+1)+q_\rho(p_\la q_\la-6p_\la^2+1)\\p_\rho(p_\la q_\la-1)+q_\rho(q_\la^2-6p_\la q_\la +7)\end{pmatrix}.
\]
The first row follows from \cite[Lemma~4.6~(ii, iv)]{M}:
\[
p_\mu t_\la=p_\rho+q_\rho+p_\la t_\mu=p_\rho+q_\rho+p_\la(p_\la(p_\rho-6q_\rho)+q_\la q_\rho).
\]
The second row also follows from \cite[Lem.~4.6~(ii, iv)]{M}, as we have
\[
q_\mu t_\la =q_\la t_\mu+7q_\rho-p_\rho =7q_\rho-p_\rho+q_\la(p_\la(p_\rho-6q_\rho)+q_\la q_\rho).
\]

Conclusion (iv) follows from (iii) in the same way that (ii) follows from (i). 
\end{proof}

\begin{remark} \rm \cite[Lemma~4.6]{M} was originally written to apply to generating triples obtained by mutation from $\Tt_n$ for $n$ even. Its proof only relies on conditions (a)-(d) in the definition of a generating triple. The fact that triples obtained by mutation from odd $\Tt_n$ are generating triples can be proved following the same procedure as in the proof of \cite[Proposition~2.1.9]{MMW} (removing the proof of condition (e) in the definition of a generating triple). See also \cite[Lemma~3.2.3]{SPUR} for a proof in the case of certain mutations of $\Tt_7$. \hfill$\er$
\end{remark}

Next we consider how $M_\rho$ and $M_\la$ act as linear fractional transformations on a rational number $p/q$. 
 The lemma below is a standard result about convergents of continued fractions; a proof can be found within the proof of \cite[Theorem~2.1]{H}. 
\begin{lemma} \label{rmk:ACF} 
    Given convergents of a continued fraction $p_i/q_i=[a_0,a_1,\hdots,a_i]$, define the matrix $A=\begin{pmatrix}p_{i-1}&p_i\\q_{i-1}&q_i\end{pmatrix}$. Then, $A$ can be written as
    \begin{equation}\label{eqn:CFmatrices}
A=\begin{pmatrix}1&a_0\\0&1\end{pmatrix}\begin{pmatrix}0&1\\1&a_1\end{pmatrix}\cdots\begin{pmatrix}0&1\\1&a_i\end{pmatrix}
\end{equation}
and $\det(A)=(-1)^i$. 
Furthermore, for a rational number $P/Q=[c_0,\hdots,c_{\ell}],$ if $A\begin{pmatrix}P\\Q\end{pmatrix}=\begin{pmatrix}p\\q\end{pmatrix}$, then
\begin{equation}\label{eqn:CFApq}
\frac{p}{q}=[a_0,\dots,a_{i-1},a_i+c_0,c_1,\dots,c_\ell].
\end{equation}
\end{lemma}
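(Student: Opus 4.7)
The plan is to handle the three assertions of the lemma in order, treating the matrix factorization \eqref{eqn:CFmatrices} as the workhorse from which both the determinant formula and the concatenation identity \eqref{eqn:CFApq} follow.

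First I would establish \eqref{eqn:CFmatrices} by induction on $i$, using the standard convention $p_{-1}=1,\ q_{-1}=0$. The base case $i=0$ gives $\begin{pmatrix}p_{-1}&p_0\\ q_{-1}&q_0\end{pmatrix}=\begin{pmatrix}1&a_0\\0&1\end{pmatrix}$, which matches the product (taken to be empty beyond the first factor). For the inductive step, I right-multiply the matrix at stage $i-1$ by $\begin{pmatrix}0&1\\1&a_i\end{pmatrix}$ and observe that the new second column is $(p_{i-2}+a_i p_{i-1},\, q_{i-2}+a_i q_{i-1})^T$, which equals $(p_i,q_i)^T$ by the recursion $p_i=a_ip_{i-1}+p_{i-2}$, $q_i=a_iq_{i-1}+q_{i-2}$. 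The determinant claim then drops out at once from multiplicativity: $\det\begin{pmatrix}1&a_0\\0&1\end{pmatrix}=1$ and $\det\begin{pmatrix}0&1\\1&a_j\end{pmatrix}=-1$ for each of the $i$ subsequent factors, so $\det A=(-1)^i$.

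The key step is the final assertion \eqref{eqn:CFApq}. Applying the factorization \eqref{eqn:CFmatrices} to the continued fraction $P/Q=[c_0,\dots,c_\ell]$, I can write $(P,Q)^T$ as the second column of
\[
B:=\begin{pmatrix}1&c_0\\0&1\end{pmatrix}\begin{pmatrix}0&1\\1&c_1\end{pmatrix}\cdots\begin{pmatrix}0&1\\1&c_\ell\end{pmatrix}.
\]
Hence $(p,q)^T=A(P,Q)^T$ is the second column of $AB$. The proof reduces to simplifying the seam $\begin{pmatrix}0&1\\1&a_i\end{pmatrix}\begin{pmatrix}1&c_0\\0&1\end{pmatrix}$, and a direct $2\times 2$ calculation gives
\[
\begin{pmatrix}0&1\\1&a_i\end{pmatrix}\begin{pmatrix}1&c_0\\0&1\end{pmatrix}=\begin{pmatrix}0&1\\1&a_i+c_0\end{pmatrix}.
\]
Substituting this into $AB$ collapses the product into exactly the factorization \eqref{eqn:CFmatrices} associated to $[a_0,\dots,a_{i-1},a_i+c_0,c_1,\dots,c_\ell]$, and reading off the second column then identifies $p/q$ with this continued fraction.

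There is no serious obstacle: the entire proof is a combination of the standard convergent recursion with a single $2\times 2$ multiplication at the junction of the two continued fractions. The one thing I would be careful about is the convention at $i=0$ (using $p_{-1}=1$, $q_{-1}=0$, and an empty tail in the matrix product) and the mild edge case where $a_i+c_0$ or some $c_j$ fails the usual positivity convention of a continued fraction; neither issue arises in the uses of the lemma elsewhere in the paper, where it is invoked on honest continued fractions with positive partial quotients.
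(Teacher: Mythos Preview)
Your proof is correct and entirely standard. The paper itself does not give a proof of this lemma at all: it simply states that the result is classical and refers to \cite[Theorem~2.1]{H} for a proof. Your argument---induction on $i$ for the factorization, multiplicativity of determinants, and collapsing the seam $\begin{pmatrix}0&1\\1&a_i\end{pmatrix}\begin{pmatrix}1&c_0\\0&1\end{pmatrix}=\begin{pmatrix}0&1\\1&a_i+c_0\end{pmatrix}$ for the concatenation identity---is exactly the proof one finds in such references, so there is nothing to compare.
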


For the proof of Lemmas \ref{lem:Mpm} and \ref{lem:yMutMatrix}, we will use the notation
\[
\frac{p}{q}=[a_0,\dots,a_m] \mbox{ and } \frac{p_i}{q_i}=[0,a_1,\dots,a_i],
\]
where $p/q$ and $p_i/q_i$ are rational numbers written in lowest terms. Therefore,
\begin{equation}\label{eqn:pqpmqm}
\frac{p}{q}=a_0+\frac{p_m}{q_m}=\frac{p_m+a_0q_m}{q_m}.
\end{equation}
We also define auxiliary matrices
\[
M_+:=
\begin{pmatrix}0&1\\1&a_1\end{pmatrix}\cdots\begin{pmatrix}0&1\\1&a_{m-1}\end{pmatrix}\begin{pmatrix}0&1\\1&a_m+(-1)^{m+1}\end{pmatrix}\begin{pmatrix}0&1\\1&a_m+(-1)^m\end{pmatrix}
\begin{pmatrix}0&1\\1&a_{m-1}\end{pmatrix}\cdots\begin{pmatrix}0&1\\1&a_1\end{pmatrix}
\]
and
\[
M_-:=
\begin{pmatrix}0&1\\1&a_1\end{pmatrix}\cdots\begin{pmatrix}0&1\\1&a_{m-1}\end{pmatrix}\begin{pmatrix}0&1\\1&a_m+(-1)^m\end{pmatrix}\begin{pmatrix}0&1\\1&a_m+(-1)^{m+1}\end{pmatrix}
\begin{pmatrix}0&1\\1&a_{m-1}\end{pmatrix}\cdots\begin{pmatrix}0&1\\1&a_1\end{pmatrix}.
\]
They are computed in the following lemma.
\begin{lemma}\label{lem:Mpm} We have
\[
M_+=\begin{pmatrix}p_m^2&p_mq_m-1\\p_mq_m+1&q_m^2\end{pmatrix} \mbox{ and } M_-=\begin{pmatrix}p_m^2&p_mq_m+1\\p_mq_m-1&q_m^2\end{pmatrix}.
\]
\end{lemma}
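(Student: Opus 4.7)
The plan is to exploit the palindromic structure of $M_\pm$: both are of the form $N_{m-1}^{} \cdot C_\pm \cdot N_{m-1}^T$, where $N_{m-1}$ denotes the product of the first $m-1$ factors and $C_\pm$ is the product of the two middle factors. The symmetry $(A_1A_2\cdots A_{m-1})^T = A_{m-1}\cdots A_1$ (each factor $A_i = \begin{pmatrix}0&1\\1&a_i\end{pmatrix}$ is symmetric) lets us identify the tail end of the palindrome as $N_{m-1}^T$. By Lemma~\ref{rmk:ACF} applied with $a_0=0$ (since $p_m/q_m = [0,a_1,\dots,a_m]$), the matrix $N_{m-1}$ equals $\begin{pmatrix}p_{m-2}&p_{m-1}\\q_{m-2}&q_{m-1}\end{pmatrix}$, the matrix of the two preceding convergents.

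Next I would compute the middle block directly. Setting $\epsilon = (-1)^{m+1}$ and using $\epsilon^2 = 1$, a direct multiplication gives
\[
C_+ = \begin{pmatrix}0&1\\1&a_m+\epsilon\end{pmatrix}\begin{pmatrix}0&1\\1&a_m-\epsilon\end{pmatrix} = \begin{pmatrix}1 & a_m-\epsilon\\ a_m+\epsilon & a_m^2\end{pmatrix},
\]
and $C_-$ is obtained by swapping $\epsilon \leftrightarrow -\epsilon$. The key observation is that $C_+$ admits the clean rank-one-plus-correction splitting
\[
C_+ = \begin{pmatrix}1\\ a_m+\epsilon\end{pmatrix}\begin{pmatrix}1 & a_m-\epsilon\end{pmatrix} + \begin{pmatrix}0&0\\0&1\end{pmatrix}.
\]

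Now I would conjugate this splitting by $N_{m-1}$ on the left and $N_{m-1}^T$ on the right. Using the convergent recursion $p_m = a_m p_{m-1}+p_{m-2}$ and its $q$-analog,
\[
N_{m-1}\begin{pmatrix}1\\ a_m\pm\epsilon\end{pmatrix} = \begin{pmatrix}p_m\pm\epsilon p_{m-1}\\ q_m\pm\epsilon q_{m-1}\end{pmatrix},
\]
so the rank-one piece contributes a matrix whose diagonal entries are $p_m^2 - p_{m-1}^2$ and $q_m^2 - q_{m-1}^2$, while the off-diagonal piece is $p_mq_m + \epsilon(p_{m-1}q_m - p_m q_{m-1})$ and its transpose. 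The correction $N_{m-1}\begin{pmatrix}0&0\\0&1\end{pmatrix}N_{m-1}^T$ equals $\begin{pmatrix}p_{m-1}\\ q_{m-1}\end{pmatrix}\begin{pmatrix}p_{m-1}&q_{m-1}\end{pmatrix}$, which cancels the $p_{m-1}^2$ and $q_{m-1}^2$ contributions on the diagonal and adds $p_{m-1}q_{m-1}$ to the off-diagonals.

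Finally I would invoke the standard determinant identity for convergents, $p_{m-1}q_m - p_m q_{m-1} = (-1)^m$, which together with $\epsilon = (-1)^{m+1}$ forces $\epsilon(p_{m-1}q_m - p_m q_{m-1}) = -1$ in the $(1,2)$ entry and $+1$ in the $(2,1)$ entry (after absorbing the $p_{m-1}q_{m-1}$ term). This yields exactly
\[
M_+ = \begin{pmatrix}p_m^2 & p_mq_m-1\\ p_mq_m+1 & q_m^2\end{pmatrix},
\]
and the formula for $M_-$ follows instantly by replacing $\epsilon$ with $-\epsilon$. There is no real obstacle here beyond careful bookkeeping of signs; the main thing to get right is the matching of $\epsilon$ with the parity of $m$ via the determinant identity, since that is the single step in which the palindromic algebra actually interacts with the arithmetic of continued fractions.
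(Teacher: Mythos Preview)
Your proof is correct and takes essentially the same approach as the paper: both factor $M_+$ as $N_{m-1}\,C_+\,N_{m-1}^T$ with $N_{m-1}=\begin{pmatrix}p_{m-2}&p_{m-1}\\q_{m-2}&q_{m-1}\end{pmatrix}$ via Lemma~\ref{rmk:ACF}, then reduce the computation to the convergent recursion and the determinant identity $p_{m-1}q_m-p_mq_{m-1}=(-1)^m$. Your rank-one-plus-correction splitting of $C_+$ is a pleasant organizational device but not a substantively different route; the paper simply multiplies the four matrices out directly.
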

\begin{proof} Using \eqref{eqn:CFmatrices} and its transpose with $a_0=0$, we obtain
\[
M_+=\begin{pmatrix}p_{m-2}&p_{m-1}\\q_{m-2}&q_{m-1}\end{pmatrix}\begin{pmatrix}0&1\\1&a_m+(-1)^{m+1}\end{pmatrix}\begin{pmatrix}0&1\\1&a_m+(-1)^m\end{pmatrix}\begin{pmatrix}p_{m-2}&q_{m-2}\\p_{m-1}&q_{m-1}\end{pmatrix}.
\]
The conclusion is obtained using the facts
\begin{equation}\label{eqn:piai}
    p_i=a_ip_{i-1}+p_{i-2} \mbox{ and }q_i=a_iq_{i-1}+q_{i-2},
\end{equation}
which are standard, and
\[
p_iq_{i-1}-p_{i-1}q_i=(-1)^{i+1},
\]
which follows from taking the determinant of both sides of \eqref{eqn:CFmatrices}.
The formula for $M_-$ is obtained similarly.
\end{proof}

Setting $M=\begin{pmatrix}a&b\\c&d\end{pmatrix},$ when $M=M_{\rho}$ the continued fractions of $a/c$ and $b/d$ are the same up to the last digit as discussed in Lemma \ref{rmk:ACF}. This is not the case for $M=M_\la.$ Instead, the matrix 
\begin{equation}\label{eqn:Mlaa0}
M_\la \begin{pmatrix} 1 & a_0 \\ 0 &1 \end{pmatrix}=\begin{pmatrix} p_\la^2 & (a_0-6)p_\la^2+p_\la q_\la+1 \\ p_\la q_\la-1 & q_\la^2+(a_0-6)p_\la q_\la -(a_0-6)\end{pmatrix}
\end{equation}
does have the property that the continued fractions of $a/c$ and $b/d$ are the same up to the last digit. The next lemma will be applied to compute the ratios $a/c$ and $b/d$ for the matrices $M_{\rho}$ and $M_\la \begin{pmatrix} 1 & a_0 \\ 0 &1 \end{pmatrix}$.

\begin{lemma} \label{lem:yMutMatrix}  Let $p/q=[a_0,\dots,a_m]$ where in (ii)-(iv) we assume $p/q>1$.
\begin{itemlist}
\item[{\rm (i)}] If $m\geq1$ and $a_m\geq2$, we have
\begin{equation}\label{eqn:pq-1}
\frac{pq-1}{q^2}=[a_0,\hdots,a_{m-1},a_m+(-1)^{m+1},a_m+(-1)^m,a_{m-1},\dots,a_1].
\end{equation}

\item [{\rm (ii)}] We have 
\[
\frac{p^2}{pq-1}=[a_0,\hdots,a_{m-1},a_m+(-1)^m,a_m+(-1)^{m+1},a_{m-1},\dots,a_1,a_0].
\]

\item[{\rm (iii)}] When $a_0\geq7$, we have
\begin{equation}\label{eqn:psquared}
\frac{p^2-6pq+6}{pq-6q^2+1}=[a_0,\dots,a_{m-1},a_m+(-1)^{m+1},a_m+(-1)^m,a_{m-1},\dots,a_1,a_0-6].
\end{equation}

\item[{\rm (iv)}] If $m\geq1, a_m\geq2$, and $a_0\geq7$, then
\[
\frac{(a_0-6)p^2+pq+1}{q^2+(a_0-6)pq-(a_0-6)}=[a_0,\hdots,a_m+(-1)^m,a_m+(-1)^{m+1},a_{m-1},\hdots,a_0,a_0-6].
\]
\end{itemlist}
\end{lemma}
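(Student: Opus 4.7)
The plan is to read off each continued fraction using the matrix product formula from Lemma \ref{rmk:ACF} and to recognize the middle factor as one of the two matrices $M_+$ or $M_-$ already computed in Lemma \ref{lem:Mpm}. Writing $p/q = [a_0,\ldots,a_m]$ and $p_m/q_m = [0,a_1,\ldots,a_m]$ as in the paragraph preceding Lemma \ref{lem:Mpm}, we have $p = p_m + a_0 q_m$ and $q = q_m$ by \eqref{eqn:pqpmqm}. For each of (i)--(iv), the target continued fraction corresponds, by \eqref{eqn:CFmatrices}, to a matrix of the form $\left(\begin{smallmatrix}1&a_0\\0&1\end{smallmatrix}\right)$ times the appropriate central block; the target number is then the ratio of top-right to bottom-right entries of this product.

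The central observation is the factorization: the matrix product equals $\left(\begin{smallmatrix}1&a_0\\0&1\end{smallmatrix}\right) M_+$ in (i); $\left(\begin{smallmatrix}1&a_0\\0&1\end{smallmatrix}\right) M_- \left(\begin{smallmatrix}0&1\\1&a_0\end{smallmatrix}\right)$ in (ii); $\left(\begin{smallmatrix}1&a_0\\0&1\end{smallmatrix}\right) M_+ \left(\begin{smallmatrix}0&1\\1&a_0-6\end{smallmatrix}\right)$ in (iii); and $\left(\begin{smallmatrix}1&a_0\\0&1\end{smallmatrix}\right) M_- \left(\begin{smallmatrix}0&1\\1&a_0\end{smallmatrix}\right)\left(\begin{smallmatrix}0&1\\1&a_0-6\end{smallmatrix}\right)$ in (iv). Substituting the explicit formulas from Lemma \ref{lem:Mpm} and using $p = p_m + a_0 q_m$, $q = q_m$, the last column in each case simplifies to the claimed numerator and denominator. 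For instance, (i) immediately gives last column $(p_m q_m - 1 + a_0 q_m^2,\, q_m^2)^T = (pq - 1,\, q^2)^T$. The recurring algebraic identity is $(p_m + a_0 q_m)^2 = p_m^2 + 2 a_0 p_m q_m + a_0^2 q_m^2 = p^2$, which collapses the mixed terms that arise from multiplying out into a perfect square.

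The most computationally involved case, and hence the main obstacle, will be (iv). There the numerator of the ratio is
\[
(a_0-6)p_m^2 + \bigl(1 + a_0(a_0-6)\bigr)(p_m q_m + 1) + a_0\bigl[(a_0-6)(p_m q_m - 1) + \bigl(1 + a_0(a_0-6)\bigr) q_m^2\bigr],
\]
which must be shown equal to $(a_0-6)p^2 + pq + 1$. The strategy is to collect all the $(a_0-6)$-multiples into $(a_0-6)(p_m + a_0 q_m)^2 = (a_0-6)p^2$ and verify the remaining terms contribute exactly $p_m q_m + a_0 q_m^2 + 1 = pq + 1$; the denominator is handled in the same fashion and reduces to $q^2 + (a_0-6)(pq - 1)$. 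The hypotheses $a_m \geq 2$ (in (i), (iv)), $p/q > 1$ (equivalently $a_0 \geq 1$, in (ii)--(iv)), and $a_0 \geq 7$ (in (iii), (iv)) enter only to guarantee that every entry in the asserted continued fraction is a positive integer and that the expansion is admissible (not ending in $0$ or requiring the reduction from Remark \ref{rmk:nn-1}).
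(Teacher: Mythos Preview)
Your approach is correct and is, if anything, cleaner than the paper's. In each of (i)--(iv) you identify the matrix attached by \eqref{eqn:CFmatrices} to the target continued fraction as a short product built from $M_+$ or $M_-$, then read off the right column using Lemma~\ref{lem:Mpm} and $p=p_m+a_0q_m$, $q=q_m$. I checked all four factorizations and the resulting column computations; they go through exactly as you describe (in (iv) it is slightly quicker to first record, from the (ii) computation, that $\left(\begin{smallmatrix}1&a_0\\0&1\end{smallmatrix}\right)M_-\left(\begin{smallmatrix}0&1\\1&a_0\end{smallmatrix}\right)=\left(\begin{smallmatrix}pq+1&p^2\\q^2&pq-1\end{smallmatrix}\right)$ and then hit this with $\left(\begin{smallmatrix}0&1\\1&a_0-6\end{smallmatrix}\right)$, which immediately gives the claimed last column without expanding the long expression you wrote out).

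The paper argues differently after (i): for (ii) it passes to the reciprocal $q/p=[0,a_0,\dots,a_m]$ and invokes (i); for (iii) and (iv) it does not compute the full matrix product but instead shows that two auxiliary ratios (e.g.\ $(pq-1)/q^2$ and a companion) are \emph{consecutive convergents} of the same number and then appends the final digit $a_0-6$ via the recursion \eqref{eqn:piai}. Your method trades that structural observation for a uniform mechanical computation; it makes the role of $M_\pm$ transparent throughout and avoids introducing the auxiliary variables $x,y,z,w,u,v$, at the cost of a bit more algebra in (iii) and (iv). Both routes are equally valid, and yours arguably generalizes more readily.
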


\begin{proof}
To prove (i), we assume $a_0=0$; to remove this assumption, note that by \eqref{eqn:pqpmqm},
\[
\frac{pq-1}{q^2}=\frac{(p_m+a_0q_m)q_m-1}{q_m^2}=\frac{p_mq_m-1}{q_m^2}+a_0,
\]
so that both sides of \eqref{eqn:pq-1} transform by adding $a_0$.

By \eqref{eqn:CFmatrices} and its transpose, the second column of the matrix $M_+$ has continued fraction
\[
\left[0,a_1,\dots,a_{m-1},a_m+(-1)^{m+1},a_m+(-1)^m,a_{m-1},\dots,a_1\right].
\]
Because we are assuming that $p_m=p$ and $q_m=q$, the computation of the entries of $M_+$ in Lemma \ref{lem:Mpm} completes the proof of (i).

Then (ii) follows from (i): taking the reciprocal of $p/q$, we obtain $q/p=[0,a_0,\dots,a_m]$. By applying (i) to $q/p$ we see that
 \[
 \frac{qp-1}{p^2}=[0,a_0,\dots,a_{m-1},a_m+(-1)^{m+2},a_m+(-1)^{m+1},a_{m-1},\dots,a_1,a_0].
 \]
Taking the reciprocal removes the first zero and proves (ii).

To prove (iii), set
\[
x=p^2,\quad y=pq+1,\quad z=pq-1, \mbox{ and }w=q^2.
\]
First, note that by \eqref{eqn:pqpmqm}, we have that both
\[ \frac{z}{w}=\frac{p q-1}{q^2}=\frac{(p_m+a_0q_m)q_m-1}{q_m^2}=a_0+\frac{p_m q_m-1}{q_m^2}\]
and
\[ \frac{u}{v}:=\frac{x-a_0z}{y-a_0w}=\frac{a_0+a_0p_mq_m+p_m^2}{1+p_mq_m}=a_0+\frac{p_m^2}{1+p_mq_m}.\]

\noindent By the definition of $M_+$ and Lemmas \ref{rmk:ACF} and \ref{lem:Mpm}, this implies that $u/v$ and $z/w$ are consecutive convergents where 
\[ z/w=[a_0,\hdots,a_{m-1},a_m+(-1)^{m+1},a_m+(-1)^{m},a_{m-1},\hdots,a_2,a_1]\] 
and $u/v$ has the same continued fraction with the final $a_1$ removed. 

By writing all the variables in terms of $p,q$, we can simplify
\[
\frac{p^2-6pq+6}{pq-6q^2+1}=\frac{(a_0-6)z+u}{(a_0-6)w+v},\]
which is the quantity whose continued fraction we are computing.  Hence, by the properties of continued fractions as mentioned in \eqref{eqn:piai}, the fractions 
 $u/v, z/w$ and $\frac{p^2-6pq+6}{pq-6q^2+1}$ 
 form a sequence of convergents. The continued fraction of $\frac{p^2-6pq+6}{pq-6q^2+1}$ is the continued fraction of $z/w$ with an $a_0-6$
 appended at the end. This proves (iii).

We now turn to the proof of (iv). Set
\[
x=p^2, \quad y=pq-1, \quad z=pq+1, \mbox{ and } w=q^2.
\]
By Lemma \ref{lem:yMutMatrix} (ii),
\[
\frac{x}{y}=[a_0,\dots,a_{m-1},a_m+(-1)^m,a_m+(-1)^{m+1},a_{m-1},\dots,a_0].
\]
To compute the continued fraction of $z/w$, follow the proof of Lemma \ref{lem:yMutMatrix} (iii), replacing $M_+$ with $M_-$, to obtain
\[
\frac{z}{w}=[a_0,\dots,a_{m-1},a_m+(-1)^m,a_m+(-1)^{m+1},a_{m-1},\dots,a_1].
\]
Therefore $z/w$ and $x/y$ are consecutive convergents of the same number, thus by \eqref{eqn:piai},
\[
\frac{(a_0-6)p^2+pq+1}{q^2+(a_0-6)pq-(a_0-6)}=\frac{(a_0-6)x+z}{(a_0-6)y+w}
\]
is the next convergent, with the next entry $a_0-6$. This completes the proof of (iv).

\end{proof}

We are now ready to prove Theorem \ref{thm:CFmain}.

\begin{proof}[Proof of Theorem \ref{thm:CFmain}] 

We prove the theorem in four cases. The structure of each case is as follows:
\begin{itemize}
    \item By Lemma \ref{lem:recurMain} (ii) and (iv), we have
    \[
    M_\rho\begin{pmatrix}p_\la\\q_\la\end{pmatrix}=\begin{pmatrix}p_{y\mu}\\q_{y\mu}\end{pmatrix} \mbox{ and } M_\la\begin{pmatrix}p_\rho\\q_\rho\end{pmatrix}=\begin{pmatrix}p_{x\mu}\\q_{x\mu}\end{pmatrix}.
    \]
    \item As we have proved in Lemmas \ref{lem:yMutMatrix}, the ratios of the entries in the columns of $M_\rho$ are consecutive convergents of the same number, as are the ratios of the entries in the columns of the matrix $M_\la \begin{pmatrix} 1 & l_0 \\ 0 & 1 \end{pmatrix}$. 
    \item We are therefore in a setting similar to that of Lemma \ref{rmk:ACF}, however we must make some adjustments based on the case. For $y$-mutation, we must swap the columns of $M_\rho$, which corresponds to taking a reciprocal (which itself corresponds to appending a zero to the start of a continued fraction). For $x$-mutation we must account for the matrix $\begin{pmatrix}1&l_0\\0&1\end{pmatrix}$ which we multiplied $M_\la$ by to be able to invoke Lemma \ref{rmk:ACF}. Finally, there are the special cases when $\bE_\la=\bE_{[n]}$ and $\bE_\rho=\bE_{[n+2]}$.
\end{itemize}

Recall that in our notation,
\[
\frac{p_\la}{q_\la}=[l_0,\dots,l_s] \mbox{ and } \frac{p_\rho}{q_\rho}=[r_0,\dots,r_m].
\]
\[
\frac{p_\la}{q_\la}=[a'_0,\dots,a'_{m'}] \mbox{ and } \frac{p_\rho}{q_\rho}=[a_0,\dots,a_m].
\]

\noindent\textbf{$y$-mutation, $\bE_\rho\neq\bE_{[n+2]}$:}  By Lemma \ref{lem:yMutMatrix} (i, iii), the columns of $M_\rho=\begin{pmatrix}a&b\\c&d\end{pmatrix}$ have continued fractions
\begin{align*}
\frac{a}{c}&=\left[r_0,\dots,r_{m-1},r_m+(-1)^{m+1},r_m+(-1)^m,r_{m-1},\dots,r_1,r_0-6\right] \text{ and }
\\\frac{b}{d}&=\left[r_0,\dots,r_{m-1},r_m+(-1)^{m+1},r_m+(-1)^m,r_{m-1},\dots,r_1\right].
\end{align*}
Note that by Lemma \ref{lem:recurMain} (ii),
\[
\begin{pmatrix}p_{y\mu}\\q_{y\mu}\end{pmatrix}=\begin{pmatrix}a&b\\c&d\end{pmatrix}\begin{pmatrix}p_\la\\q_\la\end{pmatrix}=\begin{pmatrix}b&a\\d&c\end{pmatrix}\begin{pmatrix}q_\la\\p_\la\end{pmatrix}
\]
and $q_\la/p_\la=[0,l_0,\dots,l_s]$. Therefore Lemma \ref{lem:recurMain} (ii) and \eqref{eqn:CFApq} with $A$ equal to $M_\rho$ with its columns swapped and $P/Q=q_\la/p_\la$ imply that
\begin{align*}
    \frac{p_{y\mu}}{q_{y\mu}}=\left[r_0,\dots,r_{m-1},r_m+(-1)^{m+1},r_m+(-1)^m,r_{m-1},\dots,r_1,(r_0-6)+0,l_0,l_1,\dots,l_s\right],
\end{align*}
which is our desired conclusion \eqref{eqn:ymuCF} because $r_0=n+1$ by hypothesis.

\noindent\textbf{$y$-mutation, $\bE_\rho=\bE_{[n+2]}$:} Lemma \ref{lem:yMutMatrix} (i) does not apply in this case, nor do the columns of $M_\rho$ have continued fractions of the correct form. Therefore we take a more direct approach.

By Lemma \ref{lem:recurMain} (ii), our goal \eqref{eqn:ymuCF} is to show that
\[
\begin{pmatrix}n^2-2n-2&n+1\\n-3&1\end{pmatrix}\begin{pmatrix}p_\la\\q_\la\end{pmatrix}=M_\rho\begin{pmatrix}p_\la\\q_\la\end{pmatrix}=[n+1,2,0,n-5,l_0,\dots,l_s],
\]
which by \eqref{eqn:CFmatrices} follows from showing that
\[
\begin{pmatrix}n^2-2n-2&n+1\\n-3&1\end{pmatrix}\begin{pmatrix}1&l_0\\0&1\end{pmatrix}=\begin{pmatrix}1&n+1\\0&1\end{pmatrix}\begin{pmatrix}0&1\\1&n-3\end{pmatrix}\begin{pmatrix}0&1\\1&l_0\end{pmatrix},
\]
a straightforward computation.

\noindent\textbf{$x$-mutation, $\bE_\la\neq\bE_{[n]}$:} By Lemmas \ref{lem:yMutMatrix} (ii, iv), the columns of $M_\la\begin{pmatrix}1&l_0\\0&1\end{pmatrix}=\begin{pmatrix}a&b\\c&d\end{pmatrix}$ have continued fractions
\begin{align*}
    \frac{a}{c}&=\left[l_0,
    \dots,l_{s-1},l_s+(-1)^s,l_s+(-1)^{s+1},l_{s-1},\dots,l_0\right] \text{ and }
    \\\frac{b}{d}&=\left[l_0,\dots,l_{s-1},l_s+(-1)^s,l_s+(-1)^{s+1},l_{s-1},\dots,l_0,l_0-6\right].
\end{align*}

By Lemma \ref{lem:recurMain} (iv),
\[
M_\la\begin{pmatrix}1&l_0\\0&1\end{pmatrix}\begin{pmatrix}1&l_0\\0&1\end{pmatrix}^{-1}\begin{pmatrix}p_\rho\\q_\rho\end{pmatrix}=\begin{pmatrix}p_{x\mu}\\q_{x\mu}\end{pmatrix};
\]
notice that by our convention in Remark \ref{rmk:nn-1} and our hypothesis that $\bE_\la\neq\bE_{[n]}$, the first entry in the continued fraction of $p_\rho/q_\rho$ is exactly $l_0$, so by \eqref{eqn:CFmatrices} we may apply \eqref{eqn:CFApq} with $A=M_\la\begin{pmatrix}1&l_0\\0&1\end{pmatrix}$ and $P/Q=[0,r_1,\dots,r_m]$ to obtain
\[
\frac{p_{x\mu}}{q_{x\mu}}=\left[l_0,\dots,l_{s-1},l_s+(-1)^s,l_s+(-1)^{s+1},l_{s-1},\dots,l_0,(l_0-6)+0,r_1,\dots,r_m\right].
\]
This implies the desired conclusion assuming that $l_0=n+1,$ which we prove below. 

{\bf Claim A:  $l_0=n+1$.}
\begin{proof}

Here $l_0$ is the first entry of $\bE_\la \neq \bE_{[n]}.$ 
 Recall from Remark \ref{rmk:nn-1} that the continued fraction of the center of $\bE_{[n+2]}$ can be written as $[n+1,1]$.  

In the cases, when $p_\rho/q_\rho=n+2$ and we are performing a $y$-mutation or when $p_\la/q_\la=n$ are we are performing a $x$-mutation, then first entry of $p_{y\mu}/q_{y\mu}$ is $n+1$ or the first entry of $p_{x\mu}/q_{x\mu}$ is $n+1.$ The computation for the case of $p_\la/q_\la=n$ can be seen below in the case for $\bE_\la=\bE_{[n]}.$

In all other cases we may assume by induction that $r_0=l_0=n+1$ and if $m=1$ or $s=1$ (the cases in which we might see zeroes because of an $r_{m}-1$ or $\ell_s-1$ term) then $r_m, \ell_{s}\neq1$.
\end{proof}

\noindent\textbf{$x$-mutation, $\bE_\la=\bE_{[n]}$:} By Lemma \ref{lem:yMutMatrix} (ii) or direct computation, the first column of \newline $M_\la\begin{pmatrix}1&n+1\\0&1\end{pmatrix}=\begin{pmatrix}a&b\\c&d\end{pmatrix}$ has continued fraction
\[
    \frac{a}{c}=\left[n+1,n-1\right].
\]
We claim that the second column has continued fraction
\[
\frac{b}{d}=\left[n+1,n-1,n-5\right].
\]
To show this, we set
\[
\begin{pmatrix}n^2&b\\n-1&d\end{pmatrix}:=\begin{pmatrix}n^2&n-6n^2+1\\n-1&7-6n\end{pmatrix}\begin{pmatrix}1&n+1\\0&1\end{pmatrix}=M_\la\begin{pmatrix}1&n+1\\0&1\end{pmatrix},
\]
and we compute
\[
b=n^3-5n^2+n+1 \mbox{ and } d=n^2-6n+6.
\]
It is an easy computation to show that $b/d$ has the claimed continued fraction.

By Lemma \ref{lem:recurMain} (iv),
\[
M_\la\begin{pmatrix}1&n+1\\0&1\end{pmatrix}\begin{pmatrix}1&n+1\\0&1\end{pmatrix}^{-1}\begin{pmatrix}p_\rho\\q_\rho\end{pmatrix}=\begin{pmatrix}p_{x\mu}\\q_{x\mu}\end{pmatrix};
\]
 notice that by our convention in Remark \ref{rmk:nn-1}, the first entry in the continued fraction of $p_\rho/q_\rho$ is exactly $n+1$, so by \eqref{eqn:CFmatrices} we may apply Lemma \ref{rmk:ACF} with $A=M_\la\begin{pmatrix}1&n+1\\0&1\end{pmatrix}$ and $P/Q=[0,r_1,\dots,r_m]$ to obtain
\[
\frac{p_{x\mu}}{q_{x\mu}}=\left[n+1,n-1,(n-5)+0,r_1,\dots,r_m\right].
\]
This implies the desired conclusion \eqref{eqn:xmuCF}.
\end{proof}


\begin{thebibliography}{CGFS}

\bibitem[BHM]{ICERM} M. Bertozzi, T. Holm, E. Maw, D. McDuff, G. Mwakyoma, A. Pires, and M. Weiler, ``Infinite staircases for Hirzebruch surfaces,'' in B. Acu, et. al. ed., \textit{Research directions in symplectic and contact geometry and topology}, Assoc. Women Math. Ser., 27:47-157, Springer, 2021.

\bibitem[BP]{BP} O. Buse and M. Pinsonnault, ``Packing numbers of rational ruled 4-manifolds,'' \textit{J. Symp. Geom.} 11(2):269-316, 2013.

\bibitem[CV]{CasalsVianna} R. Casals and R. Vianna, ``Full ellipsoid embeddings and toric mutations,'' \textit{Select. Math. (N.S.)}, 28(3): paper 61, 62 pp., 2022. 

\bibitem[CG1]{concaveconvex}
D. Cristofaro-Gardiner, ``Symplectic embeddings from concave toric domains into convex ones,'' \textit{J.
Diff. Geom.} 112(2):199-232, 2019.

\bibitem[CG2]{CGspecial} D. Cristofaro-Gardiner, ``Special eccentricities of rational four-dimensional ellipsoids,''
\textit{Algebr. Geom. Topol.} 22(5):2267–2291, 2022.

\bibitem[CGFS]{ellipsoidpolydisc}
D. Cristofaro-Gardiner, D. Frenkel, and F. Schlenk, ``Symplectic embeddings of four-dimensional ellipsoids into integral polydiscs,'' \textit{Algebr. Geom. Topol.} 17(2):1189–1260, 2017.

\bibitem[CGHM]{cghm} D. Cristofaro-Gardiner, R. Hind, and D. McDuff, ``The ghost stairs stabilize to sharp symplectic
embedding obstructions,'' \textit{J. Topol.}, 11(2):309-378, 2018.

\bibitem[CGHMP]{AADT}
D. Cristofaro-Gardiner, T. Holm,  A. Mandini, and A. Pires, ``On infinite staircases in toric symplectic four-manifolds,'' \textit{J. Diff. Geom.}, Vol. 129, Issue 2 (2025): 335-413.

\bibitem[CGMM]{CGMM} D. Cristofaro-Gardiner, N. Magill, and D. McDuff, ``Ellipsoidal embedding into convex toric domains,'' in progress. 

\bibitem[CGM]{CGM} D. Cristofaro-Gardiner and M. Mazzucchelli, ``The action spectrum characterizes closed contact 3-manifolds all of whose Reeb orbits are closed,'' \textit{Comment. Math. Helv.} 95(3):461–481, 2020.

\bibitem[FHM]{SPUR} C. Farley, T. Holm, N. Magill, J. Schroder, Z. Wang, M. Weiler, and E. Zabelina, ``Four-periodic infinite staircases for four-dimensional polydisks,'' \textit{Involve}, 18(1): 25-78, 2025.

\bibitem[FM]{FrenkelMuller15}
D. Frenkel and D. M\"uller, ``Symplectic embeddings of 4-dimensional ellipsoids into cubes,'' \textit{J. Symp.
Geom.} 13(4):765-847, 2015.

\bibitem[Ha]{H} A. Hatcher, {\it Topology of Numbers.} American Mathematical Society, Providence, RI, 2022. \url{https://pi.math.cornell.edu/~hatcher/TN/TNpage.html}

\bibitem[Hu1]{qECH} M. Hutchings, ``Quantitative embedded contact homology,'' {\it J. Diff. Geom.} 88(2):231--266, 2011.

\bibitem[Hu2]{Hutch} M. Hutchings, ``Recent progress on symplectic embedding problems in four dimensions,''  \emph{Proc. Natl. Acad. Sci.} USA 108 (2011), no. 20, 8093–8099.

\bibitem[KK]{KK} Y. Karshon and L. Kessler, ``Distinguishing symplectic blowups of the complex projective plane,'' \textit{J. Symp. Geom.} 15(4):1089-1128, 2017.

\bibitem[LiLi]{li-li} B-H Li and T-J Li, ``Symplectic genus, minimal genus and diffeomorphisms,'' \textit{Asian J. Math.} 6:123-144, 2002.

\bibitem[LiLiu]{li-liu} T-J Li, A-K Liu, ``Uniqueness of symplectic canonical class, surface cone and symplectic cone of
4-manifolds with $b_+ = 1$,'' \textit{J. Differential Geom.} 58:331-370, 2001.

\bibitem[M1]{M}
N. Magill, ``Unobstructed embeddings in Hirzebruch surfaces,'' \textit{J. of Symp. Geom,} 22(1): 109-152, 2024.

\bibitem[M2]{M2}
N. Magill, “Almost toric fibrations in 2-fold blow ups of $\CP^2$,” in preparation.

\bibitem[MM]{MM}
N. Magill and D. McDuff, ``Staircase symmetries in Hirzebruch surfaces,'' \textit{Algebr. Geom. Topol.} 23(9):4235-4307, 2023.

\bibitem[MMW]{MMW} N. Magill, D. McDuff, and M. Weiler, ``Staircase patterns in Hirzebruch surfaces,'' \textit{Comment. Math. Helv.} 99(3):437-555, 2024.

\bibitem[Mc1]{Mc0}  D. McDuff,   ``Symplectic embeddings of 4-dimensional ellipsoids,''
{\it J. Topol.} 2(1):1-22, 2009. 
{\it Corrigendum}: {\it J. Topol.} 8(4):1119-1122, 2015.

\bibitem[Mc2]{m2} D. McDuff, ``The Hofer conjecture on embedding symplectic ellipsoids,'' {\it J. Diff. Geom.} 88(3):519-532, 2011.

\bibitem[McDS]{McDuffSchlenk12}
D. McDuff and F. Schlenk, ``The embedding capacity of 4-dimensional symplectic ellipsoids,'' \textit{Ann.
Math} 175(3):1191–1282, 2012.

\bibitem[MP]{McDuffPolt} D. McDuff and L. Polterovich, ``Symplectic packings and algebraic geometry,'' \textit{Invent. Math.}
115(3):405-29, 1994.

\bibitem[U]{usher} M. Usher, ``Infinite staircases in the symplectic embedding problem for four-dimensional ellipsoids
into polydisks,'' \textit{Algebr. Geom. Topol.}, 19(4):1935-2022, 2019.

\end{thebibliography}
\end{document}